\renewcommand*\libertine@figurestyle{LF}
\renewcommand*\libertine@figurestyle{OsF}
\theoremstyle{plain}
    \newtheorem{theorem}{Theorem}[section]
    \newtheorem*{theorem*}{Theorem}
    \newtheorem{construction/theorem}[theorem]{Construction/Theorem}
    \newtheorem{corollary}[theorem]{Corollary}
    \newtheorem{lemma}[theorem]{Lemma}
    \newtheorem{proposition}[theorem]{Proposition}
    \newtheorem{conjecture}[theorem]{Conjecture}
\theoremstyle{definition}
    \newtheorem{remark}[theorem]{Remark}
    \newtheorem{example}[theorem]{Example}
    \newtheorem*{definition*}{Definition}
    \newtheorem{definition}[theorem]{Definition}
    \newtheorem{construction}[theorem]{Construction}
\newcommand{\comment}[1]{}
\DeclareMathOperator{\Aut}{Aut}
\DeclareMathOperator{\trop}{trop}
	\newcommand{\T}{\mathcal T}
\title{A refined twist on Hurwitz numbers}
\author[R.~Fesler]{Raphaël Fesler}
\address{R.~Fesler: Guangdong Technion-Israel Institute of Technology, Daxue lu 241, 515063, Shantou, Guangdong, P.R. of China}
\email{raphael.fesler@gtiit.edu.cn}
\author[M.~A.~Hahn]{Marvin Anas Hahn}
\address{M.~A.~Hahn: School of Mathematics 17, Westland Row, Trinity College Dublin, Dublin 2, Ireland}
\email{hahnma@maths.tcd.ie}
\author[M.~Karev]{Maksim Karev}
\address{M. Karev: Guangdong Technion-Israel Institute of Technology, Daxue lu 241, 515063, Shantou, Guangdong, P.R. of China}
\email{maksim.karev@gtiit.edu.cn}
\author[H.~Markwig]{Hannah Markwig}
\address{H.~Markwig: Universität Tübingen, Fachbereich Mathematik, Auf der Morgenstelle 10, 72076 Tübingen, Germany}
\email{hannah@math.uni-tuebingen.de}
\keywords{Hurwitz numbers, symmetric functions, tropical geometry}
\thanks{\emph{2020 Mathematics Subject Classification:}  05E10, 14T15, 05A15, 57M12.}
\begin{document}
\begin{abstract}
   We introduce a two-parameter refinement of the Jucys-Murphy theory, that we call the \textit{CJT}--refinement, unifying Schur, zonal, and, conjecturally, Jack actions of the ring of symmetric functions on the Fock space. Applications of this formalism include a partial resolution of a recent conjecture of Coulter--Do, as well as cut--and--join recursion for $b$--Hurwitz numbers. The cut--and--join equations enable the derivation of the tropicalization of $b$--Hurwitz numbers. We also provide a first application of this tropical interpretation by answering an open problem of Chapuy--Do\l{\k e}ga on the polynomial structure of $b$--Hurwitz numbers.
\end{abstract}
\maketitle

\tableofcontents
\section{Introduction}

In this introduction, we illuminate the background of the numerous research themes in Hurwitz theory that are relevant to our research. Readers with expertise in one or more of these areas may as well skip parts of the introduction, and move on e.g.\ to Section \ref{subsec:aims} where we discuss the aims and contribution of this work.

\subsection{Historical background}
 \emph{Hurwitz numbers} is an umbrella term encompassing a huge variety of invariants related to the enumeration of branched morphisms between Riemann surfaces. Historically the first variant of Hurwitz numbers was introduced by A. Hurwitz in the late 19th century \cite{hurwitz1891riemann} in an attempt to study the geometry of the moduli spaces of complex curves. These invariants have found great interest since the 1990s due to intimate connections to Gromov--Witten theory (\cite{ekedahl1999hurwitz,zbMATH01777217,okounkov2006gromov}), mirror symmetry (\cite{dijkgraaf1995mirror,bohm2017tropical,goujard2019counting}), random matrix theory (\cite{goulden2014monotone}) and many more. Despite its longevity, the Hurwitz numbers still remain a topic of intensive research,  lying in between algebraic geometry, representation theory, integrable systems theory, combinatorics, complex analysis and other subjects.

\subsection{Variants of Hurwitz numbers}

The interest to Hurwitz numbers in the late XX century was mainly related to so-called \emph{complex simple Hurwitz numbers,} that correspond to counting covers of the Riemann sphere with a certain restriction on the possible ramification profiles. 
\emph{Single} complex simple Hurwitz numbers allow one arbitrary ramification profile and simple ramification else. \emph{Double} complex simple Hurwitz numbers are obtained by allowing a second arbitrary ramification profile. We will give the precise definition of these invariants in Section~\ref{sec:repthr}.

Among the most striking aspects of complex simple Hurwitz numbers, taking in account their purely geometric origin, is their intimate relation to the representation theory of the symmetric group. Indeed, already in Hurwitz' original work simple Hurwitz numbers were expressed as a an enumeration of factorisations in the symmetric group. For single and double simple Hurwitz numbers, this yields an interpretation of Hurwitz numbers as a counting problem of factorising permutations into transpositions allowing to explicitly produce the recursive identities they obey (see~\cite{mulase2009polynomial} for the single case, and \cite{Zhu12} for the double). An alternative way to carry out the count of factorisations is given by the Frobenius formula (see Appendix of~\cite{Lando-Zvonkin}), and can be performed in terms of the irreducible characters of the symmetric group. A major development of the past decade was the introduction of several \textit{variants} of single and double Hurwitz numbers, such as monotone \cite{goulden2014monotone} or strictly monotone \cite{ALS16} Hurwitz numbers and many others \cite{do2018pruned,bouchard2024topological,hahn2015pruned}. Most of these variants are obtained by imposing additional restrictions on the factorisations we count, and also admit a representation-theoretic interpretation. They appear in various contexts, e.g. monotone Hurwitz numbers arise as the coefficients as the asymptotic expansion of the HCIZ integral in random matrix theory, while strictly monotone Hurwitz numbers are an enumeration of certain Grothendieck dessins d'enfants.

A huge part of the variants we mentioned falls into the family of so-called \emph{weighted} Hurwitz numbers. To introduce this family, we begin by setting the stage. It has long been known that appropriate generating series of Hurwitz numbers provide solutions (called $\tau$--functions) of integrable hierarchies \cite{zbMATH01539310,goulden2008kp}. These solutions are of so--called \textit{hypergeometric type} \cite{orlov2001hypergeometric}. Working backwards, i.e. starting with hypergeometric $\tau$--functions and asking for an enumerative interpretation of its coefficients, we get the definition of the weighted complex Hurwitz numbers of \cite{Guay-Paquet-Harnad}. Weighted complex Hurwitz numbers are induced by a \textit{weight generating function} $G(z)$ with $G(0) = 1,$ that in some sense controls the ramification data (see Subsection \ref{subsec:symm} for an elaboration of the details). Different choices of this weight generating function recover most known variants of Hurwitz numbers, e.g. single and double Hurwitz numbers arise for $G(z)=e^z$, while $G(z)=\frac{1}{1-z}$ produces monotone and $G(z)=1+z$ strictly monotone Hurwitz numbers. Thus weighted complex Hurwitz numbers provide a unifying formalism for Hurwitz--type enumerations.

\subsection{Real Hurwitz numbers and Jack functions}
Motivated by the original Hurwitz problem, real analogues of Hurwitz numbers have been studied extensively \cite{itenberg2018hurwitz,Guay-Paquet-Markwig-Rau,natanzon2017bkp}.  Real Hurwitz numbers have various incarnations, but essentially they enumerate branched covers of Riemann surfaces with real involutions, such that the involutions are respected by the cover. The principal example is the study of covers of $(\mathbb CP^1,\mathcal J)$ with $\mathcal J$ being the complex conjugation map.

In this work, we focus on a version of real Hurwitz numbers introduced in \cite{Chapuy-Dolega} in the context of Jack functions. We refer to the discussion starting right before \cref{th:realHN} for precise definitions and details.

To explain the motivation, we note that the aforementioned hypergeometric $\tau$--functions  responsible for complex Hurwitz numbers may be expanded in Schur functions. It is well known that Jack functions are a one parameter deformation of Schur functions (we denote this parameter by $b$). The idea of \cite{Chapuy-Dolega} --- motivated by the enumeration of non-necessarily orientable \textit{dessins d'enfants} that appeared in \cite{goulden1996connection} --- was to replace Schur funtions by Jack functions in this expansion to obtain a $b$--deformation of hypergeometric $\tau$--functions. For $b=0$, the coefficients specialise to complex weighted Hurwitz numbers, while $b=1$ recovers their real analogue. This real analogue enumerates branched coverings with real critical values and empty real locus, see \cite{burman2024real} for a more detailed discussion. We refer to these covers as \textit{purely real}. Moreover, an enumerative interpretation for arbitrary $b$ in terms of so-called \emph{generalized covers} was proved in \cite{Chapuy-Dolega} as well --- these numbers are related to the count the quotients of purely real covers modulo the action of the real structure with a $b$--weight, such that the exponent of $b$ in a precise sense "measures" how non--orientable the quotient cover is. These $b$--deformed Hurwitz numbers are now called \textit{$b$--Hurwitz numbers}. 

\subsection{Symmetric functions and representation theory}
\label{subsec:symm} As mentioned before, complex Hurwitz numbers are intimately related to the representation theory of the symmetric group. The principal representation--theoretical instrument of the study of complex weighted Hurwitz numbers is the \emph{Jucys-Murphy formalism,} distilled in~\cite{Okounkov-Vershik}. The importance of Jucys-Murphy formalism to the Hurwitz theory was highlighted in~\cite{ALS16}.

This formalism allows to control the spectrum of the operators that arise in the Hurwitz theory. Its key feature is the special class of \emph{Jucys-Murphy elements} $X_k$ introduced in~\cite{Jucys,Murphy}. These are pairwise commuting elements of the group algebra of the symmetric group $\mathbb C[S_n]$, that act diagonally on the elements of the \emph{Gelfand-Zetlin} basis, and can be used to generate the centre of the group algebra of the symmetric group. Namely, for any symmetric polynomial $\mathrm F,$ the evaluation $\mathrm F(X_1,\ldots,X_n)\in\mathbb{C}[S_n]$ belongs to the centre of the group algebra $\mathbb C[S_n].$ Thus, the usual group algebra product defines the action of the ring of symmetric functions on the centre of the symmetric group algebra $S_n$: given a symmetric function $\mathrm F,$ and an element $\mathcal C \in Z\mathbb C[S_n]$ one defines the action $F.\mathcal{C}$ as $F(X_1,\ldots,X_n)\mathcal C\in Z\mathbb C[S_n]$. Not limiting ourselves to a fixed value of $n,$ one can define an action of a symmetric polynomial on a direct sum $\mathcal Z = \bigoplus_n Z\mathbb C[S_n].$ The space $\mathcal Z$ has a basis of conjugacy class indicators $C_\mu$ labelled by partitions $\mu,$ and a certain inner product. Thus it makes an instance of a \emph{Fock space} which we define as a graded vector space with basis labelled by all possible partitions, and provided with an inner product that makes these basis vectors orthogonal. We refer to the action of the ring of symmetric functions $\Lambda$ on $\mathcal Z$ as the~\emph{Schur action.} 

 Different complex weighted Hurwitz problems correspond to choices of families of symmetric functions $(\mathrm F_k)_{k = 0}^\infty$. As will be explained in more detail in \cref{sec:schur}, choosing $\mathrm F_k=\mathrm p_1^k$, where $\mathrm p_1$ is the first Newton power sum, yields  complex simple Hurwitz numbers, while $\mathrm F_k=\mathrm h_k$ the complete homogeneous symmetric polynomial and $\mathrm F_k=\mathrm e_k$ the elementary  symmetric polynomial recover monotone and strictly monotone Hurwitz numbers respectively. In the weighted Hurwitz numbers formalism, different choices of weight generating functions $G$ essentially correspond to a base change in the ring of symmetric functions $\Lambda,$ selecting different families of symmetric functions. Namely, a weight generating function $G$ gives rise to the following generating series of symmetric functions:
\begin{equation}\label{equ:weight}
    \mathcal G(\hbar; x_1,x_2,\ldots) = \sum_{k=0}^\infty \hbar^k \mathcal G_k(x_1,x_2,\ldots) = \prod_{j=1}^\infty G(\hbar x_j)\in \Lambda \otimes \mathbb C[[\hbar]].
\end{equation}

The symmetric functions $(\mathcal{G}_k)$ determine the Hurwitz problem corresponding to $G$. For partitions $\nu,\mu\vdash n$, and $k\in \mathbb N\cup \{0\}$ one defines the corresponding 
\emph{weighted possibly disconnected Hurwitz number} $H^{G,\bullet}_k\left(\begin{smallmatrix} \nu \\ \mu \end{smallmatrix}\right)$ as
\begin{equation}\label{equ:weighthurw}
H^{G,\bullet}_k\left(\begin{smallmatrix} \nu \\ \mu \end{smallmatrix}\right) = \frac 1{n!}\sum_{\sigma, C(\sigma) = \mu} [\sigma].\mathcal G_k(X_1,\ldots,X_n)\mathcal C_\nu,
\end{equation}

where $[\sigma].$ indicates the coefficient of $\sigma$ in $\mathcal G_k(X_1,\ldots,X_n)\mathcal C_\nu$, and $C(\sigma)$ refers to the cycle type of $\sigma$, i.e. the partition that  corresponds to its conjugacy class. As shown in \cite{Guay-Paquet-Harnad}, these numbers may be seen --- via monodromy representations --- to be a weighted enumeration of branched covers of $\mathbb{CP}^1$ with restricted ramification described by $G$.

This relation between weight generating functions and  enumeration of branched covers can work the both ways: either, provided with a family of symmetric functions $\{\mathcal G_k\}$, one can study the properties of the corresponding enumerative problem by working with the operators with the known spectrum (e.g. \cite{borot2023double,Alexandrov-Chapuy-Eynard-Harnad,bychkov2024topological}), or, given a weight generating function, one can explicitly work out the \emph{cut-and-join} recursions, allowing to compute the corresponding Hurwitz numbers (e.g. \cite{Zhu12,goulden2014monotone,Karev-Do}).\vspace{\baselineskip}

It is known that similarly to the complex case, the real covers count can be related to the combinatorics of the symmetric group (\cite{Guay-Paquet-Markwig-Rau,Lozhkin}). 
This connection was used in~\cite{burman2021ribbon,burman2024real}, to study the generating functions of the number of purely real covers. 
The purely real covers enumeration can be translated to the language of combinatorics of the symmetric group. 
Namely, using an equivalent language, we can state the correspondence of~\cite{burman2024real} between the purely real covers enumeration and factorisations count  in the following way. Assume that the group $S_{2n}$ is supported in the set $\{1,\bar1,\ldots,n,\bar n\}.$ Fix a partition $\mu = (1^{m_1}2^{m_2}3^{m_3}\ldots) \vdash d$. The weighted number  of non-necessary irreducible degree $2n$ real covers of $(\mathbb CP^1,\mathcal J)$ with one point of arbitrary ramification of profile $\mu\cup\mu = (1^{2m_1}2^{2m_2}3^{2m_3}\ldots)$ and $k$ critical values of profile $(1^{2n - 4}2^2)$ elsewhere, and  divided by $k!$\footnote{We impose this division to align with the definition of weighted Hurwitz numbers for $G = e^z$.}, can be computed as
\[
\mathcal H^{e^z,\bullet}_k\left( \begin{smallmatrix} 1^{n}\\ \mu \end{smallmatrix}\right)= \frac {1}{2^{n+k}n!k!}\# \left\lbrace (\sigma_1,\ldots,\sigma_k)\in S_{2n}^k\, |\, \begin{smallmatrix} C(\sigma_i) = (1^{2n-2}2^1),\sigma_i \ne  (j~\bar j)\, \forall j = 1,\ldots,d \\ C(\sigma_k\cdots\sigma_1 \tau \sigma_1\cdots \sigma_k)= \mu\cup \mu \end{smallmatrix} \right\rbrace,
\]
where $\tau = (1~\bar1)\cdots (n~\bar n)$ is a special fixed point-free involution in $S_{2n}$.  The explanation of the pieces of notation for $\mathcal H^{e^z,\bullet}_k\left( \begin{smallmatrix} 1^{n}\\ \mu \end{smallmatrix}\right)$ will be given in section~\ref{sec:repthr}.

The papers~\cite{goulden1996connection,hanlon1992some} reveal a connection of the enumeration of non-necessarily orientable dessins d'enfant to the representation theory of the Gelfand pair $(S_{2n},H_n),$ where $H_n$ is the hyperoctahedral group, which can be extended to the purely real covers enumeration question. Namely,~\cite{Matsumoto} introduces a notion of \emph{odd Jucys-Murphy} elements $\mathcal X_k$ that form nothing but "a half" of the collection of usual Jucys-Murphy elements in $S_{2n}$ (see section~\ref{sec:repthr} for the details). The symmetric group $S_{2n}$ (and the corresponding symmetric group algebra $\mathbb C[S_{2n}]$) act by the conjugation on the subspace of $\mathbb C[S_{2n}]$ spanned by all the fixed point-free involutions that we refer to as $M_n$. Using the odd Jucys-Murphy elements, one can restate the count of $\mathcal H^{e^z,\bullet}_k\left( \begin{smallmatrix} 1^{n}\\ \mu \end{smallmatrix}\right)$ in a way similar to Equation~\ref{equ:weighthurw}:
\begin{equation}\label{equ:weighthurwreal}
\mathcal H^{e^z,\bullet}_k\left( \begin{smallmatrix} 1^{n}\\ \mu \end{smallmatrix}\right) = \frac {1}{2^n n!k!}\sum_{\begin{smallmatrix}\rho\in S_{2n} \mbox{\small  a fixed point-free involution}\\ C(\rho\tau) = \mu\cup \mu\end{smallmatrix} }[\rho].\mathrm{p^k_1}(\mathcal X_1,\ldots,\mathcal X_n).\tau,
\end{equation}
where $\mathrm{p^k_1}(\mathcal X_1,\ldots,\mathcal X_n).\tau$ denotes the $k'$th power of the first Newton power sum evaluated on odd Jucys-Murphy elements acting on the special involution $\tau$ by the conjugation. It turns out, that this formula can be treated as an application of an another action of $\Lambda$ on an instance of a Fock space $I \le \bigoplus_n M_n$, which we will refer to as the~\emph{zonal action.} We will elaborate the details in Section~\ref{sec:repthr}. Here, we only notice, that the formula above can be used to produce a cut-and-join recursion, that can be used to compute the number of purely real covers inductively.


 The similarity of Equations~(\ref{equ:weighthurw}) and~(\ref{equ:weighthurwreal}) naturally sets the question of the existence of a~\emph{"Jack action"} of $\Lambda$ on a Fock space, that would set a base for counting of $b-$Hurwitz numbers in a representation-theoretical setup. First steps were taken in a recent preprint~\cite{Coulter-Do} of X.~Coulter and N.~Do, where they adapt the odd Jucys-Murphy elements introduced in~\cite{Matsumoto} to the~\emph{Weingarten calculus on real Grassmanians}, not mentioning, however, any relation to real covers enumeration. In particular, they provide computational evidence for conjecture, that formulates a precise Jucys--Murphy formalism for $b$--Hurwitz numbers (thus specialising to the purely real case for $b=1$) via a deformation of odd Jucys-Murphy elements of~\cite{Matsumoto}.  Namely, they invoke a certain graded vector space $\mathcal V_k$ defined over the field $\mathbb  C(b)$ and provided with a special element $\mathfrak e_k$  such that: 

\begin{conjecture}[Coulter-Do]
\label{conj-coulterdo}
For a fixed $n\in \mathbb N$ there exists a collection of operators  $\mathcal J_1,\ldots,\mathcal J_n \colon \mathcal V_k \to \mathcal V_k$ such that, being restricted to $\Xi_n = \left< \mathcal J_1,\ldots,\mathcal J_n\right>\cdot \mathfrak e_k$ the following properties hold:
\begin{itemize}
\item the restrictions of $\mathcal J_k$ for $k = 1,\ldots,n $ on $\Xi_n$ commute pairwise;
\item there exists a basis of\, $\Xi_n$ that diagonalizes all the $\mathcal J_k$ for $k = 1,\ldots,n$ simultaneously;
\item the vectors of the above-mentioned basis are labelled by standard Young tableau of size $n$ and satisfy an explicit recursive relation, allowing to construct them inductively by $n$.
\end{itemize}
\end{conjecture}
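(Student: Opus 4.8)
The plan is to realize the operators $\mathcal{J}_1,\ldots,\mathcal{J}_n$ inside a two-parameter family of deformed Jucys-Murphy operators --- the CJT-refinement announced in the abstract --- which specializes to the ordinary Jucys-Murphy elements governing the Schur action and to the odd Jucys-Murphy elements $\mathcal{X}_k$ of~\cite{Matsumoto} governing the zonal action, and then to verify the three asserted properties within this framework. Here the Jack parameter enters as $b$ (with $\alpha=1+b$), the element $\mathfrak{e}_k$ plays the role of the special fixed-point-free involution $\tau$, and $\Xi_n$ is the cyclic module it generates, deforming $\langle \mathcal{X}_1,\ldots,\mathcal{X}_n\rangle\cdot\tau\subseteq M_n$. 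Throughout I would work over the field $\mathbb{C}(b)$, so that any degeneracy occurring at special values of $b$ is avoided at the generic level.

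For the first bullet I would show that the commutators $[\mathcal{J}_i,\mathcal{J}_j]$, which need not vanish on all of $\mathcal{V}_k$, nonetheless annihilate $\Xi_n$. The natural mechanism is that the $\mathcal{X}_k$ already commute as operators on $M_n$, so the $b$-deformation contributes only lower-order correction terms; I would organize the computation so that each correction, applied to the cyclic generator $\mathfrak{e}_k$ and its $\mathcal{J}$-translates, lands back inside $\Xi_n$ in a controlled way, forcing the commutators to vanish there even though they may not vanish globally.

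For the second and third bullets I would run an induction on $n$ modelled on the Okounkov-Vershik construction of the Gelfand-Zetlin basis~\cite{Okounkov-Vershik}. Assuming a simultaneous eigenbasis of $\mathcal{J}_1,\ldots,\mathcal{J}_{n-1}$ on $\Xi_{n-1}$ indexed by standard Young tableaux of size $n-1$, I would analyze the branching embedding $\Xi_{n-1}\hookrightarrow\Xi_n$ and compute the action of the newly available operator $\mathcal{J}_n$ on each branching summand. The expectation is that $\mathcal{J}_n$ acts by a $b$-deformed content $\alpha(c-1)-(r-1)$ of the cell added to pass from a tableau of size $n-1$ to one of size $n$; diagonalizing $\mathcal{J}_n$ on each summand then produces the size-$n$ eigenvectors together with the sought recursive relation.

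The hard part will be the third bullet, specifically the \emph{simplicity} of the joint spectrum: one must show that the tuple of $b$-deformed contents read off from the eigenvalues of $\mathcal{J}_1,\ldots,\mathcal{J}_n$ separates distinct standard Young tableaux, so that the common eigenbasis is pinned down uniquely and the recursion closes. Over $\mathbb{C}(b)$ this should reduce to a separation statement for $\alpha$-contents, but controlling it uniformly in $n$ --- rather than only for generic $b$ or in low degree --- is the delicate point, and is presumably why the resolution of \cref{conj-coulterdo} obtained here is only partial.
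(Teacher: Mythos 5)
Note first that the statement you are addressing is a \emph{conjecture}, and the paper does not claim to prove it in full: its contribution is the partial resolution given by \cref{th:action}, which establishes (over $\mathbb{C}(C,J,T)$, specialising conjecturally to the Jack case at $CJ=\tfrac{1+b}{2}$, $T=b$) that the refined Jucys--Murphy elements commute on the span of type indicators and define an action of $\Lambda$ there --- essentially your first bullet. The simultaneous diagonalisation and the standard-Young-tableau labelling with its recursion are exactly what the paper does \emph{not} prove; \cref{rem-eigenvec,rem-spec} only identify the expected eigenvectors as rescaled Jack functions and defer the spectral analysis to a separate publication. So your overall assessment of where the difficulty lies is accurate, but your proposal should not be read as recovering more than the paper's partial result.

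For the part that is actually proved, your sketch is directionally aligned with the paper but misses the mechanism that makes it work, and the heuristic you offer is not quite right. The deformation is not a ``lower-order correction'' to the commuting odd Jucys--Murphy elements: every transposition in $\mathcal X_k$ is reweighted by $C$, $J$ or $T$ according to whether it performs a cut, a join or a twist, and the resulting operators $\mathbf X_k$ genuinely fail to commute on $M_n$. The paper's argument splits $\mathbf X_k=\mathbf T_k+\mathbf Q_k$ into twist and cut--join parts and proves the exact cancellation $[\mathbf T_k,\mathbf Q_l]=0$ and $CJ[\mathbf T_k,\mathbf T_l]+T^2[\mathbf Q_k,\mathbf Q_l]=0$ (\cref{lm:coef}), so that $[\mathbf X_k,\mathbf X_l]=(1-\tfrac{CJ}{T^2})[\mathbf T_k,\mathbf T_l]$; the vanishing on type indicators then comes from an explicit involutive bijection on pairs of transpositions built out of \cref{lm:cut-twist} (the observation that replacing $i$ by $\bar i$ exchanges cuts with twists). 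Nothing in your plan identifies this cancellation, and without it there is no reason for the correction terms to ``land back inside $\Xi_n$.'' Moreover, commutativity alone does not give an action of $\Lambda$ on the Fock space: one still has to show that symmetric functions in the $\mathbf X_k$ preserve the $H_n$-invariant subspace, which in \cref{th:action} requires a separate equivariance analysis with respect to the generators $(1\,\bar 1)$ and $(k~k{+}1)(\bar k~\overline{k{+}1})$ of $H_n$ and a lengthy case-by-case matching of weighted factorisations. Finally, for the second and third bullets your Okounkov--Vershik induction is the natural strategy, but you give no concrete handle on the branching step or on separating tableaux by $b$-contents uniformly in $n$; since this is precisely the open part of the conjecture, the proposal as written does not close the gap.
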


For a precise statement of the Coulter-Do conjecture, we refer to Conjecture 5.4 of their paper.

 Implicitly the Coulter-Do conjecture claims that there is an action of the ring of symmetric functions $\Lambda$ on a Fock space $\mathcal F$ with $\mathcal F_n \le \Xi_n.$, that refines both the Schur and the zonal actions, and allows to compute the weighted $b-$Hurwitz numbers of~\cite{Chapuy-Dolega} as outputs of cut-and-join recursions, arising from the combinatorics of the symmetric group.

\subsection{Centrality in Hurwitz theory}
We wish to highlight a key property for the use of Jucys-Murphy formalism for Hurwitz numbers enumeration. For this, we revisit \cref{equ:weighthurw}. Recall that $Z\mathbb{C}[S_n]$ has a vector space basis given by the conjugacy classes $C_\mu$ of $S_n$. Since by construction $\mathcal{G}_k$ is a symmetric function and thus by the Jucys--Murphy theorem $\mathcal{G}_k(X_1,\dots,X_n)$ lives in the center, there is an expression of $\mathcal{G}_k(X_1,\dots,X_n)$ as a linear combination of conjugacy classes. In particular, it follows that the coefficient of $\sigma$ in $\mathcal{G}_k(X_1,\dots,X_n)\mathcal{C}_\nu$ in \cref{equ:weighthurw} only depends on the conjugacy class of $\sigma$ and not on the permutation itself. We call this property \textit{centrality}. We note that the centrality property essentially follows from the fact that the Schur action is indeed an action of $\Lambda$ on $\mathcal Z$. Centrality gives better control on the cut-and-join analysis of the corresponding Hurwitz problem. For example, it was critically used in~\cite{Karev-Do} to produce the cut-and-join recursion for the complex orbifold monotone Hurwitz numbers.

It is natural to ask whether centrality extends to the purely real or $b$--deformed case. The zonal action naturally provides the centrality result for the former. 
We explain the full picture in \cref{Sc:Centrality}.
For $b$--monotone Hurwitz numbers the issue of the missing centrality was already highlighted in \cite{Bonzom-Chapuy-Dolega} in the cut--and--join analysis of these invariants. In some sense, \cref{conj-coulterdo} proposes to extend the centrality for zonal action to weighted $b$--Hurwitz numbers by producing a refinement. We will give a more detailed account of this problem in \cref{sec:b-hurwitz}.

\subsection{Tropical Hurwitz theory}
An exciting development in the past two decades was the introduction of tropical Hurwitz theory. Tropical geometry is a relatively new field of mathematics that at its core employs a process called \textit{tropicalization}. Tropicalization can be viewed as a degeneration process turning algebraic varieties into certain polyhedral complexes, which allows an exchange of methods between algebraic geometry and polyhedral geometry resp.\ combinatorics. Likewise, covers of algebraic curves (a.k.a. Riemann surfaces) can be degenerated to covers of abstract tropical curves, which are certain metric graphs. The resulting covers of metric graphs are called tropical covers, and their combinatorics reflects many properties of the original covers. In particular, in spite of the degeneration, enough structure is preserved to obtain results for counting problems also on the tropical side.  The basic version of tropical Hurwitz numbers counts certain covers of metric graphs with fixed properties \cite{CJM10,BBM10}.

There are many ways to obtain complex simple Hurwitz numbers as a weighted enumeration of tropical covers. E.g. while \cite{CJM10} employs the interpretation of (complex) double Hurwitz numbers via factorisations, \cite{BBM10} uses topological pairs--of--pants decompositions, in \cite{cavalieri2016tropicalizing}, a moduli theoretic framework is used and \cite{cavalieri2018graphical} is based on a Fock space formalism very much in the same flavour as the discussion above. Quite astoundingly, all these seemingly unrelated approaches produce the same tropical interpretation. The transition to the tropical world has proved particularly fruitful with regards to deriving structural properties of complex Hurwitz numbers with a particular focus on polynomiality results \cite{cjm:wcfdhn} and their relation to mirror symmetry.

Ever since the introduction of tropical Hurwitz theory, these techniques have been expanded to study variants of Hurwitz numbers, see e.g. \cite{Karev-Do,hahn2019monodromy,hahn2022tropical,hahn2020wall,hahn2022triply} a selection of works on the tropicalization of monotone and strictly monotone Hurwitz numbers. The full weighted complex Hurwitz problem will be tropicalised in forthcoming work \cite{HOW}. The applications of tropical techniques, i.e. the derivation of polynomiality results or quasimodularity in the context of mirror symmetry, extend to all weighted complex Hurwitz numbers.

Under close examination, all representation--theoretic approaches to tropicalising Hurwitz numbers "secretly" rely on centrality. Motivated by the above discussion, it is natural to ask for a tropical interpretation in the purely real or $b$--deformed case. Building on \cite{burman2021ribbon}, the second and fourth author derived tropicalizations of the purely real enumeration in \cite{HMtwisted1,HMtwistelliptic} (called twisted Hurwitz numbers in these works) corresponding to $G(z)=e^z$. Maybe unsurprisingly, one obtains tropical covers with involution. Again the tropical interpretation enabled the study of structural behaviour that indeed closely mirror the complex picture. Due to a lack of centrality, the general $b$--deformed case, as well as variants (e.g. $b$--monotone Hurwitz numbers) had not been studied before.

\subsection{Aims and contribution}\label{subsec:aims}
We begin by explaining on our (quite general) point of view on weighted Hurwitz theory.

For us, its ingredients are
\begin{itemize}
    \item an action of the ring of symmetric function $\Lambda$ on the Fock space $\mathcal F$, which is a space equipped with basis elements $v_\lambda$ for $\lambda$ a partition, and an inner product $\left(\cdot,\cdot\right)$, such that all the action operators are self-adjoint and the $v_\lambda$ are pairwise orthogonal;
    \item a weight generating function $G$.
\end{itemize}

We define the associated $G$--weighted Hurwitz number as
\begin{equation}
   H_k^{G,\bullet}\left(\begin{smallmatrix}\nu\\ \mu\end{smallmatrix}\right)=\left( v_\mu, \mathcal{G}_k.v_{\nu}\right),
\end{equation}
where the symmetric function $\mathcal G_k$ is obtained from $G$ by Equation~(\ref{equ:weight}). The corresponding connected Hurwitz numbers is defined via the inclusion-exclusion (see, e.g.~\cite{LandoICM,BDBKS20}, or any other paper that discusses disconnected Hurwitz numbers):
\begin{equation}\label{eq:dis-to-conn}
H^G_k\left(\begin{smallmatrix}\nu\\ \mu\end{smallmatrix}\right) =  \sum_{I = 1}^\infty \sum_{\begin{smallmatrix}
    \nu = \nu_1\cup\ldots\cup\nu_I\\ \mu = \mu_1\cup\ldots\cup\mu_I
\end{smallmatrix}} \frac{(-1)^{I-1}}{I}\sum_{\begin{smallmatrix}k = k_1 + \ldots + k_I\\ k_i \ge 0 \end{smallmatrix}} \prod_{i = 1}^I H^{G,\bullet}_{k_i}\left(\begin{smallmatrix}\nu_i\\ \mu_i\end{smallmatrix}\right),
\end{equation}
where the second sum ranges over the set of all possible representations of $\mu,\nu$ as a union of $I$ ordered subpartitions and it is assumed that for empty $\mu,\nu$ we have $H_k^{G,\bullet}\left(\begin{smallmatrix}\emptyset\\ \emptyset\end{smallmatrix}\right) =0 $ for all $k$. Similarly
\begin{equation}\label{eq:conn-to-dis}
H^{G,\bullet}_k\left(\begin{smallmatrix}\nu\\ \mu\end{smallmatrix}\right) =  \sum_{I = 1}^\infty \sum_{\begin{smallmatrix}
    \nu = \nu_1\cup\ldots\cup\nu_I\\ \mu = \mu_1\cup\ldots\cup\mu_I
\end{smallmatrix}}\frac{1}{|\mathrm{Aut(\nu_1,\ldots,\nu_I)\times \mathrm{Aut}(\mu_1,\ldots,\mu_I)|}} \sum_{\begin{smallmatrix}k = k_1 + \ldots + k_I\\ k_i \ge 0 \end{smallmatrix}} \prod_{i = 1}^I H^{G}_{k_i}\left(\begin{smallmatrix}\nu_i\\ \mu_i\end{smallmatrix}\right),
\end{equation}
where $\mathrm{Aut}(\mu_1,\ldots,\mu_I)$ stands for the automorphism group of collection of partitions.
Self-adjointness of the action operators implies that for all $\mu,\nu$ and for all $k$ we have
\[
H^{G,\bullet}_k\left(\begin{smallmatrix}\nu\\ \mu\end{smallmatrix}\right) = H^{G,\bullet}_k\left(\begin{smallmatrix}\mu\\ \nu\end{smallmatrix}\right),\quad H^{G}_k\left(\begin{smallmatrix}\nu\\ \mu\end{smallmatrix}\right) = H^{G}_k\left(\begin{smallmatrix}\mu\\ \nu\end{smallmatrix}\right).
\]

Classical complex weighted Hurwitz numbers then arise from the Schur action, while purely real weighted Hurwitz numbers correspond to the zonal action.

The main contribution of this work is the introduction of a refined Jucys--Murphy formalism that interpolates between the Schur and zonal actions, as well as conjecturally specialises to the weighted $b$--Hurwitz theory. More precisely, in \cref{sc:cjt-prep} we introduce a novel two parameter deformation of the zonal action  (that we call the CJT--refinement; the name will become clear from the construction) over $\mathbb{C}(C,J,T)$ that for $CJ=\frac{1}{2}$ and $T=0$ recovers the Schur action, while for $CJ=1$ and $T=1$ yields the zonal action (see below). We note that the construction itself appears as a three parameter deformation in $C$, $J$ and $T$, however the resulting Hurwitz numbers only depend on the values of $CJ$ and $T$, thus ultimately producing a two parameter deformation.

Our first main result is that this refined Jucys--Murphy formalism admits a generalisation of the centrality result in \cite{Matsumoto}, that can be restated as a definition of an action of $\Lambda$ on $I$. We prove the following --- for a precise statement, we refer to \cref{th:action}:

\begin{theorem*}
    The CJT--refinement of the zonal action defines an action of $\Lambda$ on $I\otimes\mathbb{C}(C,J,T)$.
\end{theorem*}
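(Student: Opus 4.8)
The plan is to prove that the CJT--refinement defines an action of $\Lambda$ on $I\otimes\mathbb{C}(C,J,T)$ by verifying that the CJT--deformed odd Jucys--Murphy operators $\mathcal{J}_1,\ldots,\mathcal{J}_n$ (acting on the space $M_n$ of fixed point-free involutions, or its relevant refinement) satisfy the two structural properties that characterise such an action: first, that for any symmetric function $\mathrm{F}$, the evaluation $\mathrm{F}(\mathcal{J}_1,\ldots,\mathcal{J}_n)$ preserves the subspace $I_n$ (i.e.\ maps into $I\otimes\mathbb{C}(C,J,T)$) and is independent of $n$ in the appropriate stable sense, and second, that these operators are \emph{centrality-compatible}, meaning the image lies in the correct center-like subspace so that the operators descend to well-defined, pairwise-commuting, self-adjoint operators on $I$. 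Concretely, following the blueprint of \cite{Matsumoto} for the zonal case, I would first establish that the $\mathcal{J}_k$ pairwise commute when restricted to $I_n$ and that symmetric polynomial expressions in them act by conjugation-type operators that are block-diagonal with respect to the decomposition of $M_n$ into the relevant isotypic/orbit components.

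First I would set up the deformation explicitly: write down the CJT--deformed odd Jucys--Murphy elements as operators on $M_n$ over the field $\mathbb{C}(C,J,T)$, reducing to the Matsumoto odd Jucys--Murphy elements at $CJ=1,T=1$ and to (a rescaling of) the ordinary Jucys--Murphy elements at $CJ=\tfrac12,T=0$. The key algebraic input is that the centre of the relevant action is controlled by symmetric functions evaluated on these operators; so the heart of the argument is a \emph{centrality lemma}: for every symmetric function $\mathrm{F}$, the operator $\mathrm{F}(\mathcal{J}_1,\ldots,\mathcal{J}_n)$ commutes with the ambient $S_{2n}$--conjugation action and hence preserves $I_n$. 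I would prove this by checking it on the power-sum generators $\mathrm{p}_1,\mathrm{p}_2,\ldots$ of $\Lambda$ and using that the $\mathcal{J}_k$ are simultaneously diagonalisable with eigenvalues depending only on the combinatorial (tableau) data, not on the specific involution. The self-adjointness claim would follow from showing each $\mathcal{J}_k$ is self-adjoint with respect to the inner product making the $v_\lambda$ orthogonal, which reduces to a symmetry check on the matrix entries of the deformed operators.

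The main obstacle I anticipate is exactly the centrality/well-definedness step in the deformed setting: unlike the undeformed zonal case, where centrality follows cleanly from the Gelfand pair structure $(S_{2n},H_n)$ and Matsumoto's identification of the odd Jucys--Murphy elements as ``half'' of the usual ones, the three-parameter deformation breaks the literal group-algebra structure, so one cannot simply invoke the Jucys--Murphy theorem. Instead I expect one must show \emph{directly} that the deformed operators generate a commutative subalgebra whose action on $M_n\otimes\mathbb{C}(C,J,T)$ respects the filtration/grading, and that the structure constants of the resulting multiplication are polynomial in $C,J,T$ and specialise correctly at the two distinguished points. The fact that the construction is a priori a three-parameter deformation but the Hurwitz numbers depend only on $CJ$ and $T$ (as noted in the excerpt) suggests there is a rescaling symmetry $(C,J)\mapsto(\lambda C,\lambda^{-1}J)$ that one should exploit to reduce the verification; I would use this to normalise one parameter and cut the case analysis.

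Finally, to assemble these into the stated action of $\Lambda$ on $I\otimes\mathbb{C}(C,J,T)$, I would verify that the assignment $\mathrm{F}\mapsto \mathrm{F}(\mathcal{J}_1,\ldots,\mathcal{J}_n)$ is compatible across different $n$ (the stability needed to define a single operator on $I=\bigoplus_n I_n$ rather than on each $I_n$ separately), that it respects the ring structure of $\Lambda$ (multiplicativity, which is automatic once the $\mathcal{J}_k$ commute and preserve $I_n$), and that it is unital. The self-adjointness of all action operators then certifies that the resulting $G$--weighted $b$--Hurwitz numbers defined via the inner product satisfy the symmetry $H^{G,\bullet}_k\bigl(\begin{smallmatrix}\nu\\\mu\end{smallmatrix}\bigr)=H^{G,\bullet}_k\bigl(\begin{smallmatrix}\mu\\\nu\end{smallmatrix}\bigr)$, placing the CJT--refinement squarely within the general weighted Hurwitz framework set up in \cref{subsec:aims}. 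The bulk of the technical work, I expect, lives in the centrality lemma and the explicit eigenvalue computation on the tableau basis; the remaining assembly should be formal.
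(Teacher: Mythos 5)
There is a genuine gap, and it sits exactly where you predicted the difficulty would be. Your plan leans on two ingredients that are not available in the deformed setting. First, you propose to prove centrality by invoking that the deformed operators are ``simultaneously diagonalisable with eigenvalues depending only on the combinatorial (tableau) data''. That diagonalisation with tableau-labelled eigenvectors is essentially the content of the Coulter--Do conjecture that the theorem is meant to partially resolve; the paper explicitly defers the spectrum description to a separate publication. Using it here is circular. Worse, the refined Jucys--Murphy elements $\mathbf X_k$ do \emph{not} commute as operators on $M_n(C,J,T)$: the paper shows $[\mathbf X_k,\mathbf X_l]=(1-\tfrac{CJ}{T^2})[\mathbf T_k,\mathbf T_l]$, which is generically nonzero. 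What is true --- and what has to be proved combinatorially --- is only that products $\mathbf X_{k_p}\cdots\mathbf X_{k_1}(\mathcal D_\lambda)$ applied to \emph{type indicators} are order-independent (\cref{th:comm}). The engine for this is the splitting $\mathbf X_k=\mathbf T_k+\mathbf Q_k$ into twist and cut--join parts together with the relations $[\mathbf T_k,\mathbf Q_l]=0$ and $CJ[\mathbf T_k,\mathbf T_l]+T^2[\mathbf Q_k,\mathbf Q_l]=0$ of \cref{lm:coef}, obtained by tracking the $C,J,T$-degrees of matrix entries against the commutativity of the undeformed odd Jucys--Murphy elements. Nothing in your proposal supplies a substitute for this step, and without it the very expression $\mathrm F(\mathcal J_1,\ldots,\mathcal J_n)$ is not well defined.

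Second, your ``centrality lemma'' asserts that $\mathrm F(\mathcal J_1,\ldots,\mathcal J_n)$ commutes with the ambient $S_{2n}$-conjugation action. In the undeformed zonal case the operators $\mathcal O_{\mathrm F}$ are indeed $S_{2n}$-endomorphisms of $M_n$, but the CJT-weights break this: the paper only establishes (and only needs) the weaker statement that $\mathrm e_l(\mathbf X_2,\ldots,\mathbf X_n)(\mathcal D_\lambda)$ is invariant under the hyperoctahedral group $H_n$, which suffices because the fixed point-free involutions of a given type form a single $H_n$-orbit. Even this weaker invariance is not formal: conjugation by the generators $(k~k{+}1)(\bar k~\overline{k{+}1})$ of $H_n$ does not merely permute the summands of the $\mathbf X_j$; it exchanges terms between $\mathbf X_k$ and $\mathbf X_{k+1}$, and restoring the correct operator order requires the bijection $\mathbf B$ furnished by \cref{th:comm} together with a change of base involution $\rho\mapsto\varrho$, carried out in the lengthy case analysis of \cref{th:action}. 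Your rescaling observation $(C,J)\mapsto(\lambda C,\lambda^{-1}J)$ is correct but does not reduce this case analysis, since the obstruction lives in the interplay between $CJ$ and $T^2$, not in the individual normalisations of $C$ and $J$. Finally, note that self-adjointness is not part of this theorem in the paper: it requires a modified inner product and is proved separately (\cref{th:sa}) via the analysis of refined monotone Hurwitz numbers, not by a symmetry check on matrix entries of the $\mathbf X_k$ with respect to the unrefined pairing.
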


The proof of this result is obtained by an intricate combinatorial analysis of the commutation behaviour of odd Jucys--Murphy elements taking the refinement into account. In particular, we give a partial resolution of the aforementioned Conjecture 5.4 in \cite{Coulter-Do}.

In order to obtain a weighted Hurwitz theory as defined above, we need to show that the resulting operators are self--adjoint, i.e. for $\mathrm F.v_\nu$, $\mathrm F\in \Lambda$, denoting the refined action, we have $\langle v_\mu,\mathrm F.v_\nu\rangle=\langle \mathrm F.v_\mu,v_\nu\rangle$ for a certain inner product. For this, we introduce a refinement of the inner product corresponding to the zonal action and prove the following in \cref{th:sa}.

\begin{theorem*}
    Let $\mathrm F\in\Lambda$ be a symmetric function. Then, the refined action operator
    \begin{equation}
        \label{equ-action}
        v_\lambda\mapsto \mathrm F.v_\lambda
    \end{equation}
    is self--adjoint with respect to the refined inner product.
\end{theorem*}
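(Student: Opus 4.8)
The plan is to reduce the self-adjointness of the full action of $\Lambda$ to the self-adjointness of a generating family of operators, and then to verify the latter by a direct computation using the explicit combinatorial description of the CJT--refinement. Since $\Lambda$ is generated as an algebra by the power sums $\mathrm p_1,\mathrm p_2,\ldots$ (or equivalently by the $\mathcal G_k$ arising from a suitable family of weight generating functions), and since a product of self-adjoint operators that \emph{commute} is again self-adjoint, the previous theorem (\cref{th:action}) is essential here: it guarantees that the refined action is a genuine algebra action, so the action operators for different symmetric functions commute. Thus it suffices to exhibit a generating set of $\Lambda$ whose action operators are individually self-adjoint and pairwise commuting. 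The natural candidates are the operators built from the odd Jucys--Murphy elements $\mathcal J_1,\ldots,\mathcal J_n$ themselves, since every element of the image of $\Lambda$ is a symmetric expression in these.

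First I would fix the refined inner product on $I\otimes\mathbb C(C,J,T)$, making the basis vectors $v_\lambda$ orthogonal with prescribed norms; the weights defining these norms are the $b$--deformed analogues of the zonal (and, at the Schur specialisation, the Schur) normalisation constants. The key structural input is that the refined action, restricted to each graded piece $I_n$, is diagonalised in a basis indexed by standard Young tableaux, with the eigenvalues of $\mathcal J_k$ being explicit contents depending on $C,J,T$. With such a simultaneous eigenbasis in hand, self-adjointness of each $\mathcal J_k$ (and hence of any symmetric function evaluated on them) becomes the statement that distinct joint eigenvectors are orthogonal under the refined inner product and that the eigenvalues are the ones dictated by the content formula. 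So the plan has two parallel tracks: identify the joint eigenbasis and its eigenvalues, and verify that the refined inner product renders this eigenbasis orthogonal.

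The cleanest route avoids computing the eigenbasis explicitly. Instead I would check self-adjointness at the level of matrix entries in the conjugacy-class-type basis $\{v_\lambda\}$: for a single generator it suffices to show $\langle v_\mu,\mathcal J_k.v_\nu\rangle=\langle \mathcal J_k.v_\mu,v_\nu\rangle$, which unwinds to a symmetry statement about the structure constants of the refined action against the refined inner-product weights. Concretely, writing $\mathcal J_k.v_\nu=\sum_\rho c^\rho_{\nu,k}\,v_\rho$, self-adjointness is equivalent to $\|v_\mu\|^2\,c^\mu_{\nu,k}=\|v_\nu\|^2\,c^\nu_{\mu,k}$ for all $\mu,\nu$. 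This is exactly the kind of relation that the combinatorics underlying \cref{th:action} produces: the transition coefficients $c^\rho_{\nu,k}$ are governed by the edge-contraction/splitting moves on fixed-point-free involutions (the cut-and-join data), and the inner-product weights must be chosen precisely so as to balance these moves. I would therefore define the refined norms by a multiplicative formula over the parts of $\lambda$ (with the $T$-dependence encoding the "twist") and then check the balancing identity move by move.

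The main obstacle I expect is pinning down the correct $T$-dependent normalisation in the refined inner product so that the balancing identity holds uniformly in $C$, $J$, and $T$ — and not merely at the two known specialisations $CJ=\tfrac12,T=0$ (Schur) and $CJ=1,T=1$ (zonal). At those two points self-adjointness is classical, so the genuine content is interpolating between them. The delicate step is controlling how the extra $T$-weighted terms in the odd Jucys--Murphy action — the ones that "measure non-orientability" and vanish when $T=0$ — interact with the norms; these terms break the naive symmetry and must be absorbed by a compensating factor in $\|v_\lambda\|^2$. I would handle this by first proving the identity for the lowest-degree generator $\mathcal J_1$ (or $\mathrm p_1$) where the cut-and-join moves are simplest, extracting the forced form of the normalisation, and then propagating to higher $\mathcal J_k$ using the commutation relations from \cref{th:action} together with an induction on $n$. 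Once the single-generator case is established and the action is known to be an algebra action, self-adjointness for arbitrary $\mathrm F\in\Lambda$ follows formally, since the self-adjoint operators $\mathcal J_k$ commute and therefore any polynomial in them is self-adjoint.
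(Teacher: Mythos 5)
Your opening reduction is the right one and coincides with the paper's first step: by \cref{th:action} the refined action is an algebra action, so it suffices to prove self-adjointness for a multiplicative generating family of $\Lambda$, since sums and products of commuting self-adjoint operators are again self-adjoint. The gap is that you never actually cover a full generating family. Working with the individual operators $\mathcal J_k$ is not a well-posed substitute: a single odd Jucys--Murphy element is not a symmetric function of the family and does not preserve the space of type indicators $I$, so ``self-adjointness of $\mathcal J_k$ with respect to the refined inner product'' (which lives on $I$) does not reduce the theorem, and the expansion $\mathcal J_k.v_\nu=\sum_\rho c^\rho_{\nu,k}v_\rho$ you write down does not exist. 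The fallback of verifying the balancing identity for $\mathrm p_1$ only and then ``propagating to higher generators using the commutation relations and an induction on $n$'' is not an argument: $\mathrm p_2,\mathrm p_3,\dots$ (equivalently $\mathrm h_2,\mathrm h_3,\dots$) are not polynomials in $\mathrm p_1$, and commutativity alone transfers nothing from one generator to the next --- this is exactly where the content of the theorem sits. The spectral track fares no better: the paper proves nothing about an orthogonal eigenbasis with explicit content eigenvalues (the spectrum is explicitly deferred to a separate publication in \cref{rem-spec}), and orthogonality of a joint eigenbasis is normally a \emph{consequence} of self-adjointness, so invoking it here is circular.

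For contrast, the paper's proof takes the $\mathrm h_k$ as the generating family, identifies $\langle\mathcal D_\mu,\mathrm h_k(\mathbb X_2,\dots,\mathbb X_n)(\mathcal D_\nu)\rangle$ with a disconnected refined monotone Hurwitz number (\cref{lm:innprod}), and obtains the required $\mu\leftrightarrow\nu$ symmetry from the reversal involution on monotone twisted factorisations together with the count of cuts versus joins (\cref{prop:C<->J}, \cref{prop:symmetry}). Your move-by-move balancing idea does contain the germ of a correct, and arguably cleaner, alternative: for the inner product on $M_n$ that weights a fixed point-free involution by $(CJ)^{-\ell}$, with $\ell$ the number of parts of its type, each single weighted transposition operator $\hat\sigma$ is self-adjoint --- the asymmetry between its cut and join matrix entries is exactly cancelled by the factor $(CJ)^{\pm1}$ coming from the change in $\ell$, and twist entries are symmetric outright, so contrary to your expectation no extra $T$-dependent normalisation is needed. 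Hence every $\mathbb X_k$ is self-adjoint on $M_n$, the adjoint of a monomial $\mathbb X_{k_1}\cdots\mathbb X_{k_l}$ is the reversed monomial, and \cref{th:comm} identifies the reversed monomial with the original when applied to $\mathcal D_\lambda$, giving self-adjointness of $\mathrm F(\mathbb X_2,\dots,\mathbb X_n)$ on $I$ for all symmetric $\mathrm F$ simultaneously. But that argument hinges precisely on the two points your write-up omits (self-adjointness of each elementary move on all of $M_n$, and the reordering statement of \cref{th:comm} to flip the monomials), so as it stands the proposal has a genuine hole.
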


The proof itself relies on a close analysis of the case $\mathrm F=\mathrm h_k$, where $\mathrm h_k$ is the complete homogeneous symmetric polynomial. This results in an analysis of refined monotone Hurwitz numbers. The monotone case is of particular importance since the $\mathrm h_k$ provide a multiplicative generating set of $\Lambda$ (for example, $\mathrm p_1^k$ that are responsible for the simple Hurwitz numbers case, do not form a multiplicative generating family).

Focusing on the monotone case for the moment, we note that we also conduct a cut--and--join analysis of refined monotone Hurwitz numbers, i.e. the refined weighted Hurwitz numbers corresponding to $G=\frac{1}{1-z}$. This analysis allows us to derive the following result in \cref{thm:structcoeff}.

\begin{theorem*}
    For $CJ=T=1$ refined weighted Hurwitz numbers specialise to purely real weighted Hurwitz numbers, while for $CJ=\frac{1}{2}$, $T=0$, we obtain complex weighted Hurwitz numbers.
\end{theorem*}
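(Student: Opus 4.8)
The strategy is to prove the specialization at the level of the cut-and-join structure coefficients of the refined action, and then propagate it to all refined weighted Hurwitz numbers. Since the complete homogeneous symmetric functions $\mathrm h_k$ generate $\Lambda$ multiplicatively and, by \cref{th:action}, the CJT--refinement is a genuine action of $\Lambda$ on $I\otimes\mathbb C(C,J,T)$, every refined action operator is a polynomial in the operators realizing multiplication by the $\mathrm h_k$. Hence it suffices to check that the operator governing refined monotone Hurwitz numbers (the refined cut-and-join operator) specializes to the complex, respectively purely real, cut-and-join operator at the two prescribed loci. Because the refined Hurwitz numbers are the pairings $H_k^{G,\bullet}\left(\begin{smallmatrix}\nu\\ \mu\end{smallmatrix}\right)=\left( v_\mu, \mathcal G_k.v_\nu\right)$, specialization of these operators together with specialization of the refined inner product (the latter a direct check from its definition, collapsing to the standard inner product on $\mathcal Z$ at $CJ=\tfrac12,\ T=0$ and to the zonal inner product at $CJ=1,\ T=1$) yields specialization of all refined weighted Hurwitz numbers.

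Next I would write out the structure coefficients explicitly. From the cut-and-join analysis of refined monotone Hurwitz numbers, each matrix entry of the refined $\mathrm h_k$--operator in the basis $\{v_\lambda\}$ is a sum over sequences of deformed odd Jucys--Murphy moves, each move contributing a factor recording whether it performs a cut, a join, or a twist, weighted by $C$, $J$, and $T$ respectively. Building on the already observed fact that the refined numbers depend only on $CJ$ and $T$, one verifies that in any admissible sequence contributing to a fixed pair $(\nu,\mu)$ the cut- and join-type contributions are balanced, so that $C$ and $J$ enter the structure coefficients solely through the product $CJ$; this legitimizes evaluating at the loci $CJ=\tfrac12$ and $CJ=1$.

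I would then perform the two specializations. Setting $T=0$ annihilates every sequence that uses at least one twist, leaving exactly the orientable contributions; with $CJ=\tfrac12$ supplying the expected normalization, these reproduce the structure coefficients of the complex monotone operator arising from the Schur action, hence complex weighted Hurwitz numbers. Setting $CJ=1$ and $T=1$ removes the deformation entirely, so the deformed odd Jucys--Murphy elements collapse to the odd Jucys--Murphy elements of~\cite{Matsumoto} and the structure coefficients coincide with those of the zonal action, giving purely real weighted Hurwitz numbers. The initial data at $k=0$ match immediately from the same specialization of the inner product, and the reduction of the first paragraph then upgrades the agreement from $\mathrm h_k$ to every $\mathcal G_k$.

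The main obstacle is the explicit combinatorial identification in the third step, and in particular the $T=0$ case: one must show that the terms carrying positive powers of $T$ are in bijection with genuinely twisted (non-orientable) configurations, so that they vanish identically at $T=0$, while the surviving terms---after absorbing $CJ=\tfrac12$---match the complex count term by term rather than merely after summation. Controlling this requires tracking the joint cut/join/twist statistics through the recursion and confirming that no cancellation among nonvanishing terms is needed, which is precisely where the combinatorial analysis underlying \cref{th:action,th:sa} is brought to bear.
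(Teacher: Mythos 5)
Your proposal is correct and follows essentially the same route as the paper: reduce to the complete homogeneous functions $\mathrm h_k$ via multiplicative generation of $\Lambda$, check that the refined inner product specializes to $\langle\cdot,\cdot\rangle_0$ and $\langle\cdot,\cdot\rangle_1$, and observe that the cut--and--join recursion of \cref{th:cut-and-join-monotone} (with its initial condition) specializes to the Schur and zonal recursions at $2CJ=1,\,T=0$ and $CJ=T=1$ respectively. The "main obstacle" you flag in the last paragraph is already discharged by \cref{th:cut-and-join-monotone}: the twist term carries an explicit factor of $T$ and the join terms an explicit factor of $2CJ$, so the comparison happens at the level of the recursions and no further term-by-term bijection or cancellation analysis is required.
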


Based on what we discussed earlier about $b$--Hurwitz numbers, it is natural to ask whether for specific choices of $CJ$ and $T$, we recover a Jack action. Indeed, as we discuss in \cref{rem-eigenvec,rem-spec}, for $CJ=\frac{1+b}{2}$ and $T=b$ the eigenvectors of the action in \cref{equ-action} are (rescaled) Jack functions. However, in order to prove that our Hurwitz theory provides exactly the weighted $b$--Hurwitz numbers of \cite{Chapuy-Dolega}, we also need to describe the spectrum of these operators. This will be the topic of a separate publication.

We provide explicit evidence that our Hurwitz theory does indeed specialise to weighted $b$--Hurwitz numbers by working out the cases of double $b$--Hurwitz numbers ($G(z)=e^z$, i.e. $\mathrm F_k=\mathrm p_1^k/k!$) and single monotone $b$--Hurwitz numbers ($G(z)=\frac{1}{1-z}$, i.e. $\mathrm F_k=\mathrm h_k$) "by hand" in \cref{sec:simple}. This is achieved by explicitly deriving the cut--and--join recursion and comparing to the cut--and--join recursions derived (using different methods) in \cite{Chapuy-Dolega} for double $b$--Hurwitz numbers and single monotone $b$--Hurwitz numbers in \cite{Bonzom-Chapuy-Dolega}.

Based on this cut--and--join analysis, we derive tropical interpretations of double $b$--Hurwitz numbers and single monotone $b$--Hurwitz numbers in \cref{thm-corresdoubleHNb} and \cref{thm-tropmon} respectively. This generalises previous work of the second and fourth author in the case of $b=1$ in \cite{HMtwisted1}. The tropical covers involved are the same as those in the purely real case. This is not terribly surprising since the the deformed Hurwitz numbers are obtained as deformations of the $b=1$ case. We refer for details to \cref{sec:simple} but wish to highlight one application.

A famous result by Goulden, Jackson and Vakil in \cite{goulden2005towards} states for \emph{connected} complex double Hurwitz numbers $H^{e^z}_k\left(\begin{smallmatrix}\nu\\ \mu\end{smallmatrix}\right)$ (i.e. for the coefficients of the expancion of the logarithm of the $\tau$-function for $G(z)=e^z$, that in turn are responsible for the enumeration of \emph{connected} coverings) the following:

For $m,n$ positive integers, consider the parameter space $\mathcal{P}_{m,n}=\{(\mu,\nu)\in\mathbb{N}^m\times\mathbb{N}^n\mid \sum\mu_i=\nu_j\}$. Moreover, we define a hyperplane arrangement $\mathcal{R}_{m,n}$ in $\mathcal{P}_{m,n}$, the so-called \textbf{resonance arrangement}. For each $I\subset[m],J\subset$, we define a hyperplane via
\begin{equation}
    \{\textbf{x}\in\mathbb{Z}^n\mid \sum_{i\in I}\mu_i=\sum_{j\in J}\nu_j\}
\end{equation}
and define $\mathcal{R}_{m,n}$ as consisting of all these hyperplanes. We call each connected component of $\mathcal{P}_n\backslash\mathcal{R}_n$ a \textbf{chamber}. Notice, that the natural points of the resonance arrangement precisely correspond to pairs of partitions $\mu,\nu$ on which connected and not necessarily connected Hurwitz numbers can differ.

In \cite{goulden2005towards}, the following theorem was proved. (By an abuse of notation, an integer vector $\mu = (\mu_1,\ldots,\mu_m)$ is identified with the corresponding partition.)

\begin{theorem}
\label{thm-GJV}
    Let $n$ be a positive integer and $g$ a non-negative integer. Then, the map
\[
        h_g^0\colon\left( \begin{matrix} \mathcal{P}_n\to\mathbb{Q} \\ 
        (\mu,\nu)\mapsto H^{e^z}_{k(g,\mu,\nu)}\left(\begin{smallmatrix}
            \nu\\ \mu
        \end{smallmatrix}\right)\end{matrix}\right)
\]
    with $k(g,\mu,\nu) = 2g - 2 + m+ n$ is piecewise polynomial in the entries of $\mu$ and $\nu$. More precisely, the map $h_g^0$ restricted to each chamber is polynomial in the entries of $\mu$ and $\nu$.
\end{theorem}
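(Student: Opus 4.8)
The plan is to deduce \cref{thm-GJV} from the tropical interpretation of double Hurwitz numbers, namely the $b=0$ specialisation of the correspondence in \cref{thm-corresdoubleHNb}. That correspondence expresses the connected double Hurwitz number $H^{e^z}_{k}\left(\begin{smallmatrix}\nu\\\mu\end{smallmatrix}\right)$ with $k = 2g-2+m+n$ as a finite weighted sum over connected tropical covers $\pi\colon\Gamma\to\mathbb{R}$ of genus $g$, where $\Gamma$ has $m$ unbounded left ends of weights $\mu_1,\dots,\mu_m$, $n$ unbounded right ends of weights $\nu_1,\dots,\nu_n$, and trivalent inner vertices, each cover counted with multiplicity $\frac{1}{|\Aut(\pi)|}\prod_{e}\omega(e)$, the product ranging over bounded edges $e$ with expansion factors $\omega(e)$. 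The decisive structural feature is that, by the balancing (flow-conservation) condition, each expansion factor equals a signed partial sum $\sum_{i\in I}\mu_i-\sum_{j\in J}\nu_j$ for suitable subsets $I\subseteq[m]$, $J\subseteq[n]$.

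First I would fix the combinatorial type of a contributing cover --- the graph $\Gamma$ together with the linear order in which its vertices project onto $\mathbb{R}$. In genus $0$ this already determines every expansion factor as one of the linear forms above, so the contribution of the type is a product of linear forms, hence a polynomial in the entries of $\mu$ and $\nu$. For positive genus the expansion factors are not fully pinned down: the flow may circulate around the $g$ independent cycles of $\Gamma$, so the admissible expansion factors are the lattice points of a polytope $P_{\mu,\nu}\subseteq\mathbb{R}^{g}$ whose facet inequalities express positivity of each bounded edge weight. The contribution of the type is then the sum of the polynomial $\prod_e\omega(e)$ over the lattice points of $P_{\mu,\nu}$.

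The heart of the argument is to show this contribution is polynomial on each chamber. An expansion factor vanishes exactly along a hyperplane $\sum_{i\in I}\mu_i=\sum_{j\in J}\nu_j$, i.e. along a wall of the resonance arrangement $\mathcal{R}_{m,n}$; consequently, as $(\mu,\nu)$ varies within a single chamber of $\mathcal{P}_{m,n}\setminus\mathcal{R}_{m,n}$, no edge weight changes sign, the combinatorial type of $P_{\mu,\nu}$ stays constant, and its vertices move as fixed linear functions of $(\mu,\nu)$. Summing the polynomial $\prod_e\omega(e)$ over the lattice points of such a linearly-varying family of lattice polytopes yields, by a discrete-integration (Ehrhart-type) argument, a polynomial in $(\mu,\nu)$ on the chamber. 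Since for fixed $g,m,n$ there are only finitely many combinatorial types and each contributes such a polynomial, their finite sum $h_g^0$ is polynomial on each chamber, and globally piecewise polynomial. Connectedness of $\Gamma$ matches the connected Hurwitz number produced by the inclusion--exclusion of \cref{eq:dis-to-conn}, so no separate bookkeeping between the two sides is needed.

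The step I expect to be the main obstacle is precisely the positive-genus flow summation: one must verify that summing $\prod_e\omega(e)$ over $P_{\mu,\nu}\cap\Integer^{g}$ produces a genuine polynomial, rather than merely a quasi-polynomial, that is constant on a chamber, and that the chamber walls coincide with (are contained in) $\mathcal{R}_{m,n}$. This reduces to an integrality (unimodularity) property of the underlying flow polytope --- that the vertices of $P_{\mu,\nu}$ remain lattice points as $(\mu,\nu)$ moves within a chamber --- after which the Ehrhart-type polynomiality and chamber-constancy follow formally. In genus $0$ this obstacle disappears entirely and \cref{thm-GJV} is immediate from the product-of-linear-forms description.
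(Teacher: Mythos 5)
Your argument is correct, but note that the paper does not actually prove \cref{thm-GJV} — it quotes it from \cite{goulden2005towards} — while the tropical strategy you describe is precisely the one the paper deploys for the $b$-deformed generalisation in the proof of \cref{thm-piecepoly} (and, for $b=0$, the one in \cite{cjm:wcfdhn}): fix a combinatorial type of (quotient) monodromy graph, parametrise the bounded edge weights by $g$ cycle variables subject to non-negativity, and sum $\prod_e\omega(e)$ over the lattice points of the resulting polytope, whose walls lie in the resonance arrangement. The one technical point you rightly isolate — that the vertices of this flow polytope stay integral on a chamber, so the iterated summation yields a genuine polynomial rather than a quasi-polynomial — follows from total unimodularity of the incidence matrix of the graph, so your proposal is complete in outline and matches the paper's method for \cref{thm-piecepoly} at $b=0$.
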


In \cite{Chapuy-Dolega}, the piecewise polynomiality of double $b$--Hurwitz numbers was studied. In particular, for the analogous function
\[
    h_g^{(b)}\colon \left( \begin{matrix}
    \mathcal{P}_n\to\mathbb{Q}(b)\\
    (\mu,\nu) \mapsto H^{e^z}_{k(g,\mu,\nu)}\left(\begin{smallmatrix}
            \nu\\ \mu
        \end{smallmatrix};b\right)
        \end{matrix}\right),
\]
where $H^{e^z}_{k(g,\mu,\nu)}\left(\begin{smallmatrix}
            \nu\\ \mu
        \end{smallmatrix};b\right)$ stands for the corresponding simple $b$-Hurwitz number, 
it was proved that $(1+b)dh_g^{(b)}$ is a polynomial in $b$ whose coefficients are piecewise polynomial in the entries of $\mu$ and $\nu$ with respect to the resonance arrangement, where $d=|\mu|=|\nu|$. For $b=0$, this result is slightly weaker than the one obtained in \cref{thm-GJV}, since there is an extra factor of $d$. It was asked in \cite{Chapuy-Dolega}, whether the stronger result without this factor holds for arbitrary $b$. Using the tropical interpretation, this was settled affirmatively in \cite{HMtwisted1} for $b=1$. In \cref{thm-piecepoly}, we give the full answer for arbitrary $b$.

\begin{theorem*}
    The map $(1+b)h_g^{(b)}$ is a polynomial in $b$ whose coefficients are piecewise polynomials in the entries of $\mu$ and $\nu$.
\end{theorem*}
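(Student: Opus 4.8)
The plan is to derive the statement directly from the tropical correspondence for connected double $b$--Hurwitz numbers established in \cref{thm-corresdoubleHNb}. That correspondence expresses
\[
h_g^{(b)}(\mu,\nu) \;=\; H^{e^z}_{k(g,\mu,\nu)}\left(\begin{smallmatrix}\nu\\\mu\end{smallmatrix};b\right)
\]
as a weighted enumeration of connected tropical covers (metric graphs carrying the relevant involution) of a fixed one--dimensional target, where each cover $\pi$ contributes a multiplicity $\mathrm{mult}_b(\pi)$ that is assembled from local vertex data and is a rational function of $b$. The argument then follows the blueprint of \cref{thm-GJV} and of the $b=1$ case in \cite{HMtwisted1}, but now tracks the dependence on $b$ throughout.

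First I would fix a chamber $\mathfrak c$ of the resonance arrangement $\mathcal{R}_{m,n}$ and show that the set of \emph{combinatorial types} of contributing covers is constant on $\mathfrak c$. This is the mechanism behind piecewise polynomiality: a wall of $\mathcal{R}_{m,n}$ is exactly a locus where some partial sum $\sum_{i\in I}\mu_i-\sum_{j\in J}\nu_j$ vanishes, which is precisely where an edge length of some cover degenerates or a new type becomes admissible. On $\mathfrak c$ the (finite) collection of admissible graphs is therefore fixed, and for each of them the edge lengths are pinned down by the balancing and degree conditions as affine--linear functions of the entries of $\mu$ and $\nu$. Consequently $\mathrm{mult}_b(\pi)$, being a product of vertex contributions that depend polynomially on the adjacent edge weights, is a polynomial in the entries of $\mu,\nu$ on $\mathfrak c$ with coefficients in $\mathbb{C}(b)$; summing the finitely many admissible types shows that $h_g^{(b)}$ is polynomial in $\mu,\nu$ on $\mathfrak c$ with coefficients that are rational in $b$.

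It remains to control these coefficients as functions of $b$, and this is the step I expect to be the main obstacle, since it is exactly the point where the spurious factor $d$ of \cite{Chapuy-Dolega} must be removed. Two things must be checked: that the degree in $b$ stays bounded (so that we genuinely obtain a polynomial and not a power series in $b$), and that the only pole is a simple one at $b=-1$. The first follows from the fixed genus $g$, which bounds the number of ``non--orientable'' contributions a single cover can carry and hence the $b$--degree of each $\mathrm{mult}_b(\pi)$. For the second I would return to the explicit cut--and--join recursion and the structure coefficients of the CJT--refined action worked out in \cref{sec:simple} and \cref{thm:structcoeff}, in order to show that each connected cover contributes a multiplicity of the shape $\tfrac{1}{1+b}\,P_\pi(b)$ with $P_\pi\in\mathbb{C}[b]$. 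The key point is that, because the tropical correspondence produces \emph{connected} covers one at a time, the normalisation responsible for the pole at $b=-1$ enters once per cover rather than in proportion to the degree $d$, in contrast to the analytic manipulation of the $\tau$--function logarithm used in \cite{Chapuy-Dolega}.

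Granting this, each $\mathrm{mult}_b(\pi)$ has at most a simple pole at $b=-1$, and since a finite sum of rational functions each with at most a simple pole again has at most a simple pole, the per--chamber expression for $(1+b)h_g^{(b)}$ is polynomial in $b$ with coefficients polynomial in the entries of $\mu,\nu$. As the chambers of $\mathcal{R}_{m,n}$ exhaust $\mathcal{P}_{m,n}$, this yields the claimed piecewise polynomiality of the coefficients and completes the proof. The technical heart of the whole argument is thus the local analysis of the refined vertex multiplicities bounding the order of the pole at $b=-1$ by one, which generalises the $b=1$ computation of \cite{HMtwisted1}.
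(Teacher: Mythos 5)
Your overall strategy --- reduce to the tropical correspondence of \cref{thm-corresdoubleHNb} and then analyse the multiplicities chamber by chamber --- is the same as the paper's, but there is a genuine gap in the step where you claim per-chamber polynomiality. You assert that, for a fixed combinatorial type, ``the edge lengths are pinned down by the balancing and degree conditions as affine-linear functions of the entries of $\mu$ and $\nu$,'' and then sum over the finitely many types. This is only true when the quotient graph is a tree. For a quotient graph of genus $g'>0$ the balancing conditions leave $g'$ free integer parameters $i_1,\dots,i_{g'}$ (one per independent cycle); a single combinatorial type therefore contributes not one cover but a whole family, indexed by the lattice points of a polytope cut out by the non-negativity of the edge weights. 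The piecewise polynomiality --- and the degree bound $2g-1+\ell(\mu)+\ell(\nu)$ --- comes precisely from summing the (polynomial) multiplicity over these lattice points and invoking an Ehrhart/Faulhaber-type argument, with the walls of the resonance arrangement arising as the loci where facets of this polytope collide. This is how the paper argues (following \cite[Theorem 27]{HMtwisted1}), and without this summation your argument does not produce a polynomial at all: the number of contributing covers of a fixed type itself grows with $\mu$ and $\nu$.

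By contrast, the part you single out as ``the main obstacle'' --- controlling the pole at $b=-1$ --- requires no new analysis once the correspondence theorem is in place: \cref{def:twisttrophn} and \cref{lem-quotient} already give the multiplicity of each connected cover as $\frac{2}{b+1}\cdot\frac{1}{|\Aut(\pi)|}\prod_j m_j\prod_e\omega(e)$, equivalently $(1+b)^{g'-1}$ times a product of factors of the form $\frac{b}{2}(\omega_V-1)$ and edge weights for the quotient cover. Each of these is manifestly $\frac{1}{1+b}$ times a polynomial in $b$ of bounded degree, so multiplying by $(1+b)$ clears the only possible pole. You correctly identify that connectedness enters once per cover rather than in proportion to $d$, which is indeed why the extra factor of $d$ from \cite{Chapuy-Dolega} disappears, but this is already encoded in the multiplicity formula you are citing rather than something that needs to be extracted from the cut-and-join recursion or the structure coefficients of \cref{thm:structcoeff}.
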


Continuing in \cref{sc:trop-mono} we study refined monotone Hurwitz numbers for arbitrary $C,J,T$ and derive a tropicalization in \cref{thm-refinedtropcorr}. We derive a piecewise polynomiality result for these numbers in \cref{prop-refinpoly} based on techniques previously employed in \cite{Karev-Do,hahn2019monodromy}. This result is weaker than the one we obtain for $b$--Hurwitz numbers, as we currently need a refinement of the resonance arrangement. Even in the classical setting, the analogous "naive" tropicalization of monotone Hurwitz numbers failed to detect the coarsest piecewise polynomial structure. Nonetheless, we view our result as evidence that refined monotone Hurwitz numbers exhibit the same underlying polynomiality behaviour as their classical counterparts.

Some computational data on the refined action may be found in \cref{sec:pdata}.

Finally, in \cref{sc:simple-mon} we explore the \emph{topological recursion} for single refined monotone Hurwitz numbers. The case of weighted $b$--Hurwitz numbers was previously shown to be solved by \textit{refined topological recursion} (for rational $G$--weight) in \cite{KO22,Ch-Dolega-Osuga2}. We show in the instance of $G(z)=\frac{1}{1-z}$ that this formalism generalises to the CJT--refinement in \cref{th:toprec}.

\subsection{Acknowledgements}
We thank S. Galkin, P. Nikitin, Ye. Makedonskyi, A. Liashyk, I. Sechin, E. Smirnov, M. Do\l{\k e}ga for many discussions and valuable comments. H.M. acknowledges support by the Deutsche Forschungsgemeinschaft (DFG, German Research Foundation), Project-ID 286237555, TRR 195.  

\section{Hurwitz numbers, representation theory, and tropical theory}\label{sec:repthr}
In this preliminary section, we introduce the basic notions surrounding Hurwitz numbers. We begin with the definition of single and double Hurwitz numbers in \cref{sc:sing-doub}. We continue by developing the representation theory of complex and purely real Hurwitz numbers in \cref{sec:symm,sec:schur,Sc:Centrality} and  by discussing the weighted $b$--Hurwitz numbers formalism in \cref{sec:b-hurwitz}. 
In Section \ref{sec:tropprelim} we introduce basic notions of tropical Hurwitz theory.

\subsection{Hurwitz numbers}\label{sc:sing-doub}
In this subsection, we introduce the classical notion of single and double complex simple Hurwitz numbers. 

\begin{definition}\label{def-Hurwitzcover}
Let $g$ be a non-negative integer, $d$ a positive integer and $\mu$ a partition of $d$. Moreover, we fix $k$ pairwise distinct points $p_1,\dots,p_k\in \mathbb C$ for $k=2g-2+\ell(\mu)+\ell(\nu)$. We call a holomorphic map $f\colon S\to\mathbb{P}^1$ a Hurwitz cover of type $(g,\mu,\nu)$ if
\begin{itemize}
	\item $S$ is a connected Riemann surface of genus $g$,
	\item $f$ is of degree $d$,
	\item the ramification profile of $0$ is $\mu$, of $\infty$ is $\nu$,
	\item the ramification profile of $p_i$ is $(2,1,\dots,1)$.
\end{itemize}

Further, we define an isomorphism of two Hurwitz covers $f_1\colon S_1\to\mathbb{P}^1,f_2\colon S_2\to\mathbb{P}^1$ of type $(g,\mu,\nu)$ to be an isomorphism $g\colon S_1\to S_2$, such that $f_1=f_2\circ g$.

We define \textit{double simple Hurwitz numbers} as
\begin{equation}
H_g(\begin{smallmatrix}\nu\\ \mu \end{smallmatrix};0)=\sum_{[f]}\frac{1}{|\mathrm{Aut}(f)|},
\end{equation}
where the sum runs over all isomorphism classes $[f]$ of Hurwitz covers of type $(g,\mu,\nu)$. Finally, we define \textit{single simple Hurwitz numbers} as
\begin{equation}
	H_g(\mu;0)\coloneqq H_g(\begin{smallmatrix}(1^d)\\ \mu \end{smallmatrix};0).
\end{equation}
\end{definition}

As already observed in Hurwitz' original work, these invariants are closely related to the representation theory of the symmetric group. In fact, Hurwitz numbers can be computed by the character theory of $S_d$.

The following theorem is essentially due to Hurwitz \cite{hurwitz1891riemann}. For this, we denote by $\mathcal{C}(\sigma)$ the \emph{cycle type} of a permutation $\sigma$, i.e. the partition corresponding to its conjugacy class.

\begin{theorem}
Let $g$ be a non-negative integer, $d$ a positive integer, $\mu,\nu$ partitions of $d$ and $k=2g-2+\ell(\mu)+\ell(\nu)$. We have
\begin{equation}
H_g(\begin{smallmatrix}\nu\\ \mu \end{smallmatrix};0)=\frac{1}{d!}\left\lbrace\begin{smallmatrix}(\sigma,\eta_1,\dots,\eta_k)\in S_d^{r+1}\mid \mathcal{C}(\sigma_1)=\nu,\mathcal{C}(\eta_i)=(2,1,\dots,1),\mathcal{C}(\eta_k\cdots\eta_1\sigma_1)=\mu \\ \mbox{the subgroup } \left< \sigma,\eta_1,\ldots,\eta_k\right> \mbox{ is transitive}\end{smallmatrix}\right\rbrace.
\end{equation}
\end{theorem}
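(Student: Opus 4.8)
The plan is to recognise this as the classical monodromy correspondence, organised so that the weighting by $1/|\mathrm{Aut}(f)|$ falls out of a groupoid count. First I would pass from the geometric to the topological picture. Given a Hurwitz cover $f\colon S\to\mathbb{P}^1$ of type $(g,\mu,\nu)$, set $B=\{0,\infty,p_1,\dots,p_k\}$; then restricting $f$ to the preimage of $\mathbb{P}^1\setminus B$ yields an honest unramified covering of degree $d$. Conversely, the Riemann Existence Theorem guarantees that every finite topological covering of $\mathbb{P}^1\setminus B$ extends uniquely to a branched holomorphic cover of a compact Riemann surface, and that isomorphisms match on the two sides. Hence enumerating Hurwitz covers up to isomorphism is the same as enumerating connected degree-$d$ coverings of $\mathbb{P}^1\setminus B$ with the prescribed local behaviour at each puncture.

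Next I would translate coverings into permutation tuples. Fixing a basepoint $*\in\mathbb{P}^1\setminus B$, the group $\pi_1(\mathbb{P}^1\setminus B,*)$ is generated by loops $\gamma_0,\gamma_\infty,\gamma_1,\dots,\gamma_k$ around the punctures, subject to the single relation that their ordered product is trivial. A degree-$d$ covering together with a labelling of the fibre over $*$ is exactly a homomorphism $\pi_1(\mathbb{P}^1\setminus B,*)\to S_d$; writing $\sigma$ for the image of the loop around $\infty$ and $\eta_1,\dots,\eta_k$ for those around $p_1,\dots,p_k$, the relation forces the monodromy around $0$ to be represented by $\eta_k\cdots\eta_1\sigma$. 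The covering is connected precisely when $\langle\sigma,\eta_1,\dots,\eta_k\rangle$ acts transitively, and the local model $z\mapsto z^e$ near a ramification point of order $e$ shows that each such loop is sent to a permutation whose cycle type is the ramification profile over the corresponding point, giving $\mathcal C(\sigma)=\nu$, $\mathcal C(\eta_i)=(2,1^{d-2})$, and $\mathcal C(\eta_k\cdots\eta_1\sigma)=\mu$. A short Riemann--Hurwitz computation, $2-2g=2d-(d-\ell(\mu))-(d-\ell(\nu))-k$, confirms that these cycle-type constraints are consistent with genus $g$, i.e.\ with $k=2g-2+\ell(\mu)+\ell(\nu)$.

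Finally I would account for the weighting. Forgetting the labelling of the fibre is the action of $S_d$ on the set of admissible transitive tuples by simultaneous conjugation: its orbits are exactly the isomorphism classes of connected covers, and the stabiliser of a tuple is the centraliser in $S_d$ of the monodromy subgroup, which is canonically isomorphic to the deck group $\mathrm{Aut}(f)$. By orbit--stabiliser each orbit has cardinality $d!/|\mathrm{Aut}(f)|$, so summing over orbits gives
\[
\#\{(\sigma,\eta_1,\dots,\eta_k)\ \text{admissible and transitive}\}=\sum_{[f]}\frac{d!}{|\mathrm{Aut}(f)|},
\]
and dividing by $d!$ yields the asserted identity. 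The step demanding the most care is the identification $\mathrm{Aut}(f)\cong\mathrm{Stab}$, i.e.\ matching deck transformations with permutations centralising the entire monodromy image; here transitivity is essential, since it forces a deck transformation to be determined by its value at a single fibre point and thus to correspond to a well-defined centralising permutation. The Riemann Existence Theorem is the one genuinely analytic input, which I would invoke as a black box rather than reprove.
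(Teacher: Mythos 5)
Your proposal is correct and is precisely the classical monodromy argument (Riemann Existence Theorem plus the orbit--stabiliser count under simultaneous conjugation, with the deck group identified with the centraliser of the transitive monodromy image) that the paper invokes by citing Hurwitz and the standard references without reproving it. The only point worth flagging is purely conventional: depending on the orientation of the generating loops, the monodromy around $0$ is $\eta_k\cdots\eta_1\sigma$ or its inverse, but since cycle type is preserved under inversion this does not affect the count.
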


The transitivity condition here is responsible for the connectedness of the covers we are enumerating. Lifting this condition lead to the definition of \emph{non-necessarily connected complex double simple Hurwitz numbers}, which we denote by $H^\bullet_g(\begin{smallmatrix}\nu\\ \mu \end{smallmatrix};0).$

\subsection{The ring of symmetric functions and the Fock space}
\label{sec:symm}
Let $x_1,\ldots,x_d$ be independent commuting variables. The $k'th$ homogeneous symmetric polynomial $\mathrm h_k$ evaluates on commuting variables $x_1,\ldots,x_d$ as follows:
\[
\mathrm h_k(x_1,\ldots,x_d) = 
    \sum_{1\le i_1\le \ldots \le i_k\le d} x_{i_1}\cdots x_{i_d}
\]
The Fundamental Theorem of symmetric polynomials (see, e.g.~\cite{Macdonald}) asserts that the ring of symmetric polynomials in $d$ variables $\mathbb C[x_1,\ldots,x_d]^{S_d}$ is freely generated by the homogeneous symmetric polynomials $\mathrm h_1,\ldots, \mathrm h_d$. In particular, it means that for $k> d,$ the value $\mathrm h_k(x_1,\ldots,x_d)$ can be expressed as a polynomial in $\mathrm h_1,\ldots,\mathrm h_d.$ Provide the ring $\mathbb C[\mathrm h_1,\ldots \mathrm h_d]$ with a grading $\mathrm{deg} (\mathrm h_k) = k.$  

\begin{definition}
    The ring of symmetric functions $\Lambda$ is the projective limit of the graded rings $\Lambda_d = \mathbb C[\mathrm h_1,\ldots,\mathrm h_d]$ under the projections induced by the specialization maps $x_d \mapsto 0.$ 
\end{definition}

The definition implies, that every symmetric function can be specialized on a finite number of variables. In the following, given a symmetric function $\mathrm F\in \Lambda$ and a finite number of commuting variables $x_1,\ldots,x_d$, by $\mathrm F(x_1,\ldots,x_d)$ we mean the result of the specialization.

It is well-known, that the ring of symmetric function also admits the bases $\{\mathrm e_\lambda\}$ of \emph{elementary symmetric functions,} $\{\mathrm p_\lambda\}$ of \emph{power-sum functions,} $\{\mathrm s_\lambda\}$ of \emph{Schur functions,} $\{ Z_\lambda\}$ of \emph{zonal functions,} and $\{J^{(b)}_\lambda\}$ of \emph{Jack functions}. These bases will appear in various places of our paper.

As a $\mathbb C$-vector space, the ring of symmetric functions $\Lambda$ has a basis $\mathrm h_\lambda = \mathrm h_{\lambda_1}\cdots \mathrm h_{\lambda_l}$ indexed by the set of partitions and aligned with the grading. As it is stated in the introduction, a lot of questions studied by the Hurwitz theory reduce to the study of the action of the ring of symmetric functions on a \emph{Fock space.}

\begin{definition}
    A Fock space $\mathcal F = \bigoplus_{n\ge 0}\mathcal F_n$  is a graded vector space  with homogeneous components $\mathcal F_n$, provided with a homogeneous basis indexed by the set of partitions $\lambda$ with $\lambda\vdash n,$ and an inner product that makes the basis vectors orthogonal.
\end{definition}

Any two Fock spaces as pairwise isomorphic as graded vector spaces. In particular, as a vector space, the ring of symmetric functions is isomorphic to any Fock space. However, it worth keeping in mind, that we distinguish them in the following way: the \emph{ring} $\Lambda$ \emph{acts} on the \emph{vector space} $\mathcal F$.


\subsection{Schur action and the  weighted complex Hurwitz numbers}
\label{sec:schur}
Let $n\in \mathbb N.$
The centre of the group algebra $\mathcal Z_n = Z\mathbb C[S_n]$ is an algebra of functions $f\colon S_n \to \mathbb C,$ which are constant on the conjugacy classes of $S_n$, so it can be also referred to as the \emph{algebra of conjugacy classes}. The product of $\mathcal Z_n$  is the convolution product. 

 Recall, that for a permutation $\sigma$, the symbol $C(\sigma)$ denotes its cycle type. The elements 
 \[
 \mathcal C_\mu\colon \left( \begin{matrix} S_n \to \mathbb C \\ \sigma \mapsto \begin{cases}
     1, \mbox{ if } C(\sigma) = \mu, \\ 0, \mbox{ otherwise.}
 \end{cases}\end{matrix}\right)
 \]
 for $\mu\vdash n$ chosen as a basis, span a realization of $n'$th homogeneous component of a Fock space with an inner product to be specified later. The distinguished basis elements are referred to as the \emph{class indicators.}


The \emph{Schur action} on the Fock space $\mathcal Z = \bigoplus_n \mathcal Z_n$ is defined as follows. Introduce the \emph{Jucys-Murphy elements} $X_i\in \mathbb C[S_n]$, $i = 1,\ldots,n$:
$$X_i = \sum_{k = 1}^{i-1} (k\, i),$$
which commute pairwise (notice that $X_1 = 0$). The following statement holds:
\begin{theorem}[see \cite{Jucys} and also \cite{Okounkov-Vershik}]
Let $\mathrm F \in \Lambda$ be a symmetric function. Then the result of evaluation $\mathrm F(X_1,\ldots X_n)$ is a central element of $\mathbb C[S_n].$ The map
\[
(\mathrm F,\mathcal C_\lambda) \mapsto \mathrm F(X_1,\ldots,X_{|\lambda|}) \mathcal C_\lambda,
\]
where on the right-hand part we have a convolution product of two elements of $\mathcal Z_{|\lambda|}$, defines an action of $\Lambda$ on $\mathcal F$.
\end{theorem}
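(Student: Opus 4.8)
The statement splits into two parts: first, that the evaluation $\mathrm{F}(X_1,\dots,X_n)$ lands in the centre $Z\mathbb{C}[S_n]$, and second, that convolution by these central elements assembles into a genuine $\Lambda$-action on $\mathcal{Z}=\bigoplus_n\mathcal{Z}_n$. The plan is to establish centrality first, as this is the substantive point, and then to dispatch the module axioms by formal manipulation.

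For centrality I would exploit the fact that the $X_i$ commute, so that evaluation $\mathrm{F}\mapsto\mathrm{F}(X_1,\dots,X_n)$ is a well-defined ring homomorphism from $\Lambda$ into the commutative subalgebra $\mathbb{C}[X_1,\dots,X_n]\subset\mathbb{C}[S_n]$; its value is unambiguous because the projective-limit definition of $\Lambda$ lets us specialise any symmetric function to $n$ variables. Since the centre is a subalgebra and $\Lambda$ is generated as an algebra by the elementary symmetric functions $\mathrm{e}_k$ (equivalently by the $\mathrm{h}_k$), it suffices to prove that each $\mathrm{e}_k(X_1,\dots,X_n)$ is central. The engine is the Jucys identity
\begin{equation}
\prod_{i=1}^{n}(t+X_i)=\sum_{\sigma\in S_n}t^{\,c(\sigma)}\,\sigma,
\end{equation}
where $c(\sigma)$ denotes the number of cycles of $\sigma$ (fixed points included). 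I would prove this by induction on $n$: the base case $n=1$ is immediate from $X_1=0$, and the inductive step multiplies the identity for $S_{n-1}$ on the right by $(t+X_n)$ and separates the permutations $\tau\in S_n$ according to whether $\tau(n)=n$. The terms with $\tau(n)=n$ account for the factor of $t$ coming from the new fixed point, while the terms with $\tau(n)\neq n$ are matched using the bijection $(\sigma,k)\mapsto\sigma\,(k\,n)$ between $S_{n-1}\times\{1,\dots,n-1\}$ and $\{\tau:\tau(n)\neq n\}$, under which the number of cycles of $\sigma$ in $S_{n-1}$ equals the number of cycles of $\sigma\,(k\,n)$ in $S_n$. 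Extracting the coefficient of $t^{\,n-k}$ then identifies $\mathrm{e}_k(X_1,\dots,X_n)$ with the sum of all permutations having exactly $n-k$ cycles. This set is closed under conjugation (conjugate permutations have equal cycle type, hence equal cycle number), so the sum is central; centrality of arbitrary $\mathrm{F}(X_1,\dots,X_n)$ follows since the centre is closed under the algebra operations.

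With centrality in hand, the action part is formal. Writing $\phi_n\colon\Lambda\to Z\mathbb{C}[S_n]$ for the ring homomorphism $\mathrm{F}\mapsto\mathrm{F}(X_1,\dots,X_n)$, I set $\mathrm{F}.\mathcal{C}=\phi_n(\mathrm{F})\,\mathcal{C}$ for $\mathcal{C}\in\mathcal{Z}_n$ and extend by grading. One then checks the module axioms: the map is bilinear, the unit $1\in\Lambda$ maps to the identity of $\mathbb{C}[S_n]$ and so acts trivially, and for the compatibility $(\mathrm{F}\mathrm{G}).\mathcal{C}=\mathrm{F}.(\mathrm{G}.\mathcal{C})$ one uses that $\phi_n$ is multiplicative together with the associativity of the convolution product, $\phi_n(\mathrm{F}\mathrm{G})\mathcal{C}=\phi_n(\mathrm{F})\phi_n(\mathrm{G})\mathcal{C}=\phi_n(\mathrm{F})\bigl(\phi_n(\mathrm{G})\mathcal{C}\bigr)$. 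Since each $\mathcal{Z}_n$ is preserved, these assemble into an action of $\Lambda$ on $\mathcal{Z}$.

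The main obstacle is concentrated entirely in the inductive proof of the Jucys identity, and within it the cycle-count bookkeeping for the bijection $(\sigma,k)\mapsto\sigma\,(k\,n)$: one must verify that right-multiplying the $n$-fixing permutation $\sigma$ by the transposition $(k\,n)$ splices $n$ into the cycle of $k$ without changing the number of cycles, so that the weight $t^{\,c(\sigma)}$ transports correctly across the grading shift. Once this is pinned down, the remaining steps — passing from the $\mathrm{e}_k$ to all of $\Lambda$, and verifying the module axioms — are routine.
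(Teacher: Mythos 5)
Your proof is correct. The paper gives no proof of this theorem, citing it to Jucys and Okounkov--Vershik, and your argument via the identity $\prod_{i=1}^{n}(t+X_i)=\sum_{\sigma\in S_n}t^{c(\sigma)}\sigma$ (so that $\mathrm{e}_k(X_1,\dots,X_n)$ is the sum of all permutations with $n-k$ cycles, manifestly a union of conjugacy classes) is precisely the original Jucys argument, with the cycle-count bookkeeping for the bijection $(\sigma,k)\mapsto\sigma\,(k\,n)$ handled correctly.
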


It will be explained below, that for the fixed $n\in \mathbb N$, the action operators $\mathrm F(X_1,\ldots,X_n)$ for all $\mathrm F\in \Lambda$ are simultaneously diagonalizable. Their eigenvectors are proportional to the \emph{primitive central idempotents}.  
The primitive central idempotents are indexed by partitions, we denote the idempotent corresponding to a partition $\lambda$ by $ \check s_\lambda.$\footnote{If we identify the Fock space with the space of symmetric functions spanned by the power-sums, the elements $\check s_\lambda$ correspond to Schur functions.} Before describing the spectrum of the Schur action,  we require the following notion:
\begin{definition}\label{def:cont}
    Let $\lambda$ be a Young diagram. The Schur content $c_S(\square)$ of a box $\square \in \lambda$ is defined as follows. Draw $\lambda$ in the first quadrant of the plane with the horizontal coordinate axis $OX$ and the vertical coordinate axis $OY$ such that every box is a square with sides of length 1 aligned with the coordinate axes, and so that the center of the left-most bottom-most square is located in $(1,1)$. If the center of the box $\square$ has its center in the points of coordinates $(x,y),$ then we set the Schur content to be
    \[
    c_S(\square) = (x-1) - (y-1).
    \]
   
\end{definition}
 See Figure~\ref{fig:Schurcont} for an example.

\begin{figure}
\begin{tikzpicture}
    \def\cellSize{0.5cm}

    \foreach \y [count=\row] in {4, 2, 2, 1} {
        \foreach \x in {1, ..., \y} {
            \draw[thick] (\x*\cellSize, \row*\cellSize) rectangle ++(\cellSize, \cellSize);
            \pgfmathtruncatemacro{\value}{\x - \row} 
            \node at (\x*\cellSize + 0.5*\cellSize, \row*\cellSize + 0.5*\cellSize) {\value}; 
        }
    }
\end{tikzpicture}
\caption{A Young diagram and contents of its boxes.}\label{fig:Schurcont}
\end{figure}

\begin{theorem}[\cite{Jucys,Okounkov-Vershik}]\label{th:schur-spectrum}
For a symmetric function $\mathrm F\in \Lambda$ we have
\[
\mathrm F(X_1,X_2,\ldots)  \check s_\lambda = \mathrm F(\{c_S(\square)\}_{\square \in \lambda})  \check s_\lambda.
\]
\end{theorem}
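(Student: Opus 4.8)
The plan is to exploit centrality in order to reduce the asserted identity to the computation of a single scalar on each irreducible representation, and then to invoke the Okounkov--Vershik description of the joint spectrum of the Jucys--Murphy elements. Set $n=|\lambda|$. By Jucys' theorem (stated above), $\mathrm F(X_1,\ldots,X_n)$ lies in $Z\mathbb C[S_n]$. Decompose $\mathbb C[S_n]=\bigoplus_{\lambda\vdash n}\mathrm{End}(V_\lambda)$ into its simple blocks, where $V_\lambda$ denotes the irreducible $S_n$-module indexed by $\lambda$; the primitive central idempotent $\check s_\lambda$ is precisely the identity of the block $\mathrm{End}(V_\lambda)$. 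A central element acts on the simple module $V_\lambda$ by a scalar (Schur's lemma), and therefore multiplies $\check s_\lambda$ by that same scalar. Thus it suffices to show that $\mathrm F(X_1,\ldots,X_n)$ acts on $V_\lambda$ by the scalar $\mathrm F(\{c_S(\square)\}_{\square\in\lambda})$.

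To identify this scalar I would pass to the Young (Gelfand--Zetlin) basis $\{v_T\}$ of $V_\lambda$, indexed by standard Young tableaux $T$ of shape $\lambda$. The key input is that the Jucys--Murphy elements act diagonally in this basis, with
\[
X_i\, v_T = c_S(T^{-1}(i))\, v_T,
\]
where $T^{-1}(i)$ is the box of $\lambda$ containing the entry $i$ of $T$. This is proved by induction on $n$: using the branching rule for the restriction of $V_\lambda$ from $S_n$ to $S_{n-1}$, namely $V_\lambda\cong\bigoplus_\mu V_\mu$ as $S_{n-1}$-modules (the sum over all $\mu$ obtained from $\lambda$ by removing a corner box), and the fact that $X_n=\sum_{k<n}(k\,n)$ commutes with $\mathbb C[S_{n-1}]$, one sees that $X_n$ acts as a scalar on each summand $V_\mu$; evaluating on a suitable vector identifies this scalar with the content of the removed box. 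Iterating along the chain $S_1\subset S_2\subset\cdots\subset S_n$ and recording the box removed at each step produces the tableau $T$ together with the asserted eigenvalues.

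Granting this, we obtain $\mathrm F(X_1,\ldots,X_n)\,v_T=\mathrm F\big(c_S(T^{-1}(1)),\ldots,c_S(T^{-1}(n))\big)\,v_T$. Since $T$ is a bijection between $\{1,\ldots,n\}$ and the boxes of $\lambda$, the arguments $c_S(T^{-1}(1)),\ldots,c_S(T^{-1}(n))$ are exactly the multiset $\{c_S(\square)\}_{\square\in\lambda}$, and because $\mathrm F$ is a symmetric function its value depends only on this multiset and not on the ordering imposed by $T$. Hence the eigenvalue equals $\mathrm F(\{c_S(\square)\}_{\square\in\lambda})$ for every standard tableau $T$, so $\mathrm F(X_1,\ldots,X_n)$ acts by this common scalar on all of $V_\lambda$, and consequently on $\check s_\lambda$, as claimed.

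The main obstacle is the diagonalization step, that is, establishing that the joint spectrum of $(X_1,\ldots,X_n)$ on $V_\lambda$ is recorded by the contents of standard Young tableaux; everything else --- centrality, Schur's lemma, and the symmetry argument --- is formal once this is in hand. In the write-up I would either cite \cite{Jucys,Okounkov-Vershik} for this spectral statement or include the short inductive argument sketched above. It is worth emphasising that the symmetry of $\mathrm F$ is exactly what makes the resulting scalar independent of the chosen tableau, and hence well defined as an evaluation of $\mathrm F$ on the content multiset of $\lambda$.
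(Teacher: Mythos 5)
Your argument is correct and is essentially the standard proof of this classical fact; the paper itself gives no proof, citing \cite{Jucys,Okounkov-Vershik}, and your route (centrality plus Schur's lemma to reduce to a scalar on $V_\lambda$, the Gelfand--Zetlin diagonalization of the Jucys--Murphy elements with content eigenvalues, then symmetry of $\mathrm F$ to make the evaluation tableau-independent) is exactly the Okounkov--Vershik approach those references take. The only step you leave schematic — identifying the scalar by which $X_n$ acts on each branching summand $V_\mu$ with the content of the removed box — is indeed the real content of the cited theorem, and citing it there, as you propose, is appropriate.
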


As it was mentioned in Subsection~\ref{sc:sing-doub}, the algebra of conjugacy classes can be used for counting the complex covers of $\mathbb CP^1$ with a prescribed ramification data. 

Generalizing Definition \ref{def-Hurwitzcover}, by a \emph{ramified cover} of $\mathbb CP^1$ by a Riemann surface $\Sigma$ we mean a holomorphic map $f\colon \Sigma \to \mathbb CP^1$. Given such a map of degree $n$, every critical value $a\in \mathbb CP^1$ of $f$ gets endowed with the corresponding \emph{ramification profile} $r(a) = \mu \vdash n.$ We say, that two ramified covers $(\Sigma_1,f_1)$ and $(\Sigma_2,f_2)$ are equivalent, if there is a holomorphic homeomorphism $D\colon \Sigma_1 \to \Sigma_2,$ satisfying the \emph{equivariance property:} $f_2 \circ D = f_1.$ In this case, we say, that $D$ is an \emph{equivalence} of the ramified covers. The notion of equivalence gives rise to the group of automorphisms $\mathrm {Aut}(f)$ of the ramified cover $(\Sigma,f),$ that consists of self-equivalences.

For a ramified cover $(\Sigma,f)$ its \emph{weight} is the number $\frac 1{\mathrm{Aut}(f)}.$

A classical result (see e.g.~\cite{hurwitz1891riemann,Lando-Zvonkin} or also~\cite{ALS16,KarevFeynman}), states:

\begin{theorem}
	Given the ramification profiles $\mu_1,\ldots,\mu_k\vdash n$ over the points $p_1,\ldots,p_k \in \mathbb CP^1,$ the corresponding \emph{weighted} number of (not necessarily connected) ramified covers of $\mathbb CP^1$ ramified over $p_1,\ldots,p_k$ with the ramification profiles $\mu_1,\ldots,\mu_k$, can be computed as
\begin{equation}\label{Eq:Fro} \sum_{\begin{smallmatrix} [f] \\f \colon \Sigma\to \mathbb CP^1\\r(p_i) = \mu_i 
, i= 1,\ldots,n\\ f \mbox{ \small is unramified over } \mathbb CP^1\setminus \{p_1,\ldots,p_k
\}\end{smallmatrix}} \frac 1{|\mathrm{Aut}(f)|} = \mathrm{tr^{(0)}}(\mathcal C_{\mu_1} \cdots \mathcal C_{\mu_k}),\end{equation}
\end{theorem}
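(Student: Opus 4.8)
The plan is to run the standard monodromy argument: use the Riemann Existence Theorem to translate the enumeration of covers into the enumeration of tuples of permutations with prescribed cycle type and trivial product, identify automorphisms of covers with centralisers of the monodromy, and then recognise the weighted orbit count as the normalised coefficient of the identity in the product of class sums. First I would fix a base point $q\in\mathbb{CP}^1\setminus\{p_1,\ldots,p_k\}$ and identify the fibre $f^{-1}(q)$ with $\{1,\ldots,n\}$. By the Riemann Existence Theorem, isomorphism classes of (not necessarily connected) degree $n$ holomorphic maps $f\colon\Sigma\to\mathbb{CP}^1$ unramified outside $\{p_1,\ldots,p_k\}$ are in bijection with $S_n$-conjugacy classes (under relabelling of the fibre) of monodromy homomorphisms $\rho\colon\pi_1(\mathbb{CP}^1\setminus\{p_1,\ldots,p_k\},q)\to S_n$, and the ramification profile $r(p_i)$ equals the cycle type of $\rho$ on a small loop around $p_i$. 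Choosing standard generators $\gamma_1,\ldots,\gamma_k$, one loop around each $p_i$, the fundamental group has the presentation $\langle\gamma_1,\ldots,\gamma_k\mid\gamma_1\cdots\gamma_k=1\rangle$, so $\rho$ is precisely a tuple $(\sigma_1,\ldots,\sigma_k)\in S_n^k$ with $\sigma_1\cdots\sigma_k=\mathrm{id}$, and the condition $r(p_i)=\mu_i$ becomes $C(\sigma_i)=\mu_i$.

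Next I would pin down the automorphism group under this dictionary. An equivalence $D$ of covers restricts to a permutation of the fibre $f^{-1}(q)$ that commutes with the monodromy action, hence with every $\sigma_i$; conversely every such permutation lifts to a deck transformation. Therefore $\mathrm{Aut}(f)$ is exactly the stabiliser of $(\sigma_1,\ldots,\sigma_k)$ under the simultaneous conjugation action $\pi\cdot(\sigma_1,\ldots,\sigma_k)=(\pi\sigma_1\pi^{-1},\ldots,\pi\sigma_k\pi^{-1})$, i.e.\ the centraliser of $\langle\sigma_1,\ldots,\sigma_k\rangle$ in $S_n$. Consequently the isomorphism classes $[f]$ on the left-hand side correspond bijectively to the orbits of this conjugation action on the set
\[
\Omega=\{(\sigma_1,\ldots,\sigma_k)\in S_n^k\mid C(\sigma_i)=\mu_i,\ \sigma_1\cdots\sigma_k=\mathrm{id}\}.
\]

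The decisive step is an orbit–stabiliser count. For the orbit $O$ of a tuple corresponding to a cover $f$, the stabiliser is $\mathrm{Aut}(f)$, so $|O|=n!/|\mathrm{Aut}(f)|$, and summing over orbits gives
\[
\sum_{[f]}\frac{1}{|\mathrm{Aut}(f)|}=\sum_{O}\frac{1}{|\mathrm{Aut}(f)|}=\frac{1}{n!}\sum_{O}|O|=\frac{|\Omega|}{n!}.
\]
It then remains to identify $|\Omega|$ representation-theoretically. Writing each class indicator as the class sum $\mathcal{C}_{\mu_i}=\sum_{C(\sigma)=\mu_i}\sigma$ in $\mathbb{C}[S_n]$, the coefficient of the identity permutation in the convolution product $\mathcal{C}_{\mu_1}\cdots\mathcal{C}_{\mu_k}$ counts exactly the factorisations $\sigma_1\cdots\sigma_k=\mathrm{id}$ with the prescribed cycle types, so this coefficient equals $|\Omega|$. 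Since $\mathrm{tr}^{(0)}$ returns $1/n!$ times the identity-coefficient of a central element, we obtain $\mathrm{tr}^{(0)}(\mathcal{C}_{\mu_1}\cdots\mathcal{C}_{\mu_k})=|\Omega|/n!$, which matches the weighted cover count.

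I expect the step requiring the most care to be the precise matching of automorphisms of covers with centralisers of the monodromy, and correspondingly of isomorphism classes with conjugation orbits. One must be careful about the choice of base point, the direction of the bijection in the Riemann Existence Theorem, and the fact that we allow disconnected $\Sigma$, so that no transitivity of $\langle\sigma_1,\ldots,\sigma_k\rangle$ is imposed and the count genuinely ranges over all of $\Omega$. Once this dictionary is established, the orbit–stabiliser bookkeeping and the identification of $|\Omega|$ with the identity-coefficient of the class-sum product are routine.
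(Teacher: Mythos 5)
Your proof is correct and is precisely the classical monodromy/Frobenius argument (Riemann Existence Theorem, identification of $\mathrm{Aut}(f)$ with the centraliser of the monodromy, orbit--stabiliser, and reading off the identity-coefficient of the class-sum product) that the paper itself does not reprove but simply cites from the standard references. All the details check out, including the normalisation $\mathrm{tr}^{(0)}(\mathcal C_{\mu_1}\cdots\mathcal C_{\mu_k})=|\Omega|/n!$ and the fact that dropping transitivity is exactly what allows disconnected $\Sigma$.
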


 where the sum ranges over the set of isomorphism classes $[f]$ of covers with the desired properties, $\mathcal C_{\mu_i}$ for $i = 1,\ldots,k$ is the corresponding class indicator, and  
 \[
 \mathrm{tr}^{(0)}(\mathcal C_{\mu}) = \begin{cases} \frac 1{n!}, \mbox{ if } \mu = (1^n),\\0, \mbox{ otherwise.} \end{cases}
 \] The trace function induces an inner product $\langle\cdot,\cdot\rangle_0$: 
 \[
 \langle \mathcal C_\mu, \mathcal C_\nu \rangle_0 = \delta_{\mu,\nu} \prod_{j\ge 1} \frac 1{j^{m_j}m_j!}
 \]
 for the partition $\mu = (1^{m_1}2^{m_2}\ldots).$ For any partition $\mu$, the value $\langle \mathcal C_\mu,\mathcal C_\mu\rangle_0$ equals the number of elements of conjugacy class $\mu$ in $S_{|\mu|}$ divided by $|\mu|!.$

 The operators of the Schur action are self-adjoint with respect to $\left< \cdot,\cdot \right>_0,$: the count does not depend on the order of the points $p_1,\ldots,p_k$. Hence, the primitive central idempotents $\mathrm s_\lambda$ are can be normalized to form an orthonormal basis in the Fock space.

The notion of (disconnected) weighted complex Hurwitz numbers (\cite{Alexandrov-Chapuy-Eynard-Harnad, Guay-Paquet-Harnad} specifies the idea of counting covers with prescribed ramification profiles. They can be defined using the Schur action we have introduced as follows. For a given \emph{weight generating function} $G(z)$ with $G(0) = 1$, consider the following element of $\mathcal Z\otimes \mathcal Z[[\hbar]]:$
\[
T^G(\hbar) = \sum_{n \ge 0}\sum_{\lambda\vdash n}  \check s_\lambda  \otimes  \check s_\lambda \prod_{\square \in \lambda} G(\hbar c_S(\square)),
\]
which, using Theorem~\ref{th:schur-spectrum} can be rewritten as
\[
T^G(\hbar) = \sum_\lambda \check  s_\lambda  \otimes\left[\prod_{k \ge 1} G(\hbar X_k)\right] \check s _\lambda.  
\]
The product $\left[\prod_{k \ge 1} G(\hbar X_k)\right]$ is a result of the evaluation of a symmetric function $\mathcal G \in \Lambda[[\hbar]]$ on the set of Jucys-Murphy elements. Using the equality
\[
 \check s_\lambda = \sum_\mu \langle \check  s_\lambda,\mathcal  C_\mu\rangle_0 \frac {\mathcal C_\mu}{\langle \mathcal C_\mu,\mathcal C_\mu\rangle_0}
\]
and the self-adjointness of the operator $\mathcal G(\hbar X_1,\ldots,\hbar X_n)$ we rewrite
\[
T^G(\hbar) = \sum_{n\ge 0}\sum_{\mu,\nu\vdash n} \langle \mathcal C_\mu, \mathcal G(\hbar X_1,\ldots \hbar X_n) \mathcal C_\nu\rangle_0 \frac {\mathcal C_\mu}{\langle \mathcal C_\mu, \mathcal C_\mu\rangle_0} \otimes \frac {\mathcal C_\nu}{\langle \mathcal C_\nu, \mathcal C_\nu\rangle_0}.
\]
\begin{definition}\label{def:cmplx}
   The coefficients $H^{G,\bullet}_k\left(\begin{smallmatrix}\nu\\\mu\end{smallmatrix};0\right) = [\hbar^k].\langle \mathcal C_\mu, \mathcal G(\hbar X_1,\ldots \hbar X_n) \mathcal C_\nu\rangle_0$ are referred to as the \emph{disconnected complex weighted Hurwitz numbers corresponding to the weight generating function $G$}\footnote{Given the unusual way to introduce the Schur function, the reader might find it not straightforward to relate our definition of the weighted Hurwitz numbers to the standard one (e.g. from~\cite{Alexandrov-Chapuy-Eynard-Harnad}). However, in Section~\ref{sc:ref-mon} we will see that the definitions are equivalent, as the corresponding \emph{action structural coefficients} coincide.}. 
\end{definition}
 These numbers are symmetric under the swap of $\mu$ and $\nu$. For several specific functions $ G$, the numbers bear special names:
\begin{itemize}
\item for $ G(z) = e^z$ we have \emph{simple Hurwitz numbers};
\item for $ G(z) = 1+ z$ we have \emph{strictly monotone Hurwitz numbers};
\item for $ G(z) = \frac 1{1 - z}$ we have \emph{monotone Hurwitz numbers}. 
\end{itemize}
This list is not exhaustive (see, e.g. \cite{BDBKS20} for a more populated zoo of different instances of weighted Hurwitz numbers; also, some more cases were considered in~\cite{Borot-Karev-Lewanski}).

The weighted Hurwitz numbers can be assembled in a slightly different generating function, taking values in the ring $\mathbb C[[\mathrm p_1,\mathrm p_2,\ldots;\mathrm q_1,\mathrm q_2,\ldots;\hbar]]$ with two infinite collections of free commuting multiplicative generators. Using the notation $\mathrm p_\lambda = \prod_{i} \mathrm p_{\lambda_i}$ we write
\[
\tau^{G} = \sum_{k\ge 0}\sum_{\nu,\mu} \hbar^k H_k^{G,\bullet}\left(\begin{smallmatrix}
    \nu\\ \mu 
\end{smallmatrix};0\right) \mathrm q_\nu \mathrm p_\mu,
\]
the coefficients of the expansion of the logarithm of $\tau^G$ are known as \emph{connected} (or \emph{transitive}) complex {Hurwitz numbers}:
\[
H_k^G\left(\begin{smallmatrix}
    \nu \\ \mu
\end{smallmatrix};0\right) = [\hbar^k \mathrm q_\nu \mathrm p_\mu] \log{\tau^G}.
\]
As the connected Hurwitz numbers are usually of much more interest compared to their disconnected counterpart (see, e.g.~\cite{ekedahl1999hurwitz,Karev-Do,ALS16,Borot-Karev-Lewanski}), the adjective 'connected' is often omitted. The inclusion-exclusion principle allows to express the connected numbers in terms of disconnected ones (see the Introduction)\footnote{As $\mathrm p_1(X_1,\ldots,X_n) = \mathcal C_{2^11^{n-2}}$, we see, that the case $G(z) = e^z$, up to the factorial of the number of simple branch points, corresponds to the simple Hurwitz numbers discussed in Subsection~\ref{sc:sing-doub}. The fact that the generating functions for connected and non-necessarily connected simple Hurwitz numbers are related by exponentiation is a standard exercise. }:
\[
H^G_k\left(\begin{smallmatrix}\nu\\ \mu\end{smallmatrix};0\right) =  \sum_{I = 1}^\infty \sum_{\begin{smallmatrix}
    \nu = \nu_1\cup\ldots\cup\nu_I\\ \mu = \mu_1\cup\ldots\cup\mu_I
\end{smallmatrix}} \frac{(-1)^{I-1}}{I}\sum_{\begin{smallmatrix}k = k_1 + \ldots + k_I\\ k_i \ge 0 \end{smallmatrix}} \prod_{i = 1}^I H^{G,\bullet}_{k_i}\left(\begin{smallmatrix}\nu_i\\ \mu_i\end{smallmatrix};0\right).
\]

Rewriting the generating function $T^G$ in terms of the inner products $\left< \mathcal C_\mu, \mathcal G(\hbar X_1,\ldots,\hbar X_n) \mathcal C_\nu\right>_0$ allows one to produce cut-and-join operators for the Hurwitz numbers based on the combinatorics of permutations. See, for example~\cite{Zhu12,do2018towards} for the cut-and-join for the simple Hurwitz numbers case, and~\cite{goulden2014monotone, Karev-Do} for the cut-and-join for single and orbifold monotone Hurwitz numbers. The cut-and-join operators allow us to develop the \emph{tropical count} of the corresponding Hurwitz numbers, which allows us to prove nice structural properties for them~(see, e.g.~\cite{cjm:wcfdhn, hahn2020wall}).

\emph{Monotone Hurwitz numbers} correspond to the weight generating function $G = (1 - z)^{-1}$. They make one of the most important examples of weighted Hurwitz numbers. The corresponding symmetric function $\mathcal G(\hbar X_1,\ldots,\hbar X_n) = \prod_{j = 2}^n \frac 1{1 - \hbar X_j},$  is the generating function for the sequence of homogeneous symmetric polynomials $\mathrm h_k,$ in the variables $\hbar X_j$. This case is particularly important because, as it was stated earlier, the homogeneous symmetric polynomials form a multiplicative generating set for the set of all symmetric polynomials, so the knowledge of the inner products, 
\[
\left< \mathcal C_\mu,  \mathrm h_k(X_1,\ldots,X_n) \mathcal C_\nu\right>_0
\]
 allows to compute the action of every symmetric polynomial $\mathrm F$, as, for any symmetric functions $\mathrm F,\mathrm G\in \Lambda$, due to orthogonality of the basis elements $\mathcal C_\lambda$, we have
 \begin{equation}\label{eq:strcoef}
 \left< \mathcal C_\mu, \mathrm F\mathrm G(X_1,\ldots,X_{|\mu|}) \mathcal C_\nu\right>_0 = \sum_{\lambda} \frac{\left<\mathcal C_\nu, \mathrm F(X_1,\ldots,X_{|\mu|}) \mathcal C_\lambda\right>_0 \left< \mathcal C_\lambda, \mathrm G(X_1,\ldots,X_{|\mu|})  \mathcal C_\nu\right>_0}{\left< \mathcal C_\lambda,\mathcal C_\lambda\right>_0}
 \end{equation}
  We will refer to the inner products
 \[
 f_{\mathrm F, \lambda}^{\mu,0} = \left< \mathcal C_\mu,  \mathrm F(X_1,\ldots,X_{|\mu|}) \mathcal C_\nu\right>_0
 \]
 as the \emph{structure coefficients} of the Schur action.
 
 The paper~\cite{Karev-Do} contains a recursion for connected \emph{orbifold} monotone Hurwitz numbers, that can be easily generalized to the double case: for partitions $\mu,\nu$ with a chosen  order of the parts $(m_1,\ldots,m_q)$ for $\mu$ and $(n_1,\ldots,n_p)$ for $\nu$  we introduce a set of numbers $\mathrm N^{0,a}_g\left(\begin{smallmatrix}
    n_1,\ldots,n_p\\ m_1,\ldots,m_q|r
\end{smallmatrix}\right),$ for $r =1 ,\ldots, q$ and $a = 1,\ldots,n_p$ with 
\[\mathrm{N}^0_g\left(
    \begin{smallmatrix}
    n_1,\ldots,n_p\\ m_1,\ldots, m_q
\end{smallmatrix}\right) = \sum_{a = 1}^{n_p} \sum_{r = 1}^{q} \mathrm N^{0,a}_g\left(\begin{smallmatrix}
    n_1,\ldots,n_p\\ m_1,\ldots, m_q|r
\end{smallmatrix}\right).\]
These numbers are related to the transitive monotone Hurwitz numbers as 
\[
H^{{(1 - z)^{-1}}}_{{2g - 2 + p + q}} \left(\begin{smallmatrix} \nu\\ \mu \end{smallmatrix};0\right) = \frac 1{|\mathrm{Aut}(\mu)\times\mathrm{Aut}(\nu)| } \mathrm{N}^0_g\left( \begin{smallmatrix} n_1,\ldots,n_p \\  m_1\ldots,m_q \end{smallmatrix}\right).
\]
Notice, that the subscript of $\mathrm{N}^0_g\left( \begin{smallmatrix} n_1,\ldots,n_p \\  m_1\ldots,m_q \end{smallmatrix}\right)$ refers to the expected \emph{genus} $g$ of the covering corresponding to the admissible monodromy data, while the subscript of $H^{{(1 - z)^{-1}}}_{{2g - 2 + p + q}} \left(\begin{smallmatrix} \nu\\ \mu \end{smallmatrix};0\right)$ refers to the expected number of simple branch points of the covers we count.

    \begin{theorem}
The numbers $\mathrm N^a_g\left(\begin{smallmatrix}
    n_1,\ldots,n_p\\ m_1,\ldots,m_q|r
\end{smallmatrix}\right)$ satisfy the  initial condition
\[ \mathrm N^{0,a}_0\left(\begin{smallmatrix}
    n_1,\ldots,n_p\\ m_1,\ldots,m_q|r
\end{smallmatrix}\right) = \delta_{a,1} \delta_{p,1}\delta_{q,1}\delta_{m_1,n_1} \frac{1}{n_1}, \]
and the recursion
\begin{gather}
\mathrm N^{0,a}_g\left(\begin{smallmatrix}
    n_1,\ldots,n_p\\ m_1,\ldots,m_q|r
\end{smallmatrix}\right) = \Theta(m_r  - n_p + a - 1) \left(  \sum_{j \ne r}\sum_{l = 1}^a \mathrm N_{g}^{0,l} \left( \begin{smallmatrix} n_1,\ldots,n_p \\ m_1,\ldots,\hat m_j,
 \ldots, m_r + m_j,\ldots,m_q| r'\end{smallmatrix}\right)  \right)\\
+\sum_{\alpha+ \beta = m_r} \sum_{l = 1}^a  \beta \Biggl( \mathrm N_{g-1}^{0,l} \left( \begin{smallmatrix} n_1,\ldots,n_p \\ m_1,\ldots,\alpha,\beta,\ldots,m_q| r\end{smallmatrix}\right) + \Biggr. \\ \Biggl.   \sum_{g_1 + g_2 = g} \sum_{
\begin{smallmatrix} K_1 \sqcup K_2 = \{1,\ldots,p\},\ p\in K_2 \\ I_1 \cup I_2 = \{1,\ldots,q\},\ I_1\cup I_2 = \{r\}
\end{smallmatrix}
}   \mathrm{N}^{0}_{g_1}(\begin{smallmatrix}n_{K_1}\\ m_{I_1}({m_r \mapsto  \beta)}\end{smallmatrix})N^{0,l}_{g_2}(\begin{smallmatrix} n_{K_2}\\ m_{I_2}(m_r \mapsto \alpha)|r\end{smallmatrix})\Biggr),
\end{gather}
where $\Theta$ is the Heaviside step function, $r'$ stands for the position of $m_r + m_j$ in the resulting composition, and $m_r \mapsto \alpha$ stands for the replacement of $m_r$ by $\alpha$.
\end{theorem}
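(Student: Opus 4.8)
The plan is to derive both the initial condition and the recursion from a combinatorial model of the numbers $\mathrm N^{0,a}_g$ as weighted counts of monotone factorizations in the symmetric group, exploiting the interpretation of the structure coefficients $\langle \mathcal C_\mu, \mathrm h_k(X_1,\ldots,X_n)\mathcal C_\nu\rangle_0$ of the Schur action. Since $G(z)=(1-z)^{-1}$, the operator $\mathrm h_k(X_1,\ldots,X_n)$ evaluated on Jucys--Murphy elements expands as the sum over sequences of transpositions $(s_1\,t_1),\ldots,(s_k\,t_k)$ with $s_i<t_i$ whose larger entries are weakly increasing, $t_1\le\cdots\le t_k$. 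Thus $\mathrm N^0_g$ is, up to the automorphism normalization, a weighted enumeration of permutations $\sigma$ of cycle type $\nu$ together with such a monotone sequence whose product $\sigma\cdot(s_1\,t_1)\cdots(s_k\,t_k)$ has cycle type $\mu$, with $k=2g-2+p+q$ and the transitivity (connectedness) condition imposed. The refined numbers $\mathrm N^{0,a}_g$ arise by marking the cycle of $\mu$ indexed by $r$ together with the parameter $a$ recording partial progress of the monotone process along the distinguished part $n_p$; the relation $\mathrm N^0_g=\sum_{a,r}\mathrm N^{0,a}_g$ is then immediate from this definition.

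First I would establish the initial condition. For $g=0$ the number of simple branch points is $k=p+q-2$, and the base case of the recursion is the degenerate configuration in which no genuine branching occurs. This forces $p=q=1$ and $\sigma$ to be a single $n_1$--cycle whose cycle type coincides with $\mu$, so that $m_1=n_1$; the marking then forces $a=1$, and the automorphism weight of a single $n_1$--cycle contributes the factor $\tfrac1{n_1}$. This yields exactly $\delta_{a,1}\delta_{p,1}\delta_{q,1}\delta_{m_1,n_1}\,\tfrac1{n_1}$.

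The heart of the argument is the cut-and-join step, obtained by peeling off the last transposition $(s\,t)$ in the monotone order (the one carrying the largest second entry). Multiplying $\sigma$ by $(s\,t)$ either joins two cycles of the current permutation into one or cuts one cycle into two. The join case produces the first line of the recursion: two parts $m_j$ and $m_r$ of $\mu$ merge into $m_r+m_j$, the marked position is relabelled to $r'$, summation of the marking parameter up to $a$ yields $\sum_{l=1}^a$, and the monotonicity constraint on the admissible transposition manifests as the Heaviside factor $\Theta(m_r-n_p+a-1)$. The cut case produces the remaining lines: a part $m_r$ splits into $\alpha+\beta$, the extra factor $\beta$ counts the ways the transposition reconnects within the cycle, and one distinguishes whether removing it keeps the cover connected, lowering the genus and giving the $\mathrm N^{0,l}_{g-1}$ term, or disconnects it, giving the convolution over $g_1+g_2=g$ and over the set partitions $K_1\sqcup K_2=\{1,\ldots,p\}$ with $p\in K_2$ and $I_1\sqcup I_2=\{1,\ldots,q\}$ with $r\in I_2$.

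The main obstacle I anticipate is bookkeeping rather than conceptual: one must track the marking index $a$ and the position $r$ consistently through both the join and the cut, and verify that the automorphism factors $|\mathrm{Aut}(\mu)\times\mathrm{Aut}(\nu)|$ cancel correctly against the combinatorial multiplicities so that the recursion holds for the normalized numbers $\mathrm N^{0,a}_g$. Pinning down the precise form of the monotonicity constraint as the single Heaviside factor $\Theta(m_r-n_p+a-1)$, and checking that the disconnected term collects exactly the right symmetry factors with the distinguished part $p$ placed in $K_2$, is the most delicate point; this mirrors the single orbifold analysis of \cite{Karev-Do}, which I would follow and extend to the double setting.
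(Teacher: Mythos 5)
Your overall architecture coincides with the paper's: realise $\mathrm N^{0,a}_g$ as a weighted count of transitive monotone factorisations, get the initial condition from the degenerate case $k=0$, and obtain the recursion by peeling off the last transposition and analysing whether it cuts or joins. (The paper proves the refined version of exactly this recursion in \cref{th:cut-and-join-monotone} by this method and recovers the present statement as the specialisation $CJ=\tfrac12$, $T=0$ via \cref{thm:structcoeff}; the unrefined statement is presented as the easy double--case generalisation of \cite{Karev-Do}.) Your treatment of the initial condition is fine, and working directly in $S_n$ with ordinary Jucys--Murphy elements is a legitimate, even slightly more direct, route for this specialised statement.

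The concrete problem is that your case analysis assigns the two groups of terms to the wrong operations, and the justifications of the coefficients therefore attach to the wrong places. If the last transposition $\sigma_k$ \emph{joins} two cycles, then $\mu$ has one part \emph{fewer} than the state after $k-1$ steps, so the previous state carries parts $\alpha,\beta$ with $\alpha+\beta=m_r$: the join produces the $\sum_{\alpha+\beta=m_r}\beta(\cdots)$ lines, not the first line. Dually, the first line (previous state carrying the part $m_r+m_j$) arises when $\sigma_k$ \emph{cuts}. With the correct assignment the coefficients come out as stated: a cut keeps the genus of the previous state equal to $g$ (both $k$ and $\ell(\mu)$ grow by one) and, with the larger entry of $\sigma_k$ fixed, the cutting transposition is unique, so the first line has coefficient $1$; a join either preserves connectivity and sends the previous genus to $g-1$, or merges two components (the convolution term with $K_1\sqcup K_2$, $I_1\sqcup I_2$), and with the larger entry fixed in the $\alpha$-cycle there are $\beta$ choices for the other entry, whence the factor $\beta$. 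As written, you attribute the factor $\beta$, the genus drop and the disconnection dichotomy to the cut, and the Heaviside factor to the join; carried out literally, the verification would fail at each of these points (a cut admits no factor of $\beta$, cannot lower the genus, and cannot disconnect). Separately, $\Theta(m_r-n_p+a-1)$ is not the monotonicity condition itself but its consequence: every transposition has both entries at most $n-n_p+a$, so the elements $n-n_p+a,\ldots,n$ all end up in the cycle of $\mu$ containing $n$, forcing $m_r\ge n_p-a+1$. These are local repairs, but they are exactly the bookkeeping you flag as delicate, so the proof is not complete as proposed.
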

(For the details on the notation, see Section \ref{sc:ref-mon}).




\subsection{Zonal action and purely real (twisted) Hurwitz numbers}\label{Sc:Centrality}  Consider the set $[\bar n] = \{1,\bar 1,\ldots, n,\bar n\}$ provided with the following order:
$$1 < \bar 1< 2 <\bar 2< \cdots <n <\bar n.$$ The group $S_{2n}$ is regarded as  the group of self-bijections of $[\bar n]$.

We use the following shorthand convention. For any $i \in [\bar n]$ we denote
\[
\bar i = \begin{cases}
    \bar i,\mbox{ if } i\in\{1,\ldots,n \},\\
    i, \mbox{ otherwise.}
\end{cases}
\]

 The \emph{hyperoctahedral group} $H_n \le S_{2n}$ is defined to be the centralizer of the fixed point-free involution
$$\tau = (1\, \bar 1)\cdots (n\,\bar n).$$
The group $S_{2n}$ acts 
on the subspace $M_n\le \mathbb C[S_{2n}]$ spanned by fixed point-free involutions $\rho \in S_{2n}$ 
 by the conjugation:
 \[
 (g,\rho) \mapsto g.\rho = g\rho g^{-1}.
 \]
This action extends by linearity to the action of the whole group algebra $\mathbb C[S_{2n}].$ By abuse of notation, we sometimes denote the vector space $M_n$ and the corresponding representation of $S_{2n}$ by the same symbol.

\begin{remark}
    In the following, we will rely on results obtained in~\cite{Macdonald,Matsumoto} and other papers, that use a slightly different model for the action of $S_{2n}$ on $M_n.$ Namely, let $e_n$ be the idempotent corresponding to the hyperoctahedral subgroup $H_n:$ 
    \[
    e_n = \frac 1{|H_n|}\sum_{\sigma \in H_n} \sigma \in \mathbb C[S_{2n}].
    \]
    Then the group $S_{2n}$ acts on the left ideal generated by $e_n$, or, equivalently, on the set of left coset classes of $H_n$,  by the usual group algebra multiplication. As a transitive group action on a set is uniquely determined by the stabilizer subgroup of an element, we see, that our action on the set of fixed point-free involutions, and the action on the left cosets are equivalent. 
\end{remark}

 The cycle type of the product $\rho\tau$ uniquely defines the orbit of $\rho$ under the action of $H_n:$

\begin{proposition}[\cite{Macdonald}]\label{Prop:type}
Let $\rho$ and $\rho'$ are two fixed point-free involutions in $S_{2n}$. Then they belong to the same orbit of the action of $H_n$ if and only if the cycle type of $\rho\tau$ coincides with the cycle type of $\rho'\tau$.
\end{proposition}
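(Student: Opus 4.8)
The plan is to prove both implications by translating the statement into the combinatorics of perfect matchings. I identify each fixed point-free involution $\rho\in M_n$ with the perfect matching $m_\rho=\{\{i,\rho(i)\}:i\in[\bar n]\}$ of the $2n$-element set $[\bar n]$; in particular $\tau$ corresponds to the matching into the ``dominoes'' $\{i,\bar i\}$. The superposition $m_\rho\cup m_\tau$ is a graph in which every vertex meets exactly one $\rho$-edge and one $\tau$-edge, so it decomposes into cycles of even length alternating between $\rho$-edges and $\tau$-edges. I record the partition of $n$ whose parts are half the lengths of these cycles and call it the \emph{coset type} $\mu(\rho)\vdash n$.

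First I would establish the dictionary between $\mu(\rho)$ and the cycle type of $\rho\tau$. Walking along a union cycle of length $2\ell$ and applying $\rho\tau$, one sees that $\rho\tau$ splits each such cycle into exactly two $\ell$-cycles: one supported on the $(\rho\tau)$-orbit of a chosen vertex, the other on its $\tau$-image. Hence the cycle type of $\rho\tau$ is precisely $\mu(\rho)\cup\mu(\rho)$, and since a partition of the form $\mu\cup\mu$ recovers $\mu$ by halving multiplicities, the cycle type of $\rho\tau$ carries exactly the same information as $\mu(\rho)$. Thus it suffices to show that $\rho$ and $\rho'$ lie in the same $H_n$-orbit if and only if $\mu(\rho)=\mu(\rho')$. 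The forward implication is then immediate: if $\rho'=g\rho g^{-1}$ with $g\in H_n$, then because $g$ centralizes $\tau$ we have $\rho'\tau=g(\rho\tau)g^{-1}$, so $\rho\tau$ and $\rho'\tau$ are conjugate in $S_{2n}$ and share a cycle type, i.e. $\mu(\rho)=\mu(\rho')$.

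The substance lies in the converse, which I would prove by showing that $H_n$ acts transitively on the involutions of a fixed coset type. I would fix a standard representative $\rho_\mu$ for each $\mu\vdash n$ and conjugate an arbitrary $\rho$ with $\mu(\rho)=\mu$ to it by an element of $H_n$. Each union cycle is $\tau$-stable, since consecutive vertices along it are paired by $\tau$; consequently its vertex set is a union of $\ell$ dominoes $\{i,\bar i\}$, which occur in a definite cyclic order around the cycle. Reading these ordered dominoes, I would build a bijection $g$ of $[\bar n]$ that sends, cycle by cycle, the ordered dominoes of $m_\rho\cup m_\tau$ to those of the standard superposition $m_{\rho_\mu}\cup m_\tau$, choosing orientations so that $g(\bar i)=\overline{g(i)}$ for all $i$. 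The relation $g(\bar i)=\overline{g(i)}$ is exactly the condition $g\in H_n$, and because $g$ preserves the alternating cyclic order it carries $\rho$-edges to $\rho_\mu$-edges, giving $g\rho g^{-1}=\rho_\mu$. Applying this to both $\rho$ and $\rho'$ and composing the two conjugators produces the desired element of $H_n$.

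The main obstacle is the explicit cycle-by-cycle construction in this last step: one must choose consistently around each cycle which element of each domino maps to the unbarred standard label, so that the resulting $g$ simultaneously commutes with $\tau$ and intertwines the $\rho$-edges with the standard ones. Once the alternating structure of the union graph is used to pin down these orientation choices, the verification that $g\in H_n$ and $g\rho g^{-1}=\rho_\mu$ becomes a routine check.
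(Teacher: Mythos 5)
Your proof is correct and follows essentially the same route as the paper, which only sketches it: one direction is immediate because $H_n$ centralizes $\tau$ (so $\rho'\tau=g(\rho\tau)g^{-1}$), and the other is the direct construction of a conjugating element of $H_n$ from the alternating cycles of the superposition $\rho\cup\tau$. Your write-up merely fills in the details of that construction (including the $\mu\cup\mu$ dictionary, which the paper also records), so there is nothing to flag.
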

\begin{proof}

For any fixed points free involution $\rho$, the product $\rho\tau$ is an element of $S_{2n}$ of cycle type corresponding to a union $\lambda\cup \lambda$ of a partition $\lambda \vdash n$ with itself. The ``if'' part of the Proposition is obvious. The ``only if'' part can be justified by a direct construction of an element of $H_n$ relating the elements.
\end{proof}

For any fixed point-free involution $\rho,$ the cyclic decomposition of the product $\rho\tau$ contains an even number of cycles of any fixed length.
\begin{definition}
    We say that a fixed point-free involution $\rho$ is of the type $\lambda \vdash n$, $\lambda = (1^{m_1}2^{m_2}3^{m_3}\ldots)$, if the cycle type of the element $\rho \tau$ is $\lambda\cup \lambda = (1^{2m_1}2^{2m_2}3^{2m_3}\ldots).$
\end{definition}

Proposition~\ref{Prop:type} identifies the set of fixed point-free involutions of type $\lambda$ with an orbit of the action of $H_n$.

It is convenient to represent fixed point-free involutions as perfect matchings on the graph with nodes $\{1,\bar 1,\ldots,n,\bar n\}$ (see, e.g.~\cite{Macdonald}): the points interchanged by the involution get connected by an arc. The perfect matchings representation simplifies the calculation of the type of fixed point-free involution: the union of matchings corresponding to fixed point-free involutions $\rho$ and $\tau$  forms a graph with several cycles, and the halves of lengths of these cycles determine the parts of the partition $\lambda$ that specify the type (see Figure~\ref{fig:type}). Notice, that unlike the representation of elements of the symmetric group by a collection of cycles, in this case, the cycles are not provided with an orientation. We denote the graph formed by the union of matching corresponding to $\rho$ and $\tau$ as $\rho\cup \tau.$
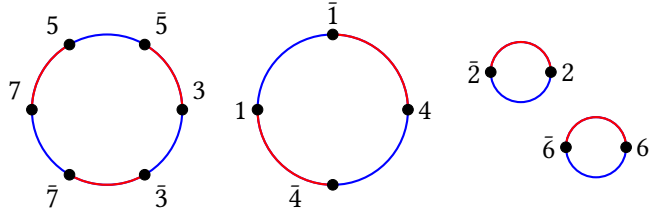
\begin{figure}
\begin{tikzpicture}

    \begin{scope}[shift={(0,0)}]
    \draw[thick,blue] (0,0) circle (1cm);

    \foreach \angle in {0, 120, 240} {
        \draw[thick,red,opacity=1] (\angle:1cm) arc[start angle=\angle, end angle=\angle+60, radius=1cm];
        
        \filldraw (\angle:1cm) circle (2pt); 
        \filldraw (\angle+60:1cm) circle (2pt);

    }

        \node[above right] at (0:1cm) {$3$}; 
        \node[above right] at (60:1cm) {$\bar 5$};
        \node[above left] at (120:1cm) {$5$};
        \node[above left] at (180:1cm) {$7$};
        \node[below left] at (240:1cm) {$\bar 7$};
        \node[below right] at (300:1cm) {$\bar 3$};

    \end{scope}

        \begin{scope}[shift={(3,0)}]
    \draw[thick,blue] (0,0) circle (1cm);

    \foreach \angle in {0, 180} {
        \draw[thick,red,opacity=1] (\angle:1cm) arc[start angle=\angle, end angle=\angle+90, radius=1cm];
        
        \filldraw (\angle:1cm) circle (2pt); 
        \filldraw (\angle+90:1cm) circle (2pt);
    }

    \node[right] at (0:1cm) {$4$}; 
    \node[above] at (90:1cm) {$\bar 1$};
    \node[left] at (180:1cm) {$1$};
    \node[below] at (240:1cm) {$\bar 4$};
    \end{scope}

            \begin{scope}[shift={(5.5,0.5)}]
    \draw[thick,blue] (0,0) circle (0.4cm);

        \draw[thick,red,opacity=1] (0:0.4cm) arc[start angle=0, end angle=180, radius=0.4cm];
        
        \filldraw (0:0.4cm) circle (2pt); 
        \filldraw (180:0.4cm) circle (2pt);

    \node[right] at (0:0.4cm) {$2$}; 
    \node[left] at (180:0.4cm) {$\bar 2$};

    \end{scope}

      \begin{scope}[shift={(6.5,-0.5)}]
    \draw[thick,blue] (0,0) circle (0.4cm);

        \draw[thick,red,opacity=1] (0:0.4cm) arc[start angle=0, end angle=180, radius=0.4cm];
        
        \filldraw (0:0.4cm) circle (2pt); 
        \filldraw (180:0.4cm) circle (2pt);

    \node[right] at (0:0.4cm) {$6$}; 
    \node[left] at (180:0.4cm) {$\bar 6$};

    \end{scope}

\end{tikzpicture}

    \caption{The perfect matchings corresponding to $\tau$ (in blue), and $\rho = (3~\bar 5)(5~7)(\bar 3~7)(\bar 1~4)(1~\bar 4)(2~\bar 2)(6~\bar 6)$ (in red) in $\rho\cup \tau$. The picture indicates that the type of $\rho$ is $(3^12^11^2)$.}
    \label{fig:type}
\end{figure}

We denote the set of fixed point-free involutions of type $\lambda\vdash n$ by $d_\lambda.$ Set
$$\mathcal D_\lambda = \sum_{\rho \in d_\lambda} \rho \in M_n. $$  The elements $\mathcal D_\lambda$ form a basis of a Fock space with an inner product to be specified. We refer to these elements as \emph{the type indicators}, and to the direct sum $I = \bigoplus_{n\ge 0} I_n = \bigoplus_{n\ge 0} \mathrm{Span}(\{\mathcal D_\lambda\}_{\lambda\vdash n})$ as \emph{the space of types.} Its representation-theoretical significance is justified by the following proposition:
\begin{proposition}\label{prop:endo}
    Let $n\in \mathbb N.$ Any endomorphism $\mathcal O$ of the $S_{2n}$-representation  $M_n$  preserves $I_n.$
\end{proposition}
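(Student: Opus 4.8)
The plan is to identify the space of types $I_n$ intrinsically as the subspace of $M_n$ fixed by the hyperoctahedral group $H_n$, and then to exploit that an $S_{2n}$-equivariant map is automatically $H_n$-equivariant, hence preserves $H_n$-invariants.

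First I would show that $I_n = M_n^{H_n}$, the subspace of vectors fixed under the (restriction of the) conjugation action of $H_n \le S_{2n}$. A general vector $v = \sum_\rho c_\rho\, \rho \in M_n$, where the sum ranges over fixed point-free involutions, is $H_n$-invariant precisely when $hvh^{-1} = v$ for all $h \in H_n$. Rewriting $hvh^{-1} = \sum_\rho c_{h^{-1}\rho h}\,\rho$ and comparing coefficients, this holds exactly when the coefficient function $\rho \mapsto c_\rho$ is constant along $H_n$-orbits. By \cref{Prop:type} these orbits are precisely the type classes $d_\lambda$, so the $H_n$-invariants are spanned by the orbit sums $\mathcal D_\lambda = \sum_{\rho \in d_\lambda}\rho$. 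Since these are exactly the spanning type indicators of $I_n$, we obtain $I_n = M_n^{H_n}$.

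Next I would observe that an endomorphism $\mathcal O$ of the $S_{2n}$-representation $M_n$ is by definition $S_{2n}$-equivariant, hence in particular $H_n$-equivariant. The conclusion is then immediate: for any $v \in I_n = M_n^{H_n}$ and any $h \in H_n$ we have $h\cdot \mathcal O(v) = \mathcal O(h\cdot v) = \mathcal O(v)$, so $\mathcal O(v)$ is again $H_n$-fixed, i.e. $\mathcal O(v) \in I_n$. Thus $\mathcal O(I_n)\subseteq I_n$, as claimed.

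There is no genuine obstacle beyond the identification in the first step; the only point requiring care is to match the $H_n$-orbit decomposition of the basis of involutions with the type classes, which \cref{Prop:type} supplies. One could alternatively run the whole argument through the isomorphism $M_n \cong \mathbb C[S_{2n}/H_n]$ together with the identification of $\mathrm{End}_{S_{2n}}(M_n)$ with the Hecke (double-coset) algebra $\mathcal H(S_{2n},H_n)$, but the invariant-subspace argument above is the most direct route.
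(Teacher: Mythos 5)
Your proof is correct and follows essentially the same route as the paper's: both arguments use \cref{Prop:type} to identify the $H_n$-orbits on fixed point-free involutions with the type classes, and then deduce from $S_{2n}$-equivariance that $h\cdot\mathcal O(\mathcal D_\mu)=\mathcal O(h\cdot\mathcal D_\mu)=\mathcal O(\mathcal D_\mu)$, so the image has coefficients constant on type classes and lies in $I_n$. Your framing via the clean identification $I_n=M_n^{H_n}$ is a slightly tidier packaging of the same idea, but the substance is identical.
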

\begin{proof}
    According to~\ref{Prop:type} the action of any element $h\in H_n$ preserves the element $\mathcal D_\mu$ for all $\mu \vdash n.$ Let $\mathcal O$ be the endomorphism of $M_n$. For any $h\in H_n$ and any $\mu\vdash n$ we have
    \[
    h.\mathcal O(\mathcal D_\mu) = \mathcal O(h.\mathcal D_\mu) = \mathcal O(\mathcal D_\mu)
    \]
    so the conjugation by any element of the octahedral group preserves the image $\mathcal O(\mathcal D_\mu)$. This means that if for a fixed point-free involution $\rho$ we have $[\rho].\mathcal O(\mathcal D_\mu) \ne 0,$ for any other fixed point-free involution $\varrho$ of the same type we have the same coefficient. So $\mathcal O(\mathcal D_\mu)$ is an element of $I_n$.
\end{proof} 

For any endomorphism $\mathcal O \colon M_n \to M_n$ the image $\mathcal O(\tau)\in I_n$ determines $\mathcal O$ in a unique way. Moreover, there is a homeomorphism (see~\cite{Macdonald}):
\[
\mathbb B_n \colon \left( \begin{smallmatrix}
    \mathrm{End(M_n)} \to I_n\\
    \phi \mapsto \phi(\tau)
\end{smallmatrix}\right).
\]
The algebra of endomorphisms of $M_n$ is called \emph{the Hecke algebra associated to the Gelfand pair $(S_{2n},H_n).$} It is commutative unital algebra (see~\cite{Macdonald,Matsumoto}). The homeomorphism $\mathbb B_n$ can be used to define an algebra structure on $I_n.$


Following \cite{Matsumoto} we introduce the odd Jucys-Murphy elements  $\mathcal X_i$, $i = 1,\ldots,n$ in $\mathbb C[{S_{2n}}].$
$$ \mathcal X_{i} = \sum_{\begin{smallmatrix}k \in [\bar n] \\ k < i \end{smallmatrix}} (k~i) = \sum_{\begin{smallmatrix} l \in \{1,\ldots,n\} \\ l < i\end{smallmatrix}} \left( (l\, i) + (\bar l\, i)\right). $$
As in the Schur case, $\mathcal X_1 = 0$. These elements form a subset of the whole set of Jucys-Murphy elements, they pairwise commute. The introduction of this family of elements is justified by the following statement:
\begin{proposition}[\cite{Matsumoto, Zinn-Justin}]\label{Prop:uniquefact}
Let $z$ be a formal variable. We have
$$ \prod_{i = 1}^n (z\cdot \mathrm{id} + \mathcal X_i). \tau = \sum_{j = 1}^{n} z^{j} \sum_{\begin{smallmatrix} \lambda\vdash n\\ \ell(\lambda) = n-j \end{smallmatrix}} \mathcal D_\lambda.$$
\end{proposition}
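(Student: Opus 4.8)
The plan is to prove the identity by induction on $n$, interpreting both sides through the perfect matchings attached to fixed point-free involutions. First I would rewrite the right-hand side as a single weighted sum over involutions: since $\mathcal D_\lambda=\sum_{\rho\in d_\lambda}\rho$ and a $\rho$ of type $\lambda$ has $\rho\tau$ of cycle type $\lambda\cup\lambda$, the number $\ell(\lambda)$ equals the number of connected components (cycles) of the graph $\rho\cup\tau$. Writing $c(\rho)$ for this cycle count, the claim becomes the statement that $\prod_{i=1}^n(z\cdot\mathrm{id}+\mathcal X_i).\tau=\sum_\rho z^{c(\rho)}\rho$, the sum ranging over all fixed point-free involutions of $S_{2n}$, i.e.\ the coefficient of $\mathcal D_\lambda$ is $z^{\ell(\lambda)}$. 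The base case $n=1$ is immediate, since $\mathcal X_1=0$ gives $z\tau=z\,\mathcal D_{(1)}$.

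For the inductive step I would use that the commuting operators may be reorganised as $(z\cdot\mathrm{id}+\mathcal X_n)\prod_{i=1}^{n-1}(z\cdot\mathrm{id}+\mathcal X_i)$. The key structural remark is that for $i<n$ every transposition occurring in $\mathcal X_i$ involves only elements of $\{1,\bar1,\dots,\overline{n-1}\}$, hence commutes with $(n\,\bar n)$ and fixes the corresponding arc of $\tau=\tau_{n-1}(n\,\bar n)$. Consequently $\prod_{i=1}^{n-1}(z\cdot\mathrm{id}+\mathcal X_i).\tau$ is computed entirely inside $S_{2(n-1)}$, and the induction hypothesis yields $\sum_{\rho'}z^{c_{n-1}(\rho')}\rho'(n\,\bar n)$. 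Adjoining the arc $(n\,\bar n)$ creates one extra bigon, so $c_n(\rho'(n\,\bar n))=c_{n-1}(\rho')+1$; thus this expression equals $\sum_{\rho:\,n\sim\bar n}z^{c(\rho)-1}\rho$, the sum over involutions in which $n$ is matched to $\bar n$.

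It then remains to apply $(z\cdot\mathrm{id}+\mathcal X_n)$. The $z$-term reproduces $\sum_{\rho:\,n\sim\bar n}z^{c(\rho)}\rho$, i.e.\ exactly the involutions with $n\sim\bar n$ carrying the correct weight, so the whole content of the proof is to show that the $\mathcal X_n$-term produces precisely $\sum_{\sigma:\,n\not\sim\bar n}z^{c(\sigma)}\sigma$. Here I would set up the following bijection: for $\rho$ with $n\sim\bar n$ and $k<n$, conjugation $\sigma=(k\,n)\rho(k\,n)$ replaces the two $\rho$-arcs $(k\,a),(n\,\bar n)$, where $a=\rho(k)$, by $(k\,\bar n),(n\,a)$, so that $\sigma$ satisfies $n\sim a\neq\bar n$. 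Reading this on $\rho\cup\tau$, it cuts the two arcs $(k\,a)$ and $(n\,\bar n)$ — which lie in the two distinct cycles, the bigon $\{n,\bar n\}$ and the component through $k$ — and reconnects them crosswise, merging these two cycles into one; hence $c(\sigma)=c(\rho)-1$, which makes the weights agree. The map $(\rho,k)\mapsto\sigma$ is inverted by $\sigma\mapsto(k,\rho)$ with $k:=\sigma(\bar n)$ and $\rho:=(k\,n)\sigma(k\,n)$, and one checks $\sigma(\bar n)\in\{1,\dots,\overline{n-1}\}$ and $\rho(n)=\bar n$, so this is a genuine bijection onto the involutions with $n\not\sim\bar n$. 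Summing over all $\rho$ with $n\sim\bar n$ and all $k<n$ then yields the claim and completes the induction.

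I expect the cycle-merging lemma to be the main obstacle: one must argue carefully that the two arcs being cut always lie in different components of $\rho\cup\tau$, so that the surgery merges rather than splits a cycle, and that $k=\sigma(\bar n)$ is the unique admissible preimage. The alternating structure of $\rho\cup\tau$ (one $\rho$-arc and one $\tau$-arc at each vertex) makes both points transparent, but they are exactly where the bookkeeping of the exponent of $z$ is won or lost.
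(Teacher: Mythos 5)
Your proof is correct, but it takes a genuinely different route from the paper's. The paper disposes of the statement in two lines by appealing to the existence and uniqueness of a strictly monotone factorisation $\rho=t_m\cdots t_1.\tau$ with $t_i=(r_i\,k_i)$, $r_i<k_i$, $k_1<\cdots<k_m$: expanding $\prod_i(z\cdot\mathrm{id}+\mathcal X_i)$ term by term, each fixed point-free involution is then hit exactly once, with weight $z^{n-m}$. Your induction on $n$, which peels off the factor $(z\cdot\mathrm{id}+\mathcal X_n)$ and exhibits the bijection $(\rho,k)\mapsto (k\,n).\rho$ between pairs with $\rho(n)=\bar n$, $k\in[\overline{n-1}]$ and involutions $\sigma$ with $\sigma(n)\neq\bar n$ (inverted by $k=\sigma(\bar n)$), is in effect a \emph{proof} of that factorisation statement rather than a citation of it. It also buys you the exponent bookkeeping for free: since $\{n,\bar n\}$ is always its own bigon in $\rho\cup\tau$ and $k\in[\overline{n-1}]$ lies in a different component, the surgery always merges two components, so $c(\sigma)=c(\rho)-1$ with no case analysis. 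The paper's sketch, by contrast, never records that the length of the monotone factorisation of a type-$\lambda$ involution equals $n-\ell(\lambda)$, which is exactly the fact needed to convert "number of transpositions used" into the exponent of $z$; your argument supplies it. So your proof is more self-contained and complete than the one in the paper.

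One point you must make explicit rather than pass off as a reformulation: the identity you actually prove is $\prod_i(z\cdot\mathrm{id}+\mathcal X_i).\tau=\sum_\lambda z^{\ell(\lambda)}\mathcal D_\lambda$, whereas the displayed statement pairs $z^j$ with $\ell(\lambda)=n-j$, i.e.\ asserts the coefficient $z^{\,n-\ell(\lambda)}$. These are different, and yours is the correct one: already for $n=1$ the left-hand side is $z\,\mathcal D_{(1)}$ with $\ell((1))=1=n$, and for $n=2$ one computes $z^2\mathcal D_{(1,1)}+z\,\mathcal D_{(2)}$, neither of which is reproduced by the printed right-hand side. The inner sum in the Proposition should read $\ell(\lambda)=j$ (equivalently, the weight of $\mathcal D_\lambda$ is $z^{\ell(\lambda)}$). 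Your first step silently corrects this indexing error while presenting it as a tautological rewriting of the right-hand side; flag the discrepancy instead, since as written your "restatement" is not equivalent to the displayed claim.
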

\begin{proof}
This statement essentially follows from the fact that for any fixed point free involution $\rho \ne \tau$ there exists a unique sequence of transpositions
$$t_1,\ldots, t_m$$
for some $m\le n$ such that
\begin{itemize}
\item for every $i = 1,\ldots,m$ the transposition $t_i$ is of the form $(r_i~ k_i)$ for $k_i \in \{1,\ldots,n\}$ and $r_i \in [\bar n], r_i < k_i$;
\item for $i =1, \ldots, m-1$ we have $k_i < k_{i+1}$;
\item $\rho = t_m \cdots t_1.\tau$.
\end{itemize}
This presentation is an analogue of the statement that every element of the symmetric group admits a unique strictly monotone factorisation.
\end{proof}

The following proposition relates the ring of symmetric functions and the endomorphism of $M_n$.
\begin{proposition}[\cite{Matsumoto}]
    For any symmetric function $\mathrm F\in \Lambda,$ the operator  $\mathcal O_\mathrm F$ defined on $M_n$ by
    \[
    \mathcal O_\mathrm{F}\colon g.\tau \mapsto (g \mathrm F(\mathcal X_1,\ldots,\mathcal X_n)).\tau 
    \]
    for all $g\in S_{2n}$ is an endomorphism of $M_n$ as a representation of $S_{2n}.$
\end{proposition}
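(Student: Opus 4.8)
The statement comprises two assertions: that $\mathcal O_\mathrm F$ is well defined (independent of the choice of $g$ with $g.\tau=\rho$) and that it is $S_{2n}$-equivariant. The first thing I would record is that conjugation makes $M_n$ a left module over $\mathbb C[S_{2n}]$, so that $(AB).\rho=A.(B.\rho)$ for all $A,B\in\mathbb C[S_{2n}]$; in particular $(\sigma g).\tau=\sigma.(g.\tau)$. Granting well-definedness, equivariance is then purely formal: writing $\rho=g.\tau$,
\[
\mathcal O_\mathrm F(\sigma.\rho)=(\sigma g\,\mathrm F(\mathcal X_1,\ldots,\mathcal X_n)).\tau=\sigma.\bigl((g\,\mathrm F(\mathcal X_1,\ldots,\mathcal X_n)).\tau\bigr)=\sigma.\mathcal O_\mathrm F(\rho).
\]
Hence the entire content of the proposition is concentrated in well-definedness.

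I would reduce well-definedness to the single assertion
\[
\mathrm F(\mathcal X_1,\ldots,\mathcal X_n).\tau\in I_n\qquad\text{for every }\mathrm F\in\Lambda,
\]
which I denote $(\ast)$. The key observation is that $I_n$ is exactly the subspace $M_n^{H_n}$ of $H_n$-fixed vectors: by \cref{Prop:type} the $\mathcal D_\lambda$ are precisely the $H_n$-orbit sums of fixed point-free involutions, and these span the invariants. Now $g.\tau=g'.\tau$ holds if and only if $g^{-1}g'\in H_n$, say $g'=gh$ with $h\in H_n$. Setting $v:=\mathrm F(\mathcal X_1,\ldots,\mathcal X_n).\tau$ and assuming $(\ast)$, we have $h.v=v$, whence
\[
(g'\,\mathrm F(\mathcal X_1,\ldots,\mathcal X_n)).\tau=g.(h.v)=g.v=(g\,\mathrm F(\mathcal X_1,\ldots,\mathcal X_n)).\tau,
\]
so $\mathcal O_\mathrm F$ is well defined. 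Conversely, once $(\ast)$ holds, the isomorphism $\mathbb B_n\colon\mathrm{End}(M_n)\xrightarrow{\sim}I_n$ furnishes a unique endomorphism $\psi$ with $\psi(\tau)=v$, and the computation $\psi(g.\tau)=g.\psi(\tau)=(g\,\mathrm F(\mathcal X_1,\ldots,\mathcal X_n)).\tau=\mathcal O_\mathrm F(g.\tau)$ identifies $\mathcal O_\mathrm F=\psi$. Thus $(\ast)$ yields simultaneously well-definedness and the endomorphism property.

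It remains to establish $(\ast)$, the heart of the matter. Because the $\mathcal X_i$ commute pairwise and the monomials $\{\mathrm e_\lambda\}$ in the elementary symmetric functions form a $\mathbb C$-basis of $\Lambda$, it suffices by linearity to verify $(\ast)$ for $\mathrm F=\mathrm e_\lambda=\mathrm e_{\lambda_1}\cdots\mathrm e_{\lambda_r}$. The single-generator case is \cref{Prop:uniquefact}: reading off the coefficient of $z^{n-m}$ in $\prod_{i=1}^n(z\cdot\mathrm{id}+\mathcal X_i).\tau$ gives $\mathrm e_m(\mathcal X_1,\ldots,\mathcal X_n).\tau=\sum_{\ell(\lambda)=m}\mathcal D_\lambda\in I_n$; by the reduction above, each $\mathcal O_{\mathrm e_m}$ is already a genuine endomorphism. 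For the general case I would pass to the composite $\Psi=\mathcal O_{\mathrm e_{\lambda_1}}\circ\cdots\circ\mathcal O_{\mathrm e_{\lambda_r}}$, which is an endomorphism of $M_n$ as a composition of such. Using the linear extension $\mathcal O_{\mathrm e_m}(w.\tau)=(w\,\mathrm e_m(\mathcal X_1,\ldots,\mathcal X_n)).\tau$ of the defining formula together with the commutativity of the $\mathcal X_i$, one computes $\Psi(\tau)=\mathrm e_\lambda(\mathcal X_1,\ldots,\mathcal X_n).\tau$. Since $\tau=\mathcal D_{(1^n)}\in I_n$ and endomorphisms of $M_n$ preserve $I_n$ by \cref{prop:endo}, we conclude $\Psi(\tau)=\mathrm e_\lambda(\mathcal X_1,\ldots,\mathcal X_n).\tau\in I_n$, which proves $(\ast)$.

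I expect the genuine obstacle to be precisely the multiplicative closure in $(\ast)$, i.e. promoting the single generators $\mathrm e_m$ to their products. A head-on approach would try to verify the $H_n$-invariance of $\mathrm e_\lambda(\mathcal X_1,\ldots,\mathcal X_n).\tau$ by evaluating $h\,\mathrm e_\lambda(\mathcal X_1,\ldots,\mathcal X_n)\,h^{-1}$ for $h\in H_n$; this stalls because conjugation by $H_n$ does not preserve the individual $\mathcal X_i$---it disturbs the order constraints defining them---so no termwise cancellation is visible, mirroring the subtlety already flagged for centrality in the introduction. The composition argument circumvents this entirely: it invokes only the base case (from \cref{Prop:uniquefact}) and the structural fact \cref{prop:endo} that endomorphisms of $M_n$ preserve the space of types, thereby bootstrapping from the generators of $\Lambda$ to all of $\Lambda$ without further combinatorics.
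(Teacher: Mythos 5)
Your argument is correct, but be aware that the paper does not actually prove this proposition: it is imported from \cite{Matsumoto}, and the surrounding text only flags the crux, namely that replacing $g$ by $gh$ with $h\in H_n$ must not change $(g\,\mathrm F(\mathcal X_1,\ldots,\mathcal X_n)).\tau$. You isolate exactly that crux, reduce it to the single statement $\mathrm F(\mathcal X_1,\ldots,\mathcal X_n).\tau\in I_n=M_n^{H_n}$, and then discharge it by a bootstrap that uses only ingredients the paper establishes independently: the orbit description of types (\cref{Prop:type}), the base case $\mathrm e_m(\mathcal X_1,\ldots,\mathcal X_n).\tau\in I_n$ from \cref{Prop:uniquefact}, and the fact that endomorphisms of $M_n$ preserve $I_n$ (\cref{prop:endo}) applied to the composite $\mathcal O_{\mathrm e_{\lambda_1}}\circ\cdots\circ\mathcal O_{\mathrm e_{\lambda_r}}$. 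This correctly sidesteps the dead end of computing $h\,\mathrm e_\lambda(\mathcal X_1,\ldots,\mathcal X_n)\,h^{-1}$ termwise, and there is no circularity, since each $\mathcal O_{\mathrm e_m}$ is certified to be an endomorphism before it is composed. Two minor remarks. The detour through the surjectivity of $\mathbb B_n$ in your ``conversely'' step is dispensable --- well-definedness plus the formal equivariance computation already make $\mathcal O_{\mathrm F}$ an endomorphism --- though it is harmless because that surjectivity is established independently of the present proposition. And your identity $\mathrm e_m(\mathcal X_1,\ldots,\mathcal X_n).\tau=\sum_{\ell(\lambda)=m}\mathcal D_\lambda$ faithfully transcribes \cref{Prop:uniquefact} as printed, but that formula appears to carry an indexing slip (its right-hand side has no $z^n$ term, while the left-hand side has leading term $z^n\tau=z^n\mathcal D_{(1^n)}$, whose type has length $n$, not $0$); the correct reading is $\mathrm e_m(\mathcal X_1,\ldots,\mathcal X_n).\tau=\sum_{\ell(\lambda)=n-m}\mathcal D_\lambda$. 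Neither point affects your proof, which only needs membership in $I_n$.
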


This statement is non-trivial, as it involves a verification of well-definiteness of the endomorphism: replacing $g$ by $gh$ for $h\in H_n$ does not affect the result of evaluation of $\mathcal O_\mathrm F$ on an element of $M_n$. Due to Proposition~\ref{prop:endo}, for any symmetric polynomial $\mathrm F \in \Lambda$ the corresponding operator $\mathcal O_\mathrm F$ preserves $I_n.$ 

We will need one more set of endomorphisms of $M_n$ indexed by partitions $\nu\vdash n$:

\begin{proposition}
                Let $n\in \mathbb N.$  We have
                \begin{enumerate}

                    \item For all $\nu \vdash n$ the operator, defined on fixed point-free involutions
                    \[
                    R_\nu \colon \left(\begin{matrix}
                        M_n \to M_n\\
                        \rho\mapsto \sum_{\{\sigma\in S_{2n}| r(\sigma) = \nu\cup \nu, \sigma\rho \in M_n  \}} \sigma\rho 
                    \end{matrix}\right)
                    \]
                     is an endomorphism of $M_n$\footnote{Notice, that the definition of $R_\nu$ involves the usual group multiplication, not a conjugation.};
                    \item For any $\nu \vdash n$ we have $R_\nu(\tau) = \mathcal D_\nu;$
                    \item  $R_{(1^{n-2}2^1)} = \mathcal O_{\mathrm p_1}.$
                \end{enumerate}
            \end{proposition}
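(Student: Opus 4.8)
The plan is to prove the three claims in sequence, as claims (2) and (3) will rely on the endomorphism property established in (1).

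\textbf{Claim (1).} I would show that $R_\nu$ commutes with the $S_{2n}$-action by conjugation. For $g \in S_{2n}$ and a fixed point-free involution $\rho$, I compute
\[
g.R_\nu(\rho) = g\Bigl(\sum_{\substack{r(\sigma)=\nu\cup\nu\\ \sigma\rho\in M_n}} \sigma\rho\Bigr)g^{-1} = \sum_{\substack{r(\sigma)=\nu\cup\nu\\ \sigma\rho\in M_n}} (g\sigma g^{-1})(g\rho g^{-1}).
\]
Reindexing by $\sigma' = g\sigma g^{-1}$, I note that $r(\sigma') = r(\sigma) = \nu\cup\nu$ since cycle type is conjugation-invariant, and that $\sigma\rho \in M_n$ (i.e.\ $\sigma\rho$ is a fixed point-free involution) if and only if $\sigma'(g.\rho) = g(\sigma\rho)g^{-1}$ is one, again because being a fixed point-free involution is a conjugation-invariant property. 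Hence the sum equals $R_\nu(g.\rho)$, proving $R_\nu$ is an endomorphism of the representation $M_n$. The one point requiring care is that the summation condition ``$\sigma\rho \in M_n$'' transforms correctly under conjugation, which is exactly the conjugation-invariance of the defining properties (involution, fixed-point-free), so this should be routine.

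\textbf{Claim (2).} Specializing to $\rho = \tau$, I must show $R_\nu(\tau) = \sum_{\sigma: r(\sigma)=\nu\cup\nu,\, \sigma\tau \in M_n} \sigma\tau = \mathcal D_\nu = \sum_{\varrho \in d_\nu} \varrho$. The idea is to set up a bijection between the index set $\{\sigma : r(\sigma) = \nu\cup\nu,\ \sigma\tau \in M_n\}$ and the set $d_\nu$ of fixed point-free involutions of type $\nu$ via $\sigma \mapsto \varrho := \sigma\tau$. By construction $\varrho$ is a fixed point-free involution, and since $\sigma = \varrho\tau^{-1} = \varrho\tau$ has cycle type $\nu\cup\nu$, the product $\varrho\tau = \sigma$ has cycle type $\nu\cup\nu$, so $\varrho$ is precisely of type $\nu$ by definition. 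Conversely every $\varrho \in d_\nu$ arises this way from $\sigma = \varrho\tau$, which has cycle type $\nu\cup\nu$; this gives the inverse map. Thus the map is a bijection and the sums agree, giving $R_\nu(\tau) = \mathcal D_\nu$.

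\textbf{Claim (3).} Here I take $\nu = (1^{n-2}2^1)$, so $\nu\cup\nu = (1^{2n-4}2^2)$ and the permutations $\sigma$ of this cycle type are products of two disjoint transpositions. I want to match $R_{(1^{n-2}2^1)}$ with $\mathcal O_{\mathrm p_1}$. Since $\mathrm p_1 = \sum_i \mathcal X_i$ and the $\mathcal X_i$ are sums of transpositions $(k\,i)$ with $k<i$ in $[\bar n]$, evaluating $\mathrm p_1(\mathcal X_1,\dots,\mathcal X_n) = \sum_{i=1}^n \mathcal X_i = \sum_{i,\, k<i} (k\,i)$ ranges over all transpositions $(k\,i)$ with $k,i$ in the same ``block'' structure consistent with the odd Jucys--Murphy setup. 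By Proposition~\ref{prop:endo} both $R_{(1^{n-2}2^1)}$ and $\mathcal O_{\mathrm p_1}$ are endomorphisms of $M_n$ preserving $I_n$, and by the uniqueness statement (any endomorphism is determined by its value on $\tau$ via $\mathbb B_n$), it suffices to check $R_{(1^{n-2}2^1)}(\tau) = \mathcal O_{\mathrm p_1}(\tau)$. I would compute both sides explicitly on $\tau$: the right-hand side is $\mathrm p_1(\mathcal X_1,\dots,\mathcal X_n).\tau = \sum_{k<i}(k\,i).\tau$, and the left-hand side sums $\sigma\tau$ over all $\sigma$ of type $(1^{2n-4}2^2)$ with $\sigma\tau \in M_n$. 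The main obstacle I anticipate is the careful bookkeeping in this last comparison: I must verify that conjugating $\tau$ by a single odd transposition $(k\,i)$, summed appropriately, produces exactly the fixed point-free involutions obtained by multiplying $\tau$ on the left by a pair of transpositions of the required profile, accounting correctly for the factor of two coming from the relation between $\sigma = (k\,i)\tau(k\,i)\cdots$ conjugation and the left-multiplication description, and for which $\sigma$ genuinely yield fixed point-free involutions. Once this identity on $\tau$ is established, the uniqueness via $\mathbb B_n$ immediately upgrades it to the operator identity $R_{(1^{n-2}2^1)} = \mathcal O_{\mathrm p_1}$.
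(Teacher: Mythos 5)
Your proposal is correct and follows essentially the same route as the paper: conjugation-equivariance for (1), the observation that $\sigma\mapsto\sigma\tau$ identifies the index set $\{\sigma : r(\sigma)=\nu\cup\nu,\ \sigma\tau\in M_n\}$ with $d_\nu$ for (2), and reduction to the value on $\tau$ via the uniqueness of endomorphisms for (3). The ``bookkeeping'' you flag as the main obstacle in (3) is in fact already available: the left-hand side $R_{(1^{n-2}2^1)}(\tau)=\mathcal D_{(1^{n-2}2^1)}$ is your own part (2), and the right-hand side $\mathrm p_1(\mathcal X_1,\ldots,\mathcal X_n).\tau=\mathcal D_{(1^{n-2}2^1)}$ is the coefficient of $z^{n-1}$ in Proposition~\ref{Prop:uniquefact}; in particular no factor of two arises, since $(k\,i)\tau(k\,i)=(k\,i)(\bar k\,\bar i)\tau$ and for each such $\sigma=(k\,i)(\bar k\,\bar i)$ exactly one of the two transpositions has its larger element unbarred, hence occurs in exactly one odd Jucys--Murphy element.
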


            \begin{proof}
            \begin{enumerate}
            \item This follows from the equality
            \[
            \varrho( \sigma \rho) \varrho^{-1} = (\varrho \sigma \varrho^{-1})( \varrho \rho \varrho^{-1}),
            \]
            and the fact that the cycle types of $\sigma$ and $\varrho \sigma\varrho^{-1}$ coincide.
            \item Recall that the \emph{type} of a fixed point-free involution $\rho$ is related to the cycle type of the product $\rho\tau.$ According to the definition of $R_\nu,$ it maps $\tau$ to the formal sum of fixed point-free involutions of type $\nu,$ each coming with the coefficient 1. However, we already know that $R_\nu$ is an endomorphism of $M_n,$ so $R_\nu(\tau) = \mathcal D_\nu.$
            \item As for any endomorphism $\mathcal O$ of the representation $M_n$ the image $\mathcal O(\tau)$ determines $\mathcal O$ in a unique way, it is enough to compare  $R_{1^{n-1}2^1}(\tau)$ and $\mathrm p_1(\mathcal X_1,\ldots,\mathcal X_n).\tau.$ The latter, according to \ref{Prop:uniquefact} is the element $\mathcal D_{(1^{n-2}2^1)} \in I_n$.
            \end{enumerate}
            \end{proof}

Introduce the following inner product on $M_n$: for two fixed point-free involutions $\rho_1,\rho_2\in S_{2n}$ define \[\left< \rho_1,\rho_2 \right>_1 = \frac 1{2^nn!} \delta_{\rho_1,\rho_2}.\]
Here, the factor $\frac 1{2^nn!}$ is the inverse of the order of the hyperoctahedral group $H_n.$

\begin{proposition}\label{prop:self-adj}
    Let $n\in \mathbb N.$ For any $\nu\vdash n$ the operator $R_\nu\colon M_n \to M_n$ is self-adjoint with respect to $\left<\cdot,\cdot\right>_1.$
\end{proposition}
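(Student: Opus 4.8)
The plan is to exploit that the inner product $\left<\cdot,\cdot\right>_1$ makes the set of fixed point-free involutions an orthogonal basis of $M_n$: we have $\left<\rho,\rho\right>_1 = \frac{1}{2^n n!}$ and $\left<\rho,\rho'\right>_1 = 0$ whenever $\rho \neq \rho'$. Consequently, self-adjointness of $R_\nu$ is equivalent to the symmetry of its matrix in this basis, i.e.\ to the identity
\[
\left<R_\nu(\rho_1),\rho_2\right>_1 = \left<\rho_1,R_\nu(\rho_2)\right>_1
\]
for all fixed point-free involutions $\rho_1,\rho_2 \in S_{2n}$. Since the basis is orthogonal, each side is $\frac{1}{2^n n!}$ times a single structure coefficient, so the whole statement reduces to comparing two such coefficients.

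First I would compute the coefficient of $\rho_2$ in $R_\nu(\rho_1)$. By the definition of $R_\nu$, this coefficient counts the permutations $\sigma$ of cycle type $\nu\cup\nu$ with $\sigma\rho_1 \in M_n$ and $\sigma\rho_1 = \rho_2$. As $\rho_1$ is an involution, the equation $\sigma\rho_1 = \rho_2$ has the unique solution $\sigma = \rho_2\rho_1$; moreover the side condition $\sigma\rho_1 \in M_n$ is then automatic, since $\sigma\rho_1 = \rho_2$ is itself a fixed point-free involution. Hence the coefficient equals $1$ if $r(\rho_2\rho_1) = \nu\cup\nu$ and $0$ otherwise. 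Running the same computation for the coefficient of $\rho_1$ in $R_\nu(\rho_2)$, with the roles of $\rho_1$ and $\rho_2$ exchanged, the unique candidate is $\sigma = \rho_1\rho_2$, and the coefficient equals $1$ if $r(\rho_1\rho_2) = \nu\cup\nu$ and $0$ otherwise.

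The final step is to observe that these two indicator conditions always agree. Indeed $\rho_2\rho_1 = (\rho_1\rho_2)^{-1}$ because $\rho_1,\rho_2$ are involutions, and a permutation and its inverse always share the same cycle type; alternatively, $\rho_2\rho_1 = \rho_2(\rho_1\rho_2)\rho_2^{-1}$ exhibits $\rho_2\rho_1$ and $\rho_1\rho_2$ as conjugate. Either way $r(\rho_2\rho_1) = r(\rho_1\rho_2)$, so the two coefficients coincide and the desired symmetry follows.

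There is no serious obstacle here: the entire argument is a direct matrix computation in the involution basis. The only point requiring a little care is the bookkeeping around the side condition $\sigma\rho \in M_n$ in the definition of $R_\nu$ — one must check that it does not distort the count — but as noted it is automatically satisfied precisely for the terms contributing to an involution coefficient, so it plays no role. The essential input is simply the conjugacy (equivalently, mutual inversion) of $\rho_1\rho_2$ and $\rho_2\rho_1$.
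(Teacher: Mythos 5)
Your proof is correct and follows essentially the same route as the paper's: both reduce self-adjointness to the symmetry $\left<R_\nu(\rho_1),\rho_2\right>_1 = \left<\rho_1,R_\nu(\rho_2)\right>_1$ in the orthogonal basis of fixed point-free involutions and then observe that the witnessing $\sigma$ on one side yields $\sigma^{-1}$ (equivalently, a conjugate) on the other, with the same cycle type. Your version is marginally more explicit in pinning down $\sigma = \rho_2\rho_1$ as the unique candidate, so that each matrix entry is $0$ or $1$, but this is the same argument.
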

\begin{proof}
    It is enough to check that for any two fixed point-free involutions $\rho_1,\rho_2 \in S_{2n}$ we have
    \[
    \left< R_\nu(\rho_1),\rho_2\right>_1 = \left< \rho_1,R_\nu(\rho_2)\right>_1.
    \]
    Suppose $\left< R_\nu(\rho_1),\rho_2\right>_1 \ne 0$. It is equivalent to the existence of $\sigma\in S_{2n}, r(\sigma) = \nu\cup\nu,$ such that $\sigma\rho_1 = \rho_2.$ But it means that $\rho_1 = \sigma^{-1}\rho_2,$ and $r(\sigma^{-1}) = r(\sigma)$, so $\left< R_\nu(\rho_1),\rho_2\right>_1 = \left< \rho_1,R_\nu(\rho_2)\right>_1$.
\end{proof}

The existence of the homeomorphism $\mathbb B_n$ implies that the operators $\{R_\nu\}_{\nu \vdash n}$ form a basis in $\mathrm{End}(M_n),$ so every element of $\mathrm{End}(M_n)$ is self-adjoint with respect to $\left<\cdot,\cdot\right>_1.$ K.~Aker and M.~B.~Can prove in~\cite{Aker-Can}, that the operators $\mathcal O_\mathrm F$ generate the algebra $\mathrm{End}(M_n)$, so for any $\nu\vdash n$ there exists a symmetric polynomial $\mathrm F_\nu$ such that $R_\nu = \mathcal O_{\mathrm F_\nu}.$

Now, for $n \in \mathbb N, \nu \vdash n,$ and a symmetric polynomial $\mathrm F\in \Lambda$ consider the result of the conjugation:
\[
\mathrm F(\mathcal X_1,\ldots, \mathcal X_n). \mathcal D_\nu = \mathrm F(\mathcal X_1,\ldots,\mathcal X_n).(R_\nu(\tau)) = \mathcal O_{\mathrm F_\nu \cdot \mathrm F}(\tau) \in I_n.
\]

And moreover, we have  $\mathrm F(\mathcal X_1,\ldots, \mathcal X_n). \mathcal D_\nu = \mathcal O_\mathrm F(\mathcal D_\nu)$. This implies the following:
\begin{theorem}
    A correspondence that maps a symmetric function $\mathrm F \in \Lambda$ to an operator defined on $I_n$ by
    \[
    \mathcal D_\lambda \mapsto \mathrm F(\mathcal X_1,\ldots,X_{|\lambda|}).\mathcal D_\lambda =\mathcal O_\mathrm F(\mathcal D_\lambda)
    \]
    defines a representation of the ring of symmetric functions on the Fock space $I$.
\end{theorem}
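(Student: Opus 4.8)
The plan is to verify the three defining properties of a representation --- $\mathbb C$-linearity in $\mathrm F$, compatibility with multiplication, and preservation of the unit --- for the assignment $\Phi_n(\mathrm F)\colon \mathcal D_\lambda \mapsto \mathrm F(\mathcal X_1,\ldots,\mathcal X_n).\mathcal D_\lambda$ on each graded piece $I_n$, and then to observe that these assemble over $n$. The essential input, already secured above, is that this assignment genuinely lands in $I_n$: by the identity $\mathrm F(\mathcal X_1,\ldots,\mathcal X_n).\mathcal D_\lambda = \mathcal O_\mathrm F(\mathcal D_\lambda)$ together with Matsumoto's proposition (that $\mathcal O_\mathrm F$ is an endomorphism of $M_n$) and \cref{prop:endo} (that every endomorphism preserves $I_n$), the operator $\Phi_n(\mathrm F)$ is a well-defined element of $\mathrm{End}(I_n)$. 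Linearity in $\mathrm F$ is immediate, since $\mathrm F \mapsto \mathrm F(\mathcal X_1,\ldots,\mathcal X_n)$ is $\mathbb C$-linear into $\mathbb C[S_{2n}]$ and conjugation is linear; and the unit $\mathrm F = 1$ evaluates to the identity of $\mathbb C[S_{2n}]$, which acts as $\mathrm{id}_{I_n}$.

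The heart of the argument is multiplicativity, and here I would exploit two structural facts. First, conjugation is a genuine \emph{left} action of the group algebra: for $a,b\in\mathbb C[S_{2n}]$ and $x\in M_n$ one has $a.(b.x) = (ab).x$, as follows by extending $g.(h.\rho) = (gh).\rho$ bilinearly. Second, because the odd Jucys--Murphy elements $\mathcal X_1,\ldots,\mathcal X_n$ commute pairwise, the evaluation $\mathrm F\mapsto \mathrm F(\mathcal X_1,\ldots,\mathcal X_n)$ is a ring homomorphism from $\Lambda$ onto the commutative subalgebra they generate, so in particular $\mathrm F(\mathcal X_1,\ldots,\mathcal X_n)\,\mathrm G(\mathcal X_1,\ldots,\mathcal X_n) = (\mathrm F\mathrm G)(\mathcal X_1,\ldots,\mathcal X_n)$. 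Writing $\mathrm F(\mathcal X)$ for $\mathrm F(\mathcal X_1,\ldots,\mathcal X_n)$ and combining the two facts, for any $\mathcal D_\lambda$ I would compute
\[
\Phi_n(\mathrm F)\bigl(\Phi_n(\mathrm G)\,\mathcal D_\lambda\bigr)
= \mathrm F(\mathcal X).\bigl(\mathrm G(\mathcal X).\mathcal D_\lambda\bigr)
= \bigl(\mathrm F(\mathcal X)\,\mathrm G(\mathcal X)\bigr).\mathcal D_\lambda
= (\mathrm F\mathrm G)(\mathcal X).\mathcal D_\lambda
= \Phi_n(\mathrm F\mathrm G)\,\mathcal D_\lambda,
\]
where the intermediate step uses that $\mathrm G(\mathcal X).\mathcal D_\lambda \in I_n$ --- exactly the well-definedness recalled above --- so that the outer application of $\mathrm F(\mathcal X)$ again stays in the Fock space. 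Since every $\Phi_n(\mathrm F)$ acts within the single homogeneous component $I_n$, the direct sum $\Phi = \bigoplus_{n\ge 0}\Phi_n$ is then a unital algebra homomorphism $\Lambda \to \mathrm{End}(I)$ with $I = \bigoplus_n I_n$, which is precisely the claimed representation.

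The step I expect to be the genuine obstacle --- though it is already in place by the preceding results --- is the containment $\mathrm F(\mathcal X).\mathcal D_\lambda \in I_n$. This is subtle precisely because the conjugation action of an \emph{arbitrary} element of $\mathbb C[S_{2n}]$ does \emph{not} preserve $I_n$; it is only the symmetric combinations $\mathrm F(\mathcal X)$ of the odd Jucys--Murphy elements whose conjugation action coincides with the $S_{2n}$-equivariant endomorphisms $\mathcal O_\mathrm F$ (via $R_\nu = \mathcal O_{\mathrm F_\nu}$ and the Aker--Can generation result), and such equivariant endomorphisms are forced to preserve the space of types by \cref{prop:endo}. Once this containment and the identity $\mathrm F(\mathcal X).\mathcal D_\lambda = \mathcal O_\mathrm F(\mathcal D_\lambda)$ are granted, the representation property reduces to the short computation above, with commutativity of the $\mathcal X_i$ providing the crucial passage from composition of operators to the product in $\Lambda$.
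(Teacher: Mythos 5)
Your proposal is correct and follows essentially the same route as the paper: the well-definedness $\mathrm F(\mathcal X_1,\ldots,\mathcal X_n).\mathcal D_\nu\in I_n$ is obtained exactly as in the text, by writing $\mathcal D_\nu=R_\nu(\tau)=\mathcal O_{\mathrm F_\nu}(\tau)$ via the Aker--Can result and invoking \cref{prop:endo}, and the multiplicativity comes from commutativity of the odd Jucys--Murphy elements together with the module property of the conjugation action. The only difference is that you spell out the multiplicativity computation which the paper leaves implicit in the phrase ``this implies the following.''
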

We refer to this representation of $\Lambda$ on $I$ the \emph{zonal action.}
 As all the representation operators commute and are self-adjoint, similarly to the Schur representation, they are simultaneously diagonalizable. We refer to their common orthonormal eigenvectors as the \emph{zonal idempotents} $\check Z_\lambda, \lambda\vdash n$ (see~\cite{Macdonald, Matsumoto})\footnote{ The elements $\check Z_\lambda$ correspond to rescaled zonal polynomials.}. 

 \begin{definition}
     In the setup of the Definition~\ref{def:cont}, the zonal content of the box $\square\in \lambda$ equals
     \[
     c_Z(\square) = 2(x-1) -(y-1).
     \]
 \end{definition}

 \begin{theorem}[\cite{Matsumoto}]\label{thm:zonalcontent} For any symmetric function $\mathrm F\in \Lambda$ we have
 \[
\mathrm F(\mathcal X_1,\ldots,\mathcal X_{|\lambda|}). \check Z_\lambda = \mathrm F(\{c_Z(\square)\}_{\square \in \lambda}) \check Z_\lambda.
\]

 \end{theorem}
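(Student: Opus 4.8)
The plan is to transport the Okounkov--Vershik proof of the Schur content formula (\cref{th:schur-spectrum}) to the Gelfand pair $(S_{2n},H_n)$. The starting point is that $\mathcal O_{\mathrm F}$ is an endomorphism of the $S_{2n}$-module $M_n\cong \mathrm{Ind}_{H_n}^{S_{2n}}\mathbf 1$, and that this module decomposes \emph{multiplicity-freely} as $\bigoplus_{\lambda\vdash n}V_{2\lambda}$, where $V_{2\lambda}$ is the irreducible indexed by the partition $2\lambda$ obtained by doubling every row of $\lambda$ (multiplicity-freeness is exactly the Gelfand-pair property). By Schur's lemma $\mathcal O_{\mathrm F}$ acts as a scalar $\chi_\lambda(\mathrm F)$ on each $V_{2\lambda}$. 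Since $\mathbb B_n$ satisfies $\mathbb B_n(\mathcal O_{\mathrm F}\circ\phi)=\mathcal O_{\mathrm F}(\mathbb B_n(\phi))$ and the zonal idempotent $\check Z_\lambda$ is (proportional to) $\mathbb B_n(P_\lambda)$ for $P_\lambda$ the projector onto $V_{2\lambda}$, the theorem reduces to the single identity $\chi_\lambda(\mathrm F)=\mathrm F(\{c_Z(\square)\}_{\square\in\lambda})$.

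Since $\mathrm F\mapsto\chi_\lambda(\mathrm F)$ is an algebra homomorphism $\Lambda\to\mathbb C$, it is enough to verify this identity on a generating family, for instance the elementary symmetric functions $\mathrm e_k$ (which are directly accessible through \cref{Prop:uniquefact}) or the power sums $\mathrm p_k$. The values on the right are organised by the combinatorial identity $c_Z(x,y)=2(x-1)-(y-1)=c_S(2x-1,y)$: for a box $(x,y)\in\lambda$ the zonal content equals the ordinary Schur content of the box in the \emph{odd} column $(2x-1,y)$ of $2\lambda$. Hence $\{c_Z(\square)\}_{\square\in\lambda}$ is precisely the multiset of ordinary contents of the odd-column boxes of $2\lambda$, and the statement becomes the assertion that a symmetric function of the odd Jucys--Murphy elements $\mathcal X_i=X_{2i-1}$ reads off exactly the odd-column contents of $2\lambda$.

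To compute $\chi_\lambda(\mathrm F)$ I would run the spectral analysis of the odd Jucys--Murphy elements along the tower $(S_{2m},H_m)_{m\le n}$. The branching of zonal spherical representations along this tower is again multiplicity-free and governed by the removal of a single box, $\lambda\mapsto\lambda^-$, so iterating it produces a spherical Gelfand--Zetlin filtration indexed by standard Young tableaux of shape $\lambda$. Tracking $\mathcal O_{\mathrm e_k}$ through this filtration, with \cref{Prop:uniquefact} supplying the elementary-symmetric generating series $\prod_i(z+\mathcal X_i).\tau$, one identifies the eigenvalue contributed when passing from shape $\mu$ to $\mu\cup\{(x,y)\}$ with the zonal content $c_Z(x,y)$ of the added box; summing over a saturated chain and using symmetry of $\mathrm F$ to erase the dependence on the chosen chain yields $\mathrm F(\{c_Z(\square)\})$. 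An alternative route to the same scalar is to realise $\mathrm F(\mathcal X)$ as an $H_n$-bi-invariant function and to invoke the eigenvalue formulas for zonal polynomials (Macdonald, Ch.\ VII), which diagonalise precisely these operators on the zonal spherical functions.

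The main obstacle is exactly this spectral step. Unlike the Schur case, the individual $\mathcal X_i$ are not central in $\mathbb C[S_{2n}]$, and neither is a symmetric function of them, so one cannot simply invoke the full Jucys--Murphy content theorem for $V_{2\lambda}$; indeed the odd power sum $\sum_i\mathcal X_i^r$ selects the values $(2x-y-1)^r$ and is \emph{not} obtainable from the central full-content sum $\sum_{\square\in 2\lambda}c_S(\square)^r$ by any averaging, since it must genuinely discard the even-column boxes. This forces one to work with the spherical vectors and the Gelfand-pair branching to see that exactly half of the content data survives and assembles into the zonal contents. Two further points demand care: the well-definedness of $\mathcal O_{\mathrm F}$ (the left $H_n$-invariance of $\mathrm F(\mathcal X)e_n$) underlying the reduction, and the bookkeeping ensuring that the contribution is independent of the tableau chosen in the spherical filtration.
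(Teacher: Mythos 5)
The paper does not actually prove this statement --- it is imported verbatim from \cite{Matsumoto} --- so there is no in-paper argument to measure you against; what follows is an assessment of your sketch on its own terms. Your reduction steps are all sound and are essentially the ones underlying Matsumoto's proof: $M_n\cong\mathrm{Ind}_{H_n}^{S_{2n}}\mathbf 1\cong\bigoplus_\lambda V_{2\lambda}$ is multiplicity-free, $\mathrm F\mapsto\mathcal O_{\mathrm F}$ is an algebra homomorphism into the commutative Hecke algebra (because the $\mathcal X_i$ commute), so each $\mathcal O_{\mathrm F}$ acts by a scalar $\chi_\lambda(\mathrm F)$ on the component corresponding to $\check Z_\lambda$, and it suffices to evaluate $\chi_\lambda$ on a generating family. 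Your observation that $c_Z(x,y)=c_S(2x-1,y)$, so that the zonal contents of $\lambda$ are exactly the ordinary contents of the odd-column boxes of $2\lambda$, is correct with the paper's conventions, and you are right that this is precisely why the full Jucys--Murphy content theorem for $V_{2\lambda}$ cannot simply be averaged down --- the even-column contents must genuinely be discarded. Where your sketch is thinnest is, as you say yourself, the spectral step. Of your two proposed routes, the second (identifying $\mathrm F(\mathcal X_1,\dots,\mathcal X_n)e_n$ as an $H_n$-bi-invariant element and matching the generating series of \cref{Prop:uniquefact} against Macdonald's principal-specialization formula $\prod_{\square\in\lambda}(z+c_Z(\square))$ for zonal polynomials, Ch.~VII of \cite{Macdonald}) is the one that closes cleanly and is in substance what Matsumoto does: it pins down $\chi_\lambda(\mathrm e_k)$ for all $k$ at once, and the algebra-homomorphism property then gives the statement for all $\mathrm F$. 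The first route (a spherical Gelfand--Zetlin filtration along the tower $(S_{2m},H_m)$) is also viable --- the spherical branching is indeed multiplicity-free and governed by single-box removal, and $\mathcal X_n$ does preserve the $H_{n-1}$-isotypic filtration since it commutes with $\mathbb C[S_{2n-2}]$ --- but the identification of the eigenvalue of $\mathcal X_n$ on the $\mu$-component with $c_Z(\lambda/\mu)$ is itself the whole theorem in disguise and would need an induction base and step spelled out; as written it is a plan rather than a proof. With the Macdonald route made explicit, your argument is complete and correct.
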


The Fock space $I$ is endowed with the  inner product, obtained by the restriction of $\left<\cdot,\cdot\right>_1$ to $I_n$. Let the partition $\mu = (1^{m_1}2^{m_2}\ldots).$ We compute
\[
\langle \mathcal  D_\mu, \mathcal D_\nu\rangle_1 = \delta_{\mu,\nu}\prod_{j \ge 1} \frac 1{(2j)^{m_j}m_j!}.
\]
Said differently, the inner product $\left<\mathcal D_\mu,\mathcal D_\nu\right>_1$ equals $\frac 1{2^{|\lambda|}|\lambda|!}$ multiplied by the coefficient
\[
[\tau]. R_\mu R_\nu(\tau),
\]
indicating that this inner product arises from a trace function $\mathrm{tr}^{(1)}\colon \mathrm{End}(M_n) \to \mathbb C$ defined on the basis as
\[
\mathrm{tr}^{(1)}(R_\mu) = \begin{cases}\frac 1{2^n n!}, \mbox{ if $\mu = (1^n)$},\\ 0,\mbox{ otherwise}. \end{cases}
\]




            The zonal action on $I$ appears to be related to a real geometry enumerative problem. Let $
            \mathcal J\colon \mathbb CP^1 \to \mathbb CP^1$ be the complex conjugation. For a Riemann surface $\Sigma,$ a \emph{real structure} is an orientation reversing involution $\mathcal T,$ and the pair $(\Sigma, \mathcal T)$ is referred to as a \emph{real surface}. We say, that the holomorphic map $f$ from Riemann surface $\Sigma$ endowed with a {real structure} $\mathcal T\colon \Sigma \to \Sigma$ to $\mathbb CP^1$ is a \emph{real cover of $\mathbb CP^1$,} if $f\circ \mathcal T = \mathcal J\circ f.$ It follows from the definition, that the critical values of a real map $f$ either come in complex conjugated pairs $(a,\mathcal J(a))$, or purely real. We focus on the latter case.

            Two real covers $(\Sigma_1,\mathcal T_1,f_1)$ and $(\Sigma_2,\mathcal T_2, f_2)$ are \emph{equivalent}, if there is a holomorphic isomorphism $D\colon \Sigma_1 \to \Sigma_2$ with the properties
            \begin{itemize}
            \item $D\circ \mathcal T_1 = \mathcal T_2\circ D$ (preservation of the real structure);
            \item $D\circ f_2 = f_1$ (equivariance).
            \end{itemize}
            The map $D$ is called the \emph{equivalence} of the real covers $(\Sigma_1,\mathcal T_1,f_1)$ and $(\Sigma_2,\mathcal T_2,f_2)$.  The notion of equivalent real covers gives rise to the notion of the automorphism group $\mathrm{Aut}(\mathcal T,f)$ of the real cover, which is formed by self-equivalences. The inverse order of the automorphism group of the real cover $(\Sigma,\mathcal T,f)$ is called its \emph{weight}.

            Let $u\in \mathbb RP^1$ be a regular value of $f;$ the preimage $f^{-1}(u)$ consists of $n$ points. The restriction of the real structure $\mathcal T$ to $f^{-1}(u) $ provides it with an involution $\mathrm t\colon f^{-1}(u) \to f^{-1}(u)$. It was proven in~\cite{Guay-Paquet-Markwig-Rau, Lozhkin}, by adapting the classical complex argument for the real case, that the weighted number of possibly disconnected real covers ramified over the points $(p_1,\ldots,p_k) \in \mathbb RP^1 $  can be computed using the following theorem:
            \begin{theorem}[\cite{Guay-Paquet-Markwig-Rau}]\label{th:realHN}
            The weighted number of possibly disconnected real covers of $(\mathbb CP^1,\mathcal J)$ of degree $d$ ramified over the points $(p_1,\ldots,p_k) \in \mathbb RP^1 $, with the corresponding ramification profiles $\nu_1,\ldots,\nu_k \vdash d$ is $\frac 1{d!}$ multiplied by the number of tuples
            \[
            (\mathrm t;\varsigma_1,\ldots,\varsigma_k)
            \]
            of elements of $S_{d}$ with the following properties:
            \begin{itemize}
            \item $\varsigma_i$ is of cycle type $\nu_i$ for all $i  = 1,\ldots,k$;
            \item $\mathrm t$ is an involution (that is $\mathrm t^2 = \mathrm{id}$);
            \item the product $\varsigma_k\cdots \varsigma_1$ is the unit element of $S_{d}$;
            \item the partial products $\varsigma_i \cdots \varsigma_1 \mathrm t$ are  involutions for all $i = 1,\ldots,k$.
            \end{itemize}
            \end{theorem}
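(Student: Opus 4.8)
The plan is to adapt the classical monodromy correspondence for branched covers to the real setting, establishing a bijection between labelled covers and the tuples in the statement. Fix a real regular value $u_0\in\mathbb{RP}^1$ and identify the fibre $f^{-1}(u_0)$ with $\{1,\ldots,d\}$; the factor $\frac{1}{d!}$ will ultimately account for this choice of labelling. For the complex cover underlying a real cover $(\Sigma,\mathcal T,f)$, Riemann existence theory furnishes a bijection between isomorphism classes of degree $d$ covers branched over $p_1,\ldots,p_k$ with profiles $\nu_1,\ldots,\nu_k$ and tuples $(\varsigma_1,\ldots,\varsigma_k)$ of permutations with $\mathcal C(\varsigma_i)=\nu_i$ and $\varsigma_k\cdots\varsigma_1=\mathrm{id}$, taken up to simultaneous conjugation. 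Here $\varsigma_i$ is the monodromy around a standard loop $\gamma_i$ based at $u_0$, and the relation $\varsigma_k\cdots\varsigma_1=\mathrm{id}$ reflects the single relation in $\pi_1(\mathbb{CP}^1\setminus\{p_1,\ldots,p_k\},u_0)$. This already yields the first three conditions; note that, since we count possibly disconnected covers, no transitivity is imposed.

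The new ingredient is the real structure. Since $u_0$ is real and $f\circ\mathcal T=\mathcal J\circ f$, the restriction of $\mathcal T$ to $f^{-1}(u_0)$ is an involution $\mathrm t$ of $\{1,\ldots,d\}$. To extract the partial-product conditions, I would order the branch points along $\mathbb{RP}^1$ and, for each $i$, choose a real auxiliary point $q_i$ between $p_i$ and $p_{i+1}$ together with a path $\alpha_i$ from $u_0$ to $q_i$ running through the upper half-plane. Path lifting gives transports $T_{\alpha_i}$ and $T_{\bar\alpha_i}$ along $\alpha_i$ and its conjugate $\bar\alpha_i=\mathcal J\circ\alpha_i$ (running through the lower half-plane). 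The equivariance $f\circ\mathcal T=\mathcal J\circ f$ translates into the compatibility $\mathrm t_{q_i}\circ T_{\alpha_i}=T_{\bar\alpha_i}\circ\mathrm t$, where $\mathrm t_{q_i}$ is the involution induced by $\mathcal T$ on $f^{-1}(q_i)$. Pulling $\mathrm t_{q_i}$ back to $u_0$ along $T_{\alpha_i}$ produces $T_{\alpha_i}^{-1}T_{\bar\alpha_i}\,\mathrm t$, and the loop $\alpha_i^{-1}\cdot\bar\alpha_i$ encircles precisely $p_1,\ldots,p_i$, so this permutation equals $\varsigma_i\cdots\varsigma_1\,\mathrm t$ (up to the standard orientation convention). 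Being conjugate to the genuine involution $\mathrm t_{q_i}$, it is itself an involution, which is exactly the desired condition for every $i$; the cases $i=0$ and $i=k$ recover respectively $\mathrm t^2=\mathrm{id}$ and the compatibility with $\varsigma_k\cdots\varsigma_1=\mathrm{id}$.

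Conversely, given a tuple satisfying all four conditions, I would first reconstruct $f\colon\Sigma\to\mathbb{CP}^1$ from $(\varsigma_1,\ldots,\varsigma_k)$ by Riemann existence, and then build $\mathcal T$ sheet by sheet: over the upper half-plane $\mathcal T$ is forced to carry the lift determined by $\mathrm t$ to the conjugate lift over the lower half-plane, and the involution conditions on all partial products $\varsigma_i\cdots\varsigma_1\,\mathrm t$ are precisely what guarantees that these local prescriptions glue to a well-defined anti-holomorphic involution with $\mathcal T^2=\mathrm{id}$ and $f\circ\mathcal T=\mathcal J\circ f$. The main obstacle is exactly this gluing step: one must check that the involution data, prescribed only along the real axis, extends consistently across each branch point, and that the resulting $\mathcal T$ is independent of the auxiliary choices $q_i,\alpha_i$. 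This is where the full chain of involution conditions is used essentially rather than superficially, and where the equivalence between the geometric and combinatorial pictures is genuinely verified.

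Finally, I would pass from the labelled count to the weighted count. The group $S_d$ acts on the set of tuples $(\mathrm t;\varsigma_1,\ldots,\varsigma_k)$ by simultaneous conjugation; its orbits correspond to isomorphism classes of real covers, while the stabiliser of a tuple is the common centraliser of $\mathrm t,\varsigma_1,\ldots,\varsigma_k$, which is identified with $\mathrm{Aut}(\mathcal T,f)$. By orbit--stabiliser, summing $\frac{1}{|\mathrm{Aut}(\mathcal T,f)|}$ over isomorphism classes equals $\frac{1}{d!}$ times the number of tuples, yielding the claimed formula.
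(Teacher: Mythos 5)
The paper does not prove this theorem itself: it is imported from \cite{Guay-Paquet-Markwig-Rau} (and \cite{Lozhkin}), with the proof there described as ``adapting the classical complex argument for the real case'', which is exactly what your proposal does — Riemann existence plus monodromy for the underlying complex cover, the real structure read off on a real fibre, the partial-product involution conditions extracted by comparing transport along conjugate paths, and orbit–stabiliser for the $\frac{1}{d!}$ weighting. Your argument is correct and follows the same route as the cited source; the only place you stop short of full detail is the gluing/consistency check in the converse direction, but you have correctly identified that this is precisely where the full chain of involution conditions is used.
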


            We note here, that the operators $\mathcal O_\mathrm F$ also appear in the real geometry in a natural way -- they are related to real covers of $(\mathbb CP^1,\mathcal J)$ ramified over a pair of complex conjugated points (see~\cite{Lozhkin} for details). Yu. Burman and R. Fesler use this correspondence in~\cite{burman2024real}, identifying a cover with a real critical value with a limit of covers with two complex conjugated critical values tending towards a point on $\mathbb RP^1$. 
            
            In the following, we focus on the case, when the real locus of the covering real surface $(\Sigma,\mathcal T)$ is empty. It implies that the degree of $f$ is even and equals $2n$, and the involution $\mathrm t$ and all the partial products $\varsigma_i \cdots \varsigma_1 \mathrm t$ are fixed point-free involutions (see the relation between the behaviour of the real structure over a point of $\mathbb RP^1$ and the corresponding involutions in \cite{Guay-Paquet-Markwig-Rau}). We can choose the labelling of the sheets of $f^{-1}(u)$ in such a way that the corresponding involution $\mathrm t = \tau,$ and that replaces the factor $\frac 1{d!}$ in the statement of Theorem~\ref{th:realHN} by $\frac 1{2^n n!}$, which is the inverse order of the centralizer of $\tau$. Moreover, recall, that we have defined a \emph{type} of a fixed point-free involution $\rho$ basing on the cycle type of the product $\rho\tau.$ In particular, it means that the products $\varsigma_i\cdots\varsigma_1$ for all $i = 1,\ldots,k$ can be of a specific cycle type only.
            Yu. Burman and R. Fesler in  \cite{burman2021ribbon} give a description of such transpositions:
            \begin{proposition}[\cite{burman2021ribbon}]
                A permutation $\varsigma\in S_{2n}$ is such that $\varsigma\tau$ is a fixed point-free involution if and only if the set of independent cycles of $\varsigma$ is naturally split into pairs: for an independent cycle $c$  of $\varsigma,$  $\tau c^{-1} \tau$ is also an independent cycle of $\varsigma$ disjoint from $c$.  
            \end{proposition}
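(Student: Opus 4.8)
The plan is to translate the two requirements on $\rho\coloneqq\varsigma\tau$ --- being an involution and being fixed-point-free --- into statements about the cycle decomposition of $\varsigma$, and to show that together they amount exactly to the asserted pairing. Since $\tau^2=\mathrm{id}$, the map $\rho$ is an involution precisely when $\varsigma\tau\varsigma\tau=\mathrm{id}$, i.e. when $\tau\varsigma\tau^{-1}=\varsigma^{-1}$. First I would unwind this conjugation--inversion identity at the level of cycles: if $\varsigma=\prod_i c_i$ is the decomposition into disjoint cycles, then conjugation by $\tau$ replaces each entry $a$ of a cycle by $\bar a$, so $\tau c_i\tau^{-1}=(\bar a_1\,\cdots\,\bar a_k)$ for $c_i=(a_1\,\cdots\,a_k)$, while inversion sends $c_i$ to $c_i^{-1}$. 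Equating the two multisets of cycles shows that $\tau\varsigma\tau^{-1}=\varsigma^{-1}$ holds if and only if for every cycle $c$ of $\varsigma$ the cycle $\tau c^{-1}\tau$ is again a cycle of $\varsigma$; that is, the set of independent cycles is closed under the involution $\phi\colon c\mapsto\tau c^{-1}\tau$.

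Next I would observe that $\phi$ squares to the identity on the finite set of cycles of $\varsigma$, so its orbits have size $1$ or $2$, and any size-$2$ orbit automatically consists of two distinct --- hence disjoint --- cycles. Thus the condition ``the cycles split into pairs $\{c,\tau c^{-1}\tau\}$ with disjoint members'' is equivalent to ``$\phi$ preserves the cycle set and has no fixed cycle''. In view of the previous paragraph it then remains to show that, \emph{given} that $\rho$ is an involution, $\rho$ is fixed-point-free if and only if $\phi$ has no fixed cycle. To localise this, note that each $\phi$-orbit spans a support $U$ that is $\varsigma$-invariant (a union of cycles) and $\tau$-invariant (either $U=\operatorname{supp}(c)\sqcup\tau(\operatorname{supp}(c))$ for a pair, or $U=\operatorname{supp}(c)$ with $\operatorname{supp}(c)$ forced to be $\tau$-invariant for a fixed cycle), hence $\rho$-invariant; these supports partition $[\bar n]$, so $\rho$ is a fixed-point-free involution if and only if each restriction $\rho|_U$ is.

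I would then treat the two kinds of orbits. On a pair support $U=\operatorname{supp}(c)\sqcup\tau(\operatorname{supp}(c))$ a short computation gives $\rho(x)=\overline{c^{-1}(x)}$ for $x\in\operatorname{supp}(c)$ and $\rho(\bar z)=c(z)$, exhibiting $\rho|_U$ as the product of the disjoint transpositions $(x,\overline{c^{-1}(x)})$; in particular $\rho|_U$ is fixed-point-free. On a fixed-cycle support $S=\operatorname{supp}(c)$ with $\tau c\tau=c^{-1}$, the set $S$ is $\tau$-invariant and hence of even size $2m$; identifying $S$ with $\mathbb{Z}/2m\mathbb{Z}$ so that $c$ is the shift $i\mapsto i+1$, the relation $\tau c\tau=c^{-1}$ forces $\tau|_S$ to be a reflection $i\mapsto b-i$, and fixed-point-freeness of $\tau|_S$ forces $b$ to be odd. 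Then $\rho|_S\colon i\mapsto b+1-i$ is a reflection with \emph{even} parameter $b+1$, which necessarily has (exactly two) fixed points. Hence $\rho|_U$ has a fixed point precisely on the fixed-cycle orbits, which completes the equivalence: $\rho$ is a fixed-point-free involution if and only if the cycle set is closed under $\phi$ with no fixed cycle, if and only if the cycles split into the asserted disjoint pairs. Trivial cycles need no separate treatment, since $\phi$ sends a $1$-cycle $(x)$ to $(\bar x)\neq(x)$, so it always lies in a genuine pair.

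The formal reductions in the first two paragraphs are routine; the only real content is the fixed-cycle analysis, and the crux is the parity dichotomy for reflections of the $2m$-gon --- that $\tau|_S$ and $\rho|_S$ are reflections whose parameters have opposite parity, so that exactly one of them is fixed-point-free. This is the step I expect to demand the most care, since it is where the ``fixed-point-free'' hypothesis genuinely forces the interaction with the cycle length being even.
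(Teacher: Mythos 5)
The paper does not prove this proposition: it is quoted verbatim from Burman--Fesler \cite{burman2021ribbon} with no argument supplied, so there is nothing internal to compare your proof against. Judged on its own, your proof is correct and complete. The reduction of the involution condition to $\tau\varsigma\tau^{-1}=\varsigma^{-1}$, and hence (by uniqueness of the disjoint cycle decomposition, with fixed points read as $1$-cycles) to closure of the cycle set under $\phi\colon c\mapsto\tau c^{-1}\tau$, is sound; so is the observation that the $\phi$-orbit supports are $\varsigma$- and $\tau$-invariant blocks partitioning $[\bar n]$, which localises the fixed-point question. The two block computations check out: on a pair block $\rho$ acts by $x\mapsto\overline{c^{-1}(x)}$ and is manifestly a fixed-point-free involution, and on a $\phi$-fixed cycle the coset argument (solutions of $\sigma c\sigma^{-1}=c^{-1}$ form the reflection coset of $\langle c\rangle$) correctly forces $\tau|_S$ to be a reflection $i\mapsto b-i$ with $b$ odd, whence $\rho|_S\colon i\mapsto (b+1)-i$ has exactly two fixed points in $\mathbb{Z}/2m\mathbb{Z}$. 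The parity dichotomy you flag as the crux is exactly right and is handled correctly, including the degenerate cases ($1$-cycles are never $\phi$-fixed since $\bar x\neq x$, and a $\phi$-fixed $2$-cycle such as $(x\,\bar x)$ gives $\rho|_S=\mathrm{id}$, consistent with your formula).
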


            So all the permutations $\varsigma_i$ have the cycle type  of the form $(1^{2m_1}2^{2m_2}3^{2m_3}\ldots)$ for some non-negative integers $m_1,m_2,\ldots$ Keeping the data about the half of the parts of the corresponding partition only, we get the notion of \emph{the reduced ramification profile} $\mu_i \vdash n.$ Moreover, the reduced ramification profile of the permutation $\varsigma_1$ that contributes to the real covers with empty real part count, is precisely the \emph{type} of the fixed point-free involution $\varsigma_1\tau.$ Translating the tuples of permutation counting problem to the language of representation theory, we obtain

            \begin{theorem}
                The weighted number of possibly disconnected real covers with empty real locus of $(\mathbb CP^1,\mathcal J)$ of degree $2n,$ ramified over the points $(p_1,\ldots,p_k)\in \mathbb RP^1,$ with the corresponding reduced ramification profiles $\mu_1,\ldots,\mu_k$, is given by 
                \[
                 \sum_{\begin{smallmatrix} [f]\\ f \colon (\Sigma,\mathcal T)\to (\mathbb CP^1,\mathcal J)\\ \mbox{\small real locus of $(\Sigma,\mathcal T)$ is empty} \\p_i\in \mathbb RP^1, r(p_i) = \mu_i\cup \mu_i 
,\ \forall i= 1,\ldots,n\\ f  \mbox{ \small is unramified over } \mathbb CP^1\setminus \{p_1,\ldots,p_k\}\end{smallmatrix}} \frac 1{|\mathrm{Aut}(f)|} = \mathrm{tr}^{(1)} \left( R_{\mu_1}\cdots R_{\mu_k}\right).
                \]
                In particular, this number does not depend on the order in which  the points $(p_1,\ldots,p_k)$ are distributed along $\mathbb RP^1$.
            \end{theorem}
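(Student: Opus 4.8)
The plan is to reduce the weighted cover count, via \cref{th:realHN}, to a factorisation count, and then to recognise that count as the coefficient of $\tau$ in $(R_{\mu_1}\cdots R_{\mu_k})(\tau)$, which the trace $\mathrm{tr}^{(1)}$ extracts up to the factor $\tfrac{1}{2^nn!}$. Following the discussion preceding the statement, the emptiness of the real locus forces the degree to be $2n$ and lets us fix the distinguished involution $\mathrm t=\tau$, so that \cref{th:realHN} rewrites the left-hand side as $\tfrac{1}{2^nn!}$ times the number of tuples $(\varsigma_1,\dots,\varsigma_k)$ with $r(\varsigma_i)=\mu_i\cup\mu_i$, with $\varsigma_k\cdots\varsigma_1=\mathrm{id}$, and with every partial product $\varsigma_i\cdots\varsigma_1\tau$ a fixed point-free involution.

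First I would encode such a tuple as a \emph{chain} of fixed point-free involutions. Putting $\rho_0=\tau$ and $\rho_i=\varsigma_i\cdots\varsigma_1\tau$, the hypotheses translate into $\rho_0=\rho_k=\tau$, each $\rho_i\in M_n$ a fixed point-free involution, and $\rho_i=\varsigma_i\rho_{i-1}$, whence $\varsigma_i=\rho_i\rho_{i-1}$ and $r(\rho_i\rho_{i-1})=\mu_i\cup\mu_i$. Conversely, any chain $\tau=\rho_0,\rho_1,\dots,\rho_{k-1},\rho_k=\tau$ of fixed point-free involutions with $r(\rho_i\rho_{i-1})=\mu_i\cup\mu_i$ reconstructs the tuple through $\varsigma_i:=\rho_i\rho_{i-1}$, and the involutivity of the $\rho_i$ telescopes $\varsigma_k\cdots\varsigma_1=\rho_k\rho_0=\mathrm{id}$. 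This is a bijection, so the tuple count equals the number of such chains, i.e.\ the number of intermediate tuples $(\rho_1,\dots,\rho_{k-1})$.

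Next I would match the chain count with the operators $R_{\mu_i}$. A product of two fixed point-free involutions always has doubled cycle type, so from the definition of $R_\nu$ one gets, for any fixed point-free involution $\rho$, the clean formula $R_\nu(\rho)=\sum_{\rho'}\rho'$, the sum over fixed point-free involutions $\rho'$ with $r(\rho'\rho)=\nu\cup\nu$, each occurring with coefficient $1$ since the only candidate factor $\rho'\rho$ is unique. Iterating this and using that the $R_{\mu_i}$ commute (they lie in the commutative Hecke algebra $\mathrm{End}(M_n)$), one sees that $[\tau].(R_{\mu_1}\cdots R_{\mu_k})(\tau)$ counts exactly the chains above. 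It then remains to record the identity $\mathrm{tr}^{(1)}(\mathcal O)=\tfrac{1}{2^nn!}[\tau].\mathcal O(\tau)$: since $\mathcal D_{(1^n)}=R_{(1^n)}(\tau)=\tau$ while every other $\mathcal D_\lambda$ is supported on involutions $\ne\tau$, expanding $\mathcal O(\tau)=\sum_\lambda c_\lambda\mathcal D_\lambda$ and transporting along $\mathbb B_n$ to $\mathcal O=\sum_\lambda c_\lambda R_\lambda$ gives $\mathrm{tr}^{(1)}(\mathcal O)=c_{(1^n)}\tfrac1{2^nn!}=\tfrac1{2^nn!}[\tau].\mathcal O(\tau)$. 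Assembling these identifications yields the asserted equality, and the independence of the order of $p_1,\dots,p_k$ is immediate from the commutativity invoked above.

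I expect the main obstacle to be this third step: pinning down the clean sum-over-targets description of $R_\nu$ on involutions --- establishing that products of fixed point-free involutions have doubled cycle type, that each target occurs with multiplicity exactly one, and that no non-involutive terms intrude --- together with the careful identification of $\mathrm{tr}^{(1)}$ with the coefficient of $\tau$ through $\mathbb B_n$. The reduction of the first paragraph and the bijection of the second are, by contrast, essentially bookkeeping once \cref{th:realHN} and the preceding reductions are granted.
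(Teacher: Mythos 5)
Your argument is correct and follows the same route the paper intends: reduce to the factorisation count of \cref{th:realHN} with $\mathrm t=\tau$ and the factor $\tfrac{1}{2^n n!}=\tfrac{1}{|H_n|}$, reinterpret the tuples as chains of fixed point-free involutions, and match these with the iterated action of the $R_{\mu_i}$ and the trace $\mathrm{tr}^{(1)}$. The paper leaves this translation implicit ("Translating the tuples of permutation counting problem to the language of representation theory, we obtain\dots"), and your write-up simply supplies the details --- including the correct observation that $R_\nu(\rho)=\sum_{\rho'\colon r(\rho'\rho)=\nu\cup\nu}\rho'$ with multiplicity one and that $\mathrm{tr}^{(1)}(\mathcal O)=\tfrac{1}{2^nn!}[\tau].\mathcal O(\tau)$ --- so no further comment is needed.
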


            We can restrict ourselves to the simplest possible case, when all the ramification profiles but the first one and the last one are $(1^{n-2}2^1),$ trying to state a problem resembling the classical double simple Hurwitz numbers problem. A straightforward rephrasing of the previous theorem to the double simple Hurwitz case produces:

         \begin{proposition} For $\mu,\nu \vdash n$ the inner product
         \[
         \left<\mathcal D_\mu, {\mathrm p_1^k(\mathcal X_1,\ldots,\mathcal X_n)}.\mathcal D_\nu\right>_1,
         \]
         where $\mathrm p_1$ stands for the first power sum, equals the weighted  number of possibly disconnected real covers with empty real locus of $(\mathbb CP^1,\mathcal J)$ with the following properties:
         \begin{itemize}
         \item The critical values of the covers are fixed distinct points $p_1,\ldots,p_k \in \mathbb RP^1$;
         \item The reduced ramification profile over the point $p_1$ is $\nu\vdash n$, the reduced ramification profile over $p_k$ is $\mu \vdash n$, over the remaining points $p_i, i \ne 1,k$ the reduced ramification profile is $(1^{n-2}2)$.
         \end{itemize}
         \end{proposition}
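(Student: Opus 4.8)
The plan is to recognise the inner product as a matrix coefficient of a composite of the endomorphisms $R_\lambda$ and then to invoke the real covers count theorem established immediately above. I would combine three ingredients already at our disposal: that the zonal action is a genuine representation of $\Lambda$, so that $\mathrm p_1^k(\mathcal X_1,\ldots,\mathcal X_n).(-)$ is the $k$-fold composite $\mathcal O_{\mathrm p_1}^{k}$; the identifications $\mathcal O_{\mathrm p_1}=R_{(1^{n-2}2^1)}$ and $R_\lambda(\tau)=\mathcal D_\lambda$; and the self-adjointness of the $R_\lambda$ with respect to $\langle\cdot,\cdot\rangle_1$ from \cref{prop:self-adj}.

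First I would rewrite the acting symmetric function. By the zonal action, $\mathrm p_1^k(\mathcal X_1,\ldots,\mathcal X_n).\mathcal D_\nu=\mathcal O_{\mathrm p_1^k}(\mathcal D_\nu)=\mathcal O_{\mathrm p_1}^{k}(\mathcal D_\nu)$, and since $\mathcal O_{\mathrm p_1}=R_{(1^{n-2}2^1)}$ this equals $R_{(1^{n-2}2^1)}^{k}R_\nu(\tau)$. Writing also $\mathcal D_\mu=R_\mu(\tau)$ and moving $R_\mu$ across the pairing by \cref{prop:self-adj}, I obtain
\[
\langle\mathcal D_\mu,\,\mathrm p_1^k(\mathcal X_1,\ldots,\mathcal X_n).\mathcal D_\nu\rangle_1=\langle\tau,\,R_\mu\,R_{(1^{n-2}2^1)}^{k}\,R_\nu(\tau)\rangle_1.
\]

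Next I would identify the right-hand side with the trace functional $\mathrm{tr}^{(1)}$. Since $\tau$ is precisely the fixed point-free involution of type $(1^n)$, the assignment $\phi\mapsto\langle\tau,\phi(\tau)\rangle_1$ reproduces, on the basis $\{R_\lambda\}_{\lambda\vdash n}$ of $\mathrm{End}(M_n)$, exactly the defining values of $\mathrm{tr}^{(1)}$; hence by linearity $\langle\tau,\phi(\tau)\rangle_1=\mathrm{tr}^{(1)}(\phi)$ for every endomorphism $\phi$, and the displayed quantity equals $\mathrm{tr}^{(1)}\bigl(R_\mu R_{(1^{n-2}2^1)}^{k}R_\nu\bigr)$. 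Applying the real covers theorem above to this product then identifies the trace with the weighted number of possibly disconnected real covers with empty real locus whose reduced ramification profiles are $\nu$, the simple profile $(1^{n-2}2)$ repeated $k$ times, and $\mu$; the order-independence asserted there lets me place $\nu$ over $p_1$, $\mu$ over the final critical value, and the simple profiles in between, as claimed.

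Since all the needed statements are already proved, there is no deep obstacle; the only point demanding care is the bookkeeping in the first step. One must verify that the multiplicativity of the action legitimately replaces $\mathrm p_1^k(\mathcal X_1,\ldots,\mathcal X_n)$ by the honest $k$-fold composite $R_{(1^{n-2}2^1)}^{k}$, and that this exponent $k$ counts exactly the intermediate simple branch points while the two distinguished factors $R_\mu$ and $R_\nu$ supply the profiles over the remaining two critical values. Once the factorisation of the endomorphism is pinned down, the passage to $\mathrm{tr}^{(1)}$ and the appeal to the geometric count are immediate.
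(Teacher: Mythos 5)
Your proof is correct and follows essentially the same route as the paper, which presents this proposition as a direct specialisation of the preceding theorem on real covers with empty real locus: your explicit chain $\mathrm p_1^k(\mathcal X_1,\ldots,\mathcal X_n).\mathcal D_\nu=\mathcal O_{\mathrm p_1}^{k}R_\nu(\tau)=R_{(1^{n-2}2^1)}^{k}R_\nu(\tau)$, combined with the self-adjointness of $R_\mu$ and the identity $\langle\tau,\phi(\tau)\rangle_1=\mathrm{tr}^{(1)}(\phi)$, is exactly the intended bookkeeping behind the paper's ``straightforward rephrasing.'' The one caveat is a labelling mismatch that sits in the paper's statement rather than in your argument: your computation yields $\mathrm{tr}^{(1)}\bigl(R_\mu R_{(1^{n-2}2^1)}^{k}R_\nu\bigr)$, i.e.\ $k$ simple branch points plus the two special ones ($k+2$ critical values in total), whereas the proposition as worded allots only $k-2$ simple points among $k$ critical values, so the exponent and the point count are off by two in the statement you are asked to match.
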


          This version of Hurwitz numbers, enumerating real covers with the special properties, is known in the literature as the \emph{twisted simple Hurwitz numbers} (~\cite{burman2024real, HMtwisted1}). The tropical formalism for computation of twisted simple Hurwitz numbers is presented in~\cite{HMtwisted1}.

Generalizing the above, we can choose a weight generating function $G(z)$ with $G(0) = 1,$ and  consider the following element of $I\otimes I[[\hbar]]:$
\[
\mathcal T^G(\hbar) = \sum_{\lambda}{\check Z_\lambda \otimes \check Z_\lambda} \prod_{\square \in \lambda}  G(\hbar c_Z(\square)),
\] where $\check Z_\lambda$ is the corresponding zonal idempotent, and $G$ is a weight generating function. Similarly to the Schur case, using Theorem~\ref{thm:zonalcontent}, orthogonality of generators $D_\mu,$ and self-adjointness of $\mathcal G(\hbar \mathcal X_1,\hbar \mathcal X_2,\ldots,\hbar \mathcal X_n) = \prod_{k\ge 1} G(\hbar\mathcal X_k),$ we can rewrite the element $\mathcal T^G(\hbar)$ as
\[
\mathcal T^{G}(\hbar) = \sum_{n \ge 0}\sum_{\mu, \nu\vdash n} \left< \mathcal D_\mu, \mathcal G(\hbar \mathcal X_1,\hbar \mathcal X_2,\ldots,\hbar \mathcal X_n) \mathcal D_\nu\right>_1 \frac {\mathcal D_\mu}{\left< \mathcal D_\mu,\mathcal D_\mu\right>_1} \otimes \frac{\mathcal D_\nu}{\left< \mathcal D_\nu,\mathcal D_\nu \right>_1}.
\]

\begin{definition}\label{def:real}
    The expansion coefficients $[\hbar^k].\left< D_\mu, \mathcal G(\hbar \mathcal X_1,\ldots,\hbar \mathcal X_n) D_\nu\right>_1$ are called the \emph{not necessarily connected purely real Hurwitz numbers} $H_k^{G,\bullet}\left(\begin{smallmatrix}
    \nu\\ \mu
\end{smallmatrix};1\right).$
\end{definition}

Similarly to the complex case, purely real Hurwitz numbers enumerate real covers with the ramification data restricted by the weight generating function $G$. The interpretation of purely real Hurwitz numbers in terms of odd Jucys-Murphy elements allows to compute them using the combinatorics of symmetric group.

Similarly to the Schur case, we  define the following element of the ring $\mathbb C [[\mathrm p_1,\mathrm p_2,\ldots;\mathrm q_1,\mathrm q_2,\ldots;\hbar]]$
\[
\mathbf t^G = \sum_{k \ge 0}\sum_{\nu,\mu} \hbar ^k  H_k^{G,\bullet}\left(\begin{smallmatrix} \nu \\ \mu \end{smallmatrix};1 \right) \mathrm q_\nu \mathrm p_\mu.
\]
The coefficients of the expansion of the logarithm of $\mathrm t^G$
\[
 H_k^G\left( \begin{smallmatrix}
    \nu \\ \mu
\end{smallmatrix};1\right) = [\hbar^k \mathrm q_\nu \mathrm p_\lambda] \log{\mathrm t^G}
\]
are naturally called \emph{(transitive or connected) purely real Hurwitz numbers.} As in the case of the complex Hurwitz numbers, connected purely real Hurwitz numbers enumerate irreducible real covers\footnote{A way to state the irreducibility of a real cover is the following: the quotient of the covering space modulo its real structure is connected.}.    

\subsection{$b$-Hurwitz numbers}\label{sec:b-hurwitz} The notion of $b$-Hurwitz numbers was introduced by G. Chapuy and M. Doł{\k e}ga
in~\cite{Chapuy-Dolega} and developed in~\cite{Bonzom-Chapuy-Dolega} (see also~\cite{Ch-Dolega-Osuga1, Ch-Dolega-Osuga2}).
To introduce the $b$-Hurwitz numbers we consider the Fock space of symmetric functions $\Lambda,$ endowed with the basis of Newton power-sums $\mathrm p_\mu.$ We declare the Newton power-sums orthogonal with respect to an inner product $(\cdot,\cdot)$ depending on the parameter $b\in \mathbb C$, defined by\footnote{ Macdonald in~\cite{Macdonald} uses the parameter $a = b-1$ instead. However, in the literature on  the $b$-Hurwitz numbers, the parameter $b$ is more common. Notice, that for $b> -1$ this inner product is positively defined.}. 
\[
\left( \mathrm p_\mu, \mathrm p_\nu\right)_b = \delta_{\mu,\nu} \prod_{j \ge 1} { (1+b)^{m_j}j^{m_j} m_j!}.
\]

 The \emph{Jack functions} $ J^{(b)}_\lambda$ for partitions $\lambda$ are orthogonal with respect to $(\cdot,\cdot)$, and are upper-triangular in the monomial basis with respect to the  dominance order of integer partitions (see, e.g.~\cite{Macdonald} for a precise definition). The Jack functions specialize to the Schur functions $s_\lambda$ at $b=0$, and to the zonal functions $Z_\lambda$ at $b = 1.$ 

 Chapuy and Doł{\k e}ga introduce the following generating function
 \[
 \hat F(t,\hbar) = \sum_{n \ge 0} t^n \sum_{\lambda \vdash n} \frac{J_\lambda^{(b)} \otimes J_\lambda^{(b)} }{\mathrm j_\lambda^{(b)}}\prod_{\square \in \lambda} G(\hbar c_{b}(\square)).
 \]
 Here $c_b(\square)$ is the $b$-content defined in the setup of Definition~\ref{def:cont} as
 \[
 c_b(\square) = (b+1)(x-1) - (y-1);
 \]
 and $\mathrm j_\lambda^{(b)} = \left( J_\lambda^{(b)},J_\lambda^{(b)}\right).$ The coefficients of decomposition of this functions in terms of $p_\nu \otimes p_\mu$ were identified with the weighted count of \emph{generalized covers} provided with a \emph{measure of non-orientability (MON)} (see~\cite{Chapuy-Dolega} for the details). We denote these coefficients as $H_k^{G,\bullet}\left(\begin{smallmatrix} \nu \\ \mu \end{smallmatrix};b \right),$ and the corresponding connected numbers (the coefficients of the logarithm of the generating function) by $H_k^{G}\left(\begin{smallmatrix} \nu \\ \mu \end{smallmatrix};b \right).$
 Despite of strong resemblance to the Schur and zonal cases, the analogues of the Jucys-Murphy elements for the Jack functions, that would allow producing the cut-and-join equations from the representation-theoretical considerations, is missing. Their existence is assumed by the Coulter-Do conjecture mentioned in Subsection~\ref{subsec:symm}. The cut-and-join equations for the weight generating functions $G(z) = e^z$ (double case, see \cite{Chapuy-Dolega}) and rational weight generating functions (see~\cite{Bonzom-Chapuy-Dolega} and~\cite{Ch-Dolega-Osuga1}) are known, but their derivation is not based on the combinatorics of permutations, and the corresponding operators in non-polynomial weight generating function cases usually lead to infinite sums.

\subsection{Preliminaries on tropical curves and covers}\label{sec:tropprelim}

We start by introducing tropical curves, and then discuss their covers.

\begin{definition}[Abstract tropical curves]
An abstract tropical curve is a graph $\Gamma$ with the following data:
\begin{enumerate}
    \item The vertex set of $\Gamma$ is denoted by $V(\Gamma)$ and the edge set of $\Gamma$ is denoted by $E(\Gamma)$.
    \item The $1$-valent vertices of $\Gamma$ are called \textit{leaves} and the edges adjacent to leaves are called \textit{ends}.
    \item The set of edges $E(\Gamma)$ is partitioned into the set of ends $E^\infty(\Gamma)$ and the set of \textit{internal edges} $E^0(\Gamma)$.
    \item There is a length function
    \begin{equation}
        \ell\colon E(\Gamma)\to\mathbb{R}\cup\{\infty\},
    \end{equation}
    such that $\ell^{-1}(\infty)=E^\infty(\Gamma)$.
\end{enumerate}
The genus of a connected abstract tropical curve $\Gamma$ is defined as the first Betti number of the underlying graph, i.e.\ $g=1+ |E^0(\Gamma)|- |V(\Gamma)|$. The genus of an abstract tropical curve with $c$ connected components of genus $g_1,\ldots,g_c$ equals $g=\sum g_i-c+1$.

An isomorphism of abstract tropical curves is an isomorphism of the underlying graphs that respects the length function. The combinatorial type of an abstract tropical curve is the underlying graph without the length function.
\end{definition}

We are now ready to define the notion of a tropical cover. We restrict to the case where the target $\Gamma_2$ is a subdivided version of $\mathbb{R}$, i.e.\ a line with some $2$-valent vertices.

\begin{definition}[Tropical covers]
A tropical cover of a subdivided version of $\mathbb{R}$, $\Gamma_2$, is a surjective harmonic map between abstract tropical curves $\pi\colon \Gamma_1\to\Gamma_2$, i.e.:
\begin{enumerate}
    \item We have $\pi(V(\Gamma_1))\subset V(\Gamma_2)$.
    \item Let $e\in E(\Gamma_1)$. Then, we interpret $e$ and $\pi(e)$ as intervals $[0,\ell(e)]$ and $[0,\ell(\pi(e))]$ respectively. We require $\pi$ restricted to $e$ to be a bijective integer linear function $[0,\ell(e)]\to[0,\ell(\pi(e))]$ given by $t\mapsto \omega(e)\cdot t$, with $\omega(e) \in \mathbb{Z}$. If $\pi(e)\in V(\Gamma_2)$, we define $\omega(e)=0$. We call $\omega(e)$ the \textit{weight} of $e$.
    \item For a vertex $v\in V(\Gamma_1)$, we denote by $\mathrm{Inc}(v)$ the set of incoming edges at $v$ (edges adjacent to $v$ mapping to the left of $\pi(v)$) and by $\mathrm{Out}(v)$ the set of outgoing edges at $v$ (edges adjacent to $v$ mapping to the right of $\pi(v)$). We then require
    \begin{equation}
            \sum_{e\in\mathrm{Inc}(v)}\omega(e)=\sum_{e\in\mathrm{Out}(v)}\omega(e).
    \end{equation}
    This number is called the local degree of $\pi$ at $v$.
    We call this equality the \textit{harmonicity} or \textit{balancing condition}.
    For a point $v$ in the interior of an edge $e$ of $\Gamma_1$, the local degree of $\pi$ at $v$ is defined to be the weight $\omega(e)$.
\end{enumerate}
Moreover, we define the \textit{degree} of $\pi$ as the sum of local degrees of all preimages in $\Gamma_1$ of a given point of $\Gamma_2$. The degree is independent of the choice of point of $\Gamma_2$. This follows from the harmonicity condition.\\
For any end $e$ of $\Gamma_2$, we define a partition $\mu_e$ as the partition of weights of ends of $\Gamma_1$ mapping to $e$. We call $\mu_e$ the \textit{ramification profile} of $e$.\\
Two tropical covers $\pi_1\colon\Gamma_1\to\Gamma_2$ and $\pi_2\colon\Gamma_1'\to\Gamma_2$ are isomorphic if there exists an isomorphism $g\colon\Gamma_1\to\Gamma_1'$ of metric graphs, such that $\pi_2\circ g=\pi_1$.
\end{definition}

We now recall the definition of twisted tropical cover from \cite{HMtwisted1}:

\begin{definition}[Twisted tropical covers]\label{def-twistedtropcover}
We define a twisted tropical cover of type $(k,\underline{m},\underline{n})$ to be a tropical cover $\pi\colon\Gamma_1\to\Gamma_2$ with an involution $\iota\colon\Gamma_1\to\Gamma_1$ which respects the cover $\pi$, such that:
\begin{itemize}
    \item The target $\Gamma_2$ is a subdivided version of $\mathbb{R}$ with vertices $\{p_1,\dots,p_k\}=V(\Gamma_2)$, where $p_i<p_{i+1}$. 
    These points are called the \emph{branch points}.
    \item There are $\ell(\underline{m})$ pairs of ends mapping to $(-\infty,p_1]$ with weights $\underline{m}_1,\dots,\underline{m}_{\ell(\underline{m})}$ and $\ell(\underline{n})$ pairs of ends mapping to $[p_k,\infty)$ with weights $\underline{n}_1,\ldots,\underline{n}_{\ell(\underline{n})}$. \emph{The ends are labelled.}
    \item in the preimage of each point $p_i$, there are either two $3$-valent vertices or one $4$-valent vertex.
    \item the edges adjacent to a $4$-valent vertex all have the same weight.
    \item the fixed locus of $\iota$ is exactly the set of $4$-valent vertices.
\end{itemize}
\end{definition}

\begin{definition}[Quotient cover $\Gamma/\iota$]
Let $\pi\colon \Gamma\to \mathbb{R}$ be a twisted tropical cover with involution $\iota\colon\Gamma\to\Gamma$. The involution $\iota$ induces a symmetric relation on the vertex and edge sets of $\Gamma$: We define for $v,v'\in V(\Gamma)$  (resp. $e,e'\in E(\Gamma)$) that $v\sim v'$ (resp. $e\sim_\iota e'$) if and only $\iota(v)=v'$ (resp. $\iota(e)=e'$). We define $\Gamma/\iota$ as the graph with vertex set $V(\Gamma)/\sim$ and edge set $E(\Gamma)/\sim$ with natural identifications. For $e=[e',e'']\in E(\Gamma/\iota)$ we define the length $\ell(e)$ as $\ell(e')=\ell(e'')$ and its weight $\omega(e)$ with respect to $\pi$ to be the weight $\omega(e')=\omega(e'')$. The quotient graph $\Gamma/\iota$ also comes with a map to $\mathbb{R}$, which we call the \emph{quotient cover}.
\end{definition}

We define a twisted tropical cover to be \emph{connected} if its quotient graph is connected.

\begin{definition}[Automorphisms]
    Let $\pi:\Gamma\rightarrow \mathbb{R}$ be a twisted tropical cover with involution $\iota:\Gamma\rightarrow \Gamma$. An automorphism of $\pi$ is a morphism of abstract tropical curves (i.e.\ a map of metric graphs) $f:\Gamma\rightarrow\Gamma$ respecting the cover and the involution, i.e.\ $\pi\circ f = \pi$ and $f\circ \iota = \iota \circ f$. We denote the group of automorphisms of $\pi$ by $\Aut(\pi)$.

    An automorphism of the quotient cover is a morphism of abstract tropical curves which respects the cover.
\end{definition}

\begin{proposition}\label{prop-automquotient}
Let  $\overline{\pi}:\overline{\Gamma}\rightarrow \mathbb{R}$ be a connected quotient of a tropical twisted cover. Assume $\overline{\Gamma}$ has $c$ $2$-valent vertices and is of genus $g$.
Then
$$ \sum_{\pi} \frac{1}{|\Aut(\pi)|}= \frac{2^{g}}{2^{c+1}\cdot |\Aut(\overline{\pi})|},$$
where the sum goes over all twisted tropical covers  $\pi:\Gamma\rightarrow \mathbb{R}$  with involution $\iota$ whose quotient $\Gamma/\iota\rightarrow \mathbb{R}$ equals $\overline{\pi}:\overline{\Gamma}\rightarrow \mathbb{R}$.
\end{proposition}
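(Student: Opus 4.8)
The plan is to fix the quotient cover $\overline{\pi}\colon\overline{\Gamma}\to\mathbb{R}$ and identify the twisted covers lying above it with double covers of $\overline{\Gamma}$ that are unramified away from the branch points, and then to perform a weighted count by analysing the relevant deck groups. First I would observe that reconstructing $\pi\colon\Gamma\to\mathbb{R}$ from $\overline{\pi}$ amounts to building a degree-two cover $\kappa\colon\Gamma\to\overline{\Gamma}$ together with its deck involution $\iota$: over every $3$-valent vertex and every (pair of) end of $\overline{\Gamma}$ the fibre consists of two points swapped by $\iota$, while over each of the $c$ two-valent vertices the fibre is a single $4$-valent vertex fixed by $\iota$ (the local ``$X$'' model). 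The conditions defining a twisted tropical cover — that the fixed locus of $\iota$ be exactly the $4$-valent vertices and that $\Gamma/\iota=\overline{\Gamma}$ — force $\iota$ to be the unique ``swap-all-sheets'' deck transformation, so the only remaining datum is the cover $\kappa$ itself; lengths and weights are inherited from $\overline{\pi}$, and the hypothesis that $\overline{\pi}$ is a genuine quotient guarantees the equal-weight condition at $4$-valent vertices.

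Next I would count these covers together with their automorphisms. Writing $B$ for the set of $c$ two-valent vertices and $Y=\overline{\Gamma}\setminus B$, filling in the branch points is canonical, so double covers of $\overline{\Gamma}$ branched along $B$ (covering the identity of $\overline{\Gamma}$) correspond bijectively to honest double covers of $Y$, that is, to $H^{1}(Y;\mathbb{Z}/2\mathbb{Z})$, of which there are $2^{b_{1}(Y)}$. For the automorphisms I would argue that an automorphism of $\pi$ covering the identity on $\overline{\Gamma}$ is exactly a deck transformation of $\kappa$; since ramification imposes no compatibility constraint between the sheets on distinct components, this group is $(\mathbb{Z}/2\mathbb{Z})^{m}$, where $m$ is the number of connected components of $Y$ (each component contributing its own sheet-swap, and all of these automatically commuting with the central element $\iota$ and respecting the map to $\mathbb{R}$). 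Hence the groupoid count of covers over a fixed identification $\Gamma/\iota=\overline{\Gamma}$ equals $2^{b_{1}(Y)}\cdot 2^{-m}=2^{\,b_{1}(Y)-m}$.

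The third step is the purely topological evaluation of $b_{1}(Y)-m$. Suppressing the $c$ two-valent vertices does not change the homotopy type of $\overline{\Gamma}$, and afterwards removing each (now interior) branch point is homotopy-equivalent to deleting one open edge while retaining its endpoints; thus $Y$ is homotopy-equivalent to $\overline{\Gamma}$ with $c$ edges removed, so that $\chi(Y)=\chi(\overline{\Gamma})+c=(1-g)+c$. Reading off $\chi(Y)=m-b_{1}(Y)$ then gives $b_{1}(Y)-m=g-1-c$, and the count over a fixed identification equals $2^{\,g-1-c}$.

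Finally I would restore the dependence on $\Aut(\overline{\pi})$. Isomorphisms of twisted covers over $\mathbb{R}$ descend to automorphisms of $\overline{\pi}$, so the groupoid of twisted covers with quotient $\overline{\pi}$ is the homotopy quotient by $\Aut(\overline{\pi})$ of the groupoid of covers equipped with a fixed identification $\Gamma/\iota=\overline{\Gamma}$; passing to groupoid cardinalities divides by $|\Aut(\overline{\pi})|$, yielding
\[
\sum_{\pi}\frac{1}{|\Aut(\pi)|}=\frac{2^{\,g-1-c}}{|\Aut(\overline{\pi})|}=\frac{2^{g}}{2^{c+1}\,|\Aut(\overline{\pi})|},
\]
as claimed. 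I expect the main obstacle to be the automorphism bookkeeping: one must verify both that the covering-the-identity automorphism group is the \emph{full} product $(\mathbb{Z}/2\mathbb{Z})^{m}$ (i.e.\ that ramification genuinely decouples the sheet-swaps on the components of $Y$) and that the passage from this rigidified count to the unrigidified one contributes exactly the factor $1/|\Aut(\overline{\pi})|$. A small sanity check — the ``$X$''-cover over an interval with one midpoint, where $g=0$, $c=1$ and the formula predicts $1/4$ — is useful for calibrating the powers of two.
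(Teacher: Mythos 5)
Your proof is correct, and both of the points you flag yourself do go through: the vertical automorphism group of a lift is indeed the full gauge group $H^0(Y;\mathbb{Z}/2)\cong(\mathbb{Z}/2)^m$, since at a $4$-valent vertex the two pairs of edge-germs lying over the two downstairs germs can be permuted independently while staying continuous and commuting with $\iota$ (the deck group being abelian); and the passage from the rigidified count to the sum over isomorphism classes of twisted covers with quotient $\overline{\pi}$ is the standard invariance of groupoid cardinality under the $\Aut(\overline{\pi})$-action, contributing exactly the factor $1/|\Aut(\overline{\pi})|$. Your Euler-characteristic step is also right: deleting a $2$-valent vertex is homotopy equivalent to deleting an open edge, so $\chi(Y)=\chi(\overline{\Gamma})+c$ and the rigidified groupoid count is $2^{-\chi(Y)}=2^{g-1-c}$.

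Your route is genuinely different from the one the paper relies on (Proposition 16 of \cite{HMtwistelliptic}, whose idea is sketched after the statement). There one lifts a spanning tree uniquely, gets a binary choice for each of the $g$ cycle closures (hence ``$2^g$ lifts''), and assembles the denominator $2^{c+1}$ from one edge-swap per $4$-valent vertex plus the involution itself; you instead classify lifts by $H^1(Y;\mathbb{Z}/2)$ and weight each by its deck group $H^0(Y;\mathbb{Z}/2)$, so the count becomes $2^{-\chi(Y)}$. The two decompositions differ term by term, not just in presentation: for a quotient consisting of a single cycle carrying one $2$-valent vertex ($g=c=1$), the two cycle-closure choices produce isomorphic covers, so your count sees one isomorphism class with deck group of order $2$ where the paper's sketch sees two lifts with automorphism factor $4$ — only the ratio agrees. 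The cohomological bookkeeping absorbs such coincidences among cycle closures automatically, which is exactly the point the paper's sketch leaves implicit; the price is the Euler-characteristic detour, whereas the paper's version reads off the exponents $g$ and $c+1$ directly from the picture. Your calibration example (the ``X'' over an interval, giving $1/4$) is the right sanity check and is consistent with both accounts.
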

For a proof, see Proposition 16 in \cite{HMtwistelliptic}. (Note the slight difference in convention, see Remark \ref{rem:conventiondifferences}.)

The idea of the proof is as follows: a quotient graph which is a tree without $2$-valent vertices has precisely one lift, by taking two copies which are switched by the involution. Let us call the two copies the top and bottom copy. Whenever we close a cycle, we have two options to do so: we can either connect top with top and bottom with bottom, or top with bottom and bottom with top. Every $2$-valent vertex lifts to a $4$-valent vertex which yields an extra automorphism factor, as we can switch the adjacent edges. The involution accounts for another extra automorphism.

\section{CJT-refinement: a preliminary version}\label{sc:cjt-prep}

In this section, we present an action of the ring of the symmetric function on a Fock space, realized by the basis of type indicators $\mathcal D_\lambda$, that refines the zonal action. Let $C,J,T$ be independent variables, $n\in \mathbb N, i,j \in [\bar n] = \{1,\bar1,\ldots,n,\bar n\}$.


\begin{definition}\label{Def:CJTweight}
For a transposition $\sigma = (i~j) \in S_{2n}$ and a fixed point-free involution $\rho \in S_{2n}$ we define the $CJT-$weight $w(\rho,\sigma)$ of the pair $(\rho,\sigma)$ as follows:
\begin{itemize}
    \item $w(\rho,\sigma) =C$, if $\sigma$ makes a cut for $\rho$, i.e. the number of parts in the type of $\rho$ is one less than the number of parts in the type $\sigma.\rho$.

    Following the convention of representing fixed point-free involutions as perfect matchings, a transposition $\sigma = (i~j)$ makes a cut for $\rho$ if the both elements $i,j$ belong to the same connected component of $\rho\cup \tau,$ and belong to the same cycle of the product $\rho\tau$. A schematic picture of a cut is on the figure~\ref{fig:cut-join}.

    \begin{figure}\begin{tikzpicture}

    \begin{scope}[shift={(-1.5,0)}]
    \draw[dotted] (0,0) circle (1cm);


        \draw[thick,blue,opacity=1] (-10:1cm) arc[start angle=-10, end angle=10, radius=1cm];
        \draw[thick,red,opacity=1] (10:1cm) arc[start angle=10, end angle=30, radius=1cm];
        \draw[thick,blue,opacity=1] (130:1cm) arc[start angle=130, end angle=150, radius=1cm];
        \draw[thick,red,opacity=1] (150:1cm) arc[start angle=150, end angle=170, radius=1cm];
        \draw[thick,->,opacity=1] (60:1cm) arc[start angle=60, end angle=100, radius=1cm];
        \draw[thick,->,opacity=1] (240:1cm) arc[start angle=240, end angle=280, radius=1cm];
        
        \filldraw (10:1cm) circle (2pt); 
        \filldraw (150:1cm) circle (2pt);

        \node[above right] at (10:1cm) {$i$}; 
        \node[above left] at (150:1cm) {$j$};
        \node[above right] at (80:1cm) {$A$};
        \node[below left] at (260:1cm) {$B$};

    \end{scope}

    $\mapsto$

    \begin{scope}[shift={(1,0.5)}]
    \draw[dotted] (0,0) circle (0.4cm);


        \draw[thick,blue,opacity=1] (-30:0.4cm) arc[start angle=-30, end angle=10, radius=0.4cm];
        \draw[thick,red,opacity=1] (10:0.4cm) arc[start angle=10, end angle=50, radius=0.4cm];
        \draw[thick,->,opacity=1] (150:.4cm) arc[start angle=150, end angle=210, radius=.4cm];

        \filldraw (10:0.4cm) circle (2pt); 

        \node[above right] at (10:0.4cm) {$i$};
        \node[left] at (180:0.4cm) {$B$};
    \end{scope}

    \begin{scope}[shift={(2,-0.5)}]
    \draw[dotted] (0,0) circle (0.4cm);


        \draw[thick,blue,opacity=1] (-30:0.4cm) arc[start angle=-30, end angle=10, radius=0.4cm];
        \draw[thick,red,opacity=1] (10:0.4cm) arc[start angle=10, end angle=50, radius=0.4cm];
        \draw[thick,->,opacity=1] (150:.4cm) arc[start angle=150, end angle=210, radius=.4cm];

        \filldraw (10:.4cm) circle (2pt); 

        \node[above right] at (10:.4cm) {$j$};
        \node[left] at (180:0.4cm) {$A$};
    \end{scope}
    
    \end{tikzpicture}
    \caption{The transposition $(i~j)$ makes a cut of one of the cycles. Here, $A$ and $B$ denote sequences of elements separated by elements $i$ and $j$, that are shared on the pictures on the right and on the left. Arrows show the orders in which the sequences $A$ and $B$ should be read. The same picture read right to left represents a join of two cycles performed by the transposition $(i~j).$}\label{fig:cut-join}
    \end{figure}
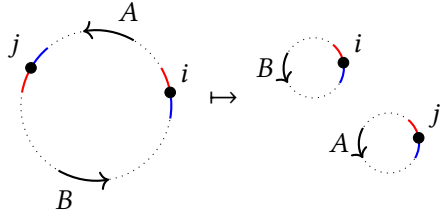

    \item $w(\rho,\sigma) = J$, if $\sigma$ makes a join for $\rho,$ i.e. the number of parts in the type of $\rho$ is one more than the number of parts in the type $\sigma.\rho;$

    A transposition $\sigma = (i~j)$ makes a join if elements $i$ and $j$ belong to different connected components of $\rho\cup \tau.$
    
    \item $w(\rho,\sigma) = T$, if $\sigma$ makes a twist for $\rho$, i.e. the both $\rho$ and $\sigma.\rho$ are of the same type. 

    A transposition $\sigma = (i~j)$ makes a twist for $\rho$ if both elements $i,j$ belong to the same connected component of $\rho\cup \tau$ but belong to different cycles of the product $\rho\tau.$ A twist keeps all the cycles of $\rho\cup \tau$, except for the one containing $i,j$, intact. A schematic picture of a twist is presented in figure~\ref{fig:twist}.
        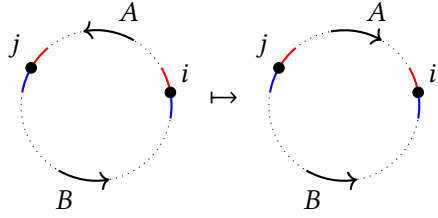
\begin{figure}\begin{tikzpicture}

    \begin{scope}[shift={(-1.5,0)}]
    \draw[dotted] (0,0) circle (1cm);


        \draw[thick,blue,opacity=1] (-10:1cm) arc[start angle=-10, end angle=10, radius=1cm];
        \draw[thick,red,opacity=1] (10:1cm) arc[start angle=10, end angle=30, radius=1cm];
        \draw[thick,red,opacity=1] (130:1cm) arc[start angle=130, end angle=150, radius=1cm];
        \draw[thick,blue,opacity=1] (150:1cm) arc[start angle=150, end angle=170, radius=1cm];
        \draw[thick,->,opacity=1] (60:1cm) arc[start angle=60, end angle=100, radius=1cm];
        \draw[thick,->,opacity=1] (240:1cm) arc[start angle=240, end angle=280, radius=1cm];
        
        \filldraw (10:1cm) circle (2pt); 
        \filldraw (150:1cm) circle (2pt);

        \node[above right] at (10:1cm) {$i$}; 
        \node[above left] at (150:1cm) {$j$};
        \node[above right] at (80:1cm) {$A$};
        \node[below left] at (260:1cm) {$B$};

    \end{scope}

    $\mapsto$

    \begin{scope}[shift={(1.4,0)}]
    \draw[dotted] (0,0) circle (1cm);


        \draw[thick,blue,opacity=1] (-10:1cm) arc[start angle=-10, end angle=10, radius=1cm];
        \draw[thick,red,opacity=1] (10:1cm) arc[start angle=10, end angle=30, radius=1cm];
        \draw[thick,red,opacity=1] (130:1cm) arc[start angle=130, end angle=150, radius=1cm];
        \draw[thick,blue,opacity=1] (150:1cm) arc[start angle=150, end angle=170, radius=1cm];
        \draw[thick,<-,opacity=1] (60:1cm) arc[start angle=60, end angle=100, radius=1cm];
        \draw[thick,->,opacity=1] (240:1cm) arc[start angle=240, end angle=280, radius=1cm];
        
        \filldraw (10:1cm) circle (2pt); 
        \filldraw (150:1cm) circle (2pt);

        \node[above right] at (10:1cm) {$i$}; 
        \node[above left] at (150:1cm) {$j$};
        \node[above right] at (80:1cm) {$A$};
        \node[below left] at (260:1cm) {$B$};

    \end{scope}

    \end{tikzpicture}
    \caption{The transposition $(i~j)$ makes a twist of one of the cycles. Here, $A$ and $B$ denote sequences of elements separated by elements $i$ and $j$, that are shared on the pictures on the right and on the left. Arrows show the orders in which the sequences $A$ and $B$ should be read.}\label{fig:twist}
    \end{figure}
\end{itemize}
\end{definition}

The $CJT$-refined action space is $I(C,J,T)\simeq I\otimes \mathbb C(C,J,T),$ where $I$ stands for the space of type indicators (see subsection~\ref{Sc:Centrality}). First, we define the action of a transposition $\sigma$ on a fixed point-free involution $\rho$ as follows:
\[
\hat \sigma.\rho = w(\rho, \sigma)  \sigma.\rho 
\]

We do not define a refined action of an arbitrary permutation: it is not relevant for the introduction of the Jucys-Murphy elements. Recall, that $M_n$ denotes the subspace of $\mathbb C[S_{2n}]$ spanned by fixed point-free involutions (see subsection~\ref{Sc:Centrality}). Define the \emph{preliminary refined Jucys-Murphy element} $\mathbf X_k$ with $1 \le k \le n$ as a $\mathbb C(C,J,T)$ -linear operators $\mathbf X_i\colon M_n(C,J,T) \to M_n(C,J,T)$ that acts on a fixed points-free involution $\rho$ by 
\[
\mathbf X_k(\rho) = \sum_{\begin{smallmatrix}j\in [\bar n] \\j < k\end{smallmatrix} } \widehat{(j~k)}.\rho =  \sum_{i \in \{1,\ldots ,k-1\}}\left( \widehat{(i\,k)}.\rho + \widehat{(\bar i\, k)}.\rho\right).
\]
As it is usual for the Jucys-Murphy elements, $\mathbf X_1 = 0.$

We will need the following lemma, which can be verified directly:

\begin{lemma}\label{lm:cut-twist}
Let $\rho\in S_{2n}$ be a fixed point free involution, $i,j\in [\bar n],  i\ne j,\bar j.$ 

Then 
\begin{enumerate}
\item $(i~j)$ makes a cut for $\rho$  if and only  if $ (\bar i~j)$ makes a twist for $\rho$.

\item $(i~j)$ makes a join for $\rho,$ if and only if $(\bar i~k)$ makes a join for $\rho$ as well. In this case, the types of $(i~j).\rho$ and $(\bar i~j).\rho$ coincide.
\end{enumerate}
\end{lemma}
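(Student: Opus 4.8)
The plan is to reduce both statements to a single structural observation about the graph $\rho\cup\tau$, which I will call Fact A: every connected component of $\rho\cup\tau$ is a cycle of even length $2k$ whose edges alternate between $\rho$-edges and $\tau$-edges; the permutation $\rho\tau$ restricted to the $2k$ vertices of such a component decomposes into exactly two cycles of length $k$; and for each vertex $x$ of a component, the elements $x$ and $\bar x$ lie in the \emph{same} component but in the two \emph{different} $\rho\tau$-cycles. I would prove Fact A by fixing a component and traversing it starting from $u_0=i$ along the alternating edges, recording the vertices in traversal order as $u_0,u_1,\dots,u_{2k-1}$. Since $\tau$ matches $u_{2m}$ with $u_{2m+1}$ and $(\rho\tau)(u_{2m})=\rho(\tau(u_{2m}))=u_{2m+2}$, the even-indexed vertices form one $\rho\tau$-cycle and the odd-indexed vertices form the other; as $i=u_0$ and $\bar i=\tau(u_0)=u_1$, the elements $i$ and $\bar i$ indeed sit in distinct $\rho\tau$-cycles of one common component.

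Given Fact A, \textbf{Part (1)} is immediate. Because $i$ and $\bar i$ are always joined by a $\tau$-edge, they lie in a common component of $\rho\cup\tau$, and hence $\{i,j\}$ lies in a single component if and only if $\{\bar i,j\}$ does. Assuming this common-component condition (otherwise neither a cut nor a twist occurs and there is nothing to prove), Fact A says the component carries exactly two $\rho\tau$-cycles, one containing $i$ and the other containing $\bar i$; therefore $j$ belongs to the $\rho\tau$-cycle of $i$ if and only if $j$ belongs to a $\rho\tau$-cycle \emph{different} from that of $\bar i$. By the definitions recalled in Definition~\ref{Def:CJTweight} this is exactly the assertion that $(i~j)$ makes a cut for $\rho$ if and only if $(\bar i~j)$ makes a twist for $\rho$. (The hypotheses $i\neq j,\bar j$ give $\bar i\notin\{j,\bar j\}$, so $(\bar i~j)$ is a genuine transposition.)

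For \textbf{Part (2)}, the equivalence of the two join conditions follows by the same co-residence of $i$ and $\bar i$: $(i~j)$ makes a join iff $i,j$ lie in different components, which holds iff $\bar i,j$ lie in different components, i.e.\ iff $(\bar i~j)$ makes a join. It remains to compare the types of $(i~j).\rho$ and $(\bar i~j).\rho$. Here I would use that conjugating $\rho$ by a transposition $(x~y)$ is precisely the reconnection of the two $\rho$-edges incident to $x$ and $y$: the matching $(x~y)\rho(x~y)$ sends $x\mapsto\rho(y)$ and $y\mapsto\rho(x)$. When $x,y$ lie in different components of half-lengths $a$ and $b$ (graph cycles of lengths $2a$ and $2b$), this reconnection fuses the two component cycles into a single cycle of half-length $a+b$ and leaves all other components untouched; thus the type of the result is obtained from the type of $\rho$ by deleting one part $a$ and one part $b$ and inserting $a+b$. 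Applying this with $(x,y)=(i,j)$ and with $(x,y)=(\bar i,j)$, the component of $i$ and that of $\bar i$ coincide and so share the half-length $a$, while $j$ lies in a component of half-length $b$ in both cases; hence both joins delete $a,b$ and insert $a+b$, producing the same type.

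I expect the only genuine work to be the careful verification of Fact A and of the reconnection description of conjugation on matchings; once these are in place, both parts are formal. The point demanding the most care is the bookkeeping distinguishing cycles of the product $\rho\tau$ from connected components of the graph $\rho\cup\tau$, together with the elementary but decisive observation that $i$ and $\bar i$ always inhabit the same component yet different $\rho\tau$-cycles of equal length.
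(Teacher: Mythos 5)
Your proof is correct, and it uses exactly the combinatorial model the paper itself sets up (components of the alternating graph $\rho\cup\tau$ versus the two $\rho\tau$-cycles inside each component, with $i$ and $\bar i=\tau(i)$ adjacent and hence in the same component but opposite cycles); the paper offers no written proof, stating only that the lemma "can be verified directly," and your Fact A together with the edge-reconnection description of conjugation is precisely that direct verification. The only cosmetic point is that you silently corrected the statement's typo $(\bar i~k)$ to $(\bar i~j)$ in part (2), which is the intended reading.
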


Notice, that being written in a basis that consists of fixed point-free involutions, the only possible elements of matrices of the operators $\mathbf X_k$ in the basis of fixed point-free involutions are $0,C,J,T$. For all $k = 2,\ldots,n$ we introduce the decomposition
\[
\mathbf X_k = \mathbf T_k + \mathbf Q_k,
\]
where $\mathbf T_k$ is the ``twist part'' of the Jucys-Murphy element. It is a $\mathbb C(C,J,T)-$linear operator defined on a fixed point-free involution $\rho\in d_\lambda$ for $\lambda\vdash n$  as
\[\mathbf T_k( \rho) = P_\rho \mathbf X_k(\rho), \]
where  \[P_\rho (\varrho) = \begin{cases}
    \varrho,\ \mbox{if the types of $\rho$ and $\varrho$ are the same,}\\
    0,\ \mbox{otherwise.}
\end{cases}
\] stands for the projection to the subspace of elements of $M_n(C,J,T)$ spanned by the elements of $d_\lambda$ along the subspace generated by fixed point-free involution that do not belong to $d_\lambda$. In particular, it means, that in the basis of fixed point-free involutions, the only possible entries of the matrix of $\mathbf T_k$ are either $0,$ or $T.$ Similarly, $\mathbf Q_k$ is a ``cut-join'' part of $\mathbf X_k$, whose value on a fixed point-free involution $\rho \in d_\lambda$ is defined as
\[
\mathbf Q_k(\rho) = (1 - P_\rho)\mathbf X_k(\rho),
\]
so the matrix elements of $\mathbf Q_k$ are polynomials in $C,J$ only.

The proof of the following lemma relies on the fact that the odd Jucys-Murphy elements $\mathcal X_k, 1\le k\le n,$ commute: namely for a pair  $(i,j)\in[\widebar{k-1}]\times[\widebar{l-1}] $ one can associate uniquely a pair $(i',j')\in [\widebar{k-1}]\times[\widebar{l-1}] $ such that $(i~k)(j~l) = (j'~l)(i'~k).$ We denote this bijection $\mathcal B,$
so $\mathcal B\left((i~k),(j~l)\right) = \left((j',l),(i'~k) \right).$

\begin{lemma}\label{lm:coef}
    For all $1 \le k,l \le n,$ the operators $\mathbf Q_k,\mathbf T_l$ satisfy the following relations:
    \begin{gather}
    [\mathbf T_k, \mathbf Q_l] = 0,\\
    CJ[\mathbf T_k, \mathbf T_l] + T^2[\mathbf Q_k,\mathbf Q_l] = 0.    
    \end{gather}
\end{lemma}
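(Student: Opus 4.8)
The plan is to expand both commutators term by term and to match the two orderings using the bijection $\mathcal B$. Writing $\mathbf X_k\mathbf X_l(\rho)$ as a double sum over pairs $\bigl((i\,k),(j\,l)\bigr)\in[\overline{k-1}]\times[\overline{l-1}]$, each summand is a scalar weight times the involution obtained by conjugating $\rho$ successively by $(j\,l)$ and then $(i\,k)$, i.e.\ by the group element $(i\,k)(j\,l)$; the weight is the product $w(\rho,(j\,l))\,w\bigl((j\,l).\rho,(i\,k)\bigr)$ of the two successive move-types, hence a degree-two monomial in $C,J,T$. The defining relation $(i\,k)(j\,l)=(j'\,l)(i'\,k)$ of $\mathcal B$ shows that the $\bigl((i\,k),(j\,l)\bigr)$-term of $\mathbf X_k\mathbf X_l$ and the $\bigl((j'\,l),(i'\,k)\bigr)$-term of $\mathbf X_l\mathbf X_k$ conjugate $\rho$ by the same element, hence produce the \emph{same} fixed point-free involution. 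In particular the net change $\Delta$ in the number of parts of the type (equivalently, in the number of components of $\rho\cup\tau$) depends only on the common target, so matched terms have equal $\Delta$. I would use $\Delta\in\{-2,-1,0,1,2\}$ as the main bookkeeping device, recalling that a single transposition contributes $+1$ for a cut, $-1$ for a join and $0$ for a twist.

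For the second relation this bookkeeping already suffices. If $|\Delta|=2$ both steps are forced to be cuts (resp.\ joins), so both orderings carry weight $C^2$ (resp.\ $J^2$); such terms are internal to $[\mathbf Q_k,\mathbf Q_l]$, cancel under $\mathcal B$, and never meet $[\mathbf T_k,\mathbf T_l]$, which is supported on $\Delta=0$. If $\Delta=0$, the two move-types are either twist–twist, of weight $T^2$ and belonging to $[\mathbf T_k,\mathbf T_l]$, or one cut and one join in some order, of weight $CJ$ and belonging to $[\mathbf Q_k,\mathbf Q_l]$. Consequently, after multiplying $[\mathbf T_k,\mathbf T_l]$ by $CJ$ and $[\mathbf Q_k,\mathbf Q_l]$ by $T^2$, every matched pair contributes $CJT^2$ from its $\mathbf X_k\mathbf X_l$ ordering and $-CJT^2$ from its $\mathbf X_l\mathbf X_k$ ordering — irrespective of whether each ordering lands in the twist–twist or the cut–join commutator — so the contributions cancel pairwise and $CJ[\mathbf T_k,\mathbf T_l]+T^2[\mathbf Q_k,\mathbf Q_l]=0$. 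This argument uses only the matching and the value of $\Delta$, not the explicit form of $\mathcal B$, which is precisely what allows a twist–twist composite in one ordering to be paired with a cut–join composite in the other.

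The first relation is the genuine obstacle. Only $\Delta=\pm1$ composites occur, each consisting of one twist and one cut/join, and matching involutions together with weight-equality yields only the symmetry $[\mathbf T_k,\mathbf Q_l]=[\mathbf T_l,\mathbf Q_k]$. To obtain $[\mathbf T_k,\mathbf Q_l]=0$ one must prove the sharper statement that $\mathcal B$ sends each $\mathbf T_k\mathbf Q_l$-term to a $\mathbf Q_l\mathbf T_k$-term: the twist stays attached to the index $k$ and the cut/join to the index $l$, with the letter $C$ versus $J$ preserved. I would establish this by running the explicit cases of $\mathcal B$ — the case where $(i\,k)$ and $(j\,l)$ have disjoint support, in which $\mathcal B$ fixes the index pair, and the overlapping cases in which $(i\,k)(j\,l)$ is a $3$-cycle and one index is rerouted. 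In each case I would combine \cref{lm:cut-twist} — which exchanges the cut $(i\,j)$ with the twist $(\bar i\,j)$ and preserves joins together with the resulting type — with a surgery analysis of $\rho\cup\tau$ describing how conjugation by one transposition reconnects the graph at its two endpoints, so as to track the move-type of the other transposition before and after.

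The technical heart, and the step I expect to be hardest, is exactly this control of move-types: since being a cut, join or twist is a \emph{global} feature of $\rho\cup\tau$ (whether the two endpoints lie in the same graph-component and the same cycle of $\rho\tau$), conjugating $\rho$ by one transposition can in principle change the type of the other. Showing that, across every case of $\mathcal B$, the twist and cut/join roles remain pinned to their indices and the $C/J$ label is unchanged — so that the cancellation in $[\mathbf T_k,\mathbf Q_l]$ is term-by-term rather than merely symmetric — is where the bulk of the combinatorial work lies, with \cref{lm:cut-twist} serving as the lever that keeps the $C$/$J$/$T$ weights consistent under the rerouting.
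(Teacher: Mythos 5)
Your treatment of the second relation is correct and is essentially the paper's own argument: group the terms of $\mathbf X_k\mathbf X_l(\rho)$ and $\mathbf X_l\mathbf X_k(\rho)$ by the bijection $\mathcal B$, note that matched terms conjugate $\rho$ by the same group element and hence land on the same involution with the same net change $\Delta$ in the number of parts, and observe that for $\Delta=0$ each matched term carries weight $T^2$ (twist--twist) or $CJ$ (one cut, one join), so that after rescaling by $CJ$ and $T^2$ respectively both orderings contribute $\pm CJT^2$ and cancel pairwise.

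For the first relation your diagnosis of the difficulty is exactly right --- the matching argument only yields $[\mathbf T_k,\mathbf Q_l]+[\mathbf Q_k,\mathbf T_l]=0$ --- but the repair you propose cannot be carried out: the ``pinning'' statement (that $\mathcal B$ keeps the twist attached to the index $k$ and the cut/join attached to $l$) is false, and so is the unsymmetrised operator identity $[\mathbf T_k,\mathbf Q_l]=0$ on $M_n$. Take $n=3$, $k=2$, $l=3$ and $\rho=(\bar 1\,2)(\bar 2\,3)(\bar 3\,1)$, which has type $(3)$. The transposition $(\bar 1\,3)$ twists $\rho$ into $\sigma'=(2\,3)(\bar 1\,\bar 2)(1\,\bar 3)$, and $(\bar 1\,2)$ then cuts $\sigma'$ into $\sigma=(\bar 1\,3)(2\,\bar 2)(1\,\bar 3)$; since $(\bar 1\,2)(\bar 1\,3)=(2\,3)(\bar 1\,2)$ as group elements, the $\mathcal B$-matched composite first twists $\rho$ by $(\bar 1\,2)$ (which fixes $\rho$) and then cuts by $(2\,3)$, landing on the same $\sigma$. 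Thus a $\mathbf Q_2\mathbf T_3$-term is matched with a $\mathbf Q_3\mathbf T_2$-term: the twist migrates from the index $3$ to the index $2$. Concretely, $\mathbf Q_3(\rho)=C(\sigma_1+\sigma)$ with $\sigma_1=(1\,3).\rho$, $\mathbf T_2(\rho)=T\rho$, $\mathbf T_2(\sigma_1)=T\sigma_1$, while $\mathbf T_2(\sigma)=0$ because both generators of $\mathbf X_2$ join $\sigma$; hence
\[
[\mathbf T_2,\mathbf Q_3](\rho)=CT\sigma_1-CT(\sigma_1+\sigma)=-CT\,\sigma\neq 0,
\qquad
[\mathbf Q_2,\mathbf T_3](\rho)=+CT\,\sigma .
\]
(The paper's own proof makes the same unjustified jump from $F^{\varrho}_{kl}=F^{\varrho}_{lk}$ to $[\mathbf T_k,\mathbf Q_l]=0$, so the issue lies in the statement, not only in your argument.) What does hold, and what your matching argument already proves, is the symmetrised identity $[\mathbf T_k,\mathbf Q_l]=-[\mathbf Q_k,\mathbf T_l]$; combined with the second relation this still yields the consequence that is actually used downstream, namely $[\mathbf X_k,\mathbf X_l]=\bigl(1-\tfrac{CJ}{T^2}\bigr)[\mathbf T_k,\mathbf T_l]$. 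You should therefore not invest effort in the case analysis of $\mathcal B$ you outline --- any such analysis will run into the example above --- but rather record the symmetrised relation and check that it suffices wherever the stronger one is invoked later (in particular in the proof of \cref{th:comm}).
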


\begin{proof}
    For a fixed point-free involution $\rho$ consider
    \begin{gather}
    \mathbf X_k \mathbf X_l(\rho) = (\mathbf T_k + \mathbf Q_k)(\mathbf T_l + \mathbf Q_l)(\rho) = \sum_{\varrho} F_{kl}^\varrho \varrho,\\
    \mathbf X_l \mathbf X_k(\rho) = (\mathbf T_l + \mathbf Q_l)(\mathbf T_k + \mathbf Q_k)(\rho) = \sum_{\varrho} F_{lk}^\varrho \varrho
    \end{gather}
    where the sum ranges over the set of fixed point-free involutions. The coefficients $F_{kl}^\varrho, F_{lk}^\varrho$ are quadratic polynomials in the variables $C,J,T.$ For this coefficient to be non-zero, the number of parts in the type of $\varrho$ can not differ from the number of parts in the type of $\rho$ by more than 2.

    Notice, the following:
    \begin{itemize}
    \item  If the type of element $\varrho$ has one more or one fewer parts than the type of $\rho,$ then both the coefficients $F_{kl}^\varrho, F_{lk}^\varrho$ are integer multiples of $CT$ or $JT$ respectively. Due to the equality $[\mathcal X_k, \mathcal X_l] = 0,$ in this case we have $F_{kl}^\varrho = F_{lk}^\varrho.$ It implies that $[\mathbf T_k, \mathbf Q_l] = 0.$
    
    \item If the type of element $\varrho$ has two more or two fewer parts than the type of $\rho,$ then both the coefficients $F_{kl}^\varrho, F_{lk}^\varrho$ are integer multiples of $C^2$ or $J^2$ respectively. Due to the equality $[\mathcal X_k, \mathcal X_l] = 0,$ in this case we have $F_{kl}^\varrho = F_{lk}^\varrho.$

    \item \ If the type of element $\varrho$ has the same number of parts as the type of $\rho,$ but these types are different, then $F_{kl}^\varrho, F_{lk}^\varrho$ are integer multiples of $CJ$. Due to the equality $[\mathcal X_k, \mathcal X_l] = 0,$ in this case we also have $F_{kl}^\varrho = F_{lk}^\varrho.$

    \item The only non-trivial case occurs when the type of $\rho$ coincides with the type of $\varrho$ In this case notice the following. Both the coefficients $F_{kl}^\varrho$ and $F_{lk}^\varrho$ are of the form $aCJ + bT^2,$ where $a,b \in \mathbb Z.$ The difference may occur when the pair of transpositions $(i~k)(j~l)$ make two twist (cut and join, respectively) for $\rho,$ but the corresponding pair $\mathcal B\left((i~k),(j~l)\right)$ makes cut and join (two twists, respectively) for $\rho$. It implies that $F_{kl}^\varrho - F_{lk}^\varrho = a (CJ - T^2)$ for some $a \in \mathbb Z.$ A example of a cycle $\rho$ and transpositions $(i~k),(j~l)$ such that $[\widehat{(i~k)},\widehat{(j~l)}].\rho = (T^2 - CJ)\varrho$ is on the Figure~\ref{fig:example2a}.
\end{itemize}

 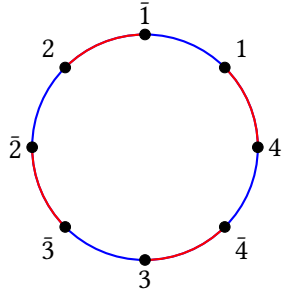
\begin{figure}
\begin{tikzpicture}

    \begin{scope}[shift={(0,0)}]
    \draw[thick,blue] (0,0) circle (1.5cm);

    \foreach \angle in {0, 90, 180, 270} {
        \draw[thick,red,opacity=1] (\angle:1.5cm) arc[start angle=\angle, end angle=\angle+45, radius=1.5cm];
        
        \filldraw (\angle:1.5cm) circle (2pt); 
        \filldraw (\angle+45:1.5cm) circle (2pt);

    }

        \node[ right] at (0:1.5cm) {$4$}; 
        \node[above right] at (45:1.5cm) {$1$};
        \node[above ] at (90:1.5cm) {$\bar 1$};
        \node[above left] at (135:1.5cm) {$2$};
        \node[ left] at (180:1.5cm) {$\bar 2$};
        \node[below left] at (225:1.5cm) {$\bar 3$};
        \node[below] at (270:1.5cm) {$3$};
        \node[below right] at (315:1.5cm) {$ \bar 4$};
  \end{scope}
    \end{tikzpicture}
    \caption{  
    A fixed point-free involution $\rho = (\bar 1~2)(\bar 2~3)(3~\bar 4)(1~4)$ such that $
    [\widehat{(\bar 1~3)},\widehat{(2~4)}].\rho = (T^2 - CJ)\left((1~2)(\bar 2~3)(3~4)(\bar 1~\bar 4)\right)  .$ }  \label{fig:example2a}  

    \end{figure}

    It implies that the matrix elements of the commutator $[\mathbf X_k, \mathbf X_l]$ are all of the form $a(CJ - T^2)$ for some $a \in \mathbb Z$. But notice that it means the following: if the matrix element of $[\mathbf Q_k,\mathbf Q_l]$ equals $aCJ,$ then the corresponding matrix element of $[\mathbf T_k,\mathbf T_l]$ equals $-aT^2.$ The statement follows.
    \end{proof}

In particular, we have the relation
\[
[\mathbf X_k,\mathbf X_l] = [\mathbf T_k,\mathbf T_l] + [\mathbf Q_k, \mathbf Q_l] = (1 - \frac {CJ}{T^2})[\mathbf T_k,\mathbf T_l].
\]

Recall, that the space of types $I$ is spanned by the elements $\mathcal D_\lambda = \sum_{\rho \in d_\lambda} \rho$ (see subsection~\ref{Sc:Centrality}).


\begin{proposition}    \label{th:comm}
    Let $p \in \mathbb N$, and $k_1,\ldots,k_p \in \{2,\ldots,n\}$ -- a collection of natural numbers, $\lambda\vdash n$. Then for any permutations $\alpha \in S_p$, we have the following equality of elements of $M_n(C,J,T)$:
    \[
    \mathbf X_{k_p} \cdots \mathbf X_{k_1}( \mathcal D_\lambda) = \mathbf X_{k_{\alpha(p)}} \cdots \mathbf X_{k_{\alpha(1)}}(\mathcal D_\lambda).
    \]
Here, by abuse of notation, $\mathcal D_\lambda$ stands for $\mathcal D_\lambda \otimes 1 \in I_n(C,J,T).$    
\end{proposition}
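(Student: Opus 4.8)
The plan is to reduce the statement to the commutator identity recorded right after \cref{lm:coef} and then to a single combinatorial identity on the type indicators. Since $S_p$ is generated by adjacent transpositions, it suffices to show that interchanging two neighbouring factors leaves the output unchanged. Writing the word as $\mathbf X_{k_p}\cdots \mathbf X_{k_1}$ and swapping the factors in positions $m$ and $m+1$, the difference of the two results equals
\[
\mathbf X_{k_p}\cdots \mathbf X_{k_{m+2}}\,[\mathbf X_{k_{m+1}},\mathbf X_{k_m}]\,\mathbf X_{k_{m-1}}\cdots \mathbf X_{k_1}(\mathcal D_\lambda).
\]
By the relation $[\mathbf X_k,\mathbf X_l]=(1-\tfrac{CJ}{T^2})[\mathbf T_k,\mathbf T_l]$, the whole difference is controlled by the twist commutator $E_{kl}:=[\mathbf T_k,\mathbf T_l]$.

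The key structural observation is that $E_{ab}$ is \emph{central} in the algebra generated by all $\mathbf T_c$ and $\mathbf Q_c$. Indeed, by \cref{lm:coef} each $\mathbf Q_c$ commutes with $\mathbf T_a$ and $\mathbf T_b$, so the Jacobi identity gives $[\mathbf Q_c,E_{ab}]=[[\mathbf Q_c,\mathbf T_a],\mathbf T_b]+[\mathbf T_a,[\mathbf Q_c,\mathbf T_b]]=0$; and rewriting $E_{ab}=-\tfrac{T^2}{CJ}[\mathbf Q_a,\mathbf Q_b]$ (again \cref{lm:coef}) and using the same identity yields $[\mathbf T_c,E_{ab}]=-\tfrac{T^2}{CJ}\bigl([[\mathbf T_c,\mathbf Q_a],\mathbf Q_b]+[\mathbf Q_a,[\mathbf T_c,\mathbf Q_b]]\bigr)=0$. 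Hence $E_{ab}$ commutes with every $\mathbf X_c=\mathbf T_c+\mathbf Q_c$, and with every word in them. Sliding $E_{k_{m+1}k_m}$ through the factors $\mathbf X_{k_{m-1}}\cdots\mathbf X_{k_1}$ to the right, the difference becomes $(1-\tfrac{CJ}{T^2})\,\mathbf X_{k_p}\cdots\mathbf X_{k_{m+2}}\mathbf X_{k_{m-1}}\cdots\mathbf X_{k_1}\,E_{k_{m+1}k_m}(\mathcal D_\lambda)$. Thus the entire proposition collapses to the single identity $E_{ab}(\mathcal D_\lambda)=[\mathbf T_a,\mathbf T_b](\mathcal D_\lambda)=0$ for all $a,b$ and all $\lambda\vdash n$.

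For this last identity I would argue combinatorially. In the basis of fixed point-free involutions $\mathbf T_k$ is the $0/T$ matrix recording twists, and since twists preserve the type, every involution occurring in $[\mathbf T_a,\mathbf T_b](\mathcal D_\lambda)$ has type $\lambda$. Expanding the commutator and re-indexing each contribution by its endpoint $\varrho$ (which is legitimate precisely because the starting involution ranges over the \emph{full} class $d_\lambda$, so the summation collapses term by term) shows that the coefficient of $\varrho$ equals $T^2$ times the difference between the number of two-step twist paths out of $\varrho$ whose first transposition has largest (unbarred) entry $a$ and second has largest entry $b$, and the number of those in the opposite order. It therefore remains to show this count of ordered twist-pairs issuing from a fixed $\varrho$ does not depend on the order of the two "colours'' $a$ and $b$. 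When the two transpositions are disjoint this is immediate: they commute, and the conjugation-invariance of the twist property in \cref{Def:CJTweight} sends twist-twist paths to twist-twist paths bijectively.

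The delicate case, which I expect to be the main obstacle, is the overlapping one, where the two transpositions share an entry so that their product is a $3$-cycle and a direct swap is unavailable. This is exactly the $CJ\leftrightarrow T^2$ ambiguity isolated in the proof of \cref{lm:coef}: a pair that is twist-twist in one order may refactor into a cut and a join in the other. The plan is to analyse these $3$-cycle configurations explicitly, using \cref{lm:cut-twist} to pair each transposition with its barred partner and matching the genuinely twist-twist paths in the two orders through the refactorisation of the $3$-cycle; the twist-twist counts are then balanced, the discrepancy is carried entirely by the cancelling cut/join contributions, and $E_{ab}(\mathcal D_\lambda)=0$ follows.
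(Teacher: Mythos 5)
Your reduction of the general word to the two–letter case is carried out differently from the paper, and your route is actually cleaner. The paper slides $[\mathbf T_{k_p},\mathbf T_{k_{p-1}}]$ to the right using only $[\mathbf T_k,\mathbf Q_l]=0$, picks up nested commutators $[[\mathbf T_{k_p},\mathbf T_{k_{p-1}}],\mathbf T_{k_{p-2}}]$ along the way, and is therefore forced to prove the stronger combinatorial claim that \emph{arbitrary} products $\mathbf T_{l_q}\cdots\mathbf T_{l_1}(\mathcal D_\lambda)$ are permutation-invariant. Your observation that $E_{ab}=[\mathbf T_a,\mathbf T_b]$ is central — killing $[\mathbf Q_c,E_{ab}]$ by Jacobi from $[\mathbf T,\mathbf Q]=0$, and $[\mathbf T_c,E_{ab}]$ by first rewriting $E_{ab}=-\tfrac{T^2}{CJ}[\mathbf Q_a,\mathbf Q_b]$ — is correct (the relations of \cref{lm:coef} are genuine operator identities on $M_n(C,J,T)$, as its proof works with an arbitrary fixed point-free involution), and it shows that the paper's nested commutators vanish identically as operators. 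This collapses everything to the single identity $[\mathbf T_a,\mathbf T_b](\mathcal D_\lambda)=0$, which is exactly the first step of the paper's proof. Your re-indexing of that identity by the endpoint, using the symmetry of the twist relation, is also sound.

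The gap is in your proof of $E_{ab}(\mathcal D_\lambda)=0$ itself: you have the easy and hard cases exactly backwards. You claim that for \emph{disjoint} transpositions the matching of twist–twist paths in the two orders is immediate because the transpositions commute and twists are conjugation-invariant. This is false, and the paper's own Figure~\ref{fig:example2a} is a counterexample: the transpositions $(\bar 1~3)$ and $(2~4)$ are disjoint, yet $[\widehat{(\bar 1~3)},\widehat{(2~4)}].\rho=(T^2-CJ)\varrho$, i.e.\ one order is twist–twist while the other is cut–join. Disjointness guarantees that the two orders reach the same endpoint, but it says nothing about how the cut/join/twist classification of the second transposition changes after the first one has rearranged a cycle both transpositions touch — and this $T^2\leftrightarrow CJ$ refactorisation is precisely the phenomenon that must be compensated. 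The actual mechanism, which works uniformly and does not hinge on whether the supports overlap, is the $\{i,\bar i\}\times\{j,\bar j\}$ switching of \cref{lm:cut-twist}: a twist–twist pair that degenerates to cut–join in the other order is replaced by the barred partner pair, which is again twist–twist but may start (or end) at a different element of $d_\lambda$ — whence the need to sum over the full class. Your closing sentence, in which "cancelling cut/join contributions" carry the discrepancy, also suggests a confusion: $[\mathbf T_a,\mathbf T_b]$ records only twists, so no cut/join terms enter that computation; the twist–twist counts must balance on their own, via the bar-switching bijection. Until that bijection is set up (for all configurations, not just overlapping ones), the final identity — and hence the proposition — is not established.
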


\begin{proof}
    We start the proof with the following observation. For any $1\le k,l\le n$ and for any $\lambda \vdash n$ we claim 
    \[
    [\mathbf X_k,\mathbf X_l](\mathcal D_\lambda) = 0.
    \]
    Indeed, working over the field $\mathbb C(C,J,T)$, Lemma~\ref{lm:coef} implies
    \[
    [\mathbf X_k,\mathbf X_l](\mathcal D_\lambda) = (1 - \frac {CJ}{T^2})[\mathbf T_k,\mathbf T_l](\mathcal D_\lambda) =(1 - \frac{CJ}{T^2}) \sum_{\varrho} G_{kl}^\varrho \varrho,
    \]
    where the coefficients $G_{kl}^\varrho = a_{kl}^\varrho T^2$ for some $a_{kl}^\varrho \in \mathbb Z.$ Moreover, the coefficients $a_{kl}^\varrho$ are non-zero only if $\varrho \in d_\lambda.$ 
    Given a pair of permutations, such that $(i~k)(j~l)$ make two twists for $\rho$,  there is a unique pair $(i',j') \in \{i,\bar i\}\times \{j,\bar j\}$ such that $(i'~k)(j' l)$ make two twists for $(i~k)(j~l).\rho:$ as elements $j,l$ belong to the same cycle of $(i~k)(j~l).\rho$, we can apply Lemma~\ref{lm:cut-twist}; similar argument works for the transposition $(i~k)$. So we denote
    \[
    \varrho = (i'~k)(j' l)(i~k)(j~l).\rho \in d_\lambda
    \]
    and $(j'~l)(i'~k).\varrho = (i~k)(j~l).\rho$ where on the both sides of the equality we have two twists. Clearly the correspondence $\rho \mapsto \varrho$ is involutive and we have
    \[\widehat{(i~k)}\widehat{(j~l)}.\rho = \widehat{(j'~l)}\widehat{(i'~k)}.\varrho.
    \]

    The claim follows. By induction, it is enough to verify that the statement holds for $\alpha = (p-1~p)$ only. We write
    \begin{gather}
    [\mathbf X_{k_p},\mathbf X_{k_{p-1}}]\mathbf X_{k_{p-2}} \cdots \mathbf X_{k_1}(\mathcal D_\lambda) = 
    (1 - \frac{CJ}{T^2})[\mathbf T_{k_p},\mathbf T_{k_{p-1}}]\mathbf X_{k_{p-2}} \cdots \mathbf X_{k_1}(\mathcal D_\lambda) \\  
    = (1 - \frac{CJ}{T^2})\mathbf X_{k_{p-2}}[\mathbf T_{k_p},\mathbf T_{k_{p-1}}] \cdots \mathbf X_{k_1}(\mathcal D_\lambda) + (1- \frac{CJ}{T^2})[[\mathbf T_{k_p}, \mathbf T_{k_{p-1}}],\mathbf T_{k_{p-2}}]... \mathbf X_{k_1}(\mathcal D_\lambda),
    \end{gather}
    and we continue the commutation pattern for each term until a group of commutators of operators $\mathbf T_j$ hits $\mathcal D_\lambda.$ It is possible to do it 
    due to the commutation relations of Lemma~\ref{lm:coef}; every term of the expression can be further rewritten making the number of operators 
    $\mathbf X_k$ between the group of commutators of $\mathbf T_l$ and $D_\lambda$ one less. Now we claim, that for all
    $n,q \in \mathbb N$, $q\ge 2,$ and for all $\lambda \vdash n$, for all collection of numbers $l_1,\ldots, l_q \in \{2,\ldots,n \},$ and any permutation $\beta \in S_{q} $ we have
    \[
        \mathbf T_{l_q} \cdots \mathbf T_{l_1}(\mathcal  D_\lambda) = \mathbf T_{l_{\beta(q)}} \cdots \mathbf T_{l_{\beta(1)}}(\mathcal D_\lambda).
    \]
    The argument is again inductive and similar to $q = 2$ case: 
    for any chain of transpositions  
    \[
    (j_{q_l}~ q_l)(j_{q_{l-1}}~ q_{l-1})\cdots (j_{q_1}~q_1).\rho = \varrho
    \]
    such that every following transposition makes a twist, there exists a unique sequence of elements $j'_{q_r} \in \{j_{q_r},\bar j_{q_r}\}$ such that
    \[
    \left[(j'_{q_{l-1}}~q_{{l-1}}) (j'_{q_l}~q_l) \cdots (j'_{q_{1}}~q_{1})\right]^{-1}.\varrho
    \]
    with the same property -- it is again implied by Lemma~\ref{lm:cut-twist}. So we have
    \[
    \widehat{(j_{q_l}~ q_l)}\cdots \widehat{(j_{q_1}~q_1)}.\rho = \widehat{(j'_{q_l-1}~ q_{l-1})}\widehat{(j'_{q_l}~ q_l)}\cdots \widehat{(j'_{q_1}~q_1)}.\varrho.
    \]
    The claim and Proposition follow.   
\end{proof}

The following is the main theorem of this section and also gives a partial affirmative answer to \cite[Conjecture 5.4]{Coulter-Do}.

\begin{theorem}\label{th:action}
    The refined Jucys-Murphy elements define an action of the ring of symmetric function in the space of type indicators $I(C,J,T) = \bigoplus_n I_n(C,J,T) \le M(C,J,T).$
\end{theorem}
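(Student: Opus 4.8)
The plan is to define the action on a homogeneous piece by
$\mathrm F.\mathcal D_\lambda := \mathrm F(\mathbf X_1,\ldots,\mathbf X_{|\lambda|})\,\mathcal D_\lambda$, where $\mathrm F(\mathbf X_1,\ldots,\mathbf X_n)$ denotes the substitution of the operators $\mathbf X_k$ into any polynomial expression of $\mathrm F$ in the power sums $\mathrm p_1,\mathrm p_2,\ldots$, which freely generate $\Lambda$. The first point is that this is unambiguous. A symmetric function evaluated at non--commuting operators is a priori ill--defined, but \cref{th:comm} says exactly that any product $\mathbf X_{k_p}\cdots\mathbf X_{k_1}$ is insensitive to the order of its factors once applied to a type indicator. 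Expanding $\mathrm F$ fully into monomials in the $\mathbf X_k$ and applying the result to $\mathcal D_\lambda$ therefore yields a well--defined vector of $M_n(C,J,T)$, independent of the chosen expression for $\mathrm F$.

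The substance of the theorem is that this vector lies in $I_n(C,J,T)$. By \cref{Prop:type} each set $d_\lambda$ is a single orbit of $H_n$ acting by conjugation, so $I_n$ is precisely the subspace $(M_n)^{H_n}$ of $H_n$--invariants; it thus suffices to prove that $\mathrm F(\mathbf X)\mathcal D_\lambda$ is fixed by $H_n$. I stress that this is a statement about \emph{symmetric} functions and not about the individual operators: already for $n\ge 3$ one checks directly that $\mathbf X_k\mathcal D_\lambda\notin I_n$ in general (for instance $\mathbf X_2.\tau$ only reaches those involutions of type $(2,1^{n-2})$ whose nontrivial block is supported on $\{1,2\}$, and misses the remaining elements of $d_{(2,1^{n-2})}$), so the passage to a symmetric combination is what restores invariance.

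The core of the proof is therefore the $H_n$--invariance, and this is where I expect the main difficulty to lie. Using the standard generators of $H_n=(\mathbb Z/2)^n\rtimes S_n$, one must treat the bar--swaps $s_i=(i~\bar i)$ and the pair--permutations separately. The bar--swaps are partly under control: since conjugation by $s_i$ merely interchanges the transpositions $(i~k)$ and $(\bar i~k)$ occurring in $\mathbf X_k$ (and fixes the others) while the $CJT$--weight is conjugation--equivariant, one gets $s_i\mathbf X_k s_i=\mathbf X_k$ for every $k\neq i$; this is precisely why the two summands $\widehat{(i\,k)}$ and $\widehat{(\bar i\,k)}$ are grouped together in $\mathbf X_k$, and it is where \cref{lm:cut-twist} enters, matching their cut/twist and join weights. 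The genuinely non--trivial contributions are the action of $s_k$ on its ``own'' element $\mathbf X_k$ (conjugation sends the terms $(j~k)$ to $(j~\bar k)$, which leave the family) and the pair--permutations, which reshuffle the whole family $\{\mathbf X_1,\ldots,\mathbf X_n\}$ in the same way that ordinary Jucys--Murphy elements fail to be individually central. For both I would track the resulting defect and show that it cancels after symmetrising, running the same commutation bookkeeping as in the proof of \cref{th:comm} and using the identities of \cref{lm:coef} together with the decomposition $\mathbf X_k=\mathbf T_k+\mathbf Q_k$.

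Granting that every $\mathrm F(\mathbf X)$ preserves $I_n$, the remaining axioms are formal. Linearity in $\mathrm F$ is clear, and multiplicativity $(\mathrm F\mathrm G).\mathcal D_\lambda=\mathrm F.(\mathrm G.\mathcal D_\lambda)$ follows by expanding both sides into monomials in the $\mathbf X_k$ applied to $\mathcal D_\lambda$ and appealing once more to the order--independence of \cref{th:comm}. This produces an algebra homomorphism $\Lambda\to\mathrm{End}\big(I_n(C,J,T)\big)$ for each $n$, and assembling these over all $n$ gives the action of $\Lambda$ on $I(C,J,T)=\bigoplus_n I_n(C,J,T)$; as a consistency check, specialising $CJ=T=1$ recovers the zonal action.
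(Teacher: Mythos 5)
Your overall architecture matches the paper's: reduce the theorem to showing that symmetric functions in the $\mathbf X_k$ applied to $\mathcal D_\lambda$ land in the $H_n$--invariant subspace $I_n$, use the order--independence of \cref{th:comm} for well--definedness and multiplicativity, and check invariance on a generating set of $H_n$. Your preliminary observations are sound and appear in the paper's proof as well: the weight equivariance $w(\rho,\sigma)=w(\alpha.\rho,\alpha\sigma\alpha^{-1})$, the identification of $I_n$ with the $H_n$--fixed subspace via \cref{Prop:type}, the fact that conjugation by a bar--swap fixes every $\mathbf X_k$ whose index is not the swapped one, and the correct warning that the individual $\mathbf X_k$ do not preserve $I_n$.

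The gap is that the entire substance of the theorem is concentrated in the two cases you defer with ``I would track the resulting defect and show that it cancels after symmetrising.'' This is not a routine bookkeeping step that follows from \cref{lm:coef} and the decomposition $\mathbf X_k=\mathbf T_k+\mathbf Q_k$; in the paper it occupies several pages. For the pair--permutation $\alpha=(k~k+1)(\bar k~\overline{k+1})$ one first uses \cref{th:comm} to push $\mathbf X_k$ and $\mathbf X_{k+1}$ to the rightmost positions of each monomial, and the term $\mathbf P\,\mathbf X_k\mathbf X_{k+1}$ then requires an explicit weighted bijection $\mathbf B$ on triples $(\sigma_0,\sigma_1,\rho)$, a subcase analysis according to the local cycle structure of $\rho\cup\tau$ (distinguishing whether $CJ\,\widehat{(j~k+1)}\widehat{(k~k+1)}.\rho=T^2\,\widehat{(k~k+1)}\widehat{(j~k)}.\rho$, the reverse, or neither), and a final check that the subcases do not overlap. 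None of this is supplied or even outlined by your proposal. Moreover, your choice of generators creates an additional hard case that the paper deliberately avoids: you take all bar--swaps $s_i=(i~\bar i)$, whereas the paper uses only $(1~\bar1)$ --- harmless because $\mathbf X_1=0$, so no operator has $1$ as its ``own'' index --- together with the diagonal adjacent transpositions, which already generate the remaining bar--swaps. With your generating set you must separately control the conjugation of $\mathbf X_k$ by its own $s_k$, which replaces every term $(j~k)$ by $(j~\bar k)$ and leaves the defining family of transpositions; you give no argument for why this defect cancels, and it does not follow from the commutation relations you cite. As written, the proposal is a correct plan whose decisive combinatorial verification is missing.
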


\begin{proof}
    By Proposition~\ref{th:comm}, we know that the refined Jucys-Murphy elements commute over $\mathbb C(C,J,T)$. 
    
    We start with the following observation. Suppose $\rho$ is a fixed point-free involution, $\sigma$ is a transposition, and $\alpha$ -- element of the hyperoctahedral group $H_n$. Then we have the following identity of $CJT-$weights:
    \[
    w(\rho,\sigma) = w( \alpha.\rho,\alpha\sigma\alpha^{-1}).
    \]
    Indeed, the type of $\alpha.\rho$ coincides with the type of $\rho,$ (see subsection~\ref{Sc:Centrality}), and the type of $\alpha\sigma\alpha^{-1}.(\alpha.\rho) = \alpha \sigma \rho \sigma \alpha^{-1}$ is the type of $\sigma.\rho.$ In particular for any $\alpha \in H_n,$ fixed point-free involution $\rho$ and a transposition $\sigma$ we have
    \[
    \alpha.(\hat \sigma.\rho) = \widehat{\alpha \sigma \alpha^{-1}}.(\alpha.\rho)
    \]

     Elementary symmetric functions form a multiplicative system of generators in the ring of symmetric functions. As the fixed point-free involutions of the same type form an orbit of $H_n$-action (see subsection~\ref{Sc:Centrality},  it is enough to show that elementary symmetric polynomials evaluated on the refined Jucys-Murphy elements preserve the space of type indicators, namely for all $1\le l \le n$ and for all $\alpha \in H_n$ we have 
    \[
    \alpha.(\mathrm e_l(\mathbf X_2,\ldots, \mathbf X_n)(\mathcal D_\lambda)) = \mathrm e_l((\alpha.\mathbf X_2),\ldots, (\alpha.\mathbf X_n))(\alpha.\mathcal D_\lambda) = \mathrm e_l(\mathbf X_2,\ldots, \mathbf X_n)(\mathcal D_\lambda).
    \]
    Here, by $\alpha.\mathbf X_l$ we mean the operator defined as
    \[
    \alpha.\mathbf X_k(\rho) = \sum_{\begin{smallmatrix}
        i \in [\bar n ]\\ i<k
    \end{smallmatrix}}\widehat{\alpha(i\,k)\alpha^{-1}}.\rho .
    \]

    For any $\alpha \in H_n$ we have $\alpha.\mathcal D_\lambda = \mathcal D_\lambda$. 

    Recall
    \[
    e_l(\mathbf X_2,\ldots,\mathbf X_n) = \sum_{\begin{smallmatrix}
        2\le k_1<\ldots<k_l\le n
    \end{smallmatrix}} 
    \mathbf X_{k_1}\cdots \mathbf X_{k_l}.
    \]

    In order to show that the result of evaluation $\mathrm e_l(\mathbf X_2,\ldots,X_n)(\mathcal D_\lambda)$ is invariant under the action of the group $H_n$ it is enough to check that for any $\alpha\in H_n$  and for $\rho  \in d_\lambda$ such that 
    \[
    \widehat{\alpha(i_1~k_1)\alpha^{-1}}\cdots \widehat{\alpha(i_l~k_l)\alpha^{-1}}.\rho = P\in M_{n}(C,J,T)
    \]
    one can find a sequence $i_1',\ldots,i_l'$ and $\varrho \in \mathcal D_\lambda$ with
    \[
    \widehat{(i'_1~k_1)}\cdots \widehat{(i'_l~k_l)}.\varrho = P,
    \]
    and moreover,  a correspondence between the sequences $(i_1,\ldots,i_l)$ and $(i_1',\ldots ,i_l')$ is bijective.

    The group $H_n$ is generated by the transposition $(1~\bar 1)$ and products of pairs of transpositions $(k~k+1)(\bar k~\widebar{k+1})$ for all $1 \le k \le n-1.$ 

    The conjugation by $(1~\bar 1)$  just shuffles the terms of all the $\mathbf X_l,$ so the action by this element does not change the operators. So we have for all $\rho \in \mathcal D_\lambda$
    \[
    (1~\bar 1).\left(\mathbf X_{k_1}\cdots \mathbf X_{k_l}(\rho)\right) = \mathbf X_{k_1}\cdots \mathbf X_{k_l}\left( (1~\bar 1).\rho\right).
    \]

    Now, consider the conjugation by $\alpha = (k~k+1)(\bar k~\widebar{k+1})$. Due to Proposition~\ref{th:comm}, we can always rearrange the operators in every term of the sum $\sum_{\begin{smallmatrix}
        2\le k_1<\ldots<k_l\le n
    \end{smallmatrix}} 
    \mathbf X_{k_1}\cdots \mathbf X_{k_l}$ in such a way, that $\mathbf X_{k}$ and $\mathbf X_{k+1}$ are on the rightmost positions. Now consider the cases:
    \begin{itemize}
    \item If neither of $\mathbf X_k,\mathbf X_{k+1}$ is involved in the term, then the conjugation by $\alpha$ just shuffles the summands in expressions for all the remaining $\mathbf X_{l}.$ Thus the consideration of this case is similar to the case of conjugation by $(1~\bar 1)$.
    \item The terms with only one of the $\mathbf X_{k},\mathbf X_{k+1}$  involved can be grouped as $\mathbf P (\mathbf X_k + \mathbf X_{k+1}),$ where $\mathbf P$ is a product of refined Jucys-Murphy elements. The conjugation of $\mathbf P$ by $\alpha$ just lead to reshuffle of the terms in the its factors, so $\alpha.\left(\mathbf P(\rho)\right) = \mathbf P(\alpha.\rho)$. However, the conjugation of $\mathbf X_k + \mathbf X_{k+1}$ by $\alpha$ preserves almost all the terms of the sum except that $(\bar k~k+1)$ gets replaced by $(k~\widebar{k+1}).$ So
    \begin{gather}
    \widehat{(k~\widebar{k+1})}.\rho = w\left(\rho,(k~\widebar{k+1})\right)(k~\widebar{k+1})).\rho\\
    =w\left(\rho,(k~\widebar{k+1})\right)(k~\widebar{k+1})(k~\widebar{k+1})(\bar k~k+1).\left((\bar k~k+1)(k~\widebar{k+1}).\rho\right) \\= w\left(\rho,(k~\widebar{k+1})\right)(\bar k~k+1).\left((\bar k~k+1)(k~\widebar{k+1}).\rho \right). 
    \end{gather}
    We see that as $(\bar k~k+1)(k~\widebar{k+1})\in H_n,$ the element $\varrho = (\bar k~k+1)(k~\widebar{k+1}).\rho$ has the same type as $\rho.$ Also, $(\bar k~k+1).\left((\bar k~k+1)(k~\widebar{k+1}).\rho \right) = (k~\widebar{k+1}).\rho)$, so the we have equality of the weights $w\left(\rho,(k~\widebar{k+1})\right) = w\left(\varrho, (\bar k~k+1) \right)$. Thus 
    
    \begin{equation}\label{eq:bar}
    \widehat{(k~\widebar{k+1})}.\rho = \widehat{(\bar k~k+1)}.\varrho.
    \end{equation}
    
    As the correspondence $\rho \mapsto \varrho$ is bijective, the case follows.

    \item If the both $\mathbf X_k, \mathbf X_{k+1}$ are involved, so we consider the term $\mathbf P\mathbf X_k \mathbf X_{k+1}.$ Conjugation by $\alpha$ does not change $\mathbf P.$ 

    Due to Proposition~\ref{th:comm} we have
    \[
    [\mathbf X_k,\mathbf X_{k+1}](\mathcal D_\lambda)  = 0.
    \]

    It means that for every triple $(\sigma_0, \sigma_1,\rho)$ with $ \sigma_0 = (i~k), \sigma_1 = (j~k+1), \rho \in d_\lambda $ we can bijectively associate a triple $(\varsigma_0,\varsigma_1,\varrho)$ with $\varsigma_0 = (i'~k), \varsigma_1 = (j'~k+1), \varrho \in d_\lambda$ such that
    \[
    \widehat {\sigma_0 }\widehat {\sigma_1 }.\rho = \widehat {\varsigma_1 }\widehat {\varsigma_0 }.\varrho.
    \]
    Denote the bijection mapping $(\sigma_0,\sigma_1,\rho)$ to $(\varsigma_0,\varsigma_1,\varrho)$ by the letter $\mathbf{B}.$ We have the following subcases:

    \begin{itemize}
        \item for any $i,j \in [\widebar{k-1}]$ we have
        \[
        (\alpha.\widehat{(i~k)})(\alpha.\widehat{(j~k+1)}).\rho = \widehat{(i~k+1)}\widehat{(j~k)}.\rho 
        \]
        Evaluating $\mathbf B
        \left( (i~k+1),(j~k),\rho\right)$ produces the counterpart with the correct order of transpositions.
        \item for any $j\in [\widebar{k-1}]$ we have
        \[
         (\alpha.\widehat{(j~k)})(\alpha.\widehat{(k~k+1)}).\rho = \widehat{(j~k+1)}\widehat{(k~k+1)}.\rho.
        \]
        As $(j~k+1)(k~k+1) = (k~k+1)(j~k)$, we have 
        \begin{itemize}
        
        \item if $CJ \widehat{(j~k+1)}\widehat{(k~k+1)}.\rho = T^2 \widehat{(k~k+1)}\widehat{(j~k)}.\rho$, then $\rho\cup\tau$ has a cycle of the following structure:

        \begin{center}
        \begin{tikzpicture}

    \begin{scope}
    \draw[dotted] (0,0) circle (1cm);


        \draw[thick,blue,opacity=1] (-10:1cm) arc[start angle=-10, end angle=10, radius=1cm];
        \draw[thick,red,opacity=1] (10:1cm) arc[start angle=10, end angle=30, radius=1cm];
        \draw[thick,red,opacity=1] (130:1cm) arc[start angle=130, end angle=150, radius=1cm];
        \draw[thick,blue,opacity=1] (150:1cm) arc[start angle=150, end angle=170, radius=1cm];
        \draw[thick,red,opacity=1] (250:1cm) arc[start angle=250, end angle=270, radius=1cm];
        \draw[thick,blue,opacity=1] (270:1cm) arc[start angle=270, end angle=290, radius=1cm];
        
        \filldraw (10:1cm) circle (2pt); 
        \filldraw (150:1cm) circle (2pt);
        \filldraw (270:1cm) circle (2pt);

        \node[above right] at (10:1cm) {$k+1$}; 
        \node[above left] at (150:1cm) {$k$};
        \node[below] at (270:1cm) {$j$};

    \end{scope}
    \end{tikzpicture}
    \end{center}
    The fixed point-free involution $\rho' = (j~k)(\bar k~k+1)(j~k+1)(k~k+1).\rho$ differs from $\rho$ by four twists, so it has the same type. Clearly, in this case, we have
    \[
    \widehat{(\bar k~k+1)}\widehat{(j~k)}.\rho' = \widehat{(j~k+1)}\widehat{(k~k+1)}.\rho,
    \]
    and $\rho'\cup\tau$ has a cycle of the following structure:
            \begin{center}
        \begin{tikzpicture}

    \begin{scope}
    \draw[dotted] (0,0) circle (1cm);


        \draw[thick,blue,opacity=1] (-10:1cm) arc[start angle=-10, end angle=10, radius=1cm];
        \draw[thick,red,opacity=1] (10:1cm) arc[start angle=10, end angle=30, radius=1cm];
        \draw[thick,red,opacity=1] (130:1cm) arc[start angle=130, end angle=150, radius=1cm];
        \draw[thick,blue,opacity=1] (150:1cm) arc[start angle=150, end angle=170, radius=1cm];
        \draw[thick,red,opacity=1] (250:1cm) arc[start angle=250, end angle=270, radius=1cm];
        \draw[thick,blue,opacity=1] (270:1cm) arc[start angle=270, end angle=290, radius=1cm];
        
        \filldraw (10:1cm) circle (2pt); 
        \filldraw (150:1cm) circle (2pt);
        \filldraw (270:1cm) circle (2pt);

        \node[above right] at (10:1cm) {$j$}; 
        \node[above left] at (150:1cm) {$k$};
        \node[below] at (270:1cm) {$k+1$};

    \end{scope}
    \end{tikzpicture}
    \end{center}
    Evaluating $\mathbf B\left((\bar k~k+1), (j~k),\rho'\right)$ produces the operators positioned in the desired order.

    \item if $T^2 \widehat{(j~k+1)}\widehat{(k~k+1)}.\rho = CJ \widehat{(k~k+1)}\widehat{(j~k)}.\rho$, then $\rho\cup\tau$ has a cycle of the following structure:
     
    \begin{center}
    \begin{tikzpicture}

    \begin{scope}
    \draw[dotted] (0,0) circle (1cm);


        \draw[thick,blue,opacity=1] (-10:1cm) arc[start angle=-10, end angle=10, radius=1cm];
        \draw[thick,red,opacity=1] (10:1cm) arc[start angle=10, end angle=30, radius=1cm];
        \draw[thick,blue,opacity=1] (130:1cm) arc[start angle=130, end angle=150, radius=1cm];
        \draw[thick,red,opacity=1] (150:1cm) arc[start angle=150, end angle=170, radius=1cm];
        \draw[thick,red,opacity=1] (250:1cm) arc[start angle=250, end angle=270, radius=1cm];
        \draw[thick,blue,opacity=1] (270:1cm) arc[start angle=270, end angle=290, radius=1cm];
        
        \filldraw (10:1cm) circle (2pt); 
        \filldraw (150:1cm) circle (2pt);
        \filldraw (270:1cm) circle (2pt);

        \node[above right] at (10:1cm) {$k$}; 
        \node[above left] at (150:1cm) {$k + 1$};
        \node[below] at (270:1cm) {$j$};

    \end{scope}
    \end{tikzpicture}
    \end{center}
    The fixed point free involution $\rho' = (j k)(\bar k~k+1)(j~k+1)(k~k+1).\rho$ differs from $\rho$ by four twists as $(j~k+1)(k~k+1) = (k~k+1)(j~k)$, and $(j~k)$ commutes with $(\bar k~k+1).$ The union $\rho'\cup \tau$ has a cycle of the following structure:
       \begin{center}
    \begin{tikzpicture}

    \begin{scope}
    \draw[dotted] (0,0) circle (1cm);


        \draw[thick,blue,opacity=1] (-10:1cm) arc[start angle=-10, end angle=10, radius=1cm];
        \draw[thick,red,opacity=1] (10:1cm) arc[start angle=10, end angle=30, radius=1cm];
        \draw[thick,blue,opacity=1] (130:1cm) arc[start angle=130, end angle=150, radius=1cm];
        \draw[thick,red,opacity=1] (150:1cm) arc[start angle=150, end angle=170, radius=1cm];
        \draw[thick,blue,opacity=1] (250:1cm) arc[start angle=250, end angle=270, radius=1cm];
        \draw[thick,red,opacity=1] (270:1cm) arc[start angle=270, end angle=290, radius=1cm];
        
        \filldraw (10:1cm) circle (2pt); 
        \filldraw (150:1cm) circle (2pt);
        \filldraw (270:1cm) circle (2pt);

        \node[above right] at (10:1cm) {$k$}; 
        \node[above left] at (150:1cm) {$k + 1$};
        \node[below] at (270:1cm) {$j$};

    \end{scope}
    \end{tikzpicture}
    \end{center}
    And
    
    \[
    \widehat{(\bar k~k+1)}\widehat{(j~k)}.\rho' = \widehat{(j~k+1)}\widehat{(k~k+1)}.\rho.
    \]
    Evaluating $\mathbf B((\bar k~k+1),(j~k),\rho')$ produces the operators positioned in the desired order.
        
        \item Otherwise, $\widehat{(j~k+1)}\widehat{(k~k+1)}.\rho = \widehat{(k~k+1)}\widehat{(j~k)} .\rho$. Evaluating $\mathbf B((k~k+1),(j~k),\rho)$ produces the operators positioned in the correct order. 
        
        \end{itemize}

        \item Similarly, for an element $j\in [\widebar{k-1}]$ and a fixed point-free involution $\rho'''$ we have
        \[
        (\alpha.\widehat{(\bar j~k)})(\alpha.\widehat{(\bar k~k+1)}).\rho''' = \widehat{(\bar j~k+1)}\widehat{(k~\widebar{k+1})}.\rho'''.
        \]
        Formula~\ref{eq:bar} suggests, that for any given element $\rho'''$ we can bijectively find a fixed point-free involution $\rho''$ with  
        \[
        \widehat{(\bar j~k+1)}\widehat{(k~\widebar{k+1})}.\rho''' = \widehat{(\bar j~k+1)}\widehat{(\bar k~k+1)}.\rho''.
        \]
        Now we have $(\bar j~k+1)(\bar k~k+1) = (\bar k~k+1)(\bar j~\bar k)$. So we consider the cases:
        \begin{itemize}
        \item If $CJ\widehat{(\bar j~k+1)}\widehat{(\bar k~k+1)}.\rho'' = T^2\widehat{(\bar k~k+1)}\widehat{(\bar j~\bar k)}.\rho''$, then $\rho''\cup\tau$ has a cycle of the following structure:
        \begin{center}
        \begin{tikzpicture}

    \begin{scope}
    \draw[dotted] (0,0) circle (1cm);


        \draw[thick,blue,opacity=1] (-10:1cm) arc[start angle=-10, end angle=10, radius=1cm];
        \draw[thick,red,opacity=1] (10:1cm) arc[start angle=10, end angle=30, radius=1cm];
        \draw[thick,red,opacity=1] (130:1cm) arc[start angle=130, end angle=150, radius=1cm];
        \draw[thick,blue,opacity=1] (150:1cm) arc[start angle=150, end angle=170, radius=1cm];
        \draw[thick,red,opacity=1] (250:1cm) arc[start angle=250, end angle=270, radius=1cm];
        \draw[thick,blue,opacity=1] (270:1cm) arc[start angle=270, end angle=290, radius=1cm];
        
        \filldraw (10:1cm) circle (2pt); 
        \filldraw (150:1cm) circle (2pt);
        \filldraw (270:1cm) circle (2pt);

        \node[above right] at (10:1cm) {$k+1$}; 
        \node[above left] at (150:1cm) {$\bar k$};
        \node[below] at (270:1cm) {$\bar j$};

    \end{scope}
    \end{tikzpicture}
    \end{center}
    And setting $\rho' = (\bar j~\bar k)(k~k+1)(\bar j~k+1)(\bar k~k+1).\rho''$ we have
    \[
    \widehat{(k~k+1)}\widehat{(\bar j~\bar k)}.\rho' = \widehat{(\bar j~k+1)}\widehat{(\bar k~k+1)}.\rho''
    \]
    The union $\rho'\cup \tau$ should have a cycle of the following structure:
            \begin{center}
        \begin{tikzpicture}

    \begin{scope}
    \draw[dotted] (0,0) circle (1cm);


        \draw[thick,blue,opacity=1] (-10:1cm) arc[start angle=-10, end angle=10, radius=1cm];
        \draw[thick,red,opacity=1] (10:1cm) arc[start angle=10, end angle=30, radius=1cm];
        \draw[thick,blue,opacity=1] (130:1cm) arc[start angle=130, end angle=150, radius=1cm];
        \draw[thick,red,opacity=1] (150:1cm) arc[start angle=150, end angle=170, radius=1cm];
        \draw[thick,red,opacity=1] (250:1cm) arc[start angle=250, end angle=270, radius=1cm];
        \draw[thick,blue,opacity=1] (270:1cm) arc[start angle=270, end angle=290, radius=1cm];
        
        \filldraw (10:1cm) circle (2pt); 
        \filldraw (150:1cm) circle (2pt);
        \filldraw (270:1cm) circle (2pt);

        \node[above right] at (10:1cm) {$k+1$}; 
        \node[above left] at (150:1cm) {$\bar k$};
        \node[below] at (270:1cm) {$\bar j$};

    \end{scope}
    \end{tikzpicture}
    \end{center}
    Finally, we notice that as $(\bar j~\bar k)(j~k) \in H_n$ we can set
    \[\rho = (j~k)(\bar j~\bar k).\rho',\]
    an similarly to the formula~\ref{eq:bar} we get
    \[
    \widehat{(k~k+1)}\widehat{(\bar j~\bar k)}.\rho' = \widehat{(k~k+1)}\widehat{( j~ k)}.\rho,
    \]
    where the union $\rho\cup \tau$ has a cycle with the following structure:
    \begin{center}
     \begin{tikzpicture}

    \begin{scope}
    \draw[dotted] (0,0) circle (1cm);


        \draw[thick,blue,opacity=1] (-10:1cm) arc[start angle=-10, end angle=10, radius=1cm];
        \draw[thick,red,opacity=1] (10:1cm) arc[start angle=10, end angle=30, radius=1cm];
        \draw[thick,red,opacity=1] (130:1cm) arc[start angle=130, end angle=150, radius=1cm];
        \draw[thick,blue,opacity=1] (150:1cm) arc[start angle=150, end angle=170, radius=1cm];
        \draw[thick,blue,opacity=1] (250:1cm) arc[start angle=250, end angle=270, radius=1cm];
        \draw[thick,red,opacity=1] (270:1cm) arc[start angle=270, end angle=290, radius=1cm];
        
        \filldraw (10:1cm) circle (2pt); 
        \filldraw (150:1cm) circle (2pt);
        \filldraw (270:1cm) circle (2pt);

        \node[above right] at (10:1cm) {$k+1$}; 
        \node[above left] at (150:1cm) {$ j$};
        \node[below] at (270:1cm) {$ k$};

    \end{scope}
    \end{tikzpicture}
    \end{center}
    Evaluating $\mathbf B\left((k~k+1),(j~k),\rho\right)$ produces the operator positioned in the desired order.
    \item If $T^2\widehat{(\bar j~k+1)}\widehat{(\bar k~k+1)}.\rho'' = CJ\widehat{(\bar k~k+1)}\widehat{(\bar j~\bar k)}.\rho''$, then $\rho''\cup\tau$ has a cycle of the following structure:
   \begin{center}
    \begin{tikzpicture}

    \begin{scope}
    \draw[dotted] (0,0) circle (1cm);


        \draw[thick,blue,opacity=1] (-10:1cm) arc[start angle=-10, end angle=10, radius=1cm];
        \draw[thick,red,opacity=1] (10:1cm) arc[start angle=10, end angle=30, radius=1cm];
        \draw[thick,blue,opacity=1] (130:1cm) arc[start angle=130, end angle=150, radius=1cm];
        \draw[thick,red,opacity=1] (150:1cm) arc[start angle=150, end angle=170, radius=1cm];
        \draw[thick,red,opacity=1] (250:1cm) arc[start angle=250, end angle=270, radius=1cm];
        \draw[thick,blue,opacity=1] (270:1cm) arc[start angle=270, end angle=290, radius=1cm];
        
        \filldraw (10:1cm) circle (2pt); 
        \filldraw (150:1cm) circle (2pt);
        \filldraw (270:1cm) circle (2pt);

        \node[above right] at (10:1cm) {$\bar k$}; 
        \node[above left] at (150:1cm) {$k + 1$};
        \node[below] at (270:1cm) {$\bar j$};

    \end{scope}
    \end{tikzpicture}
    \end{center}
       The fixed point free involution $\rho' = (\bar j~\bar k)( k~k+1)(\bar j~k+1)(\bar k~k+1).\rho''$ differs from $\rho''$ by four twists as $(j~k+1)(k~k+1) = (k~k+1)(j~k)$, and $(j~k)$ commutes with $(\bar k~k+1).$ The union $\rho'\cup \tau$ has a cycle of the following structure:
       \begin{center}
    \begin{tikzpicture}

    \begin{scope}
    \draw[dotted] (0,0) circle (1cm);


        \draw[thick,blue,opacity=1] (-10:1cm) arc[start angle=-10, end angle=10, radius=1cm];
        \draw[thick,red,opacity=1] (10:1cm) arc[start angle=10, end angle=30, radius=1cm];
        \draw[thick,blue,opacity=1] (130:1cm) arc[start angle=130, end angle=150, radius=1cm];
        \draw[thick,red,opacity=1] (150:1cm) arc[start angle=150, end angle=170, radius=1cm];
        \draw[thick,blue,opacity=1] (250:1cm) arc[start angle=250, end angle=270, radius=1cm];
        \draw[thick,red,opacity=1] (270:1cm) arc[start angle=270, end angle=290, radius=1cm];
        
        \filldraw (10:1cm) circle (2pt); 
        \filldraw (150:1cm) circle (2pt);
        \filldraw (270:1cm) circle (2pt);

        \node[above right] at (10:1cm) {$\bar k$}; 
        \node[above left] at (150:1cm) {$k + 1$};
        \node[below] at (270:1cm) {$\bar j$};

    \end{scope}
    \end{tikzpicture}
    \end{center}
    And
    
    \[
    \widehat{( k~k+1)}\widehat{(\bar j~\bar k)}.\rho' = \widehat{(\bar j~k+1)}\widehat{(\bar k~k+1)}.\rho''.
    \]

    Similarly to the previous case, we set
    \[\rho = (j~k)(\bar j~\bar k).\rho',\]
    and
    \[
    \widehat{(k~k+1)}\widehat{(\bar j~\bar k)}.\rho' = \widehat{(k~k+1)}\widehat{( j~ k)}.\rho,
    \]
    where the union $\rho\cup \tau$ has a cycle with the following structure:
           \begin{center}
    \begin{tikzpicture}

    \begin{scope}
    \draw[dotted] (0,0) circle (1cm);


        \draw[thick,red,opacity=1] (-10:1cm) arc[start angle=-10, end angle=10, radius=1cm];
        \draw[thick,blue,opacity=1] (10:1cm) arc[start angle=10, end angle=30, radius=1cm];
        \draw[thick,blue,opacity=1] (130:1cm) arc[start angle=130, end angle=150, radius=1cm];
        \draw[thick,red,opacity=1] (150:1cm) arc[start angle=150, end angle=170, radius=1cm];
        \draw[thick,red,opacity=1] (250:1cm) arc[start angle=250, end angle=270, radius=1cm];
        \draw[thick,blue,opacity=1] (270:1cm) arc[start angle=270, end angle=290, radius=1cm];
        
        \filldraw (10:1cm) circle (2pt); 
        \filldraw (150:1cm) circle (2pt);
        \filldraw (270:1cm) circle (2pt);

        \node[above right] at (10:1cm) {$ j$}; 
        \node[above left] at (150:1cm) {$k + 1$};
        \node[below] at (270:1cm) {$k$};

    \end{scope}
    \end{tikzpicture}
    \end{center}
    
    Evaluating $\mathbf B(( k~k+1),(j~k),\rho')$ produces the operators positioned in the desired order.

    \item Otherwise, we have 
    $\widehat{(\bar j~k+1)}\widehat{(\bar k~k+1)}.\rho'' = \widehat{(\bar k~k+1)}\widehat{(\bar j~\bar k)}.\rho''$. Like in the previous cases set
    \[\rho = (j~k)(\bar j~\bar k).\rho'',\]
    so that 
    \[\widehat{(\bar k~k+1)}\widehat{(\bar j~\bar k)}.\rho'' = \widehat{(\bar k~k+1)}\widehat{( j~k)}.\rho \]
    and evaluate $\mathbf B\left((\bar k~k+1),( j~k), \rho\right). $

        \end{itemize}
        
    \end{itemize}

    \end{itemize}
    Finally, we have to check that the cases do not overlap. Here, we provide only one check: the rest are done similarly. Consider the case
    \[
    CJ \widehat{(j~k+1)}\widehat{(k~k+1)}.\rho = T^2 \widehat{(k~k+1)}\widehat{(j~k)}.\rho
    \]
    then consider the result of the conjugation of the produced element $\rho = (j~k)(\bar j~\bar k).\rho'$  It is straightforward to check that $\widehat{(\bar j~k+1)}\widehat{(\bar k~k+1)}.\rho \ne \widehat{(\bar k~k+1)}\widehat{(\bar j~\bar k)}.\rho$ 
    
    This finishes the proof.   
\end{proof}
Notice, that the results of this section rely on the fact that we work over $\mathbb C(C,J,T),$ not $\mathbb C[C,J,T]$. However, as the coefficients of $\mathbf X_{k_p}\cdots \mathbf X_{k_1}(\mathcal D_\lambda)$ are \emph{polynomials} is $C,J,T$, we can conclude, that even in the case of arbitrary specialization of the variables $C,J,T$, the refined odd Jucys-Murphy elements commute, and define an action of the ring of symmetric functions on a Fock space.

We refer to the constructed action of the ring of symmetric functions on $I(C,J,T)$  as to the \emph{preliminary refined action}. The table in Appendix~\ref{sec:pdata} provides examples of the values of the elementary symmetric functions evaluated on the refined Jucys-Murphy elements applied to the type indicators.

\section{Refined monotone Hurwitz numbers}\label{sc:ref-mon}

Any symmetric function is a polynomial in complete homogeneous symmetric functions $\mathrm h_k$. Given an action of $\Lambda$ on a Fock space $\mathcal F$, is enough to know the action of the complete homogenous functions on the basis elements to compute the action of any symmetric polynomial uniquely in a way similar to the computation of Equation~\ref{eq:strcoef}. 

In this section, we produce a recursion that allows us to determine the structure coefficients of the refined action of $\mathrm h_k$. 

The recursion we construct computes the \emph{connected} numbers. By the usual inclusion-exclusion argument, the generating functions for connected and disconnected numbers differ by exponentiation, and the corresponding numbers are related by the formulas~\ref{eq:dis-to-conn} and~\ref{eq:conn-to-dis}. A  reason for considering the connectedness condition is that for the Schur and zonal action the corresponding numbers are referred to as the \emph{monotone Hurwitz numbers,} that play a crucial role in Weingarten calculus, and admit a nice generalization to the $b-$deformed case (see~\cite{Coulter-Do} and other relevant references in the Introduction).



First, following~\cite{Karev-Do} we define a notion of a \emph{monotone twisted factorisation}.





\begin{definition}\label{def:twfac}

For $k\in \mathbb N\cup \{0\},$
and $\mu,\nu \vdash n$ a  twisted factorisation  of type $(k,\mu,\nu)$ is  a pair  
$(\rho,(\sigma_1,\ldots,\sigma_k))$ consisting of an element  $\rho\in d_\nu$, and $\sigma_1,\ldots,\sigma_k$ --- a sequence of transpositions of $S_{2n}$ that satisfy:
\begin{itemize}
\item For every $i$ the transposition $\sigma_i$ is  the form $\sigma_i = (p_i\, q_i),$ with $q_i \in [n],$ $p_i \in [\widebar{q_i -1} ]$.
\item The type of the element $\sigma_k\cdots \sigma_1.\rho$ is $\mu.$
\end{itemize}
If we also impose the condition
\begin{itemize}
    \item For all $i > 1$ we have $q_{i-1} \le q_i$ (\emph{monotonicity condition}),
\end{itemize}
the corresponding factorisation is called \emph{monotone}.
Moreover, if 
\begin{itemize}
    \item The subgroup $\langle \tau,\rho,\sigma_1,\ldots,\sigma_k\rangle\le S_{2n}$ is transitive (\emph{transitivity condition}),
\end{itemize}
we call the factorisation \emph{transitive}.

Given a  twisted factorisation $\mathfrak f = (\rho,(\sigma_1,\ldots,\sigma_k)),$ with $\sigma_k\cdots\sigma_1.\rho = \varrho,$ we define its \emph{weight} to be $w(\mathfrak f) = [\varrho].\hat \sigma_k\cdots\hat \sigma_1.\rho\in \mathbb C(C,J,T).$ 
\end{definition}


Let us comment on the definition. The notion of the weight is a direct generalization of the CJT-weight of a pair of a fixed point-free involution and a transposition we have used so far. If we do not require the transitivity condition, the sum of weights of the monotone twisted factorisations for $\mu =(1^{m_1}2^{m_2}\ldots)$ equals
\[
\sum_{\mathfrak f\mbox{ is of type }(k,\mu,\nu) } w(\mathfrak f) = \sum_{\varrho\in d_\mu} [\varrho].\mathrm h_k(\mathbf X_2,\ldots, \mathbf X_n)(\mathcal D_\nu) = \frac {2^{|\mu|}|\mu|!}{\prod (2j)^{m_j}m_!} [\mathcal D_\mu].\mathrm h_k(\mathbf X_2,\ldots, \mathbf X_n)(\mathcal D_\nu), 
\]
where $\mathrm h_k(\mathbf X_2,\ldots,X_n)$ is the $k'$th  complete homogeneous symmetric function evaluated on the set of preliminary refined Jucys-Murphy elements, and the rightmost expression follows due to the fact that $\mathrm h_k(\mathbf X_2,\ldots, \mathbf X_n)$ preserves the space of type indicators. The transitivity condition is inspired by the real Hurwitz theory: coverings that correspond to transitive factorisations are irreducible (see~\cite{Guay-Paquet-Markwig-Rau,Lozhkin}).

In the framework of the Hurwitz theory, it is more natural to use  \emph{genus} $g$ instead of the number of transpositions $k$. We say that a transitive factorisation of type $(k,\mu,\nu)$ is of \emph{genus $0$}, if $k$ is the smallest possible number of transpositions needed to transform a fixed point-free involution of type $\nu$ to a fixed point-free involution of type $\mu$ in a transitive way. 

\begin{proposition}
    A genus $0$ factorisation of type $(k,\mu,\nu)$ has $k = l(\mu) + l(\nu) - 2.$
\end{proposition}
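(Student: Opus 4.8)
The plan is to track two statistics along the factorisation and then combine them. Write $\rho^{(0)}=\rho$ and $\rho^{(i)}=\sigma_i.\rho^{(i-1)}$ for the successive conjugates, so that $\rho^{(i)}\in d_{\lambda^{(i)}}$ with $\lambda^{(0)}=\nu$ and $\lambda^{(k)}=\mu$. First I would classify each transposition $\sigma_i$ according to \cref{Def:CJTweight} as a \emph{cut}, a \emph{join}, or a \emph{twist} for $\rho^{(i-1)}$, and let $b$, $a$, $d$ denote the respective numbers of cuts, joins and twists, so that $k=a+b+d$. Recalling that the number of parts of the type equals the number of connected components of $\rho^{(i)}\cup\tau$, and that a cut raises this number by one, a join lowers it by one, and a twist leaves it unchanged, I obtain the ``type-count'' identity
\[
\ell(\mu)-\ell(\nu)=b-a .
\]

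The second statistic is connectivity. Set $G_i=\langle \tau,\rho,\sigma_1,\dots,\sigma_i\rangle$ and let $C_i$ be its number of orbits on $[\bar n]$; then $C_0=\ell(\nu)$, since the orbits of $\langle\tau,\rho\rangle$ are exactly the components of $\rho\cup\tau$, and transitivity means $C_k=1$. Adjoining one transposition can lower the orbit count by at most one, and it does so precisely when its two points lie in different orbits of $G_{i-1}$. The key point I would stress is that a cut or a twist has both of its points in the same component of $\rho^{(i-1)}\cup\tau$, hence in the same orbit of $\langle\tau,\rho^{(i-1)}\rangle\le G_{i-1}$, so it can never merge orbits; only a join can. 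Since each merging join lowers $C$ by exactly one and we must pass from $\ell(\nu)$ orbits to a single one, there are exactly $\ell(\nu)-1$ merging joins, whence $a\ge\ell(\nu)-1$. Combining with the type-count identity gives $b\ge\ell(\mu)-1$, and therefore
\[
k=a+b+d\ge(\ell(\nu)-1)+(\ell(\mu)-1)=\ell(\mu)+\ell(\nu)-2,
\]
with equality forcing $d=0$ and every join to be merging.

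Finally I would show that this bound is attained, so that the minimum defining genus $0$ is exactly $\ell(\mu)+\ell(\nu)-2$. For this I would build an explicit transitive factorisation: starting from $\rho\in d_\nu$, apply $\ell(\nu)-1$ joins, each merging two components of the running graph, to reach the single-component type $(n)$, at which stage $G_i$ is already transitive; then apply $\ell(\mu)-1$ cuts, splitting components so that the resulting part sizes are precisely $\mu_1,\dots,\mu_{\ell(\mu)}$ (a cut realises any splitting $m=m_1+m_2$ of a part, so every $\mu$ is reachable), which preserves transitivity because $G_i$ only grows. This uses $\ell(\mu)+\ell(\nu)-2$ transpositions. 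I would check that at each stage the connecting or splitting pair can be chosen of the admissible form $(p\,q)$ with $q\in[n]$ and $p<q$: each component is closed under the bar involution $\tau$ and hence contains unbarred elements, so taking $q$ to be the largest unbarred element of the two components to be joined forces every element of the other component to lie below $q$, and a merging transposition of admissible form always exists; the analogous choice works for cuts. Monotonicity is not needed for the genus definition, but the transpositions can be re-chosen in order of increasing $q$ if a monotone representative is wanted.

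The step I expect to be the main obstacle is the connectivity bookkeeping in the second paragraph: one must keep the \emph{components of $\rho^{(i-1)}\cup\tau$} (which govern the cut/join/twist classification and the type) carefully separate from the \emph{orbits of the full group $G_{i-1}$} (which govern transitivity), and argue precisely that non-merging joins and twists can only increase $k$. Once this is in place, the achievability argument is routine, the only care being to stay within the admissible set of transpositions.
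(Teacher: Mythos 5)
Your proof is correct and follows the same route the paper takes: the paper's own argument is a three-line sketch observing that twists are never needed for minimality and deferring to the "well-known" cut-and-join count from the Schur case, whereas you carry that count out in full, carefully separating the components of $\rho^{(i)}\cup\tau$ (which govern the cut/join/twist classification) from the orbits of the generated subgroup (which govern transitivity), and verifying achievability with transpositions of the admissible form. Nothing is missing; your write-up is simply a complete version of what the paper leaves to the reader.
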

\begin{proof}
    The number of transpositions needed for a factorisation does not depend on a particular choice of a representative of type $\nu$. Every applied transposition either makes a cut, or a join, or a twist, but as we want the number of transpositions to be minimal, twists are not present. So the problem reduces to the problem of cutting and joining the cycles in a transitive way and is similar to the well-known problem of minimizing the number of transpositions in the Schur case. 
\end{proof}

The genus of a factorisation measures the deviation of the number of transpositions from the minimal possible one.

\begin{definition}
    The genus of a transitive  factorisation of type $(k,\mu,\nu)$ is the number $g = \frac{k + 2 - l(\mu) - l(\nu)}{2}. $
\end{definition}

Notice, that unlike the complex Hurwitz theory, we allow half-integer values of the genus.

\begin{remark}
    As the reader realises, the theory we are going to develop, can be specialized and used for enumeration of purely real covers. In the purely real case, the genus, we introduce, is related to the Euler characteristic $\chi(\Sigma)$ of the covering real surface $(\Sigma,\mathcal T)$ by the formula $g = 1 - \chi(\Sigma)/4,$ and actually reflects the topological properties of $\Sigma$ quotiented by the action of real structure $\mathcal T$. 
\end{remark}

\begin{proposition}\label{prop:C<->J}
    Let a  transitive factorisation of type $(k,\mu,\nu)$ be of genus $g$ and involve $t$ twists. Then it involves $c = l(\mu) + g - \frac t2 - 1$ cuts and $j = l(\nu) + g - \frac t2 - 1$ joins.
\end{proposition}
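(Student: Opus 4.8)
The plan is to set up two elementary conservation identities relating the numbers of cuts, joins and twists occurring in the factorisation, and then solve them simultaneously with the definition of the genus.

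First I would fix notation: write $c$, $j$, $t$ for the numbers of indices $i\in\{1,\ldots,k\}$ at which $\sigma_i$ makes a cut, a join, or a twist for the fixed point-free involution $\sigma_{i-1}\cdots\sigma_1.\rho$ to which it is applied. By Definition~\ref{Def:CJTweight}, every admissible transposition $\sigma_i=(p_i\,q_i)$ does exactly one of these three things: the trichotomy (the two marked points lie in the same cycle of the associated product, in two different connected components, or in one component but in two different cycles) is both exhaustive and mutually exclusive. Hence these counts satisfy
\[
c + j + t = k.
\]

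Next I would track the number of parts of the type along the factorisation. By definition a cut increases the number of parts of the type by one, a join decreases it by one, and a twist leaves it unchanged. Since the factorisation starts at $\rho\in d_\nu$, whose type $\nu$ has $l(\nu)$ parts, and ends at an involution of type $\mu$ with $l(\mu)$ parts, summing these unit increments over all $k$ steps gives the net balance
\[
l(\mu) - l(\nu) = c - j.
\]

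Finally I would combine these two identities with the genus relation $k = 2g - 2 + l(\mu) + l(\nu)$ coming directly from the definition of the genus. Substituting into $c+j+t=k$ gives $c + j = 2g - 2 + l(\mu) + l(\nu) - t$, and then adding and subtracting the identity $c - j = l(\mu) - l(\nu)$ and dividing by two yields
\[
c = l(\mu) + g - \tfrac{t}{2} - 1, \qquad j = l(\nu) + g - \tfrac{t}{2} - 1,
\]
which is exactly the claim. I do not expect a genuine obstacle here: the argument is pure bookkeeping once the trichotomy is in place. The only point deserving a moment's care is the justification that each admissible transposition falls into precisely one of the three classes and changes the number of parts of the type by at most one — but this is precisely the content of Definition~\ref{Def:CJTweight}, so it can simply be invoked.
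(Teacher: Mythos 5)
Your proof is correct and follows essentially the same route as the paper: both arguments rest on the two bookkeeping identities $c - j = l(\mu) - l(\nu)$ (net change in the number of parts of the type) and $c + j + t = k = 2g - 2 + l(\mu) + l(\nu)$ (the trichotomy combined with the definition of the genus), solved as a linear system. Your write-up just makes the exhaustiveness and exclusivity of the cut/join/twist trichotomy explicit, which the paper leaves implicit.
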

\begin{proof}
    Every cut increases the number of parts by 1, and every join decreases the number of parts by 1. So we have $l(\nu) + c - j = l(\mu).$ On the other hand, we have $c + j + t = 2g - 2 + l(\mu) + l(\nu).$ Solving the system of linear equations for $c$ and $j,$ we obtain the result.
\end{proof}

Every monotone factorisation $\mathfrak f = (\rho,(\sigma_1,\ldots,\sigma_k))$ of type $(k,\mu,\nu)$ gives rise to an \emph{opposite factorisation}
\[
\mathfrak f' = (\varrho,(\sigma_k,\ldots,\sigma_1))
\] 
of type $(k,\nu,\mu)$ respectively. The weights of $\mathfrak f$ and $\mathfrak f'$ are related:
\[
w(\mathfrak f) = \left. w(\mathfrak f')\right|_{C \leftrightarrow J},
\]
 where $C\leftrightarrow J$ in the subscript denotes the application of the automorphism of $\mathbb C(C,J,T)$ that interchanges $C$ and $J.$ Notice that a factorisation $\mathfrak f$ is transitive if and only if the opposite factorisation $\mathfrak f'$ is transitive. 
Since the preliminary refined Jucys-Murphy elements restricted to the space of type indicators commute (Proposition \ref{th:comm}), we have the following equality:
\[
\sum_{\varrho \in d_\mu } [\varrho].\mathrm h_k(\mathbf X_2,\ldots,\mathbf X_n)(\mathcal D_\nu) = 
\sum_{\rho \in d_\nu } \left.[\rho].\mathrm h_k(\mathbf X_2,\ldots,\mathbf X_n)(\mathcal D_\mu)\right|_{C \leftrightarrow J},
\]
or
\begin{equation}\label{eq:symmetry}
\frac {2^{|\mu|}|\mu|!}{\prod (2j)^{m_j}m!} [\mathcal D_\mu].\mathrm h_k(\mathbf X_2,\ldots, \mathbf X_n)(\mathcal D_\nu) = \frac {2^{|\nu|}|\nu|!}{\prod (2j)^{n_j}n_j!} \left.[\mathcal D_\nu].\mathrm h_k(\mathbf X_2,\ldots, \mathbf X_n)(\mathcal D_\mu)\right|_{C\leftrightarrow J}, 
\end{equation}
where we assume that  the partition $\nu = (1^{n_1}2^{n_2}\ldots).$ Proposition~\ref{prop:C<->J} implies, that the expression on the left-hand side of the equality is of the form $\sum_{t\ge 0} p_t T^t C^{l(\mu)+g - \frac t2 - 1}J^{l(\nu)+g - \frac t2 - 1}$, where $p_t$ is a collection of rational numbers, while the expression on the right-hand side is $\sum_{t\ge 0} q_t T^t C^{l(\nu)+g - \frac t2 - 1}J^{l(\mu)+g - \frac t2 - 1}$ for a collection of rational numbers $q_t$. Given, that the automorphism $C\leftrightarrow J$ interchanges these expressions, we obtain:
\begin{proposition}\label{prop:symmetry}
For any $k\in \mathbb N,$ $\mu = (1^{m_1}2^{m_2}\ldots), \nu = (1^{n_1}2^{n_2}\ldots)$ the equality 
\[
\frac {1}{\prod (2jC)^{m_j}m!} [\mathcal D_\mu].\mathrm h_k(\mathbf X_2,\ldots, \mathbf X_n)(J^{-l(\nu)}\mathcal D_\nu ) = \frac {1}{\prod (2jC)^{n_j}n_j!} [\mathcal D_\nu].\mathrm h_k(\mathbf X_2,\ldots, \mathbf X_n)(J^{-l(\mu)}\mathcal D_\mu )
\]holds.
\end{proposition}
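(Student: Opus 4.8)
The plan is to obtain the identity as a direct, if slightly fiddly, consequence of the symmetry relation already recorded in Equation~\eqref{eq:symmetry}, whose content rests on the opposite-factorisation construction together with the commutativity of the refined Jucys--Murphy elements (Proposition~\ref{th:comm}). Abbreviate
\[
A=[\mathcal D_\mu].\mathrm h_k(\mathbf X_2,\ldots,\mathbf X_n)(\mathcal D_\nu),\qquad
A'=[\mathcal D_\nu].\mathrm h_k(\mathbf X_2,\ldots,\mathbf X_n)(\mathcal D_\mu).
\]
Since $\mu,\nu\vdash n$ the prefactors $2^{|\mu|}|\mu|!$ and $2^{|\nu|}|\nu|!$ both equal $2^{n}n!$ and cancel, so Equation~\eqref{eq:symmetry} reduces to
\[
\frac{A}{\prod_j(2j)^{m_j}m_j!}=\frac{\left.A'\right|_{C\leftrightarrow J}}{\prod_j(2j)^{n_j}n_j!}.
\]
On the other hand, pulling the scalars out by linearity and using $\prod_j(2jC)^{m_j}m_j!=C^{l(\mu)}\prod_j(2j)^{m_j}m_j!$, the statement of Proposition~\ref{prop:symmetry} is equivalent to
\[
\frac{A}{C^{l(\mu)}J^{l(\nu)}\prod_j(2j)^{m_j}m_j!}=\frac{A'}{C^{l(\nu)}J^{l(\mu)}\prod_j(2j)^{n_j}n_j!}.
\]

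First I would combine these two displays: substituting the reduced form of Equation~\eqref{eq:symmetry} into the left-hand side and cancelling the common factor $\prod_j(2j)^{n_j}n_j!$, it suffices to establish the purely monomial identity
\[
C^{l(\nu)}J^{l(\mu)}\left.A'\right|_{C\leftrightarrow J}=C^{l(\mu)}J^{l(\nu)}A'.
\]
The crucial input is the combinatorial balance underlying Proposition~\ref{prop:C<->J}: each monotone twisted factorisation contributing to $A'$ turns a fixed point-free involution of type $\mu$ into one of type $\nu$, and since a cut raises the number of parts by one, a join lowers it by one, and a twist leaves it unchanged, every monomial $C^{c'}J^{j'}T^{t}$ occurring in $A'$ satisfies $c'-j'=l(\nu)-l(\mu)$. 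This relation concerns only the number of parts and therefore holds irrespective of transitivity, hence applies to the full disconnected coefficient $A'$.

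Then the remaining step is pure exponent bookkeeping. Applying $C\leftrightarrow J$ to the monomial $C^{c'}J^{j'}T^{t}$ and multiplying by $C^{l(\nu)}J^{l(\mu)}$ produces $C^{\,j'+l(\nu)}J^{\,c'+l(\mu)}T^{t}$, whereas multiplying the original monomial by $C^{l(\mu)}J^{l(\nu)}$ produces $C^{\,c'+l(\mu)}J^{\,j'+l(\nu)}T^{t}$; the identity $c'-j'=l(\nu)-l(\mu)$ gives $c'+l(\mu)=j'+l(\nu)$, so the two monomials coincide. Summing over all factorisations yields the monomial identity above, and unwinding the reductions completes the proof.

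I expect no genuine obstacle, since the substantive work is already carried out in Equation~\eqref{eq:symmetry} and Proposition~\ref{prop:C<->J}. The only point requiring care is the exponent accounting: one must verify that the rescaling by $C^{l(\mu)}J^{l(\nu)}$ (respectively $C^{l(\nu)}J^{l(\mu)}$) is precisely what equalises the $C$- and $J$-degrees of each monomial, so that the automorphism $C\leftrightarrow J$ acts trivially on the rescaled polynomial; any mismatch in those offsets would be the one place an error could creep in.
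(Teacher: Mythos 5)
Your proposal is correct and follows essentially the same route as the paper: Equation~\eqref{eq:symmetry} combined with the cut/join balance $c'-j'=l(\nu)-l(\mu)$ (the content of Proposition~\ref{prop:C<->J}), followed by exponent bookkeeping showing that the rescaling by $C^{l(\mu)}J^{l(\nu)}$ and $C^{l(\nu)}J^{l(\mu)}$ makes the involution $C\leftrightarrow J$ act trivially. Your explicit remark that the part-counting relation holds without the transitivity hypothesis (and hence applies to the disconnected coefficients appearing in Equation~\eqref{eq:symmetry}) is a welcome clarification of a point the paper leaves implicit.
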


Now we are in the position to give the principal definition of this section:
\begin{definition}
    For partitions $\mu,\nu\vdash n,$ and a number $g \in \frac 12\mathbb N \cup \{0\}$ the refined monotone Hurwitz numbers $H^{\le}_g\left(\begin{smallmatrix}\nu\\ \mu\end{smallmatrix}\right)$ equals the weighted number of transitive monotone factorisations of type $({2g - 2 +l(\mu) + l(\nu)},\mu,\nu)$ divided by $2^n n! C^{l(\mu)} J^{l(\nu)}.$ 
\end{definition}

If we don't require the transitivity we get the notion of the \emph{disconnected refined monotone Hurwitz numbers} 
\begin{equation}\label{eq:monHurnumdef}
H^{\le,
\bullet}_g\left(\begin{smallmatrix}\nu\\ \mu\end{smallmatrix}\right) = \frac {1}{\prod (2jC)^{m_j}m_j!} [\mathcal D_\mu].\mathrm h_k(\mathbf X_2,\ldots, \mathbf X_n)(J^{-l(\nu)}\mathcal D_\nu ),\end{equation} where $k$ is related to $g$ by $k ={2g - 2 +l(\mu) + l(\nu)}.$ The disconnected numbers can be non-zero even for negative values of the genus. The relation between disconnected and transitive Hurwitz numbers is through the inclusion-exclusion formula (see Equations~\ref{eq:dis-to-conn},\ref{eq:conn-to-dis}).

Using the relation between the refined monotone Hurwitz numbers and the symmetric functions $\mathrm h_k$, we can repeat verbatim the argument from Lemma 7 of~\cite{Karev-Do}, and obtain:

\begin{proposition}\label{prop:rhoindependence}
The weighted number of monotone twisted factorisations $  \sigma_k\cdots  \sigma_1.\rho$, where $\rho$ is a fixed element of $d_\nu$, and $ \sigma_k\cdots \sigma_1.\rho\in d_\mu$, depends on $\nu$ only.  Moreover, the result still holds if we restrict to transitive monotone factorisations.
\end{proposition}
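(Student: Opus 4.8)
The plan is to read the weighted count in the statement as a matrix coefficient of the refined monotone operator and to prove that it is constant along the $H_n$-orbit $d_\nu$. Fix $\rho\in d_\nu$ and write $\mathcal H_k=\mathrm h_k(\mathbf X_2,\dots,\mathbf X_n)$ for the complete homogeneous symmetric function evaluated on the preliminary refined Jucys--Murphy elements, with the factors ordered by weakly increasing index so as to match the monotonicity condition. With this convention the weighted number of monotone twisted factorisations $\sigma_k\cdots\sigma_1.\rho$ landing in $d_\mu$ is exactly $N_\mu(\rho)\coloneqq\sum_{\varrho\in d_\mu}[\varrho].\mathcal H_k(\rho)$, since the weight of a factorisation is by definition the coefficient of its endpoint in $\hat\sigma_k\cdots\hat\sigma_1.\rho$. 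By \cref{Prop:type} the set $d_\nu$ is a single orbit of the conjugation action of $H_n$, so it suffices to prove $N_\mu(\alpha.\rho)=N_\mu(\rho)$ for $\alpha$ ranging over the generators $(1~\bar 1)$ and $(k~k+1)(\bar k~\widebar{k+1})$ of $H_n$.

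The key input is the equivariance of the refined action established inside the proof of \cref{th:action}: for every $\alpha\in H_n$, transposition $\sigma$, and fixed point-free involution $\rho$ one has $w(\rho,\sigma)=w(\alpha.\rho,\alpha\sigma\alpha^{-1})$ and $\alpha.(\hat\sigma.\rho)=\widehat{\alpha\sigma\alpha^{-1}}.(\alpha.\rho)$. Iterating this over a factorisation, and using that conjugation by $\alpha$ permutes $d_\mu$ while preserving coefficients, identifies $N_\mu(\rho)$ with the weighted count of the \emph{$\alpha$-conjugated} factorisations of $\alpha.\rho$ that land in $d_\mu$. For the generator $(1~\bar 1)$ conjugation merely swaps the summands $(1~q)\leftrightarrow(\bar 1~q)$ of each $\mathbf X_q$, so $\alpha\mathbf X_q\alpha^{-1}=\mathbf X_q$ exactly; the $\alpha$-conjugated factorisations are again monotone, and $N_\mu((1~\bar 1).\rho)=N_\mu(\rho)$ is immediate.

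The substantial case is $\alpha=(k~k+1)(\bar k~\widebar{k+1})$, where conjugation interchanges the roles of the indices $k$ and $k+1$ and therefore turns a monotone factorisation into one that is monotone only for the permuted order. The plan is to reorder it back to genuine monotonicity by exactly the procedure carried out in \cref{th:action}: the commutation relations of \cref{th:comm} together with \cref{lm:cut-twist} allow one to transpose the neighbouring factors $\mathbf X_k$ and $\mathbf X_{k+1}$, while the explicit weight- and type-preserving bijection $\mathbf B$, the identity \eqref{eq:bar}, and the twist-swap cases analysed there match each $\alpha$-conjugated monotone factorisation with an honest monotone factorisation of the same weight ending in the same type $d_\mu$. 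Summing these matchings gives $N_\mu(\alpha.\rho)=N_\mu(\rho)$, and hence constancy of $N_\mu$ on $d_\nu$.

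I expect the main obstacle to be precisely this reordering step. In \cref{th:action} the bijections were only needed \emph{after} summing over all of $d_\lambda$, in order to conclude invariance of $\mathcal D_\lambda$, whereas here I must verify that they already balance weighted mass \emph{fibre by fibre}: at each elementary move the base involution stays inside the type $\nu$ and the endpoint inside the type $\mu$, so that no cancellation between distinct involutions of $d_\nu$ is invoked. This is exactly what makes the single-base-point statement stronger than the $\mathcal D_\nu$-level identity \eqref{eq:monHurnumdef}, and it is what mirrors the role of centrality in Lemma~7 of \cite{Karev-Do}. Finally, for the transitive refinement one uses that $H_n$ centralises $\tau$ and that every move above replaces the data $(\rho,\sigma_1,\dots,\sigma_k)$ by its image under conjugation by an element of $H_n$ (the four-twist modifications appearing in \cref{th:action} are precisely such conjugations). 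Hence the subgroup $\langle\tau,\rho,\sigma_1,\dots,\sigma_k\rangle$ is carried to a conjugate subgroup, its transitivity is preserved, and the whole argument restricts verbatim to transitive monotone factorisations.
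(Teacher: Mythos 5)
Your overall strategy --- reduce via \cref{Prop:type} to showing that the count $N_\mu(\alpha.\rho)=N_\mu(\rho)$ for $\alpha$ ranging over the generators of $H_n$, using the weight equivariance $w(\rho,\sigma)=w(\alpha.\rho,\alpha\sigma\alpha^{-1})$ --- is the natural one, and your treatment of the generator $(1~\bar 1)$ is correct. The gap sits exactly at the point you yourself flag as ``the main obstacle'', and it is not resolved by what you write. For $\alpha=(k~k{+}1)(\bar k~\overline{k+1})$ you need a weight-preserving bijection from monotone factorisations based at $\rho$ onto monotone factorisations based \emph{at the single point} $\alpha.\rho$. The machinery you invoke from \cref{th:action} --- the bijection $\mathbf B$, identity \eqref{eq:bar}, and the four-twist substitutions --- does not produce such a map: every nontrivial reordering move there replaces the base involution by a \emph{different} element of the same type (e.g.\ $\varrho=(i'~k)(j'~l)(i~k)(j~l).\rho$, or $\rho'=(j~k)(\bar k~k{+}1)(j~k{+}1)(k~k{+}1).\rho$, or $\rho=(j~k)(\bar j~\bar k).\rho'$). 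Composing your conjugation step with these moves therefore yields a weight-preserving self-bijection of the set of monotone factorisations based at \emph{all} of $d_\nu$, which is precisely the orbit-level statement already contained in \cref{th:action} (equivalently, the displayed identity following \cref{def:twfac}) and is strictly weaker than the fibre-level constancy the proposition asserts. Your proposed fix --- ``at each elementary move the base involution stays inside the type $\nu$'' --- restates the orbit-level fact rather than establishing the fibre-level one.

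Note that the same issue is why one cannot literally repeat the argument of Lemma~7 of \cite{Karev-Do}: there $\mathrm h_k(X_1,\dots,X_n)$ is an honest central element, so the counting operator commutes with conjugation on \emph{individual} permutations and fibre-wise invariance is automatic. In the refined setting the operator $\rho\mapsto\mathrm h_k(\mathbf X_2,\dots,\mathbf X_n)(\rho)$ is not $H_n$-equivariant on individual involutions --- the commutators $[\mathbf X_k,\mathbf X_l]$ vanish only on type indicators, their matrix entries being proportional to $CJ-T^2$ otherwise --- so a genuinely new ingredient (a fibre-wise refinement of the bijections in \cref{th:action}, or an induction on $k$ that controls the base point) is required and is missing from your proposal. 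A secondary inaccuracy: your claim that the four-twist modifications are conjugations by elements of $H_n$, used to carry over transitivity, is not correct; elements such as $(j~k)(\bar k~k{+}1)(j~k{+}1)(k~k{+}1)$ need not lie in $H_n$, so preservation of transitivity under the reordering moves also needs its own (short) argument.
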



Using this proposition, we can arrange the computation of a refined monotone Hurwitz number $H_g^{\le}\left(\begin{smallmatrix}\nu\\ \mu\end{smallmatrix}\right)$ by the method proposed in~\cite{Karev-Do}: for the given $\nu = (1^{n_1}2^{n_2}\ldots) \vdash n $, choose its convenient representative in $d_\nu$, count the weighted number of transitive monotone factorisations of type $(k,\mu,\nu)$, and multiply the result by $\frac1{C^{l(\mu)}  \prod_j (2Jj)^{n_j}n_j!}$.

To break the symmetry that can interfere the analysis, we are going to use \emph{compositions} instead of partitions. 
 Let $\mathbf n = (n_1,\ldots,n_p)$ be a composition of the number $ n = n_1 + \cdots n_p$ of length $p$, with the partition $\nu$ corresponding to it. The  part $n_1$ of $\mathbf n$ corresponds to the product of transpositions
$(\bar 1~2)(\bar 2~3)\cdots(\bar n_1~ 1)$.
We refer to $n_1$ as \emph{the last used index}.

Now, suppose we have already considered the involution that corresponds to parts $(n_1,n_2,\dots,n_{r-1})$ of $\underline  n$, and the next part is the part of length $n_r$. Let the last used index is  $ l $.
Thus for $n_r$ we add to the involution we construct the product of transpositions
\[
(\widebar{l+1}~l+2)(\widebar{l+2}~l+3)\cdots(\widebar{l+n_r}~l+1,)
\]
 and redefine the last used index to be $l + n_r$. Proceed until all the parts of $\mathbf n$ are used. We denote the obtained fixed point-free involution by $\rho_\mathbf n.$ 
As an example, taking the composition $(1,1,2,3)$, we have that $n_1=1$ correspond to the transposition $(\bar1\ 
 1)$, to $n_2=1$ corresponds $(\bar2\ 2)$, to $n_3=2$ corresponds $(\bar 3\ 4)(\bar4\ 3)$ and
 finally to $n_4=3$ corresponds $(\bar 5\ 6)(\bar 6 \ 7)(\bar 7\ 5)$,
 and the preferred representative is $(\bar1\  1) (\bar2\ 2)(\bar 3\ 4)(\bar4\ 3)(\bar 5\ 6)(\bar 6 \ 7)
(\bar 7\ 5)$, see Figure~\ref{fig:pref}. We denote the type representative we have constructed $\rho_{\underline n}$

\begin{figure}
\begin{tikzpicture}

    \begin{scope}[shift={(6,0)}]
    \draw[thick,blue] (0,0) circle (1cm);

    \foreach \angle in {0, 120, 240} {
        \draw[thick,red,opacity=1] (\angle:1cm) arc[start angle=\angle, end angle=\angle+60, radius=1cm];
        
        \filldraw (\angle:1cm) circle (2pt); 
        \filldraw (\angle+60:1cm) circle (2pt);

    }

        \node[above right] at (0:1cm) {$6$}; 
        \node[above right] at (60:1cm) {$\bar 5$};
        \node[above left] at (120:1cm) {$5$};
        \node[above left] at (180:1cm) {$\bar 7$};
        \node[below left] at (240:1cm) {$ 7$};
        \node[below right] at (300:1cm) {$\bar 6$};

    \end{scope}

        \begin{scope}[shift={(3,0)}]
    \draw[thick,blue] (0,0) circle (1cm);

    \foreach \angle in {0, 180} {
        \draw[thick,red,opacity=1] (\angle:1cm) arc[start angle=\angle, end angle=\angle+90, radius=1cm];
        
        \filldraw (\angle:1cm) circle (2pt); 
        \filldraw (\angle+90:1cm) circle (2pt);
    }

    \node[right] at (0:1cm) {$4$}; 
    \node[above] at (90:1cm) {$\bar 3$};
    \node[left] at (180:1cm) {$3$};
    \node[below] at (240:1cm) {$\bar 4$};
    \end{scope}

            \begin{scope}[shift={(-0.5,0.5)}]
    \draw[thick,blue] (0,0) circle (0.4cm);

        \draw[thick,red,opacity=1] (0:0.4cm) arc[start angle=0, end angle=180, radius=0.4cm];
        
        \filldraw (0:0.4cm) circle (2pt); 
        \filldraw (180:0.4cm) circle (2pt);

    \node[right] at (0:0.4cm) {$1$}; 
    \node[left] at (180:0.4cm) {$\bar 1$};

    \end{scope}

      \begin{scope}[shift={(0.5,-0.5)}]
    \draw[thick,blue] (0,0) circle (0.4cm);

        \draw[thick,red,opacity=1] (0:0.4cm) arc[start angle=0, end angle=180, radius=0.4cm];
        
        \filldraw (0:0.4cm) circle (2pt); 
        \filldraw (180:0.4cm) circle (2pt);

    \node[right] at (0:0.4cm) {$2$}; 
    \node[left] at (180:0.4cm) {$\bar 2$};

    \end{scope}

\end{tikzpicture}

    \caption{The perfect matchings corresponding to $\tau$ (in blue), and the preferred representative $\rho_\mathbf n$ for $\mathbf n = (1,1,2,3)$ (in red).}
    \label{fig:pref}
\end{figure}
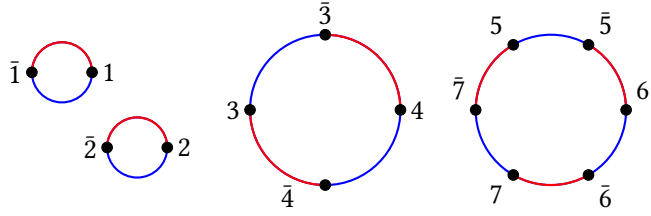


Following the ideas of~\cite{DDM,Karev-Do}, we produce the recursion keeping track of the cycle of $\mu$ that contains the largest element. We introduce the following splitting.
Let $\underline n = (n_1,\ldots,n_p),\underline m = (m_1,\ldots,m_q)$ be two compositions of $n$. The symbol $N_g^a\left( \begin{smallmatrix} n_1,\ldots,n_p \\  m_1,m_2,\ldots,m_q | r \end{smallmatrix}\right)$ for $r \in \{1,\ldots,q\}, a\in\{ 1,\ldots,n_p\}$ stands for $\frac 1{C^qJ^p\prod (2j)^{n_j}}$ multiplied by the weighted number of pairs $((\sigma_1,\ldots,\sigma_k),\preccurlyeq)$, satisfying

\begin{itemize}
\item $(\sigma_1,\ldots,\sigma_k)$ is a transitive monotone string of transpositions;
\item $k =  {2g - 2 + p + q};$
\item the type of $\varrho = \sigma_k\cdots \sigma_1. \rho_\mathbf n$ is a fixed partition $\mu\vdash n$; 
\item $\preccurlyeq$ is a linear order on the parts of $\mu$ turning it into the composition $(m_1,\ldots,m_q)$;
\item the element $n$ belongs to the cycle of $\varrho\tau$ of length $2m_r$ that has order $r$ with respect to $\preccurlyeq$; 
\item the last transposition $\sigma_k$ is either $(j~ n - n_p +a)$ or $(\bar j~ n - n_p + a)$ for  $a \in \{1,\ldots,n_p\}$ with $j < n - n_p + a.$
\end{itemize}
 The last condition arises due to the transitivity and monotonicity requirement. Notice also, that in case all the numbers $n-n_p+a,\ldots,n$ belong to the same cycle of $(\sigma_k\cdots \sigma_1.\rho_\mathbf n)\tau$ of length $2m_r.$

We set:
\[\mathrm{N}_g\left(
    \begin{smallmatrix}
    n_1,\ldots,n_p\\ m_1,\ldots, m_q
\end{smallmatrix}\right) = \sum_{a = 1}^{n_p} \sum_{r = 1}^{q} N^a_g\left(\begin{smallmatrix}
    n_1,\ldots,n_p\\ m_1,\ldots, m_q|r
\end{smallmatrix}\right)\]
to denote the weighted number of strings of transpositions with labeled cycles with no conditions on the position of the largest element and the last used transposition.

The initial conditions for these numbers correspond to the case $k = 0.$ Due to the transitivity requirement, both $p,q$ are equal 1. The genus of the corresponding string of transpositions is $0$. We also set $a = 1$ in this case. Summarizing, the initial conditions are:

\[  N^{0,a}_0\left(\begin{smallmatrix}
    n_1,\ldots,n_p\\ m_1,\ldots,m_q|r
\end{smallmatrix}\right)= \delta_{a,1} \delta_{p,1}\delta_{q,1}\delta_{m_1,n_1} \frac{1}{2CJn_1}. \]


The cut-and-join analysis for the number  $N_g^a\left( \begin{smallmatrix} n_1,\ldots,n_p \\  m_1,m_2,\ldots,m_q | r \end{smallmatrix}\right)$ for $r \in \{1,\ldots,q\}$, breaks into the following cases depending on the action of the permutations $\sigma_k$. The main idea is to establish a weighted bijective correspondence of the numbers before and after the application of $\sigma_k$ keeping track of the order of parts and the part containing the largest element.
\begin{itemize}
\item If $\sigma_k$ performs a cut, then the cycle that was cut due to the transitivity condition must have contained the element $n$ in it. The cut
produces two new cycles:
\begin{itemize}
\item the one that contains $n$; the corresponding part becomes the special one with respect to $\preccurlyeq$;
\item the one that does not contain $n$; its order with respect to $\preccurlyeq$ can be arbitrary;
\end{itemize} The cut is only possible if $n - (n - n_p + a) < m_q:$ otherwise the number of elements in the cycle is not enough to form a cycle of the required length. Moreover, if the lengths of the cycles obtained after the cut are specified, given that the second entry of $\sigma_k$ is fixed, due to Lemma~\ref{lm:cut-twist}, there is a unique option for the first entry of $\sigma_k$. As the cut takes place, the number should be multiplied by $C$, but as the number of part increases, we also need to divide by $C$. Collecting everything together, the cut term takes the following form:
\[
 \Theta(m_r  - n_p + a - 1)  \sum_{j \ne r}\sum_{l = 1}^a N_{g}^l \left( \begin{smallmatrix} n_1,\ldots,n_p \\ m_1,\ldots,\hat m_j,
 \ldots, m_r + m_j,\ldots,m_q| r'\end{smallmatrix}\right),
\]
where \[r' = \begin{cases} r-1,\quad\mbox{if } j < r,\\ r,\quad\mbox{otherwise,}\end{cases}\] and  $\Theta$ is the Heaviside step function, and $\hat m_j$ means that this part is omitted.
\item If $\sigma_m$ performs a twist
in case when the second entry of $\sigma_m$ is fixed, due to Lemma~\ref{lm:cut-twist}, there are $(m_q - n_p + a-1)$ possible options for the first its entry. This condition implies the inequality $(m_q - n_p + a -1) > 0$ similar to the cut case.  Twists preserve the order $\preccurlyeq$ of the parts of $\mu$. So, the term corresponding to the twist is:
\[
(m_r - n_p + a - 1 )T\Theta(m_r  - n_p + a - 1)\sum_{l = 1}^a N^l_{g-\frac 12}\left( \begin{smallmatrix} n_1,\ldots,n_p \\ m_1,\ldots,m_q| r\end{smallmatrix}\right).
\]
\item If $\sigma_k$ performs a redundant join, we assume that the cycles of the lengths $2\alpha$ and $2\beta$ with $\alpha + \beta = m_q$ were joined to form a cycle of length $2m_q$. Due to the transitivity, the cycle of length $2\alpha$ contains the element $n$, and its position with respect to $\preccurlyeq$ is $r$. For bijectivity, we set that the cycles of length  $2\beta$ had position $r+1$ according to  $\preccurlyeq$ before the join takes place.  There are $2\beta$ possible choices of the first components of $\sigma_k$. The fact that we have the join provides the weight $J,$ but there is also factor $C$ arising from the fact that the total number of cycles becomes one less. The overall contribution is: 
\[
2CJ\sum_{\alpha+ \beta = m_r} \sum_{l = 1}^a \beta N_{g-1}^l \left( \begin{smallmatrix} n_1,\ldots,n_p \\ m_1,\ldots,\alpha,\beta,\ldots,m_q| r\end{smallmatrix}\right),
\]
where the factor $2$ arises from the fact that we only record the halves of cycle lengths.
\item Finally, if $\sigma_k$ makes an essential join, the weighted number equals
\[
2CJ\sum_{\alpha + \beta = m_r} \sum_{g_1 + g_2 = g} \sum_{
\begin{smallmatrix} K_1 \sqcup K_2 = \{1,\ldots,p\},\ p\in K_2 \\ I_1 \cup I_2 = \{1,\ldots,q\},\ I_1\cup I_2 = \{r\}
\end{smallmatrix}
} \sum_{l = 1}^a \beta  N_{g_1}(\begin{smallmatrix}n_{K_1}\\ m_{I_1}({m_r \mapsto  \beta)}\end{smallmatrix})N^l_{g_2}(\begin{smallmatrix} n_{K_2}\\ m_{I_2}(m_r \mapsto \alpha)|r\end{smallmatrix}),
\]
Here, for $I_1 \subset \{1,\ldots q\}$ ($K_1\subset \{1,\ldots,p\}$, respectively) we mean a subcomposition of $(m_1,\ldots,m_q)$ ($(n_1,\ldots,n_p)$, respectively) formed by parts whose indices are contained in $I_1$ ($K_1$, respectively). The notation $m_{I_1}(m_r \mapsto \beta)$ mean the following. The index $r$ belongs to $I_1$. We take a subcomposition $m_{I_1}$ and replace its part $m_r$ by $\beta$. 
\end{itemize}

Summarizing, we obtain the following.

\begin{theorem}\label{th:cut-and-join-monotone}
The numbers $N^a_g\left(\begin{smallmatrix}
    n_1,\ldots,n_p\\ m_1,\ldots,m_q|r
\end{smallmatrix}\right)$ satisfy the  initial condition
\[ N^a_g\left(\begin{smallmatrix}
    n_1,\ldots,n_p\\ m_1,\ldots,m_q|r
\end{smallmatrix}\right) = \delta_{a,1} \delta_{g,0} \delta_{p,1}\delta_{q,1} \frac{1}{2CJn_1}, \]
and the recursion
\begin{gather}
N^a_g\left(\begin{smallmatrix}
    n_1,\ldots,n_p\\ m_1,\ldots,m_q|r
\end{smallmatrix}\right) = \Theta(m_r  - n_p + a - 1) \left(  \sum_{j \ne r}\sum_{l = 1}^a N_{g}^l \left( \begin{smallmatrix} n_1,\ldots,n_p \\ m_1,\ldots,\hat m_j,
 \ldots, m_r + m_j,\ldots,m_q| r'\end{smallmatrix}\right) + \right. \\ \left. (m_r - n_p + a - 1 )T\sum_{l = 1}^a N^l_{g-\frac 12}\left( \begin{smallmatrix} n_1,\ldots,n_p \\ m_1,\ldots,m_q| r\end{smallmatrix}\right) \right)
+\sum_{\alpha+ \beta = m_r} \sum_{l = 1}^a 2CJ \beta \Biggl( N_{g-1}^l \left( \begin{smallmatrix} n_1,\ldots,n_p \\ m_1,\ldots,\alpha,\beta,\ldots,m_q| r\end{smallmatrix}\right) + \Biggr. \\ \Biggl.   \sum_{g_1 + g_2 = g} \sum_{
\begin{smallmatrix} K_1 \sqcup K_2 = \{1,\ldots,p\},\ p\in K_2 \\ I_1 \cup I_2 = \{1,\ldots,q\},\ I_1\cup I_2 = \{r\}
\end{smallmatrix}
}   \mathrm{N}_{g_1}(\begin{smallmatrix}n_{K_1}\\ m_{I_1}({m_r \mapsto  \beta)}\end{smallmatrix})N^l_{g_2}(\begin{smallmatrix} n_{K_2}\\ m_{I_2}(m_r \mapsto \alpha)|r\end{smallmatrix})\Biggr).
\end{gather}
\end{theorem}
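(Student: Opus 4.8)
The plan is to derive the recursion by peeling off the \emph{last} transposition $\sigma_k$ of a transitive monotone factorisation contributing to $N^a_g\left(\begin{smallmatrix} n_1,\ldots,n_p \\ m_1,\ldots,m_q\mid r\end{smallmatrix}\right)$ and classifying it according to the three effects (cut, twist, join) it has on the fixed point-free involution $\sigma_{k-1}\cdots\sigma_1.\rho_{\underline{n}}$. Throughout I would work with the preferred representative $\rho_{\underline{n}}$ and invoke Proposition~\ref{prop:rhoindependence} to reduce to counting with a fixed source involution. The initial condition is immediate: the only string with $k=0$ is the empty one, which by transitivity forces $p=q=1$, hence $g=0$ and (by the monotonicity convention) $a=1$; the normalisation $\tfrac{1}{2CJn_1}$ then comes directly from the defining Equation~\eqref{eq:monHurnumdef} together with the weight of the empty string.

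First I would fix the bookkeeping. By Definition~\ref{def:twfac} and the definition of $N^a_g$, the second entry of $\sigma_k$ is the index $n-n_p+a$; monotonicity forces every earlier transposition to have second entry $\le n-n_p+a$, so the top index used by $\sigma_{k-1}\cdots\sigma_1$ is $n-n_p+l$ for some $1\le l\le a$. This is exactly the source of the inner sums $\sum_{l=1}^a$ appearing in every term of the recursion. The genus of the shortened string is recovered from $k=2g-2+p+q$ by tracking how $k$, $p$ and $q$ change: a cut leaves $g$ unchanged, a twist lowers it by $\tfrac12$, a join lowers it by $1$. Using Lemma~\ref{lm:cut-twist} I can, for a fixed second entry and a prescribed effect on a prescribed cycle, count the admissible first entries of $\sigma_k$: for a cut into cycles of prescribed lengths the first entry is \emph{unique}, whereas for a twist on the order-$r$ cycle (of half-length $m_r$) the number of admissible first entries is $m_r-n_p+a-1$, which simultaneously produces the Heaviside factor $\Theta(m_r-n_p+a-1)$ ensuring the cycle is long enough.

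Next I would treat the four cases, in each exhibiting a weight-preserving bijection with the corresponding shortened data. If $\sigma_k$ is a \emph{cut}, transitivity forces the cut cycle to contain $n$; reversing the cut merges the order-$r$ cycle with some order-$j$ cycle ($j\ne r$), lowering $q$ by one while fixing $g$, and the weight $C$ of the cut is cancelled exactly by the change in the normalising power of $C$, giving the first line. If $\sigma_k$ is a \emph{twist}, the type (and hence $\mu$, $q$ and the order $\preccurlyeq$) persists while $g$ drops by $\tfrac12$, so multiplying the $(m_r-n_p+a-1)$ admissible first entries by the weight $T$ yields the twist term. If $\sigma_k$ is a \emph{join} of two cycles of half-lengths $\alpha,\beta$ with $\alpha+\beta=m_r$, I split into the \emph{redundant} case (deleting $\sigma_k$ keeps the string transitive and lowers $g$ by one) and the \emph{essential} case (the string falls into two transitive components with $g_1+g_2=g$); in both, $2\beta$ counts the first-entry choices, the factor $2$ recording that only half-cycle lengths are tracked, and the combined prefactor is $2CJ$ (the $J$ of the join together with the $C$ from the decreased part count). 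In the essential case the index partitions $K_1\sqcup K_2=\{1,\ldots,p\}$ and $I_1\cup I_2=\{1,\ldots,q\}$, with $p\in K_2$ and $r$ shared, record the distribution of source and target cycles between the two components, reproducing the final double sum with the marked factor $N^l_{g_2}$ and the unmarked $\mathrm N_{g_1}$.

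The main obstacle will be the essential-join case. There one must verify that deleting a separating $\sigma_k$ yields exactly \emph{two} transitive factorisations, that the source parts $n_i$ and target parts $m_i$ split precisely along the stated index sets (with the component carrying the largest index forced to contain $p$ and to retain the marked cycle of order $r$, whence its count remains $N^l_{g_2}(\cdots\mid r)$ while the other component is summed freely as $\mathrm N_{g_1}$), and that the product of the two local normalisations matches the global one so that the surviving weight is exactly $2CJ\beta$. Once this bijection and its weight are confirmed, assembling the four cases produces the displayed recursion; the verification that the cases are mutually exclusive, and the remaining bijective checks, proceed exactly as in the analogous complex computation of \cite{Karev-Do,DDM}.
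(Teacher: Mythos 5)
Your proposal is correct and follows essentially the same route as the paper: the paper's argument is precisely the cut/twist/redundant-join/essential-join case analysis on the last transposition $\sigma_k$, with the sums over $l$ coming from monotonicity of the second entries, the first-entry counts coming from Lemma~\ref{lm:cut-twist}, and the $C$, $J$, $T$ bookkeeping cancelling against the normalisation exactly as you describe. The only cosmetic difference is that the paper presents the four cases as a direct weighted bijection rather than flagging the essential-join case as a separate obstacle, but the content is the same.
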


Notice, that the numbers $N_g\left( \begin{smallmatrix} n_1,\ldots,n_p \\  m_1,\ldots,m_q \end{smallmatrix}\right) = \sum_{a = 1}^{n_p} \sum_{r = 1}^{q} N^a_g\left(\begin{smallmatrix}
    n_1,\ldots,n_p\\ m_1,\ldots, m_q|r
\end{smallmatrix}\right)$ neither depend on the composition $(n_1,\ldots,n_p)$ giving rise to the partition $\nu,$ nor on the order $\preccurlyeq$ we impose on the parts $(m_1,\ldots,m_q)$ of the partition $\mu.$ The refined monotone Hurwitz numbers $H^\le_g\left(\begin{smallmatrix} \nu\\ \mu \end{smallmatrix}\right)$ are related to the numbers $N_g\left( \begin{smallmatrix} n_1,\ldots,n_p \\  m_1,m_2,\ldots,m_q \end{smallmatrix}\right)$ as follows:

\begin{equation}\label{eq:aut}
H^\le_g\left(\begin{smallmatrix} \nu\\ \mu \end{smallmatrix}\right) = \frac 1{|\mathrm{Aut}(\mu)\times\mathrm{Aut}(\nu)| } \mathrm{N}_g\left( \begin{smallmatrix} n_1,\ldots,n_p \\  m_1\ldots,m_q \end{smallmatrix}\right), \end{equation}
where $(n_1,\ldots,n_p)$ ($(m_1,\ldots,m_q),$ respectively) is an arbitrary composition, realizing the partition $\nu$ ($\mu,$ respectively).

Specialization of the obtained recursion to the values $C = J = T = 1$ clarly recovers the zonal case. 

Introduce the following \emph{refined inner product} on $I:$
\[
\left< \mathcal D_\lambda,  \mathcal D_\mu\right> = \delta_{\lambda,\mu}\prod_{j\ge 1} \frac 1{(2jCJ)^{m_j} m_j!}.
\]
Let us also modify the preliminary refined Jucys-Murphy elements by the following rule to obtain the \emph{refined Jucys-Murphy elements} 
\[ {\mathbb X}_k  = \mathbf{X}_k|_{\begin{smallmatrix} J \to 1 \\ C \to CJ \end{smallmatrix}}.
\]
The meaning of these substitutions is the following: now in the case of cut the corresponding weight becomes $1,$ and in the case of join the weight becomes $CJ.$ The weight of a twist is unchanged. 

\begin{lemma}\label{lm:innprod}
We have
\[
H_g^{\le,\bullet}\left( \begin{smallmatrix} \nu \\ \mu \end{smallmatrix}\right) 
= \left< \mathcal D_{\mu}, h_k ({\mathbb X}_2,\ldots,  {\mathbb X}_{|\nu|}) (\mathcal D_\nu)\right>,
\]
where $k$ is related to $g$ by $k = 2g - 2 +\ell(\mu) + \ell(\nu). $
\end{lemma}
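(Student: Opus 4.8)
The plan is to reduce both sides to the same weighted count of monotone twisted factorisations, so that the only real work is bookkeeping: tracking how the two normalisation prefactors interact with the substitution defining $\mathbb{X}_k$.

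First I would unwind the right-hand side. Since $\mathrm{h}_k(\mathbb{X}_2,\dots,\mathbb{X}_{n})$ is obtained from $\mathrm{h}_k(\mathbf{X}_2,\dots,\mathbf{X}_{n})$ by a specialisation of the ground field $\mathbb{C}(C,J,T)$, Theorem~\ref{th:action} still guarantees that it preserves the space of types $I_n$; hence $\mathrm{h}_k(\mathbb{X}_2,\dots,\mathbb{X}_{n})(\mathcal{D}_\nu)=\sum_\lambda c_\lambda\mathcal{D}_\lambda$, and by orthogonality of the $\mathcal{D}_\lambda$ under the refined inner product, with $\mu=(1^{m_1}2^{m_2}\cdots)$,
\[
\bigl\langle \mathcal{D}_\mu,\ \mathrm{h}_k(\mathbb{X}_2,\dots,\mathbb{X}_{n})(\mathcal{D}_\nu)\bigr\rangle
= \bigl([\mathcal{D}_\mu].\mathrm{h}_k(\mathbb{X}_2,\dots,\mathbb{X}_{n})(\mathcal{D}_\nu)\bigr)\prod_{j\ge 1}\frac{1}{(2jCJ)^{m_j}m_j!}.
\]
Thus the statement reduces to an identity between the two coefficient extractions $[\mathcal{D}_\mu].\mathrm{h}_k(\mathbb{X}_2,\dots)(\mathcal{D}_\nu)$ and the one $[\mathcal{D}_\mu].\mathrm{h}_k(\mathbf{X}_2,\dots)(\mathcal{D}_\nu)$ appearing in the definition~\eqref{eq:monHurnumdef} of $H^{\le,\bullet}_g$.

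Next I would expand each coefficient combinatorially. Writing $\mathrm{h}_k=\sum_{2\le i_1\le\cdots\le i_k\le n}\mathbf{X}_{i_1}\cdots\mathbf{X}_{i_k}$ and applying it to a fixed representative $\varrho\in d_\mu$, one sees exactly as in Definition~\ref{def:twfac} that $[\mathcal{D}_\mu].\mathrm{h}_k(\mathbf{X}_2,\dots)(\mathcal{D}_\nu)=\sum_{\mathfrak{f}}w(\mathfrak{f})$, where $\mathfrak{f}=(\rho,(\sigma_1,\dots,\sigma_k))$ ranges over the monotone twisted factorisations of type $(k,\mu,\nu)$ with $\sigma_k\cdots\sigma_1.\rho=\varrho$; the choice of $\varrho$ is immaterial because the result lies in $I_n$. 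A factorisation with $c$ cuts, $j$ joins and $t$ twists contributes the monomial $C^{c}J^{j}T^{t}$. Recalling that $\mathbb{X}_k=\mathbf{X}_k|_{J\to 1,\,C\to CJ}$, the very same factorisation instead contributes $(CJ)^{c}\,1^{\,j}\,T^{t}=C^{c}J^{c}T^{t}$ to $[\mathcal{D}_\mu].\mathrm{h}_k(\mathbb{X}_2,\dots)(\mathcal{D}_\nu)$.

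The key step is then the part-counting identity $c-j=\ell(\mu)-\ell(\nu)$, which holds for every such factorisation (transitive or not) because a cut raises the number of parts of the type by one, a join lowers it by one, and a twist preserves it — this is the linear relation already exploited in the proof of Proposition~\ref{prop:C<->J}. Consequently $C^{c}J^{c}T^{t}=J^{\ell(\mu)-\ell(\nu)}\,C^{c}J^{j}T^{t}$ term by term, whence
\[
[\mathcal{D}_\mu].\mathrm{h}_k(\mathbb{X}_2,\dots)(\mathcal{D}_\nu)=J^{\ell(\mu)-\ell(\nu)}\,[\mathcal{D}_\mu].\mathrm{h}_k(\mathbf{X}_2,\dots)(\mathcal{D}_\nu).
\]
Substituting this into the displayed inner product, using $\prod_{j}(2jCJ)^{m_j}=J^{\ell(\mu)}\prod_{j}(2jC)^{m_j}$ to rewrite the normalisation, and pulling out the scalar $J^{-\ell(\nu)}$ then matches the expression exactly with~\eqref{eq:monHurnumdef}, identifying it with $H^{\le,\bullet}_g\!\left(\begin{smallmatrix}\nu\\\mu\end{smallmatrix}\right)$.

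The argument is essentially bookkeeping: the only genuinely combinatorial ingredient, the relation $c-j=\ell(\mu)-\ell(\nu)$, is already available, so the main point requiring care is keeping the two normalisation prefactors and the direction of the substitution $C\mapsto CJ$, $J\mapsto 1$ (leaving $T$ fixed) consistent throughout.
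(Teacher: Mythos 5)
Your proposal is correct and follows essentially the same route as the paper: both reduce the identity to matching the total $J$-degree of each factorisation's contribution, using the part-counting relation $c-j=\ell(\mu)-\ell(\nu)$ together with the two normalisation prefactors. If anything, your version is slightly more careful, since you note that only this linear relation is needed and that it holds for non-transitive factorisations as well (the paper cites Proposition~\ref{prop:C<->J}, which is stated for transitive ones, though only its linear consequence is actually used).
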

\begin{proof}
We have defined the disconnected refined monotone Hurwitz numbers by
\[
H_g^{\le,\bullet}\left( \begin{smallmatrix} \nu \\ \mu \end{smallmatrix}\right)  = \frac 1{(2jC)^{m_j}m_j!} [\mathcal D_\mu].\mathrm h_k ({\mathbf X}_2,\ldots,  {\mathbf X}_{|\lambda|}) (J^{-l(\nu)}\mathcal D_\nu),
\]
and we seek to check that this expression coincides with
\[
\frac 1{(2jCJ)^{m_j}m_j!} [\mathcal D_\mu].\mathrm h_k ({\mathbb X}_2,\ldots,  {\mathbb X}_{|\lambda|}) (\mathcal D_\nu),
\]
where the total degree of $J$ coming from the denominator is ${-\ell(\mu)}.$ Every sequence of transpositions contributing to the count of $H_g^{\le,\bullet}\left( \begin{smallmatrix} \nu \\ \mu \end{smallmatrix}\right)$ has the total $J$-degree of $-\ell(\nu) + \#\mbox{joins}$ in its weight. However, Proposition~\ref{prop:C<->J} indicates that this number coincides with $-\ell(\mu) + \#\mbox{cuts}.$ The assertion follows.
\end{proof}

\begin{theorem}\label{th:sa} Let $\mathrm F\in \Lambda$ be a symmetric function. Then the operator
\[
\mathcal D_\lambda \mapsto \mathrm F({\mathbb X}_2,\ldots,  {\mathbb X}_{|\lambda|}) (\mathcal D_\lambda),
\]
is  self-adjoint  with respect to the inner product $\left<\cdot\right>$.
\end{theorem}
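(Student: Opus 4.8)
The plan is to exploit the multiplicative structure of $\Lambda$ together with the combinatorial symmetry already encoded in \cref{prop:symmetry}. Working within each homogeneous component $I_n$ (distinct components being orthogonal, so that cross-degree matrix entries vanish automatically), I note that self-adjointness with respect to the symmetric bilinear form $\left<\cdot,\cdot\right>$ is preserved under $\mathbb{C}(C,J,T)$-linear combinations, and that the adjoint of a product of \emph{commuting} self-adjoint operators is again self-adjoint. It therefore suffices to verify the claim on a multiplicative generating set of $\Lambda$, and I would take the complete homogeneous symmetric functions $\mathrm h_k$. The structural input making this reduction legitimate is that $\mathbb{X}_2,\ldots,\mathbb{X}_n$ pairwise commute on $I_n$: the commutation relations of \cref{th:comm} hold at the level of polynomial coefficients in $C,J,T$, hence are stable under the substitution $J\to 1,\ C\to CJ$ defining $\mathbb{X}_k$ (as recorded in the remark following \cref{th:action}). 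Thus $\mathrm F\mapsto \mathrm F(\mathbb{X}_2,\ldots,\mathbb{X}_n)$ is an algebra homomorphism into a commutative subalgebra of $\mathrm{End}(I_n)$, every $\mathrm F(\mathbb{X})$ is a polynomial in the $\mathrm h_k(\mathbb{X})$, and self-adjointness of the generators propagates to all of $\Lambda$.

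It then remains to prove that $\mathrm h_k(\mathbb{X}_2,\ldots,\mathbb{X}_n)$ is self-adjoint. Since the $\mathcal D_\lambda$ form an orthogonal basis of $I_n$, this is equivalent to the symmetry of matrix entries, namely
\[
\left<\mathcal D_\mu,\ \mathrm h_k(\mathbb{X}_2,\ldots,\mathbb{X}_n)\,\mathcal D_\nu\right>=\left<\mathcal D_\nu,\ \mathrm h_k(\mathbb{X}_2,\ldots,\mathbb{X}_n)\,\mathcal D_\mu\right>
\]
for all $\mu,\nu\vdash n$, where $k=2g-2+\ell(\mu)+\ell(\nu)$ is itself symmetric in $\mu$ and $\nu$. \Cref{lm:innprod} identifies the left-hand side with the disconnected refined monotone Hurwitz number $H^{\le,\bullet}_g\left(\begin{smallmatrix}\nu\\ \mu\end{smallmatrix}\right)$, and the same lemma applied with $\mu,\nu$ interchanged identifies the right-hand side with $H^{\le,\bullet}_g\left(\begin{smallmatrix}\mu\\ \nu\end{smallmatrix}\right)$. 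The needed equality $H^{\le,\bullet}_g\left(\begin{smallmatrix}\nu\\ \mu\end{smallmatrix}\right)=H^{\le,\bullet}_g\left(\begin{smallmatrix}\mu\\ \nu\end{smallmatrix}\right)$ is precisely \cref{prop:symmetry}, once one unwinds the definition in \eqref{eq:monHurnumdef}.

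I expect the genuine mathematical content to reside in \cref{prop:symmetry}, which rests on the opposite-factorisation involution $\mathfrak f\mapsto \mathfrak f'$ and the $C\leftrightarrow J$ weight exchange, combined with the commutativity of \cref{th:comm}; in the present argument it is invoked as a black box. The remaining points are purely formal and should be handled with a line each: that the adjoint for the symmetric bilinear form $\left<\cdot,\cdot\right>$ is $\mathbb{C}(C,J,T)$-linear (there is no conjugation, so self-adjoint operators form a subspace closed under products of commuting elements), and that $\mathbb{X}_k$ inherits both commutativity and the $I_n$-preserving property of $\mathbf{X}_k$ under the defining substitution. No new estimates or constructions are required beyond assembling \cref{th:comm}, \cref{th:action}, \cref{lm:innprod}, and \cref{prop:symmetry}.
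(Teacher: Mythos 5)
Your proposal is correct and follows essentially the same route as the paper: reduce to the multiplicative generators $\mathrm h_k$ using commutativity of the $\mathbb{X}_k$ on $I_n$, identify the matrix entries with disconnected refined monotone Hurwitz numbers via \cref{lm:innprod}, and conclude by the symmetry of \cref{prop:symmetry}. The extra care you take (orthogonality across homogeneous components, stability of commutativity under the substitution $J\to 1$, $C\to CJ$) is left implicit in the paper but does not change the argument.
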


\begin{proof}
As any symmetric function, $\mathrm F$ is a linear combination of products of homogeneous symmetric functions, and a composition of commuting self-adjoint operators is self-adjoint, it is enough to establish the declared property for the homogeneous symmetric functions $\mathrm h_k.$ Notice, that for the homogeneous symmetric functions, the inner product
\[
\left< \mathcal D_\mu, \mathrm h_k({\mathbb X}_2,\ldots,  {\mathbb X}_{|\lambda|}) (\mathcal D_\lambda)\right>
\]
is the corresponding disconnected refined Hurwitz number by  Lemma~\ref{lm:innprod}, and Proposition~\ref{prop:symmetry} implies the self-adjointness.
\end{proof}

The second important feature of the theory is the possibility for the specialization for different values of the parameters. For any $\mathrm F\in \Lambda$ \emph{the refined structure coefficients}
\[
\mathrm f^\mu_{\mathrm F,\nu} =  {\left< \mathcal D_\mu, \mathrm F({\mathbb X}_2,\ldots,  {\mathbb X}_{|\mu|})(\mathcal D_\nu)\right>}
\]
are completely determined by the inner products involving the homogeneous symmetric function as for any $\mathrm F,\mathrm G\in \Lambda$ we have:

\begin{equation}
\mathrm f_{\mathrm F\mathrm G,\nu}^{\mu} = \sum_{\lambda} \frac{\mathrm f^{\mu}_{\mathrm G,\lambda} \mathrm f^{\lambda}_{\mathrm F,\nu}}{\left\langle \mathcal D_\lambda,\mathcal D_\lambda \right\rangle }
\end{equation}
(compare to Equation~\ref{eq:strcoef}). 
Both the structure coefficients and the inverse of the inner square of $\mathcal D_\mu$ are rational functions in $C,J,T$ with possible poles in $C = 0$ and $J = 0$, so it makes sense to  ask about specializations of the structure coefficients.

\begin{theorem}
    \label{thm:structcoeff}
    For $CJ = T = 1$ the structure coefficients $f^\mu_{F,\lambda}$ of the refined action coincide with the structure coefficients of the zonal action. For $CJ = \frac 12, T = 0$ the structure coefficients of the refined action coincide with the structure coefficient of the Schur action.
\end{theorem}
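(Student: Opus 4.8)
The plan is to reduce the statement to the single case $\mathrm F=\mathrm h_k$ and then compare cut-and-join recursions after specialization. By the composition formula
\[
\mathrm f_{\mathrm F\mathrm G,\nu}^{\mu} = \sum_{\lambda} \frac{\mathrm f^{\mu}_{\mathrm G,\lambda}\,\mathrm f^{\lambda}_{\mathrm F,\nu}}{\left\langle \mathcal D_\lambda,\mathcal D_\lambda \right\rangle}
\]
and the fact that the $\mathrm h_k$ generate $\Lambda$ multiplicatively, it is enough to establish the claim for $\mathrm F=\mathrm h_k$ together with the correct specialization of the normalizer $\langle \mathcal D_\lambda,\mathcal D_\lambda\rangle$, since the Schur and zonal structure coefficients obey the analogous formulas (\eqref{eq:strcoef} and its zonal counterpart). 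The normalizer is immediate: from $\langle \mathcal D_\lambda,\mathcal D_\lambda\rangle=\prod_{j}(2jCJ)^{-m_j}(m_j!)^{-1}$ one obtains $\langle \mathcal D_\lambda,\mathcal D_\lambda\rangle_1$ at $CJ=1$ and $\prod_j j^{-m_j}(m_j!)^{-1}=\langle \mathcal C_\lambda,\mathcal C_\lambda\rangle_0$ at $CJ=\tfrac12$. An induction on the number of factors of a monomial $\mathrm h_{k_1}\cdots\mathrm h_{k_s}$, using that specialization commutes with the finite sum over $\lambda\vdash n$, then propagates the result to all of $\Lambda$ by linearity.

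\emph{The zonal point.} For $\mathrm F=\mathrm h_k$, \cref{lm:innprod} identifies $\mathrm f^\mu_{\mathrm h_k,\nu}$ with the disconnected refined monotone number $H^{\le,\bullet}_g$. Since the weights entering $\mathbb X_k$ depend only on $CJ$ and $T$, at $CJ=T=1$ all of them equal $1$, so $\mathbb X_k$ becomes the odd Jucys-Murphy element $\mathcal X_k$ and $\langle\cdot,\cdot\rangle$ becomes $\langle\cdot,\cdot\rangle_1$; thus $\mathrm f^\mu_{\mathrm h_k,\nu}$ becomes $\langle \mathcal D_\mu,\mathrm h_k(\mathcal X_2,\dots)\mathcal D_\nu\rangle_1$, which is by definition the zonal structure coefficient.

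\emph{The Schur point.} Here I would pass through the cut-and-join recursion of \cref{th:cut-and-join-monotone}, using that $H^{\le,\bullet}_g$ is recovered from the connected numbers $N^a_g$ by the universal inclusion-exclusion \eqref{eq:dis-to-conn}--\eqref{eq:conn-to-dis} and by \eqref{eq:aut}. At $CJ=\tfrac12$, $T=0$ the twist summand of \cref{th:cut-and-join-monotone} carries the explicit factor $T$ and hence vanishes, the join coefficient $2CJ$ becomes $1$, the cut term is weight-free, and the initial value $\tfrac{1}{2CJn_1}$ becomes $\tfrac{1}{n_1}$. This is termwise the complex monotone recursion for $\mathrm N^{0,a}_g$ recorded in \cref{sec:schur}, with identical initial condition; hence $N^a_g\big|_{CJ=1/2,\,T=0}=\mathrm N^{0,a}_g$, and summing over $a$ and $r$ together with \eqref{eq:aut} yields the Schur structure coefficients.

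\emph{Main obstacle.} The delicate point is the legitimacy of specialization rather than the combinatorics. By \cref{prop:C<->J} every factorization contributes a monomial $C^cJ^jT^t$ with $c,j\ge 0$, so all structure coefficients and inner squares are rational in $C,J,T$ with poles only on $\{C=0\}\cup\{J=0\}$; both evaluation points avoid these poles, so substitution commutes with the composition formula and with the finite sums in the recursion, which is exactly what the reduction requires. The one verification needing genuine attention is the termwise match at $T=0$, in particular confirming that the vanishing of the twist contributions does not silently remove terms needed elsewhere; this is transparent from \cref{th:cut-and-join-monotone}, where twists enter only through the single summand proportional to $T$.
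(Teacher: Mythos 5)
Your proposal is correct and follows essentially the same route as the paper: the paper's (very terse) proof likewise rests on the specialization of the inner product $\langle\cdot,\cdot\rangle$ to $\langle\cdot,\cdot\rangle_1$ at $CJ=1$ and to $\langle\cdot,\cdot\rangle_0$ at $CJ=\tfrac12$, together with the termwise specialization of the recursion of \cref{th:cut-and-join-monotone} to the zonal and Schur cut-and-join recursions. Your additional bookkeeping — the reduction to $\mathrm h_k$ via the composition formula and the check that the evaluation points avoid the poles at $C=0$ and $J=0$ — merely makes explicit what the paper leaves implicit.
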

\begin{proof} It follows immediately from the fact that the inner product $\left<,\right>$ specializes to the $\left<,\right>_0$ and $\left<,\right>_1$ for the corresponding values of parameters, and the recursion of Theorem~\ref{th:cut-and-join-monotone} that specializes to the zonal, and Schur case correspondingly. 
\end{proof}
\begin{remark}
It is not surprising that the specialization $CJ = T = 1$ gives the zonal case, as the refined case takes its origin as its deformation. However, the fact, that the structure coefficients of the Schur case appear in this family, given that the Schur case is based on a different combinatorial model (\emph{no $\bar i$!)}, is satisfying.
\end{remark}
\begin{remark}
    \label{rem-eigenvec}
     Similarly to Schur and zonal actions, it is possible to relate the common eigenvectors of the operators $F({\mathbb X}_2,\ldots,  {\mathbb X}_{|\lambda|})$ to a set of symmetric functions orthogonal with respect to the inner product $\left<\cdot,\cdot\right>$. These function are a rescaled version of Jack functions.\footnote{We are grateful to A. Liashyk who pointed it out.}
\end{remark}
\begin{remark}
\label{rem-spec}
We also note here, that the specialization $CJ = \frac{1+b}2, T = b$ conjecturally provides the \emph{Jack} case extensively studied nowadays (see e.g.~\cite{Chapuy-Dolega,Bonzom-Chapuy-Dolega}). Namely, we can notice immediately, that under the indicated specialization, the refined Laplace-Beltrami operator (see Eq.~\ref{eq:Laplace-Beltrami})  specialize to the corresponding operator derived in~\cite{Chapuy-Dolega}.  However, a proof that the refined Hurwitz numbers specialize to double $b-$weighted Hurwitz numbers for all possible weights, should state the coincidence of not only the eigenfunctions, but also of the \emph{spectrum} of the action. Such a proof is a subject of a separate publication. This paper mentions explicit coincidence to the \emph{single b-monotone Hurwitz numbers} case, which refers to the trivial partition $\nu$ in $H_g\left(\begin{smallmatrix} \nu\\ \mu\end{smallmatrix}\right),$ and \emph{double b-Hurwitz numbers} case, which refers to the action of the symmetric function $\mathrm p_1^n$ (see subsequent sections).
\end{remark}




\section{Tropical refined simple Hurwitz numbers }\label{sec:simple}
In this section, we derive tropialisations of monotone and simple $b$--Hurwitz numbers. We note that in principle our constructions generalise to the full CJT--refinement, however due to the importance of the $b$--case and to keep notation light, starting from subsection~\ref{sec:tropb} we focus on the specialization $CJ=\frac{1+b}{2}$ and $T=b$.

\subsection{Recursion for double simple refined Hurwitz numbers}
\label{sec-rec}
In this section, we focus on the refined simple Hurwitz numbers, and their tropical interpretation, that prove to be fruitful for investigating structural properties in the classical Schur case. Let $p_1$ be the first Newton power sum.

\begin{definition}
    Let $k\in \mathbb N\cup \{0\}$, $\mu,\nu$ are partitions. The disconnected refined simple Hurwitz number  $h^{\circ}_k\left(\begin{smallmatrix}
        \nu \\ \mu
    \end{smallmatrix} \right)$ is the inner product
    \[
    h^{\circ}_k\left(\begin{smallmatrix}
        \nu \\ \mu
    \end{smallmatrix} \right) = \left<\mathcal D_\mu, {\mathrm p_1^k({\mathbb X}_2,\ldots,  {\mathbb X}_{n})(\mathcal D_\nu)}\right>.
    \]
\end{definition}

Notice, that by definition we have $h^{\circ}_k\left(\begin{smallmatrix}
        \nu \\ \mu
    \end{smallmatrix} \right) = h^{\circ}_k\left(\begin{smallmatrix}
        \mu \\ \nu
    \end{smallmatrix} \right).$ As usual, we can also define the corresponding connected numbers $h_{k}\left(\begin{smallmatrix}
        \nu \\ \mu
    \end{smallmatrix} \right)$, using the inclusion-exclusion formula. It is instructive for us to define the connected numbers in terms of the weighted count of strings of transpositions:

\begin{definition}
For $k\in \mathbb N\cup\{0\}$, and partitions $\mu,\nu,$ the connected refined simple Hurwitz number $h^k\left(\begin{smallmatrix}
        \nu \\ \mu
    \end{smallmatrix} \right)$ equals the weighted number of transitive twisted factorisations (see Definition~\ref{def:twfac}) of type $(k,\mu,\nu)$ divided by $2^{|\mu|}|\mu|! C^{\ell(\mu)}J^{\ell(\nu)}$.
\end{definition}

Using this definition we can conduct the cut-and-join analysis by the study of possible behavior under the action of the last transposition $\sigma_k.$ Similarly to the monotone case, it is convenient to introduce an order $\preccurlyeq$ on the parts of $\mu$ producing a composition $(m_1,\ldots,m_q)$ instead. The numbers that we compute do not depend on this order. As for the monotone numbers the base case is
\[
h_0\left(\begin{smallmatrix} (n^1) \\ m\end{smallmatrix}\right) = \delta_{n,m}\frac 1{2CJn}
\]
Now, consider the cases:
\begin{itemize}
\item Let the transposition $\sigma_k$ performs a cut. Notice, that if  a transposition $(i~j)$ for $i,j\in \{1,\bar 1,\ldots,n,\bar n\}$ cuts a cycle of length $2(m_r + m_s)$ into cycles of lengths $2m_r$ and $2m_s$ respectively then the transposition $(\bar i~\bar j)$ also cuts it into cycles of the same lengths. However, among $(i~j)$ and $(\bar i~\bar j)$, there is exactly one that can be a part of a twisted factorisation. As the total number of transpositions that make the required cut is
\[
\begin{cases} 2(m_r + m_s),\,\mbox{if $m_r \ne m_s$}\\ 2m_r,\,\mbox{otherwise,}\end{cases}
\]
and any cycle can be cut, taking into account the factor $C^{-\ell(\mu)}$ from the definition of the refined Hurwitz numbers, the cut term is
\[
\frac {m_r + m_s}{2}\sum_{r\ne s}h^{k-1}\left(\begin{smallmatrix}
    \nu \\ m_1,\ldots,\hat m_r,\ldots, \hat m_s,\ldots,m_q,m_r + m_s 
\end{smallmatrix} \right).
\]
\item If $\sigma_k$ performs a twist of a cycle of length $2m_r$ there are exactly $1 + \ldots +m_r-1 = \frac{m_r(m_r-1)}2$ ways to do it using transpositions that can be a part of transitive factorisation. As any cycle can be twisted, the resulting term is 
\[
\frac T2\sum_{r = 1}^q m_r(m_r-1) h_{k-1}\left(\begin{smallmatrix}\nu \\m_1,\ldots,m_q \end{smallmatrix} \right).
\]
\item In the case of a redundant join, cycles of lengths $2\alpha$ and $2\beta$ merge to form a cycle of length $2m_r.$ There are $4\alpha\beta$ possible ways to make this join, however among the transpositions $(i~j),(\bar i~j),(i~\bar j),$ and $(\bar i~\bar j)$ for $i,j = 1,\ldots n,$ only two can be part of a transitive factorisation. Taking in account the factor $C^{-\ell(\mu)},$ the redundant join terms is
\[
CJ\sum_{r = 1}^q \sum_{\alpha + \beta = m_r} \alpha\beta h_{k-1}\left(\begin{smallmatrix}
    \nu \\ m_1,\ldots,\hat  m_r,\ldots ,m_q,\alpha,\beta
\end{smallmatrix}\right)
\]
\item For the essential join we need to choose, which of the permutations $\sigma_1,\ldots,\sigma_{k-1}$ effect two groups of cycles that join, that results in binomial coefficient ${k - 1 \choose k_1}.$ Similarly to the previous case, there are $2\alpha\beta$ possible choices of $\sigma_k.$ Summarizing, the essential join term takes the form

\[
CJ\sum_{\alpha + \beta = m_r} \sum_{k_1 + k_2 = k-1} {k-1 \choose k_1}\sum_{
\begin{smallmatrix} K_1 \sqcup K_2 = \{1,\ldots,p\},\\ I_1 \cup I_2 = \{1,\ldots,q\},\ I_1\cup I_2 = \{r\}
\end{smallmatrix}
} \alpha \beta  h_{k_1}(\begin{smallmatrix}n_{K_1}\\ m_{I_1}({m_r \mapsto  \beta)}\end{smallmatrix})h_{k_2}(\begin{smallmatrix} n_{K_2}\\ m_{I_2}(m_r \mapsto \alpha)\end{smallmatrix}),
\]
where we use the pieces of notation similar to those used for the cut-and-join analysis in monotone case.
\end{itemize}

\subsection{Tropical interpretation of $b$-Hurwitz numbers}\label{sec:tropb}
We now focus on the case of $b$--Hurwitz numbers, i.e. we consider the invariants 

\begin{equation}
    h_k^{(b)}\left(\begin{smallmatrix} \nu \\ \mu\end{smallmatrix}\right) = h_k\left(\begin{smallmatrix} \nu \\ \mu\end{smallmatrix}\right)|_{\substack{CJ\to\frac{1+b}{2}\\T\to b}}.
\end{equation}

Our cut--and--join analysis in the previous subsection produces the following recursion that is a rephrasing of \cite[Theorem 6.5]{Chapuy-Dolega}.

\begin{theorem}\label{thm:recursionbHN}
		For partitions $\mu,\nu$,
		the $b$-Hurwitz numbers satisfy the following recursion:
		\begin{gather} \label {Eq:MonCJ}
			h_k^{(b)}\left(
            \begin{smallmatrix}
                \nu \\ \mu
            \end{smallmatrix}\right) = \frac{1}{2}\cdot \Bigg(
			\sum_{\substack{ i\neq j}} (\mu_i+\mu_j) h_{k-1}^{(b)}\left(
            \begin{smallmatrix} \nu\\ \mu(\hat{i},\hat{j}),\mu_i+\mu_j\end{smallmatrix}\right)\\
			+(1+b)\sum_{i=1}^{\ell(\mu)}\sum_{\alpha+\beta = \mu_i}\alpha\beta \cdot 
			h_{k-1}^{(b)}\left( \begin{smallmatrix} \nu \\
            \mu(\hat{i}),\alpha,\beta\end{smallmatrix}\right)\\
            +(1+b) \sum_{k_1+k_2=k-1} \sum_{I_1\cup I_2 =\{1,\ldots,\ell(\mu)\}\setminus\{i\}} \sum_{ J_1\cup J_2 =\{1,\ldots,\ell(\nu)\}}
			\binom{k-1}{k_1,k_2} \alpha\beta\cdot  h_{k_1}^{(b)}\left(
            \begin{smallmatrix} \nu_{J_1} \\ \mu_{I_1},\alpha\end{smallmatrix}\right)h_{k_2}^{(b)}\left(
            \begin{smallmatrix} \nu_{J_2} \\ \mu_{I_2},\beta\end{smallmatrix}\right) \\
			+b\sum_{i=1}^{\ell(\mu)}\mu_i(\mu_i-1)h_{k-1}^{(b)}\left(
            \begin{smallmatrix} \nu \\ \mu\end{smallmatrix}\right)\Bigg),
		\end{gather}
		where the $\hat{}$ sign means removing the entry, and the initial condition $h_0^{(b)}\left(\begin{smallmatrix}(m^1) \\(m^1) \end{smallmatrix}\right) = \frac{1}{(1+b)\cdot m}$, 
		
	\end{theorem}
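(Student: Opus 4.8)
The plan is to obtain this recursion as the specialization $CJ\mapsto\tfrac{1+b}{2}$, $T\mapsto b$ of the cut--and--join recursion for the connected refined simple Hurwitz numbers $h_k\left(\begin{smallmatrix}\nu\\\mu\end{smallmatrix}\right)$ worked out in Subsection~\ref{sec-rec}. There, the four mutually exclusive possibilities for the effect of the last transposition $\sigma_k$ --- a cut, a twist, a redundant join, or an essential join --- were analysed, and each was shown to contribute a term built from $h_{k-1}$ (or a product $h_{k_1}h_{k_2}$) with an explicit numerical weight carrying the appropriate power of $C$, $J$, $T$. Since $h_k^{(b)}$ is by definition the specialization of $h_k$ at $CJ=\tfrac{1+b}{2}$, $T=b$, the entire proof amounts to performing this substitution in the four contributions and collecting a common factor $\tfrac12$.

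First I would record the matching term by term. The cut contribution $\tfrac12\sum_{r\neq s}(m_r+m_s)\,h_{k-1}\left(\begin{smallmatrix}\nu\\ m_1,\dots,\hat m_r,\dots,\hat m_s,\dots,m_q,m_r+m_s\end{smallmatrix}\right)$ involves no power of $C,J,T$ and gives, after extracting $\tfrac12$, the first summand $\sum_{i\neq j}(\mu_i+\mu_j)h_{k-1}^{(b)}$. The redundant join contribution $CJ\sum_r\sum_{\alpha+\beta=m_r}\alpha\beta\,h_{k-1}$ becomes $\tfrac{1+b}{2}\sum\alpha\beta\,h_{k-1}^{(b)}$ and yields the second summand; the essential join contribution $CJ\binom{k-1}{k_1}\sum\alpha\beta\,h_{k_1}h_{k_2}$ becomes $\tfrac{1+b}{2}\binom{k-1}{k_1,k_2}\sum\alpha\beta\,h_{k_1}^{(b)}h_{k_2}^{(b)}$, the third summand; and the twist contribution $\tfrac{T}{2}\sum_r m_r(m_r-1)h_{k-1}$ becomes $\tfrac{b}{2}\sum_r m_r(m_r-1)h_{k-1}^{(b)}$, the fourth summand. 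Adding the four specialized contributions and pulling out the overall $\tfrac12$ reproduces the displayed identity verbatim, after renaming the ordered parts $m_r$ of the composition underlying $\mu$ as $\mu_i$. The initial condition is immediate from the base case $h_0\left(\begin{smallmatrix}(n^1)\\ m\end{smallmatrix}\right)=\delta_{n,m}\tfrac{1}{2CJn}$ of Subsection~\ref{sec-rec}, which at $CJ=\tfrac{1+b}{2}$ reads $\delta_{n,m}\tfrac{1}{(1+b)n}$.

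Because the substantive combinatorics is already in place, the step demanding the most care --- and the one I would verify most scrupulously --- is the bookkeeping of the integer weights in the four cases, all of which originate from the doubling inherent to the $[\bar n]$--model. Among the two transpositions $(i~j),(\bar i~\bar j)$ realizing a prescribed cut exactly one is admissible in a twisted factorisation; among the four choices $(\pm i~\pm j)$ realizing a join exactly two are admissible; and a twist of a cycle of half--length $m_r$ admits $\binom{m_r}{2}$ admissible realizations. Tracking these against the multiplicities $(\mu_i+\mu_j)$, $\alpha\beta$ and $\mu_i(\mu_i-1)$ recorded in Subsection~\ref{sec-rec}, together with a uniform treatment of the ordered sums over parts, is exactly what fixes the coefficients $\tfrac12$, $\tfrac{1+b}{2}$ and $b$ in the statement; once these are confirmed, the asserted agreement with \cite[Theorem 6.5]{Chapuy-Dolega} follows at once.
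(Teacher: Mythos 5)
Your proposal is correct and coincides with the paper's own justification: the paper states the theorem as the direct specialization $CJ\mapsto\tfrac{1+b}{2}$, $T\mapsto b$ of the cut--and--join analysis of Subsection~\ref{sec-rec}, which is exactly the substitution and term-by-term matching you carry out. Your extra care with the integer weights coming from the $[\bar n]$--model (one admissible cut out of two, two admissible joins out of four, $\binom{m_r}{2}$ twists) is precisely the bookkeeping already done in that subsection, so nothing further is needed.
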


We can interpret this recursion in terms of a graph formalism, which we can view as a count of tropical covers. Recall the notion of a twisted tropical cover from Section \ref{sec:tropprelim}.

\begin{definition}[Branch point multiplicities]\label{def:brpointmult}
    Let $\pi:\Gamma\rightarrow \mathbb{R}$ be a twisted tropical cover and $\overline{\pi}:\overline{\Gamma}\rightarrow \mathbb{R}$ its quotient. 
Let $p_j$ be a branch point.

Assume the preimage of $p_j$ is a $4$-valent vertex $V$. Denote the weight of its adjacent edges by $\omega_V$. We define the \emph{branch point multiplicity} to be $m_j:=b\cdot (\omega_V-1)$.

Assume the preimage of $p_j$ consists of a pair of vertices which both join two edges (considered from right to left). Assume that by cutting the image vertex $V$ in the quotient, $\overline{\Gamma}$ stays connected.
Pick one of the join edges $e$ of $\overline{\Gamma}$ (i.e.\ one of the two edges which are mapped to the right of the image of $V$ in $\mathbb{R}$). Remove $e$, and $g-1$ further edges of $\overline{\Gamma}$ (where $g$ is the genus of $\overline{\Gamma}$) such that we are left with a spanning tree. In $\Gamma$, remove the preimage edges accordingly.
Now re-insert $e$ into $\overline{\Gamma}$ and the preimage edges back into $\Gamma$. If the result is connected (i.e.\ the preimages of $e$ connect top with bottom and bottom with top, as described above), we define 
the \emph{branch point multiplicity} to be $m_j:=b$.
Else, i.e.\ if the preimage edges connect top with top and bottom with bottom, we define $m_j:=1$.

For all other branch points, $m_j$ is set to be $1$.

\end{definition}

\begin{definition}[Tropical $b$ double Hurwitz number]
\label{def:twisttrophn}
We define the tropical $b$ double Hurwitz 
number $h^{\trop}_k\left( \begin{smallmatrix}\nu \\ \mu\end{smallmatrix}\right)$ for $k>0$ to be  the weighted enumeration of connected twisted tropical covers with labeled ends of type $(k,\mu,\nu)$, with branch points at fixed points $p_i$, such that each cover $\pi\colon \Gamma\to\mathbb{R}$ is counted with multiplicity
\begin{equation}
    \frac{2}{b+1}\cdot \frac{1}{|\Aut(\pi)|}\cdot\prod_{j=1}^r m_j\prod_e\omega(e),
\end{equation}
where the first product goes over all branch points $p_j$ and $m_j$ denotes the branch point multiplicity from Definition \ref{def:brpointmult}, and the second product is taken over all internal edges of the quotient cover and $\omega(e)$ denotes their weights.
For $r=0$, the tropical twisted cover consisting of two edges of weight $m$ which are switched by the involution is counted with multiplicity $\frac{1}{(1+b)m}$, so $h^{\trop}_k\left(\begin{smallmatrix}(m^1) \\ (m^1)\end{smallmatrix}\right)= \frac{1}{(1+b)m}$.
\end{definition}

\begin{remark}\label{rem:conventiondifferences}
For $b=1$, this definition specializes to the twisted tropical double Hurwitz number from \cite{HMtwisted1} up to three differences in convention which we point out here:
\begin{enumerate}
    \item In \cite{HMtwisted1}, there is an additional global factor of $2^k$, where $k$ is the number of branch points.
    \item In \cite{HMtwisted1}, we do not label the ends. This produces an additional factor of $\frac{1}{|\Aut(\mu)||\Aut(\nu)|}$.
    \item In \cite{HMtwisted1}, we require the source of the tropical cover to be connected rather than its quotient graph. That is, here we allow more twisted covers, namely those which consist of two copies of the quotient graph which are switched by the involution. Here, we consider those to be connected via the action of the involution.
\end{enumerate}
\end{remark}

The following lemma is a direct consequence of Proposition \ref{prop-automquotient}:

\begin{lemma}
\label{lem-quotient}
  The  tropical $b$ double Hurwitz 
number  $h^{\trop}_k\left( \begin{smallmatrix}\nu \\ \mu\end{smallmatrix}\right)$ for $k>0$ can equivalently be determined by enumerating quotient covers $\overline{\pi}$, which we count with multiplicity
\begin{equation}
     (1+b)^{g-1}\cdot \frac{1}{|\Aut(\overline{\pi})|}\cdot\prod_{V \mbox{ $2$-valent}} \frac{b}{2}\cdot (\omega_V-1)   \prod_e\omega(e),
\end{equation}
where $g$ is the genus of the quotient graph.

For $k=0$, the quotient cover consisting of a single edge of weight $m$ is counted with multiplicity $\frac{1}{(1+b)m}$, so $h^{\trop}_0\left(\begin{smallmatrix}(m^1) \\ (m^1)\end{smallmatrix}\right)= \frac{1}{(1+b)m}$ can also be determined in terms of quotient covers.
\end{lemma}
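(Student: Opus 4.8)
The plan is to group the connected twisted tropical covers by their quotient and to show that, after summing over the fibre of the quotient map $\pi\mapsto\overline\pi$, the multiplicity of Definition \ref{def:twisttrophn} reproduces the stated quotient multiplicity. The first observation is that the edge-weight product $\prod_e\omega(e)$ in Definition \ref{def:twisttrophn} runs over internal edges of the quotient cover and hence depends only on $\overline\pi$; it matches the factor $\prod_e\omega(e)$ on the quotient side verbatim, so it pulls out and cancels. Likewise, among the branch point multiplicities $m_j$ of Definition \ref{def:brpointmult}, those coming from $4$-valent vertices (that is, the $2$-valent vertices $V$ of $\overline\Gamma$) equal $b(\omega_V-1)$ and depend only on $\overline\pi$, while the multiplicities at cut branch points and at essential (connectivity-breaking) joins are all $1$. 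Thus the only lift-dependent contribution is $\prod_j m_j=\big(\prod_{V\ 2\text{-val}}b(\omega_V-1)\big)\cdot b^{\#\mathrm{cross}(\pi)}$, where $\#\mathrm{cross}(\pi)$ counts the cycle-closing joins whose two preimage edges connect top to bottom.

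The heart of the argument is a $b$-weighted refinement of Proposition \ref{prop-automquotient}. With $g,c$ the genus and number of $2$-valent vertices of $\overline\Gamma$, I would prove
\[
\sum_{\pi\mapsto\overline\pi}\frac{b^{\#\mathrm{cross}(\pi)}}{|\Aut(\pi)|}=\frac{(1+b)^{g}}{2^{c+1}\,|\Aut(\overline\pi)|},
\]
the sum running over twisted covers with quotient $\overline\pi$. This is obtained by revisiting the proof of Proposition \ref{prop-automquotient}: a spanning tree of $\overline\Gamma$ lifts uniquely (two copies interchanged by $\iota$), and each of the $g$ independent cycles is closed by exactly one cycle-closing join admitting two lifts, parallel or cross. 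Inserting the weight $b^{\#\mathrm{cross}}$ replaces the per-cycle count $2$ by the generating factor $1+b$, turning the numerator $2^g$ of Proposition \ref{prop-automquotient} into $(1+b)^g$; the automorphism bookkeeping producing $2^{c+1}|\Aut(\overline\pi)|$ (a factor $2$ per $2$-valent vertex together with the involution) is insensitive to the cross/parallel choices and carries over unchanged. Setting $b=1$ recovers Proposition \ref{prop-automquotient}, which is the consistency check.

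Finally I would assemble the two pieces. Substituting the refinement into Definition \ref{def:twisttrophn} and using $b(\omega_V-1)=2\cdot\tfrac{b}{2}(\omega_V-1)$ gives
\[
\sum_{\pi\mapsto\overline\pi}\frac{2}{b+1}\frac{\prod_j m_j}{|\Aut(\pi)|}
=\frac{2}{b+1}\Big(\prod_{V\ 2\text{-val}}b(\omega_V-1)\Big)\frac{(1+b)^{g}}{2^{c+1}|\Aut(\overline\pi)|}
=(1+b)^{g-1}\frac{1}{|\Aut(\overline\pi)|}\prod_{V\ 2\text{-val}}\frac{b}{2}(\omega_V-1),
\]
using $\tfrac{2}{b+1}(1+b)^g=2(1+b)^{g-1}$ and $2^c/2^{c+1}=\tfrac12$. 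Reinstating the common factor $\prod_e\omega(e)$ yields precisely the quotient multiplicity of the lemma, and the case $k=0$ is immediate from the normalisation $h^{\trop}_0(\begin{smallmatrix}(m^1)\\(m^1)\end{smallmatrix})=\tfrac{1}{(1+b)m}$.

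The main obstacle I anticipate is the refined counting identity, not the surrounding algebra. One must check that $\prod_j m_j$ restricted to cycle-closing joins is well defined independently of the auxiliary spanning-tree choices in Definition \ref{def:brpointmult}, that exactly $g$ branch points are of cycle-closing join type, and that the cross/parallel dichotomy at these $g$ joins is genuinely independent so that the weights multiply to $(1+b)^g$. Everything else is the constant bookkeeping recorded above.
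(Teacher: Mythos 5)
Your proposal is correct and follows exactly the route the paper intends: the paper states the lemma as a direct consequence of Proposition \ref{prop-automquotient}, and your $b$-weighted refinement $\sum_{\pi\mapsto\overline\pi} b^{\#\mathrm{cross}(\pi)}/|\Aut(\pi)|=(1+b)^{g}/(2^{c+1}|\Aut(\overline\pi)|)$ together with the factor $2^{c}$ absorbed into $\prod_V \tfrac{b}{2}(\omega_V-1)$ is precisely the bookkeeping the paper leaves implicit. The consistency checks you flag (exactly $g$ cycle-closing joins, well-definedness of the cross/parallel dichotomy, independence of the per-cycle choices) are the right ones, and they are confirmed by Example \ref{ex-tropbcomp}.
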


For $b=0$, this count specializes to the standard count of tropical double Hurwitz numbers: quotient covers with $2$-valent vertices do not contribute, and the quotient covers which do not have any $2$-valent vertices contribute with multiplicity $\frac{1}{|\Aut(\overline{\pi})|}\cdot  \prod_e\omega(e) $.

\begin{example}
\label{ex-tropbcomp}
    We compute the tropical $b$ double Hurwitz 
number  $h^{\trop}_2\left( \begin{smallmatrix}(m^1) \\(m^1)\end{smallmatrix}\right)$, where $m\in\mathbb{N}_{>0}$.

Figure \ref{fig:tropbHN} depicts in the top row the twisted tropical covers which contribute to the count, and in the bottom row the quotient tropical covers, both with their multiplicities.

The first two pictures in the top row appear for every choice of $i$. Note that if $i=\frac{m}{2}$, both pictures and their quotient cover have to be counted with another factor of $\frac{1}{2}$ to take the extra automorphism into account which arises from exchanging the internal edges of the same weight. 
Since we have to consider all $i$, it is most natural to overcount by summing over all $i=1,\ldots,m-1$ and multiply by a global factor of $\frac{1}{2}$ which will also take care of the extra automorphisms of the middle summand if $m$ is even.

The first picture has another automorphism, given by the twisting, and the two internal edges of its quotient cover have weight $i$ and $b-i$. The branch point multiplicity is $1$. For the second picture, the multiplicity is almost the same, except for an additional factor of $b$ which arises as branch point multiplicity for the left branch point.
The third picture has $3$ independent automorphism, yielding a factor of $\frac{1}{8}$. The branch point multiplicity of both branch points is $b\cdot (m-1)$. The quotient cover has one internal edge of weight $m$.

Altogether, we obtain
$$ h^{\trop}_2\left( \begin{smallmatrix} (m^1) \\(m^1)\end{smallmatrix}\right) = \frac{1}{2}\cdot \sum_{i=1}^{m-1} i\cdot (m-i) + \frac{1}{4\cdot (b+1)}b^2\cdot (m-1)^2\cdot m.$$

Alternatively, we can count quotient covers directly. The two right pictures of the top row yield the same quotient cover, which contributes $i\cdot (m-i)$. The left picture has a quotient cover of the same multiplicity. We obtain the same sum as before.

Observe that if we insert $b=1$, we obtain the purely real double Hurwitz number 
$$ h^{\trop}_2\left(\begin{smallmatrix} (m^1) \\ (m^1)\end{smallmatrix}\right)|_{b=1} = \frac{1}{2}\cdot \sum_{i=1}^{m-1} i\cdot (m-i) + \frac{1}{8} (m-1)^2\cdot m.$$
If we insert $b=0$, we obtain the complex double Hurwitz number
$$h^{\trop}_2\left( \begin{smallmatrix} (m^1) \\ (m^1)\end{smallmatrix}\right)|_{b=0} = \frac{1}{2}\cdot \sum_{i=1}^{m-1} i\cdot (m-i). $$
\begin{figure}
    \centering

\tikzset{every picture/.style={line width=0.75pt}} 

\begin{tikzpicture}[x=0.75pt,y=0.75pt,yscale=-1,xscale=1]

\draw    (99.42,39.37) -- (135.72,39.37) ;
\draw    (135.72,39.37) .. controls (144.97,31.02) and (172.2,30.23) .. (181.09,39.37) ;
\draw    (135.72,39.37) .. controls (144.97,50.6) and (171.84,48.65) .. (181.09,39.37) ;
\draw    (181.09,39.37) -- (217.39,39.37) ;
\draw    (99.42,65.53) -- (135.72,65.53) ;
\draw    (135.72,65.53) .. controls (144.97,57.17) and (172.2,56.39) .. (181.09,65.53) ;
\draw    (135.72,65.53) .. controls (144.97,76.76) and (171.84,74.8) .. (181.09,65.53) ;
\draw    (181.09,65.53) -- (217.39,65.53) ;
\draw    (186.9,181.63) -- (223.2,181.63) ;
\draw    (223.2,181.63) .. controls (232.45,173.27) and (259.68,172.48) .. (268.57,181.63) ;
\draw    (223.2,181.63) .. controls (232.45,192.86) and (259.32,190.9) .. (268.57,181.63) ;
\draw    (268.57,181.63) -- (304.87,181.63) ;
\draw    (262.76,39.67) -- (299.06,39.67) ;
\draw    (299.06,39.67) .. controls (308.32,31.31) and (335.54,30.53) .. (344.44,39.67) ;
\draw    (299.06,39.67) .. controls (308.32,50.9) and (339.54,58.83) .. (344.44,65.83) ;
\draw    (344.44,39.67) -- (380.73,39.67) ;
\draw    (262.76,65.83) -- (299.06,65.83) ;
\draw    (299.06,65.83) .. controls (308.32,57.47) and (346.07,46.29) .. (344.44,39.37) ;
\draw    (299.06,65.83) .. controls (308.32,77.06) and (335.18,75.1) .. (344.44,65.83) ;
\draw    (344.44,65.83) -- (380.73,65.83) ;
\draw    (407.96,39.37) .. controls (423.39,41.99) and (437.91,43.55) .. (444.26,49.17) ;
\draw    (407.96,58.96) .. controls (428.83,56.87) and (441.9,51.39) .. (444.26,49.17) ;
\draw    (489.63,49.17) .. controls (496.62,41.59) and (517.13,38.85) .. (525.93,39.37) ;
\draw    (444.26,49.17) .. controls (453.51,40.81) and (480.74,40.03) .. (489.63,49.17) ;
\draw    (444.26,49.17) .. controls (453.51,60.4) and (480.37,58.44) .. (489.63,49.17) ;
\draw    (489.63,49.17) .. controls (496.44,57.66) and (516.67,58.51) .. (525.48,59.03) ;
\draw    (407.96,181.72) -- (525.93,181.72) ;
\draw  [fill={rgb, 255:red, 0; green, 0; blue, 0 }  ,fill opacity=1 ] (445.89,181.59) .. controls (445.89,180.83) and (445.32,180.22) .. (444.62,180.22) .. controls (443.92,180.22) and (443.35,180.83) .. (443.35,181.59) .. controls (443.35,182.35) and (443.92,182.96) .. (444.62,182.96) .. controls (445.32,182.96) and (445.89,182.35) .. (445.89,181.59) -- cycle ;
\draw  [fill={rgb, 255:red, 0; green, 0; blue, 0 }  ,fill opacity=1 ] (490.9,181.59) .. controls (490.9,180.83) and (490.33,180.22) .. (489.63,180.22) .. controls (488.93,180.22) and (488.36,180.83) .. (488.36,181.59) .. controls (488.36,182.35) and (488.93,182.96) .. (489.63,182.96) .. controls (490.33,182.96) and (490.9,182.35) .. (490.9,181.59) -- cycle ;

\draw (116.84,29.51) node  [font=\footnotesize] [align=left] {\begin{minipage}[lt]{12.34pt}\setlength\topsep{0pt}
$\displaystyle m$
\end{minipage}};
\draw (115.39,57.2) node  [font=\footnotesize] [align=left] {\begin{minipage}[lt]{12.34pt}\setlength\topsep{0pt}
$\displaystyle m$
\end{minipage}};
\draw (198.15,31.34) node  [font=\footnotesize] [align=left] {\begin{minipage}[lt]{12.34pt}\setlength\topsep{0pt}
$\displaystyle m$
\end{minipage}};
\draw (200.45,56.81) node  [font=\footnotesize] [align=left] {\begin{minipage}[lt]{12.34pt}\setlength\topsep{0pt}
$\displaystyle m$
\end{minipage}};
\draw (203.96,173.69) node  [font=\footnotesize] [align=left] {\begin{minipage}[lt]{12.34pt}\setlength\topsep{0pt}
$\displaystyle m$
\end{minipage}};
\draw (286.36,171.86) node  [font=\footnotesize] [align=left] {\begin{minipage}[lt]{12.34pt}\setlength\topsep{0pt}
$\displaystyle m$
\end{minipage}};
\draw (278.74,30.56) node  [font=\footnotesize] [align=left] {\begin{minipage}[lt]{12.34pt}\setlength\topsep{0pt}
$\displaystyle m$
\end{minipage}};
\draw (277.65,56.81) node  [font=\footnotesize] [align=left] {\begin{minipage}[lt]{12.34pt}\setlength\topsep{0pt}
$\displaystyle m$
\end{minipage}};
\draw (356.78,30.95) node  [font=\footnotesize] [align=left] {\begin{minipage}[lt]{12.34pt}\setlength\topsep{0pt}
$\displaystyle m$
\end{minipage}};
\draw (362.95,57.98) node  [font=\footnotesize] [align=left] {\begin{minipage}[lt]{12.34pt}\setlength\topsep{0pt}
$\displaystyle m$
\end{minipage}};
\draw (422.54,35) node  [font=\footnotesize] [align=left] {\begin{minipage}[lt]{12.34pt}\setlength\topsep{0pt}
$\displaystyle m$
\end{minipage}};
\draw (420.48,65.43) node  [font=\footnotesize] [align=left] {\begin{minipage}[lt]{12.34pt}\setlength\topsep{0pt}
$\displaystyle m$
\end{minipage}};
\draw (467.07,37.09) node  [font=\footnotesize] [align=left] {\begin{minipage}[lt]{12.34pt}\setlength\topsep{0pt}
$\displaystyle m$
\end{minipage}};
\draw (465.49,64.25) node  [font=\footnotesize] [align=left] {\begin{minipage}[lt]{12.34pt}\setlength\topsep{0pt}
$\displaystyle m$
\end{minipage}};
\draw (516.79,36.44) node  [font=\footnotesize] [align=left] {\begin{minipage}[lt]{12.34pt}\setlength\topsep{0pt}
$\displaystyle m$
\end{minipage}};
\draw (516.43,67.12) node  [font=\footnotesize] [align=left] {\begin{minipage}[lt]{12.34pt}\setlength\topsep{0pt}
$\displaystyle m$
\end{minipage}};
\draw (514.62,173.43) node  [font=\footnotesize] [align=left] {\begin{minipage}[lt]{12.34pt}\setlength\topsep{0pt}
$\displaystyle m$
\end{minipage}};
\draw (166.06,21.94) node  [font=\footnotesize] [align=left] {\begin{minipage}[lt]{24.15pt}\setlength\topsep{0pt}
$\displaystyle m-i$
\end{minipage}};
\draw (163.94,81.36) node  [font=\footnotesize] [align=left] {\begin{minipage}[lt]{28.67pt}\setlength\topsep{0pt}
$\displaystyle m-i$
\end{minipage}};
\draw (250.45,197.98) node  [font=\footnotesize] [align=left] {\begin{minipage}[lt]{26.21pt}\setlength\topsep{0pt}
$\displaystyle m-i$
\end{minipage}};
\draw (324.86,80.45) node  [font=\footnotesize] [align=left] {\begin{minipage}[lt]{23.74pt}\setlength\topsep{0pt}
$\displaystyle m-i$
\end{minipage}};
\draw (326.07,28.21) node  [font=\footnotesize] [align=left] {\begin{minipage}[lt]{27.03pt}\setlength\topsep{0pt}
$\displaystyle m-i$
\end{minipage}};
\draw (164.77,41.1) node  [font=\footnotesize] [align=left] {\begin{minipage}[lt]{12.34pt}\setlength\topsep{0pt}
$\displaystyle i$
\end{minipage}};
\draw (164.15,66.6) node  [font=\footnotesize] [align=left] {\begin{minipage}[lt]{12.34pt}\setlength\topsep{0pt}
$\displaystyle i$
\end{minipage}};
\draw (251.45,167.72) node  [font=\footnotesize] [align=left] {\begin{minipage}[lt]{12.34pt}\setlength\topsep{0pt}
$\displaystyle i$
\end{minipage}};
\draw (323.45,43) node  [font=\footnotesize] [align=left] {\begin{minipage}[lt]{12.34pt}\setlength\topsep{0pt}
$\displaystyle i$
\end{minipage}};
\draw (322.75,64.6) node  [font=\footnotesize] [align=left] {\begin{minipage}[lt]{11.31pt}\setlength\topsep{0pt}
$\displaystyle i$
\end{minipage}};
\draw (393.79,97.1) node [anchor=north west][inner sep=0.75pt]  [font=\small] [align=left] {$\displaystyle \frac{2}{b+1} \cdot \frac{1}{8} \cdot b^{2} \cdot ( m-1)^{2} \cdot m$};
\draw (392.58,205.5) node [anchor=north west][inner sep=0.75pt]  [font=\small] [align=left] {$\displaystyle \frac{2}{b+1} \cdot \frac{1}{8} \cdot b^{2} \cdot ( m-1)^{2} \cdot m$};
\draw (89.85,104.93) node [anchor=north west][inner sep=0.75pt]  [font=\small] [align=left] {$\displaystyle \frac{2}{b+1} \cdot \frac{1}{2} \cdot i\cdot ( m-i)$};
\draw (239.67,106.88) node [anchor=north west][inner sep=0.75pt]  [font=\small] [align=left] {$\displaystyle \frac{2}{b+1} \cdot \frac{1}{2} \cdot b\cdot i\cdot ( m-i)$};
\draw (206.91,230.51) node [anchor=north west][inner sep=0.75pt]  [font=\small] [align=left] {$\displaystyle i\cdot ( m-i)$};

\end{tikzpicture}

    \caption{The tropical count of the $b$ double Hurwitz number $h^{\trop}_2\left(\begin{smallmatrix}(m^1) \\ (m^1)\end{smallmatrix}\right)$. The top row shows the twisted covers, the bottom row the quotient covers.}
    \label{fig:tropbHN}
\end{figure}
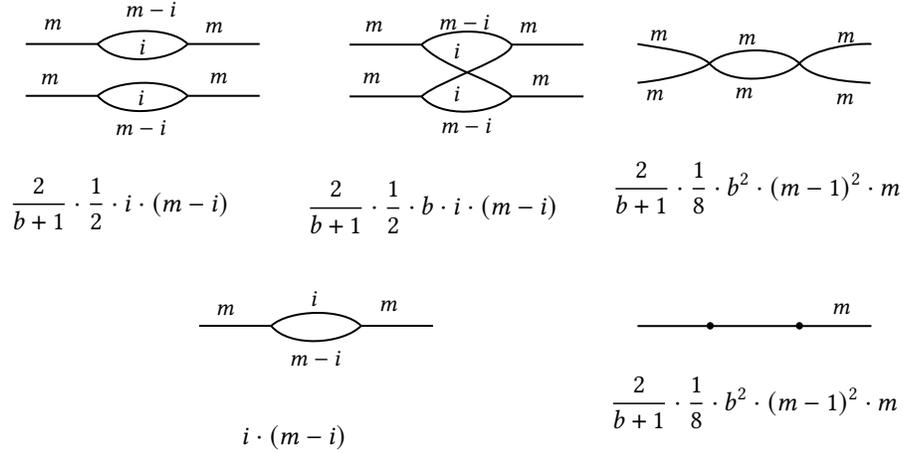
\end{example}

\begin{theorem}\label{thm-corresdoubleHNb}
  The  tropical $b$ double Hurwitz 
number  $h^{\trop}_k\left(\begin{smallmatrix}\nu \\ \mu\end{smallmatrix}\right)$ equals its $b-$counterpart $h^{(b)}_k\left(\begin{smallmatrix}\nu \\ \mu\end{smallmatrix}\right)$. 
\end{theorem}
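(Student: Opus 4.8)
The plan is to prove the identity by showing that both sides obey the same recursion with the same initial data, and then conclude by induction on the number of branch points $k$. On the algebraic side this is already available: by the cut--and--join analysis of \cref{sec-rec}, specialised through $CJ\to\frac{1+b}{2}$ and $T\to b$, the numbers $h^{(b)}_k\left(\begin{smallmatrix}\nu\\\mu\end{smallmatrix}\right)$ satisfy the recursion of \cref{thm:recursionbHN} together with the initial condition $h^{(b)}_0\left(\begin{smallmatrix}(m^1)\\(m^1)\end{smallmatrix}\right)=\frac{1}{(1+b)m}$. Hence it suffices to prove that the tropical numbers $h^{\trop}_k\left(\begin{smallmatrix}\nu\\\mu\end{smallmatrix}\right)$ satisfy the very same recursion and initial condition. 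The base case $k=0$ is immediate from \cref{def:twisttrophn}, which assigns the banana cover of weight $m$ the value $\frac{1}{(1+b)m}$.

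For the inductive step I would work with quotient covers, using the reformulation of \cref{lem-quotient}, where a contributing quotient cover $\overline{\pi}$ is weighted by $(1+b)^{g-1}|\Aut(\overline{\pi})|^{-1}\prod_{V}\frac{b}{2}(\omega_V-1)\prod_e\omega(e)$. Fixing a connected quotient cover, I would examine the boundary branch point $p_1$ adjacent to the $\mu$--ends; deleting it produces a quotient cover with one fewer branch point, and the local picture at the preimage of $p_1$ splits into exactly four cases matching the four terms of \cref{thm:recursionbHN}. If the preimage is a $2$--valent vertex on the $i$--th $\mu$--end, its adjacent internal edge has weight $\mu_i$ and the vertex contributes $\frac{b}{2}(\mu_i-1)$; when $p_1$ is deleted this edge is absorbed into a new end, removing a factor $\mu_i$ from $\prod_e\omega(e)$, so the net contribution $\frac{b}{2}\mu_i(\mu_i-1)$ reproduces the twist term. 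A $3$--valent vertex merging two $\mu$--ends $\mu_i,\mu_j$ creates an internal edge of weight $\mu_i+\mu_j$, giving the cut term $\sum_{i<j}(\mu_i+\mu_j)h^{\trop}_{k-1}=\tfrac12\sum_{i\neq j}(\mu_i+\mu_j)h^{\trop}_{k-1}$. A $3$--valent vertex splitting a single $\mu$--end $\mu_i$ into two edges of weights $\alpha,\beta$ with $\alpha+\beta=\mu_i$ either closes a cycle (so the genus drops by one upon deletion, supplying the factor $(1+b)$ together with the product $\alpha\beta$, yielding the redundant--join term) or disconnects the cover, yielding the essential--join term.

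For the essential join the deleted branch point separates $\overline{\pi}$ into two connected quotient covers, and the weight factorises into a product of two lower tropical numbers; the $k-1$ remaining branch points, sitting at fixed positions on $\mathbb{R}$, are distributed between the two pieces, which accounts for the binomial coefficient $\binom{k-1}{k_1}$, while distributing the $\nu$--ends produces the index sets $J_1,J_2$. Throughout the argument the global prefactor $\frac{2}{b+1}$ and the automorphism contributions must be tracked, and the dictionary of \cref{prop-automquotient} between twisted covers and their quotients is what guarantees that the powers of $2$ balance on both sides.

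The step I expect to be the main obstacle is the exact bookkeeping of the remaining numerical factors: the factors of $\tfrac12$ stemming from labelled versus unlabelled ends and from recording only one half of each cycle, the extra automorphisms appearing when two edges carry equal weight (for instance $\alpha=\beta$ in a redundant join), and --- most delicately --- verifying that the branch--point multiplicities of \cref{def:brpointmult} (the factor $b(\omega_V-1)$ attached to a twist, and the distinction $m_j=b$ versus $m_j=1$ that separates essential from redundant joins) reproduce precisely the powers of $b$ and of $(1+b)$ prescribed by the recursion. This is exactly where the measure of non--orientability is encoded, and matching it term--by--term against the genus factor $(1+b)^{g-1}$ of \cref{lem-quotient} is the crux of the argument.
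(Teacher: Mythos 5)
Your proposal follows essentially the same route as the paper's proof: induction on $k$, deleting the leftmost branch point of the quotient cover, matching the four local pictures (twist at a $2$-valent vertex, cut, cycle-closing join, disconnecting join) to the four summands of the recursion in \cref{thm:recursionbHN}, and then doing exactly the bookkeeping of the $\tfrac12$'s, the wiener automorphisms, and the powers of $(1+b)$ that the paper carries out in its closing paragraph. One small correction to your final remarks: the dichotomy $m_j=b$ versus $m_j=1$ in \cref{def:brpointmult} distinguishes the two possible lifts of a cycle-closing (redundant) join, not essential from redundant joins; since you work at the level of quotient covers via \cref{lem-quotient}, this is already absorbed into the factor $(1+b)^{g-1}$ and does not affect your argument.
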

\begin{proof}
We prove the statement using induction on $k$. The case $k=0$ follows from the definition.
Assume the statement is true for $k-1$.
Consider all tropical quotient covers contributing to the count of $h^{\trop}_k\left(\begin{smallmatrix}\nu \\ \mu\end{smallmatrix}\right)$ and cut off their leftmost vertex (thus reducing the number of branch points by one). We can express $h^{\trop}_k\left(\begin{smallmatrix}\nu \\ \mu\end{smallmatrix}\right)$ in terms of all possibilities how these cut off pictures can look like and what they contribute. We claim that this is exactly equivalent to the recursion for $h^{(b)}_k\left(\begin{smallmatrix}\nu \\ \mu\end{smallmatrix}\right)$ from 
Theorem \ref{thm:recursionbHN}. The statement then follows by induction.

The leftmost vertex can be a vertex which cuts an edge of weight $\mu_i+\mu_j$ into two edges of weight $\mu_i$ and $\mu_j$. If the cut edge is a right end (in case $k=1$), the factor of $\mu_i+\mu_j$ cancels with the initial value for $k=0$. Else, we have cut an internal edge and produced an end from it. Before cutting, the edge yielded a factor to the multiplicity of the quotient graph, namely its weight $\mu_i+\mu_j$. In any case, we have to multiply with $\mu_i+\mu_j$. We also have to take a factor of $\frac{1}{2}$ since we make two choices $(i,j)$ and $(j,i)$ in the sum for the two edges we cut.
This first case thus contributes the first summand of the recursion.

The leftmost vertex can be a vertex which joins two edges, such that the cut off graph is still connected. That means that cutting off the vertex reduces the genus of the quotient graph by one. We thus have to multiply by a factor of $(1+b)$. Furthermore, we cut two internal edges, so we have to multiply with their weights.
The two cut edges are ends of the cut quotient cover, and as such they come with labels. Since these labels are dropped after we join them, we have to multiply with a factor of $\frac{1}{2}$. 
    This second case thus contributes the second summand of the recursion.

    The leftmost vertex can be a vertex which joins two edges, such that the cut off graph has two connected components. One of the connected components must have $k_1$ vertices, and the other $k_2$, such that $k_1+k_2=k-1$. The ends must split into some that belong to one part, and others that belong to the other. 
    Given two quotient covers that contribute to the two counts corresponding to these components, we have to align their vertices before we merge them together at the leftmost vertex, this produces a factor of $\binom{k}{k_1,k_2}$. 
    Also, both components contribute a factor of $(1+b)^{g_1-1}$ resp.\ $(1+b)^{g_2-1}$, where $g_i$ denotes the genus of each component. The genus of the graph after joining equals $g_1+g_2$, so we want a contribution of $(1+b)^{g_1+g_2-1}$. For that reason, we have to multiply with $(1+b)$.
    Furthermore, we we cut two internal edges, so we have to multiply with their weights. Again, we drop labels of the two ends we join, so we have to multiply with a factor of $\frac{1}{2}$. 
    This third case thus contributes the third summand of the recursion.

    Finally, the leftmost vertex can be a $2$-valent vertex. We multiply by the branch point multiplicity which equals $\frac{b}{2}\cdot (\mu_i-1)$ and by the weight of the edge.
    This last case thus contributes the last summand of the recursion.

At last, we have to convince ourselves that we counted each quotient cover with the right multiplicity in this way. From the arguments before, this is already clear for all factors of $(1+b)$, the branch point multiplicities for the $2$-valent vertices and the edge weights of the internal edges. We still have to discuss the factor of $\frac{1}{|\Aut(\overline{\pi})|}$. Since the quotient cover is has only $2$- and $3$-valent vertices and labeled ends, the only automorphisms come from wieners, i.e.\ cycles formed by two edges of the same weight (see Lemma 4.2 in \cite{CJM10}). The factor $\frac{1}{|\Aut(\overline{\pi})|}$ thus equals $\big(\frac{1}{2}\big)^w$, where $w$ is the number of wieners. These factors occur in our recursion however, since we multiplied by $\frac{1}{2}$ when we join two edges. When the two edges have different weight, the factor just accounts for the two possibilities to choose weights for the two edges. When the two edges have the same weight, the factor yields the requested contribution to $\frac{1}{|\Aut(\overline{\pi})|}$.

We conclude that using the recursion from Theorem \ref{thm:recursionbHN}, we indeed count all possibilities for quotient covers (in terms of quotient covers with less branch points by adding one more branch point) with the requested multiplicity, thus obtaining $h^{\trop}_k\left(\begin{smallmatrix}\nu \\ \mu\end{smallmatrix}\right)$. Since $h^{\trop}_k\left(\begin{smallmatrix}\nu \\ \mu\end{smallmatrix}\right)$ satisfy the same recursion and have the same initial values by definition, the equality $h^{\trop}_k\left(\begin{smallmatrix}\nu \\ \mu\end{smallmatrix}\right) = h^{(b)}_k\left(\begin{smallmatrix}\nu \\ \mu\end{smallmatrix}\right)$ follows. 
 \end{proof}
\subsection{Piecewise polynomiality of double $b$-Hurwitz numbers}
\label{subsec-poly}
In this subsection, for the $b$-Hurwitz number  $h^{(b)}_k\left(\begin{smallmatrix}\nu \\ \mu\end{smallmatrix}\right)$ we use the genus $g\in \frac 12\mathbb N \cup \{0\} $ instead of the number of elements in the factorisation $k$. By the abuse of notation, $h^{(b)}_g\left(\begin{smallmatrix}\nu \\ \mu\end{smallmatrix}\right)$ denotes the weighted number of factorisations of the corresponding genus. Recall, that the genus and the number of factorisations are related as $k = 2g - 2 + \ell(\mu) + \ell(\nu)$. The main result of this subsection is the following theorem. 

\begin{theorem}
    \label{thm-piecepoly}
    Let $g$ be a non-negative integer and $n$ a positive integer. Then $(1+b)h_g^{(b)}$ is a polynomial in $b$ whose coefficients are piecewise polynomials in the entries of $\mu$ and $\nu$ of degree $2g-1+\ell(\mu)+\ell(\nu)$ with respect to the resonance arrangement.
\end{theorem}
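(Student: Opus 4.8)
The plan is to argue entirely on the tropical side, generalising the $b=1$ argument of \cite{HMtwisted1} and running in parallel to the classical Goulden--Jackson--Vakil analysis recovered at $b=0$. By \cref{thm-corresdoubleHNb} I may replace $h_g^{(b)}\left(\begin{smallmatrix}\nu\\\mu\end{smallmatrix}\right)$ by the tropical count, with $k=2g-2+\ell(\mu)+\ell(\nu)$, and by \cref{lem-quotient} I may organise this as a finite weighted sum over connected quotient covers $\overline\pi\colon\overline\Gamma\to\mathbb R$ with labelled ends of profiles $\mu,\nu$, each counted with multiplicity $(1+b)^{\tilde g-1}\,|\Aut(\overline\pi)|^{-1}\prod_{V\ \mathrm{2\text{-}valent}}\tfrac b2(\omega_V-1)\prod_e\omega(e)$, where $\tilde g$ denotes the genus of $\overline\Gamma$. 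For fixed $\mu,\nu,g$ there are finitely many such covers, since the number of vertices equals $k$ and all weights are bounded by $|\mu|$. The only $b$-dependence is the factor $(1+b)^{\tilde g-1}$ together with one factor of $b$ per $2$-valent vertex; the tree quotients ($\tilde g=0$) contribute the only, simple, pole at $b=-1$, so multiplying by $(1+b)$ clears it and exhibits $(1+b)h_g^{(b)}$ as a genuine polynomial in $b$. This already settles the polynomiality-in-$b$ part of the statement.

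For the piecewise polynomiality I would group the quotient covers by their \emph{combinatorial type} $\Theta$, i.e.\ the underlying graph with the distribution of the labelled ends, forgetting the metric. For a fixed type, the balancing condition determines the weight of each bounded edge as an integer affine-linear function of the end weights $\mu_i,\nu_j$ and of $\tilde g$ free loop parameters $w_1,\dots,w_{\tilde g}$: cutting a bounded bridge $e$ (one lying on no cycle) separates $\overline\Gamma$ into a left and a right part and forces $\omega(e)=\sum_{i\in I}\mu_i-\sum_{j\in J}\nu_j$, while the edges on cycles carry in addition the loop variables. The contribution of $\Theta$ to $(1+b)h_g^{(b)}$ is then obtained by summing the polynomial $\prod_{V}\tfrac b2(\omega_V-1)\prod_e\omega(e)$ over all lattice points $(w_1,\dots,w_{\tilde g})$ in the polytope $P_\Theta(\mu,\nu)=\{\,w\mid \omega(e)>0\text{ for every bounded }e\,\}$.

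The decisive point is that the facets of $P_\Theta(\mu,\nu)$ move linearly in $\mu,\nu$, and that a resonance wall $\sum_{i\in I}\mu_i=\sum_{j\in J}\nu_j$ is crossed exactly when one of the bridge weights $\omega(e)$ changes sign, that is, when a type $\Theta$ enters or leaves the sum. Hence on each open chamber of $\mathcal R_{m,n}$ the set of contributing types and the combinatorial structure of each $P_\Theta$ stay constant; summing a fixed polynomial over the lattice points of a polytope with integral vertices varying linearly then yields a single polynomial in $\mu,\nu$ on that chamber, the unimodularity of the balancing system ensuring an honest polynomial rather than a quasi-polynomial. A bookkeeping of the number of bounded edges of $\overline\Gamma$, corrected by the $\tilde g$ loop summations (each raising the degree by one) and by the $2$-valent contributions, finally gives the asserted degree $2g-1+\ell(\mu)+\ell(\nu)$, uniformly in the power of $b$.

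The main obstacle is this last, global step: one must show that \emph{no finer} arrangement than $\mathcal R_{m,n}$ is needed, i.e.\ that the positivity constraints coming from the loop-carrying edges introduce no extra walls in $(\mu,\nu)$-space and that the loop summation stays polynomial across an entire resonance chamber. This is precisely the property that fails for the monotone weight (compare \cref{prop-refinpoly}, where a refinement of the resonance arrangement is forced) but holds for the simple weight $G(z)=e^z$: there the facets of each $P_\Theta$ are resonance forms, so the interior loop constraints are resolved within a single chamber, exactly as in the Goulden--Jackson--Vakil analysis and its $b=1$ incarnation in \cite{HMtwisted1}. Making the transversality and unimodularity input precise, and checking that the degree bookkeeping is uniform in $b$, is where the real work lies.
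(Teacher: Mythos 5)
Your proposal follows essentially the same route as the paper: pass to the tropical count via \cref{thm-corresdoubleHNb} and \cref{lem-quotient}, read off polynomiality in $b$ from the $(1+b)^{\tilde g-1}$ prefactor on quotient covers, then fix a combinatorial type, use balancing to express bounded-edge weights affine-linearly in $\mu,\nu$ and in the loop parameters, and sum the multiplicity polynomial over the lattice points of the resulting polytope. The step you flag as ``where the real work lies'' --- that the walls are exactly the resonance hyperplanes and the lattice-point sum is honestly polynomial on each chamber --- is precisely the part the paper handles by invoking the argument of \cite[Theorem 27]{HMtwisted1}, so your outline is correct and matches the published proof.
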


As mentioned before for $b=0$, this result was proved in \cref{thm-GJV}. Moreover, for $b=1$ a proof was given via tropical geometry methods by the second and fourth author in \cite{HMtwisted1}. Our proof of \cref{thm-piecepoly} follows similar ideas as the one in \textit{loc.cit.}.

We recall from \cite{HMtwisted1} the definition of twisted monodromy graphs, which can be viewed as combinatorial types of twisted tropical covers as discussed in the previous section and Section \ref{sec:tropprelim}.

\begin{definition}
\label{def-mongraph}
For fixed $g$ and two partitions $\mu$ and $\nu$ of $n$, a graph $\Gamma$ is a \emph{twisted monodromy graph of type $(g, \mu,\nu)$}  if:
\begin{enumerate}
\item $\Gamma$ is a connected, genus $g$, directed graph. 
\item $\Gamma$ has $3$-valent and $4$-valent vertices.
\item $\Gamma$ has an involution $\iota$ whose fixed points are the $4$-valent vertices.
\item $\Gamma$ has $2\ell(\mu)$ inward ends of weights $\mu_i$. Ends which are mapped to each other under the involution have the same weight.
\item $\Gamma$ has $2\ell(\nu)$ outward ends of weights $\nu_i$. Ends mapped to each other via the involution have the same weight.
\item $\Gamma$ does not have sinks or sources.
\item The inner vertices are ordered compatibly with the partial ordering induced by the directions of the edges.
\item Every bounded edge $e$ of the graph is equipped with a weight $w(e)\in \mathbb{N}$. These satisfy the \emph{balancing condition} at each inner vertex: the sum of all weights of  incoming  edges equals the sum of the weights of all outgoing edges.
\item The four edges adjacent to a $4$-valent vertex all have the same weight.
\item The involution is compatible with the weights.
\end{enumerate}
We call two twisted monodromy graphs $\Gamma,\Gamma'$ isomorphic, when there is a graph isomorphism $g\colon\Gamma\to \Gamma'$ that respects the orderings, the involutions and edge weights of both graphs.
\end{definition}

Next, we define quotients of twisted monodromy graphs.

\begin{definition}
    Let $\Gamma$ be a twisted monodromy graph of type $(g,\mu,\nu)$. We define its quotient graph $\overline{\Gamma}$ by identifying vertices $v,v'$ with $\iota(v)=v'$ and edges $e,e'$ with $\iota(e)=e'$. Let $[e]$ be the equivalence class of an edge in $\overline{\Gamma}$, then we define its weight $\omega([e])=\omega(e)=\omega(\iota(e))$.
\end{definition}

We note that by the same argument as in the proof of \cite[Corollary 24]{HMtwisted1} there is a natural bijection between twisted monodromy graphs and twisted tropical covers, as well as between their quotients (see also Section \ref{sec:tropprelim}).

Thus, we immediately obtain the following analogue of \cref{prop-automquotient}.

\begin{corollary}
    Let $\overline{\Gamma}$ be the quotient of a twisted monodromy graph. Assume $\overline{\Gamma}$ has $c$ $2$-valent vertices and is of genus $g$. Then, we have
    \begin{equation}
        \sum \frac{1}{|\mathrm{Aut}(\Gamma)|}=\frac{2^g}{2^{c+1}|\mathrm{Aut}(\overline{\Gamma})|},
    \end{equation}
    where the sum is over all twisted monodromy graphs $\Gamma$ whose quotient is $\overline{\Gamma}$.
\end{corollary}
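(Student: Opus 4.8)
The plan is to deduce the corollary directly from Proposition~\ref{prop-automquotient} by transporting that statement along the bijection between twisted monodromy graphs and twisted tropical covers. First I would make this correspondence precise: to a twisted monodromy graph $\Gamma$ of type $(g,\mu,\nu)$ one associates the twisted tropical cover $\pi\colon\Gamma\to\mathbb{R}$ whose underlying metric graph has the same combinatorial type, whose edge weights are the $\omega(e)$, and whose map to $\mathbb{R}$ is dictated by the vertex ordering in Definition~\ref{def-mongraph}(7) together with the balancing condition; conversely, a twisted tropical cover determines its combinatorial type, which is a twisted monodromy graph. This is exactly the correspondence of \cite[Corollary~24]{HMtwisted1}, and I would simply invoke it, observing that it is compatible with passing to quotients, so that the quotient $\overline{\Gamma}$ corresponds to the quotient cover $\overline{\pi}$.

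The second step is to check that all quantities appearing in the two identities match under this correspondence. The genus and the number $c$ of $2$-valent vertices of $\overline{\Gamma}$ are purely combinatorial data, hence visibly preserved. The crucial point is that an isomorphism of twisted monodromy graphs (which by definition respects orderings, involution and edge weights) carries the same information as an automorphism of the associated twisted tropical cover (a map of metric graphs respecting the cover and the involution), and likewise for the quotients; consequently $|\mathrm{Aut}(\Gamma)|=|\Aut(\pi)|$ and $|\mathrm{Aut}(\overline{\Gamma})|=|\Aut(\overline{\pi})|$. Since summing over all twisted monodromy graphs $\Gamma$ with fixed quotient $\overline{\Gamma}$ corresponds under the bijection to summing over all twisted tropical covers $\pi$ with fixed quotient $\overline{\pi}$, Proposition~\ref{prop-automquotient} then yields the stated equality verbatim.

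I expect the only genuine content to lie in the verification that the combinatorial and the metric notions of automorphism coincide, i.e.\ that no automorphisms are lost or gained when passing between monodromy graphs and tropical covers. The main obstacle is therefore bookkeeping rather than a new idea: one must confirm that the vertex ordering of Definition~\ref{def-mongraph}(7) and the harmonicity/balancing conditions are rigid enough that every metric automorphism permutes vertices within a fixed combinatorial type, so that the two automorphism groups are literally equal, and that this equality is inherited by the quotient graphs. Once this compatibility is in place, the corollary is a formal translation of Proposition~\ref{prop-automquotient}, and no further argument is needed.
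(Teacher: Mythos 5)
Your proposal is correct and follows essentially the same route as the paper: the authors note the natural bijection between twisted monodromy graphs and twisted tropical covers (and their quotients), observe that it identifies the relevant automorphism groups, and then read the corollary off from Proposition~\ref{prop-automquotient}. Your extra attention to why the vertex ordering and the cover map force $|\mathrm{Aut}(\Gamma)|=|\Aut(\pi)|$ is exactly the point the paper leaves implicit, so nothing is missing.
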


As a next step, we define vertex multiplicities in analogy with \cref{def:brpointmult}.

\begin{definition}
    Let $\Gamma$ a twisted monodromy graph with quotient $\overline{\Gamma}$. Let $v$ a vertex of $\overline{\Gamma}$. We have the following cases.
    \begin{enumerate}
        \item If $v$ is a $2$-valent vertex with both adjacent edges having weight $\omega_v$. Then, we define its vertex multiplicity $m_v=b\cdot(\omega_v-1)$.
        \item If $v$ is a vertex that joins two edges (considered from right to left), then we define its vertex multiplicity $m_v=1+b$.
        \item For all vertices $v$, we define the vertex multiplicity $m_v=1$.
    \end{enumerate}
\end{definition}

As a corollary of \cref{thm-corresdoubleHNb,lem-quotient}, we obtain:

\begin{corollary}
\label{cor-quotmult}
 The  double $b$-Hurwitz numbers $h^{(b)}_g\left(\begin{smallmatrix}\nu \\ \mu\end{smallmatrix}\right)$ is equal to the weighted sum of twisted monodromy graphs of type $(g,\mu,\nu)$ counted with multiplicity
\begin{equation}
        \frac{2}{b+1}\cdot \frac{1}{|\mathrm{Aut}(\pi)|}\cdot\prod_{v\in V(\overline{\Gamma})} m_v\prod_e\omega(e).
    \end{equation}
    
    or equivalently to the weighted sum of twisted quotient monodromy graphs $\overline{\Gamma}$ counted with multiplicity
    \begin{equation}
     (1+b)^{g-1}\cdot \frac{1}{|\Aut(\overline{\pi})|}\cdot\prod_{v \mbox{ $2$-valent}} \frac{m_v}{2}   \prod_e\omega(e).
\end{equation}
\end{corollary}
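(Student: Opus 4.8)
The plan is to read \cref{cor-quotmult} off the two results it invokes — the identity $h^{\trop}_k\left(\begin{smallmatrix}\nu\\\mu\end{smallmatrix}\right)=h^{(b)}_k\left(\begin{smallmatrix}\nu\\\mu\end{smallmatrix}\right)$ of \cref{thm-corresdoubleHNb} (with $k=2g-2+\ell(\mu)+\ell(\nu)$) and the quotient reformulation of \cref{lem-quotient} — once everything is transported along the bijection between twisted monodromy graphs and twisted tropical covers. First I would spell out this dictionary (obtained as in \cite[Corollary 24]{HMtwisted1}): a twisted monodromy graph $\Gamma$ of type $(g,\mu,\nu)$ is the same datum as a connected twisted tropical cover $\pi\colon\Gamma\to\mathbb{R}$ of type $(k,\mu,\nu)$, and the correspondence descends to quotients $\overline{\Gamma}\leftrightarrow\overline{\pi}$. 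It matches edge weights, respects orderings and involutions, identifies automorphism groups, and sends the inner vertices of $\overline{\Gamma}$ to the branch points $p_j$: a $3$-valent joining (resp. cutting) vertex to a join (resp. cut) branch point, and a $2$-valent vertex to a twist branch point. Under this identification the vertex multiplicity $m_v$ agrees with the branch point multiplicity $m_j$ of \cref{def:brpointmult}: immediately for cut vertices ($m_v=m_j=1$) and for $2$-valent vertices ($m_v=b(\omega_v-1)=m_j$), while the join vertices are the subtle case discussed below.

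With the dictionary in place, the two displayed multiplicities are just the two expressions for $h^{\trop}_k$. The quotient formula is \cref{lem-quotient} read on $\overline{\Gamma}$: its weight $(1+b)^{g-1}\cdot\frac{1}{|\Aut(\overline{\pi})|}\cdot\prod_{V\text{ $2$-valent}}\frac{b}{2}(\omega_V-1)\prod_e\omega(e)$ turns into the second display upon writing $\frac{b}{2}(\omega_V-1)=\frac{m_v}{2}$, and \cref{thm-corresdoubleHNb} rewrites $h^{\trop}_k$ as $h^{(b)}_g$; note that here $m_v$ enters only through $2$-valent vertices, so no join multiplicity is actually used. The source formula is the defining count \cref{def:twisttrophn} read on $\Gamma$, in which each cover carries weight $\frac{2}{b+1}\cdot\frac{1}{|\Aut(\pi)|}\cdot\prod_j m_j\prod_e\omega(e)$ with the edge product over the internal edges of the quotient; this is exactly the first display after renaming branch points to inner vertices of $\overline{\Gamma}$ and $m_j$ to $m_v$.

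What remains — and where the real work sits — is the word \emph{equivalently}, i.e. the coincidence of the two displays. This is the graph analogue of \cref{prop-automquotient}, encapsulated in the automorphism corollary preceding the statement, $\sum_{\Gamma\to\overline{\Gamma}}\frac{1}{|\Aut(\Gamma)|}=\frac{2^{g}}{2^{c+1}|\Aut(\overline{\Gamma})|}$ for a fixed quotient with $c$ $2$-valent vertices. I would fix $\overline{\Gamma}$, factor out the (source-independent) $2$-valent and edge contributions, and then sum the join multiplicities over the source lifts of $\overline{\Gamma}$: each of the $g$ independent cycles admits two lifts, contributing branch point multiplicities $b$ and $1$ respectively, so that summing over lifts weighted by automorphisms upgrades the factor $2^{g}$ of the automorphism corollary to $(1+b)^{g}$, i.e. $\sum_{\Gamma\to\overline{\Gamma}}\frac{1}{|\Aut(\Gamma)|}\prod_{v\text{ join}}m_v=\frac{(1+b)^{g}}{2^{c+1}|\Aut(\overline{\Gamma})|}$. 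Feeding this into the source count and absorbing the prefactor $\frac{2}{b+1}$ reproduces the quotient count term by term, which is the asserted equivalence. The genuine obstacle is thus the reconciliation of the \emph{source-dependent} join multiplicities of \cref{def:brpointmult} (value $b$ or $1$ according to whether a cycle is closed top-to-bottom) with the uniform per-cycle contribution $1+b$; this is precisely the computation already carried out in the proof of \cref{lem-quotient}, and once it is invoked the remainder is bookkeeping, facilitated by the fact that the numbers of join and of cut vertices, $g+\ell(\mu)-1$ and $g+\ell(\nu)-1$, are constant across all graphs of type $(g,\mu,\nu)$.
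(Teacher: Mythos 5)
Your proposal is correct and follows essentially the same route as the paper: the corollary is read off by transporting \cref{def:twisttrophn}, \cref{lem-quotient} and \cref{thm-corresdoubleHNb} through the bijection between twisted monodromy graphs and twisted tropical covers, and your unpacking of the $2^{g}\to(1+b)^{g}$ upgrade over the lifts of a fixed quotient is precisely the content of \cref{lem-quotient} via \cref{prop-automquotient}. One inessential slip: your closing remark that every graph of type $(g,\mu,\nu)$ has $g+\ell(\mu)-1$ joins and $g+\ell(\nu)-1$ cuts ignores the twist vertices (by Proposition~\ref{prop:C<->J} each count drops by half the number of twists), but nothing in your argument actually relies on this.
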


We are now ready to prove \cref{thm-piecepoly}.

\begin{proof}[{Proof of \cref{thm-piecepoly}}]
Let $\Gamma$ be a twisted monodromy graph of type $(g,\mu,\nu)$, where we forget the vertex ordering. Treating the entries of $\mu$ and $\nu$ as variables, we obtain that the edge weights of $\Gamma$ also depend on these. Moreover, we fix $\mathfrak{o}(\Gamma)$ to be the number of vertex orderings compatible with edge directions of $\Gamma$. 
We observe that from \cref{cor-quotmult}, the polynomiality in $b$ is immediate. Thus, we now focus on the piecewise polynomiality in the entries of $\mu$ and $\nu$ and thus may treat $\frac{2}{b+1}$ and $\frac{1}{|\mathrm{Aut}(\pi)|}$ in \cref{cor-quotmult} as parameters independent of $\mu$ and $\nu$. We therefore need to derive the piecewise polynomiality of the factors $\prod\omega(e)$ and $\prod m_v$.

Assume $\Gamma$ has $c$ $4$-valent vertices. Then by the same arguments as in \cite{HMtwisted1}, we have that $\Gamma$ has
\begin{itemize}
	\item $2(\ell(\mu)+\ell(\nu))+3g-3-c$ bounded edges and
	\item $2(\ell(\mu)+\ell(\nu))+2g-2c-2$ $3$-valent vertices.
\end{itemize}
Equivalently, we obtain that $\overline{\Gamma}$ has
\begin{itemize}
	\item $\ell(\mu)+\ell(\nu)+\frac{3g-3-c}{2}$ bounded edges and
	\item $\ell(\mu)+\ell(\nu)+g-c-1$ $3$-valent vertices.
\end{itemize}
Moreover, $\overline{\Gamma}$ has $c$ $2$-valent vertices. An Euler characteristic computation yields that the genus $g'$ of $\overline{\Gamma}$ is $g'=\frac{1}{2}(g-c+1)$. We may introduce $g'$ parameters $i_1,\dots,i_{g'}$ (one for each independent cycle), such that the balancing condition imposes and expression of all edge weights in the entries of $\mu$ and $\nu$ as well as in $i_1,\dots,i_{g'}$. Thus, the contribution of $\overline{\Gamma}$ to $h_g^b(\mu,\nu)$ is the sum of the multiplicities for all "legal" choices of $i_j$. Indeed, the only restriction is that edge weights are non-negative. Thus, by the same argument as in the proof of \cite[Theorem 27]{HMtwisted1}, we obtain that $h^{(b)}_g\left(\begin{smallmatrix}\nu \\ \mu\end{smallmatrix}\right)$ is piecewise polynomial of desired degree.
\end{proof}

\begin{example}
    We note that we have already seen an example of \cref{thm-piecepoly} in \cref{ex-tropbcomp}. More precisely, we have computed that
    \begin{equation}
        h^{(b)}_1\left(\begin{smallmatrix}(m^1) \\ (m^1)\end{smallmatrix}\right) =\frac{1}{2}\sum_{i=1}^{m-1}i(m-i)+\frac{1}{4(1+b)}b^2(m-1)^2m.
    \end{equation}
    By Faulhaber's formulas, we obtain
    \begin{equation}
        h^{(b)}_1\left(\begin{smallmatrix}(m^1) \\ (m^1)\end{smallmatrix}\right)=\frac{1}{12}(m-1)m(m+1)+\frac{1}{4(1+b)}b^2(m-1)^2m.
    \end{equation}
    We immediately observe the necessity of the factor $(1+b)$ for the polynomiality in $b$. Since there is only one chamber here, there is actual global polynomiality in the coefficients.
    
\end{example}

\subsection{Tropical count of single monotone $b$-Hurwitz numbers}

In this subsection, we derive a tropicalization of single $b$--monotone Hurwitz numbers. Recall the recursion for the double monotone case we produced in Section~\ref{sc:ref-mon}. For the case $\nu = 1^n$, we have $n_p = 1, a = 1$ and the recursion of Theorem~\ref{th:cut-and-join-monotone} simplifies to

\begin{gather}
N_g\left(\begin{smallmatrix}
    1^{|
    \mu|}\\ m_1,\ldots,m_q|r
\end{smallmatrix}\right) = \left(  \sum_{j \ne r} N_{g} \left( \begin{smallmatrix} 1^{|
    \mu|} \\ m_1,\ldots,\hat m_j,
 \ldots, m_r + m_j,\ldots,m_q| r'\end{smallmatrix}\right) + \right. \\ \left. (m_r - 1 )T N_{g-\frac 12}\left( \begin{smallmatrix} 1^{|
    \mu|}\\ m_1,\ldots,m_q| r\end{smallmatrix}\right) \right)
+\sum_{\alpha+ \beta = m_r} 2CJ \beta \Biggl( N_{g-1} \left( \begin{smallmatrix} 1^{|
    \mu|} \\ m_1,\ldots,\alpha,\beta,\ldots,m_q| r\end{smallmatrix}\right) + \Biggr. \\ \Biggl.   \sum_{g_1 + g_2 = g} \sum_{
\begin{smallmatrix}  I_1 \cup I_2 = \{1,\ldots,q\},\ I_1\cup I_2 = \{r\}
\end{smallmatrix}
}  {|\mu| - 1 \choose \beta + \sum_{i\in I_1, i\ne r}m_i} \mathrm{N}_{g_1}(\begin{smallmatrix}1^{\beta + \sum_{i\in I_1, i\ne r}m_i}\\ m_{I_1}({m_r \mapsto  \beta)}\end{smallmatrix})N_{g_2}(\begin{smallmatrix} 1^{\alpha + \sum_{j\in I_2, j\ne r}m_j}\\ m_{I_2}(m_r \mapsto \alpha)|r\end{smallmatrix})\Biggr),
\end{gather}
where we omit $a$ from the symbol for $N^a_g$ to make the notation lighter. To take in considerations the automorphisms of $1^{|\mu|}$ (see formula~\ref{eq:aut}), all the numbers $N_g\left(\begin{smallmatrix}
    1^{|
    \mu|}\\ m_1,\ldots,m_q|r
\end{smallmatrix}\right)$ have to be divided by $|\mu|!$. Notice, that in the corresponding recursion for the numbers $\frac 1{|\mu|!}N_g\left(\begin{smallmatrix}
    1^{|
    \mu|}\\ m_1,\ldots,m_q|r
\end{smallmatrix}\right)$ the binomial coefficient ${|\mu| - 1 \choose \beta + \sum_{i\in I_1, i\ne r}m_i}$ gets replaced by the ratio $\frac{\alpha + \sum_{j\in I_2, j\ne r}m_j}{|\mu|}.$ This recursion admits even further simplification due to the following Lemma:
\begin{lemma}
For any $g \in \frac 12\mathbb N\cup\{0\}$, any  composition $(m_1,\ldots ,m_q)$ and any $r=1,\ldots,q$ we have 
\[
N_g\left(\begin{smallmatrix}
    1^{|
    \mu|}\\ m_1,\ldots,m_q|r
\end{smallmatrix}\right) = \frac {m_r}{|
\mu|} \mathrm N_g\left(\begin{smallmatrix}
    1^{|
    \mu|}\\ m_1,\ldots,m_q
\end{smallmatrix}\right).
\]
\end{lemma}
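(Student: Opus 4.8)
The plan is to establish the equivalent identity $|\mu|\cdot N_g\!\left(\begin{smallmatrix}1^{|\mu|}\\ m_1,\ldots,m_q|r\end{smallmatrix}\right)=m_r\cdot\mathrm N_g\!\left(\begin{smallmatrix}1^{|\mu|}\\ m_1,\ldots,m_q\end{smallmatrix}\right)$ by strong induction on the number of transpositions $k=2g-2+|\mu|+q$, feeding the specialised single-part recursion (obtained from \cref{th:cut-and-join-monotone}) into the inductive hypothesis. The heuristic behind the statement is the classical \emph{marked-preimage} principle: over the ramification point with reduced profile $\mu$, the distinguished sheet labelled by the largest index lands in the cycle of reduced length $m_r$ with frequency proportional to $m_r$; what must be checked is that the monotonicity constraint does not spoil this proportionality. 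The base case $k=0$ forces $|\mu|=q=1$, $m_1=r=1$, where both sides equal the initial value $\tfrac{1}{2CJ}$ from \cref{th:cut-and-join-monotone}.

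As a preliminary reduction I would use that, as observed right after \cref{th:cut-and-join-monotone}, $\mathrm N_g$ and the rooted numbers do not depend on the chosen linear order $\preccurlyeq$ of the parts. Swapping two positions $r,r'$ with $m_r=m_{r'}$ is therefore a weight-preserving bijection, so $N_g\!\left(\begin{smallmatrix}1^{|\mu|}\\ m_1,\ldots,m_q|r\end{smallmatrix}\right)$ depends on $r$ only through the value $m_r$. Consequently it suffices to prove that $N_g(\cdots|r)/m_r$ is independent of $r$; summing over $r$ and using $\sum_r m_r=|\mu|$ together with $\sum_r N_g(\cdots|r)=\mathrm N_g$ then pins the common value to $\mathrm N_g/|\mu|$.

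For the inductive step I would multiply the single-part recursion by $|\mu|$ and replace every rooted number on the right by the inductive hypothesis. Concretely: the cut term turns $N_g(\ldots,m_r+m_j,\ldots|r')$ into $\tfrac{m_r+m_j}{|\mu|}\,\mathrm N_g(\ldots)$; the twist term turns $N_{g-1/2}(\ldots|r)$ into $\tfrac{m_r}{|\mu|}\,\mathrm N_{g-1/2}(\ldots)$; the redundant join turns $N_{g-1}(\ldots,\alpha,\beta,\ldots|r)$ into $\tfrac{\alpha}{|\mu|}\,\mathrm N_{g-1}(\ldots)$; and in the essential join the rooted factor $N_{g_2}$, living on a sub-configuration of size $n_2=\alpha+\sum_{j\in I_2,\,j\ne r}m_j$, becomes $\tfrac{\alpha}{n_2}\,\mathrm N_{g_2}(\ldots)$. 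One then assembles the recursion for $|\mu|\cdot\mathrm N_g$ by summing the rooted recursion over $r$, and compares it, term by term, with $m_r$ times the expression just obtained.

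The main obstacle is that the length factor does not appear pre-packaged as $m_r$: the cut term delivers $m_r+m_j$ rather than $m_r$, so proportionality only surfaces after these are combined with the twist and join contributions and with the cut terms attached to the other positions in the sum defining $\mathrm N_g$. The essential-join term is the delicate one, since there the configuration splits into two blocks of sizes $n_1,n_2$ with $n_1+n_2=|\mu|$, and one must verify that the ratio $\tfrac{\alpha}{n_2}$ produced by the hypothesis, multiplied by the combinatorial prefactor coupling the two blocks, telescopes to the single factor $\tfrac{m_r}{|\mu|}$ after summation over the set splittings $I_1\cup I_2$ and the genus splittings $g_1+g_2=g$. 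I expect this matching to be the bulk of the argument; the cancellations involved are elementary but must be tracked through the relabelling $r\mapsto r'$ and the part-omissions, which is exactly the routine computation I would relegate to the end.
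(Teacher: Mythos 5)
Your inductive strategy does not close, and the gap is not the routine bookkeeping you defer to the end. Test it on $\underline m=(1,3)$, $\nu=1^4$. Writing $E_r$ for the right-hand side of the specialised recursion of \cref{th:cut-and-join-monotone} after substituting the inductive hypothesis, the cut term gives $E_1=\tfrac{1+3}{4}\mathrm N_g\!\left(\begin{smallmatrix}1^4\\4\end{smallmatrix}\right)=\mathrm N_g\!\left(\begin{smallmatrix}1^4\\4\end{smallmatrix}\right)$ and nothing else (the twist and join terms vanish for $m_1=1$), while $E_2=\mathrm N_g\!\left(\begin{smallmatrix}1^4\\4\end{smallmatrix}\right)+\tfrac{3T}{2}\mathrm N_{g-\frac12}\!\left(\begin{smallmatrix}1^4\\1,3\end{smallmatrix}\right)+2CJ\,\mathrm N_{g-1}\!\left(\begin{smallmatrix}1^4\\1,1,2\end{smallmatrix}\right)+(\text{essential joins})$. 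The statement you need for the inductive step is $E_1/m_1=E_2/m_2$, i.e.\ $2\,\mathrm N_g\!\left(\begin{smallmatrix}1^4\\4\end{smallmatrix}\right)=\tfrac{3T}{2}\mathrm N_{g-\frac12}\!\left(\begin{smallmatrix}1^4\\1,3\end{smallmatrix}\right)+2CJ\,\mathrm N_{g-1}\!\left(\begin{smallmatrix}1^4\\1,1,2\end{smallmatrix}\right)+\cdots$. This is a genuine relation among \emph{distinct} lower-order Hurwitz numbers, all sitting at the same transposition count $k-1$; it is not a formal consequence of the inductive hypothesis (which only identifies each rooted number with a multiple of its own total), and the terms cannot cancel against one another as free symbols. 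So the ``elementary cancellations'' you expect do not exist at the level of the recursion; the identity holds only because of further symmetries that your induction never supplies. Your preliminary reduction (that $N_g(\cdots|r)$ depends on $r$ only through $m_r$, by order-independence) is correct but only handles parts of equal size, which is precisely not the hard case.

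The paper avoids the recursion entirely. By \cref{prop:rhoindependence} (ultimately, the centrality of \cref{th:action}: $\mathrm h_k(\mathbf X_2,\ldots,\mathbf X_n)(\mathcal D_\nu)$ lies in the span of type indicators), every fixed point-free involution $\varrho\in d_\mu$ carries the same total weight in $\mathrm h_k(\mathbf X_2,\ldots,\mathbf X_n)(\mathcal D_{1^{|\mu|}})$. The rooted number merely restricts to those ordered-cycle involutions for which the distinguished element $n$ lies in the $r$-th component of $\varrho\cup\tau$; since that component contains $2m_r$ of the $2|\mu|$ symbols and the weight is uniform over $d_\mu$, the share is exactly $m_r/|\mu|$. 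If you want to salvage your route you would have to import this equidistribution statement anyway, at which point the induction is superfluous.
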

\begin{proof}
Recall, that $N_g\left(\begin{smallmatrix}
    1^{|
    \mu|}\\ m_1,\ldots,m_q|r
\end{smallmatrix}\right),$ is a weighted number of transitive strings of transpositions $(\sigma_1,\ldots,\sigma_k)$ such that $\sigma_k\cdots \sigma_1.\tau$ is of type $\mu$ that corresponds to the composition $(m_1,\ldots,m_q),$ and the element $n = |\mu|$ is in cycle of $(\sigma_k\cdots \sigma_1.\tau)\tau$ of length $2m_r.$ Due to Proposition~\ref{prop:rhoindependence}, all fixed point-free involutions $\varrho \in d_\mu$ contribute equally to the number  $\mathrm N_g\left(\begin{smallmatrix}
    1^{|
    \mu|}\\ m_1,\ldots,m_q
\end{smallmatrix}\right).$ Thus, $N_g\left(\begin{smallmatrix}
    1^{|
    \mu|}\\ m_1,\ldots,m_q|r
\end{smallmatrix}\right)$ makes a share of $\mathrm N_g\left(\begin{smallmatrix}
    1^{|
    \mu|}\\ m_1,\ldots,m_q
\end{smallmatrix}\right)$ equal to the share of fixed point-free involutions with ordered cycles of type $(m_1,\ldots,m_q)$ with element $n$ belonging to the cycle $r$, among all fixed point-free involutions with ordered cycles of type $(m_1,\ldots,m_q)$. This share is $\frac {m_r}{|
\mu|}$.
\end{proof}

Combining the established facts, we obtain the following recursion for the numbers $\mathcal N_g(m_1,\ldots,m_q) = \frac 1{|\mu|!} \mathrm N_g\left(\begin{smallmatrix}
    1^{|
    \mu|}\\ m_1,\ldots,m_q
\end{smallmatrix}\right)$ (cf. the recursion of~\cite{DDM}):

\begin{theorem}\label{th:rec_single_monotone1}
The numbers $\mathcal N_g(m_1,\ldots,m_q)$ satisfy the recurrence
\begin{gather}
m_r\mathcal N_g(m_1,\ldots,m_q) = \sum_{j \ne r}(m_r +m_j) \mathcal N_g(m_1,\ldots,\hat m_j,\ldots m_r + m_j,\ldots,m_q) + \\  m_r(m_r - 1 )T \mathcal N_{g-\frac 12}(m_1,\ldots,m_q)
+\sum_{\alpha+ \beta = m_r} 2CJ \alpha \beta \Biggl( \mathcal N_{g-1}(m_1,\ldots,\hat m_r, \alpha, \beta, \ldots,m_q) + \Biggr. \\ \Biggl.   \sum_{g_1 + g_2 = g} \sum_{
\begin{smallmatrix}  I_1 \cup I_2 = \{1,\ldots,q\},\ I_1\cup I_2 = \{r\}
\end{smallmatrix}
}  \mathcal N_{g_1}\left(m_{I_1}(m_r \mapsto \beta)\right)\mathcal N_{g_2}\left(m_{I_2}(m_r \mapsto \alpha)\right)\Biggr),
\end{gather}

for any $r = 1,\ldots,q$ and the initial condition $\mathcal N_0(m) = \delta_{m,1}\frac 1{2CJ}.$
\end{theorem}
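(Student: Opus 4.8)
The plan is to derive this recursion purely formally from the cut-and-join recursion of \cref{th:cut-and-join-monotone}, the preceding lemma, and the normalisation prescribed by the definition of $\mathcal N_g$; no new combinatorial input is needed, so the whole argument is a bookkeeping reduction. First I would specialise \cref{th:cut-and-join-monotone} to $\nu=1^n$. Here the composition realising $\nu$ is $(n_1,\dots,n_p)=(1,\dots,1)$ with $p=|\mu|$, so $n_p=1$, the index $a$ ranges only over $\{1\}$, and every inner sum $\sum_{l=1}^a$ collapses to a single term. The only genuine change occurs in the essential-join term: the splitting $K_1\sqcup K_2=\{1,\dots,p\}$ with $p\in K_2$ is now a choice of a subset of the $|\mu|-1$ remaining trivial parts, and since every $n_i=1$ the sizes are forced to be $|K_1|=k_1:=\beta+\sum_{i\in I_1,i\ne r}m_i$ and $|K_2|=k_2:=\alpha+\sum_{j\in I_2,j\ne r}m_j$, with $k_1+k_2=|\mu|$. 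This produces exactly the factor $\binom{|\mu|-1}{k_1}$ and hence the intermediate recursion for $N_g\left(\begin{smallmatrix}1^{|\mu|}\\ m_1,\dots,m_q|r\end{smallmatrix}\right)$ displayed just before the lemma.

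Next I would pass to the normalised numbers $\mathcal N_g=\frac1{|\mu|!}\mathrm N_g$ by dividing the intermediate recursion through by $|\mu|!$, and then apply the preceding lemma to every factor still carrying the marker $|r$, which replaces $N_g(\cdots|r)$ by $\frac{m_r}{|\mu|}\mathrm N_g$ with $m_r$ the marked part and $|\mu|$ its total degree. On the left-hand side this turns $\frac1{|\mu|!}N_g(\cdots|r)$ into $\frac{m_r}{|\mu|}\mathcal N_g$. In the cut term the marked part is $m_r+m_j$ and the degree is $|\mu|$, giving $\frac{m_r+m_j}{|\mu|}\mathcal N_g$; in the twist and redundant-join terms the marked parts are $m_r$ and $\alpha$, giving $\frac{m_r}{|\mu|}\mathcal N_{g-\frac12}$ and $\frac{\alpha}{|\mu|}\mathcal N_{g-1}$ respectively. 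In the essential-join term the decisive simplification is the identity $\frac1{|\mu|!}\binom{|\mu|-1}{k_1}=\frac{k_2}{|\mu|}\cdot\frac1{k_1!\,k_2!}$, which follows from $\binom{|\mu|-1}{k_1}=\frac{(|\mu|-1)!}{k_1!\,(k_2-1)!}$ and $k_1+k_2=|\mu|$; the two factors $\frac1{k_1!}\mathrm N_{g_1}$ and $\frac1{k_2!}\mathrm N_{g_2}$ then reassemble into $\mathcal N_{g_1}$ and $\mathcal N_{g_2}$, while the lemma contributes the factor $\frac{\alpha}{k_2}$ to the $g_2$ factor, so the ratio $\frac{k_2}{|\mu|}$ cancels the $k_2$ and leaves precisely $\frac{\alpha}{|\mu|}\mathcal N_{g_1}\mathcal N_{g_2}$.

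Finally I would multiply the entire identity through by $|\mu|$, which clears every denominator $|\mu|$ at once, turns the left-hand side into $m_r\mathcal N_g(m_1,\dots,m_q)$, and converts the four right-hand groups into the cut term $\sum_{j\ne r}(m_r+m_j)\mathcal N_g$, the twist term $m_r(m_r-1)T\,\mathcal N_{g-\frac12}$, the redundant-join term $\sum_{\alpha+\beta=m_r}2CJ\,\alpha\beta\,\mathcal N_{g-1}$, and the essential-join term $\sum_{\alpha+\beta=m_r}2CJ\,\alpha\beta\sum_{g_1+g_2=g}\sum_{I_1\cup I_2=\{1,\dots,q\},\,I_1\cap I_2=\{r\}}\mathcal N_{g_1}\mathcal N_{g_2}$, exactly as stated. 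The initial condition is immediate: transitivity at $k=0$ forces $p=q=1$, hence $n=1$ when $\nu=1^n$, so $|\mu|!=1$ and $\mathcal N_0(m)=\frac1{|\mu|!}N^1_0=\delta_{m,1}\frac1{2CJn_1}=\delta_{m,1}\frac1{2CJ}$. The step I expect to be the main obstacle is the essential-join bookkeeping: the binomial $\binom{|\mu|-1}{k_1}$, the global $\frac1{|\mu|!}$ (together with the two $\frac1{k_i!}$ hidden inside the $\mathcal N_{g_i}$), and the lemma's ratio $\frac{\alpha}{k_2}$ must conspire to the clean weight $\alpha\beta$, and verifying that these factorials collapse to exactly $\frac{\alpha}{|\mu|}$ before the final multiplication by $|\mu|$ is the crux of the computation.
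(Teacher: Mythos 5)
Your proposal is correct and takes essentially the same route as the paper: specialise the double cut-and-join recursion of \cref{th:cut-and-join-monotone} to $\nu=1^{|\mu|}$ (so $n_p=1$, $a=1$), divide by $|\mu|!$, use the lemma $N_g(\cdots|r)=\tfrac{m_r}{|\mu|}\mathrm N_g(\cdots)$ to strip the markers, and observe that $\tfrac{1}{|\mu|!}\binom{|\mu|-1}{k_1}=\tfrac{k_2}{|\mu|}\cdot\tfrac{1}{k_1!\,k_2!}$ so that the essential-join term collapses to the weight $2CJ\,\alpha\beta\,\mathcal N_{g_1}\mathcal N_{g_2}$. Your bookkeeping in the essential-join term, which you correctly identify as the crux, matches the paper's computation exactly.
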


 We use the following notation: 

\begin{equation}
    h^\le_g(m_1,\dots,m_q)=\mathcal N_g(m_1,\ldots,m_q)|_{\substack{CJ\to \frac{1+b}{2}\\T\to b}}.
\end{equation}

Summing over all $q,$ the recursion takes the following form. Notice that this form can also be derived from  \cite[Theorem 2.4]{Bonzom-Chapuy-Dolega} by expanding the involved generating series and comparing coefficients. We consider the tuple $\underline m = (m_1,\ldots,m_q)$ as a composition of a number $d\in \mathbb N$, and make use of the fact that the numbers we compute do not depend on the orders of its parts.

		\begin{gather} \label {Eq:MonCJ}
			dh_g^\le(m_1,\ldots,m_q) =
			\sum_{\substack{ i\neq j}}^s (m_i + m_j) h_g^\le(\underline m(\hat{i},\hat{j}),m_i+m_j)\\
			+(1+b)\sum_{i=1}^q\sum_{\alpha+\beta = m_i}\alpha\beta\left[ 
			h^\le_{g-1}(\underline m(\hat{i}),\alpha,\beta)+ \sum_{\substack{g_1+g_2=g\\ \underline n_1\cup \underline n_2 = \underline m(\hat{i})}}
			\binom{d}{|\underline n_1|,|\underline n_2|}\cdot \frac{1}{\sigma}\cdot  h^\le_{g_1}(\underline n_1,\alpha)\cdot h
            _{g_2}^\le(\underline n_2,\beta) \right]\\
			+b\sum_{i=1}^q m_i(m_i-1)h^\le_{g-\frac 12}(\underline m)
		\end{gather}
		where in the third summand, $\sigma=2$ if $\nu_1=\nu_2$, $g_1=g_2$ and $\alpha=\beta$ and $\sigma=1$ else;
        with the initial condition $h^0(1) = \frac{1}{1+b}$; 
		where the $\hat{}$ sign means removing the entry.

As in Section \ref{sec:tropb}, we derive a graph formalism from this recursion that can be interpreted as a count of tropical covers. 
Recall the notion of twisted tropical covers and their quotient covers from Section \ref{sec:tropprelim}. 

\begin{definition}
    Consider a tropical quotient cover $\overline{\pi}:\overline{\Gamma}\rightarrow \mathbb{R}$.
    Assume we cut all edges between the branch point $p_{i-1}$ and the branch point $p_i$. 

If the vertex $V$ that maps to $p_i$ is $2$-valent, we define its \emph{vertex multiplicity} $\mathrm m(V):=\frac{1}{d}\cdot b \cdot (\omega_V-1)$, where $d$ is the degree of the connected component containing $V$ after cutting as above and $\omega_V$ is the weight of its adjacent edges. 
    
    If $V$ is $3$-valent, we define $\mathrm m(V):=\frac{2}{d}$.

    Two tropical quotient covers $\overline{\pi}$ and $\overline{\pi'}$ are \emph{equivalent}, if they differ only in the metric, more precisely in the order of the branch points that are the images of vertices coming from two different connected components after cutting all edges between the branch points $p_{i-1}$ and $p_i$.

\end{definition}

\begin{example}

The two tropical quotient covers shown in Figure \ref{fig-exequiv} are equivalent: if we cut all edges between the second and third branch point from the left, we obtain two connected components on the right. The orders of the images of the vertices in these connected components differ. E.g.\ in the left picture, the third branch point is the image of the first vertex of the upper connected component, whereas it is the image of the first vertex of the lower connected component in the second picture.

   \begin{figure}
       \centering

\tikzset{every picture/.style={line width=0.75pt}} 

\begin{tikzpicture}[x=0.75pt,y=0.75pt,yscale=-1,xscale=1]

\draw    (110,330) -- (320,330) ;
\draw    (150,270) .. controls (159.26,261.64) and (231.11,260.86) .. (240,270) ;
\draw    (110,280) -- (160,280) ;
\draw    (110,270) -- (150,270) ;
\draw    (200,280) .. controls (218.4,276.27) and (223.2,277.47) .. (240,270) ;
\draw    (200,280) .. controls (215.6,282.87) and (250.8,284.47) .. (260,290) ;
\draw    (240,270) -- (270,270) ;
\draw    (180,318) -- (180,300) ;
\draw [shift={(180,320)}, rotate = 270] [color={rgb, 255:red, 0; green, 0; blue, 0 }  ][line width=0.75]    (10.93,-3.29) .. controls (6.95,-1.4) and (3.31,-0.3) .. (0,0) .. controls (3.31,0.3) and (6.95,1.4) .. (10.93,3.29)   ;
\draw    (180,280) -- (200,280) ;
\draw  [fill={rgb, 255:red, 0; green, 0; blue, 0 }  ,fill opacity=1 ] (152.54,329.83) .. controls (152.54,329.07) and (151.97,328.46) .. (151.27,328.46) .. controls (150.57,328.46) and (150,329.07) .. (150,329.83) .. controls (150,330.59) and (150.57,331.2) .. (151.27,331.2) .. controls (151.97,331.2) and (152.54,330.59) .. (152.54,329.83) -- cycle ;
\draw    (150,270) .. controls (159.2,279.67) and (232.4,306.27) .. (260,290) ;
\draw    (260,290) -- (280,290) ;
\draw  [fill={rgb, 255:red, 0; green, 0; blue, 0 }  ,fill opacity=1 ] (202.54,329.77) .. controls (202.54,329.01) and (201.97,328.4) .. (201.27,328.4) .. controls (200.57,328.4) and (200,329.01) .. (200,329.77) .. controls (200,330.53) and (200.57,331.14) .. (201.27,331.14) .. controls (201.97,331.14) and (202.54,330.53) .. (202.54,329.77) -- cycle ;
\draw  [fill={rgb, 255:red, 0; green, 0; blue, 0 }  ,fill opacity=1 ] (241.63,330.06) .. controls (241.63,329.3) and (241.06,328.69) .. (240.36,328.69) .. controls (239.65,328.69) and (239.09,329.3) .. (239.09,330.06) .. controls (239.09,330.81) and (239.65,331.43) .. (240.36,331.43) .. controls (241.06,331.43) and (241.63,330.81) .. (241.63,330.06) -- cycle ;
\draw  [fill={rgb, 255:red, 0; green, 0; blue, 0 }  ,fill opacity=1 ] (261.86,330.11) .. controls (261.86,329.36) and (261.29,328.74) .. (260.58,328.74) .. controls (259.88,328.74) and (259.31,329.36) .. (259.31,330.11) .. controls (259.31,330.87) and (259.88,331.49) .. (260.58,331.49) .. controls (261.29,331.49) and (261.86,330.87) .. (261.86,330.11) -- cycle ;
\draw    (270,270) -- (320,260) ;
\draw    (270,270) -- (320,280) ;
\draw    (280,290) -- (300,300) ;
\draw    (280,290) -- (320,290) ;
\draw    (300,300) -- (320,300) ;
\draw    (300,300) -- (320,310) ;
\draw  [fill={rgb, 255:red, 0; green, 0; blue, 0 }  ,fill opacity=1 ] (271.66,330.2) .. controls (271.66,329.44) and (271.09,328.82) .. (270.38,328.82) .. controls (269.68,328.82) and (269.11,329.44) .. (269.11,330.2) .. controls (269.11,330.95) and (269.68,331.57) .. (270.38,331.57) .. controls (271.09,331.57) and (271.66,330.95) .. (271.66,330.2) -- cycle ;
\draw  [fill={rgb, 255:red, 0; green, 0; blue, 0 }  ,fill opacity=1 ] (281.06,330) .. controls (281.06,329.24) and (280.49,328.62) .. (279.78,328.62) .. controls (279.08,328.62) and (278.51,329.24) .. (278.51,330) .. controls (278.51,330.75) and (279.08,331.37) .. (279.78,331.37) .. controls (280.49,331.37) and (281.06,330.75) .. (281.06,330) -- cycle ;
\draw  [fill={rgb, 255:red, 0; green, 0; blue, 0 }  ,fill opacity=1 ] (301.26,330) .. controls (301.26,329.24) and (300.69,328.62) .. (299.98,328.62) .. controls (299.28,328.62) and (298.71,329.24) .. (298.71,330) .. controls (298.71,330.75) and (299.28,331.37) .. (299.98,331.37) .. controls (300.69,331.37) and (301.26,330.75) .. (301.26,330) -- cycle ;
\draw    (260,290) -- (290,290) ;
\draw    (359.5,330.36) -- (569.5,330.36) ;
\draw    (399.5,270.36) .. controls (408.76,262) and (510.61,261.22) .. (519.5,270.36) ;
\draw    (359.5,280.36) -- (409.5,280.36) ;
\draw    (359.5,270.36) -- (399.5,270.36) ;
\draw    (449.5,280.36) .. controls (467.9,276.62) and (502.7,277.82) .. (519.5,270.36) ;
\draw    (449.5,280.36) .. controls (465.1,283.22) and (480.8,284.47) .. (490,290) ;
\draw    (519.5,270.36) -- (550,270) ;
\draw    (429.5,318.36) -- (429.5,300.36) ;
\draw [shift={(429.5,320.36)}, rotate = 270] [color={rgb, 255:red, 0; green, 0; blue, 0 }  ][line width=0.75]    (10.93,-3.29) .. controls (6.95,-1.4) and (3.31,-0.3) .. (0,0) .. controls (3.31,0.3) and (6.95,1.4) .. (10.93,3.29)   ;
\draw    (429.5,280.36) -- (449.5,280.36) ;
\draw  [fill={rgb, 255:red, 0; green, 0; blue, 0 }  ,fill opacity=1 ] (402.04,330.19) .. controls (402.04,329.43) and (401.47,328.81) .. (400.77,328.81) .. controls (400.07,328.81) and (399.5,329.43) .. (399.5,330.19) .. controls (399.5,330.94) and (400.07,331.56) .. (400.77,331.56) .. controls (401.47,331.56) and (402.04,330.94) .. (402.04,330.19) -- cycle ;
\draw    (399.5,270.36) .. controls (408.7,280.02) and (462.4,306.27) .. (490,290) ;
\draw  [fill={rgb, 255:red, 0; green, 0; blue, 0 }  ,fill opacity=1 ] (452.04,330.13) .. controls (452.04,329.37) and (451.47,328.76) .. (450.77,328.76) .. controls (450.07,328.76) and (449.5,329.37) .. (449.5,330.13) .. controls (449.5,330.89) and (450.07,331.5) .. (450.77,331.5) .. controls (451.47,331.5) and (452.04,330.89) .. (452.04,330.13) -- cycle ;
\draw  [fill={rgb, 255:red, 0; green, 0; blue, 0 }  ,fill opacity=1 ] (491.13,330.41) .. controls (491.13,329.66) and (490.56,329.04) .. (489.86,329.04) .. controls (489.15,329.04) and (488.59,329.66) .. (488.59,330.41) .. controls (488.59,331.17) and (489.15,331.79) .. (489.86,331.79) .. controls (490.56,331.79) and (491.13,331.17) .. (491.13,330.41) -- cycle ;
\draw  [fill={rgb, 255:red, 0; green, 0; blue, 0 }  ,fill opacity=1 ] (511.36,330.47) .. controls (511.36,329.71) and (510.79,329.1) .. (510.08,329.1) .. controls (509.38,329.1) and (508.81,329.71) .. (508.81,330.47) .. controls (508.81,331.23) and (509.38,331.84) .. (510.08,331.84) .. controls (510.79,331.84) and (511.36,331.23) .. (511.36,330.47) -- cycle ;
\draw    (550,270) -- (570,260) ;
\draw    (550,270) -- (570,280) ;
\draw    (510,290) -- (530,300) ;
\draw    (510,290) -- (570,290) ;
\draw    (530,300) -- (570,300) ;
\draw    (530,300) -- (570,310) ;
\draw  [fill={rgb, 255:red, 0; green, 0; blue, 0 }  ,fill opacity=1 ] (521.16,330.55) .. controls (521.16,329.8) and (520.59,329.18) .. (519.88,329.18) .. controls (519.18,329.18) and (518.61,329.8) .. (518.61,330.55) .. controls (518.61,331.31) and (519.18,331.92) .. (519.88,331.92) .. controls (520.59,331.92) and (521.16,331.31) .. (521.16,330.55) -- cycle ;
\draw  [fill={rgb, 255:red, 0; green, 0; blue, 0 }  ,fill opacity=1 ] (530.56,330.35) .. controls (530.56,329.6) and (529.99,328.98) .. (529.28,328.98) .. controls (528.58,328.98) and (528.01,329.6) .. (528.01,330.35) .. controls (528.01,331.11) and (528.58,331.72) .. (529.28,331.72) .. controls (529.99,331.72) and (530.56,331.11) .. (530.56,330.35) -- cycle ;
\draw  [fill={rgb, 255:red, 0; green, 0; blue, 0 }  ,fill opacity=1 ] (550.76,330.35) .. controls (550.76,329.6) and (550.19,328.98) .. (549.48,328.98) .. controls (548.78,328.98) and (548.21,329.6) .. (548.21,330.35) .. controls (548.21,331.11) and (548.78,331.72) .. (549.48,331.72) .. controls (550.19,331.72) and (550.76,331.11) .. (550.76,330.35) -- cycle ;
\draw    (490,290) -- (510,290) ;

\draw (264.64,265.14) node  [font=\footnotesize] [align=left] {\begin{minipage}[lt]{12.34pt}\setlength\topsep{0pt}
$\displaystyle 2$
\end{minipage}};
\draw (130.36,266.86) node  [font=\footnotesize] [align=left] {\begin{minipage}[lt]{12.34pt}\setlength\topsep{0pt}
$ $
\end{minipage}};
\draw (274.59,284.72) node  [font=\footnotesize] [align=left] {\begin{minipage}[lt]{12.34pt}\setlength\topsep{0pt}
$\displaystyle 3$
\end{minipage}};
\draw (218.9,303.6) node  [font=\footnotesize] [align=left] {\begin{minipage}[lt]{12.34pt}\setlength\topsep{0pt}
$\displaystyle 2$
\end{minipage}};
\draw (130.5,262.8) node  [font=\footnotesize] [align=left] {\begin{minipage}[lt]{12.34pt}\setlength\topsep{0pt}
$\displaystyle 3$
\end{minipage}};
\draw (131.9,291.2) node  [font=\footnotesize] [align=left] {\begin{minipage}[lt]{12.34pt}\setlength\topsep{0pt}
$\displaystyle 2$
\end{minipage}};
\draw (292.5,302.8) node  [font=\footnotesize] [align=left] {\begin{minipage}[lt]{12.34pt}\setlength\topsep{0pt}
$\displaystyle 2$
\end{minipage}};
\draw (538.5,264) node  [font=\footnotesize] [align=left] {\begin{minipage}[lt]{12.34pt}\setlength\topsep{0pt}
$\displaystyle 2$
\end{minipage}};
\draw (379.86,267.21) node  [font=\footnotesize] [align=left] {\begin{minipage}[lt]{12.34pt}\setlength\topsep{0pt}
$ $
\end{minipage}};
\draw (501.1,284) node  [font=\footnotesize] [align=left] {\begin{minipage}[lt]{12.34pt}\setlength\topsep{0pt}
$\displaystyle 3$
\end{minipage}};
\draw (467.6,304.76) node  [font=\footnotesize] [align=left] {\begin{minipage}[lt]{12.34pt}\setlength\topsep{0pt}
$\displaystyle 2$
\end{minipage}};
\draw (378,265.16) node  [font=\footnotesize] [align=left] {\begin{minipage}[lt]{12.34pt}\setlength\topsep{0pt}
$\displaystyle 3$
\end{minipage}};
\draw (381.8,291.16) node  [font=\footnotesize] [align=left] {\begin{minipage}[lt]{12.34pt}\setlength\topsep{0pt}
$\displaystyle 2$
\end{minipage}};
\draw (523.3,306) node  [font=\footnotesize] [align=left] {\begin{minipage}[lt]{12.34pt}\setlength\topsep{0pt}
$\displaystyle 2$
\end{minipage}};

\end{tikzpicture}

       \caption{Two equivalent tropical quotient covers: they differ only by the order of the images of vertices in the two connected components we obtain when we cut all edges between the second and third branch point.}
       \label{fig-exequiv}
   \end{figure}
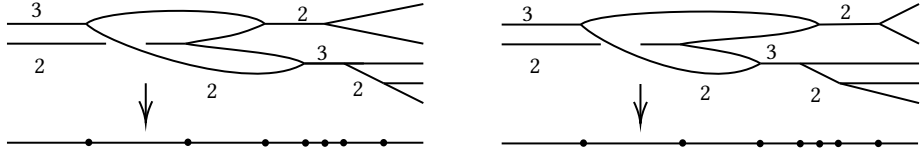 
\end{example}

\begin{definition}
    Let $m=(m_1,\ldots,m_q)$ be a composition of  $d\in \mathbb N$ for $g \in \frac 12 \mathbb N \cup \{0\}$, the integer  $k = 2g - 2 + |\lambda| + d>0$. Then, we define \textbf{tropical monotone $b$-Hurwitz numbers} as
    \begin{equation}
        h^{\le,\textrm{trop}}_g(m)=\sum_{\overline{\pi}}(1+b)^{g(\overline{\Gamma})-1}\Big(\frac{1}{2}\Big)^w\prod_{e}\omega(e)\prod_{V}\mathrm m(V),
    \end{equation}
    where the sum is over all equivalence classes of tropical quotient covers $\overline{\pi}:\overline{\Gamma}\rightarrow \mathbb{R}$ with $k$ inner vertices, left ends of weights given by $m$ and all right ends of weight $1$, $g(\overline{\Gamma})$ denotes the genus of the quotient graph $\Gamma$, $w$ denotes the number of wieners, i.e.\ cycles formed by two edges of the same weight, the first product is over all internal edges $e$ and the second product is over all vertices.

This is well-defined, since the multiplicity with which we count a tropical quotient cover does not depend on the representative of an equivalence class.

    If $k=0$, we let  $h_0^{\le,\textrm{trop}}(1)=\frac{1}{1+b}$.
\end{definition}

\begin{theorem}
    \label{thm-tropmon}
    Let $m=(m_1,\ldots,m_q)$ be a composition of $d\in \mathbb N,$ and $g\in \frac 12\mathbb N \cup\{0\}$. Then, we have
    \begin{equation}
         h^{\le}_{g}(m)= h^{\le,\trop}_g(m).
    \end{equation}
\end{theorem}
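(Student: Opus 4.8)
The plan is to prove the identity by induction on the number of branch points $k=2g-2+\ell(\mu)+d>0$, showing that the tropical count $h^{\le,\trop}_g(m)$ satisfies the same recursion (in the summed form displayed just after \cref{th:rec_single_monotone1}) and the same initial condition as $h^\le_g(m)$, exactly mirroring the proof of \cref{thm-corresdoubleHNb}. For $k=0$ both sides equal $\tfrac{1}{1+b}$ by definition, which settles the base case. For the inductive step I would take every tropical quotient cover $\overline\pi\colon\overline\Gamma\to\mathbb{R}$ contributing to $h^{\le,\trop}_g(m)$ and cut off the leftmost inner vertex $V$ (the one mapping to $p_1$), thereby reducing the number of branch points by one and producing one or two quotient covers of lower complexity. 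Expressing $h^{\le,\trop}_g(m)$ as a sum over the possible local shapes of $V$ and the configurations obtained after its removal, I would match the resulting expression term by term with the four summands of the summed recursion; the inductive hypothesis then identifies each lower-complexity tropical count with the corresponding $h^\le_\bullet$, completing the argument.

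The four cases for $V$ are as follows. First, $V$ may be a $3$-valent vertex merging two incoming left ends of weights $m_i,m_j$ into a single edge of weight $m_i+m_j$; removing $V$ yields a cover with left-end profile $(\underline m(\hat i,\hat j),m_i+m_j)$ and unchanged genus, reproducing the first summand, with the factor $m_i+m_j$ supplied by $\prod_e\omega(e)$. Second, $V$ may be a $3$-valent vertex whose removal keeps $\overline\Gamma$ connected while splitting a left end of weight $m_i$ into two ends of weights $\alpha,\beta$ with $\alpha+\beta=m_i$; this drops the genus by one and gives the $h^\le_{g-1}(\underline m(\hat i),\alpha,\beta)$ term inside the bracket, weighted by $(1+b)$ (since $g(\overline\Gamma)=g$ while the reduced cover has graph genus one less) and by $\alpha\beta$ from the two new edge weights. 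Third, $V$ may be a $3$-valent vertex whose removal disconnects $\overline\Gamma$ into two quotient covers of genera $g_1,g_2$ with $g_1+g_2=g$ and left-end profiles $(\underline n_1,\alpha)$, $(\underline n_2,\beta)$; this yields the essential-join summand, the factor $(1+b)^{g-1}=(1+b)(1+b)^{g_1-1}(1+b)^{g_2-1}$ accounting for the $(1+b)$ and the two sub-counts. Finally, $V$ may be a $2$-valent vertex, whose multiplicity $\tfrac1d\,b\,(\omega_V-1)$ together with the retained edge weight $\omega_V=m_i$ reproduces the twist summand $b\sum_i m_i(m_i-1)h^\le_{g-\frac12}(\underline m)$.

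Throughout, I would track the numerical factors as in \cref{thm-corresdoubleHNb}: the edge weights collected in $\prod_e\omega(e)$ supply the factors $m_i+m_j$ and $\alpha\beta$; the factor $(\tfrac12)^w$ accounts for $\tfrac1{|\Aut(\overline\pi)|}$ exactly as for the double covers (the only automorphisms of a quotient cover with $2$- and $3$-valent vertices and labelled right ends come from wieners), using \cref{prop-automquotient}; and the genus factor $(1+b)^{g(\overline\Gamma)-1}$ splits correctly under cutting as indicated above. The decisive bookkeeping is the interaction between the normalizing factor $d$ on the left-hand side of the summed recursion and the $\tfrac1d$ in each vertex multiplicity: for the leftmost vertex the component containing $V$ after cutting its adjacent left ends carries the full degree $d$, so multiplying $h^{\le,\trop}_g(m)$ by $d$ clears these denominators and converts the multiplicity $\tfrac2d$ (resp.\ $\tfrac1d\,b\,(\omega_V-1)$) of the removed vertex into the integer factors demanded by the recursion.

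The main obstacle I expect is the disconnecting case, where I must verify that enumerating \emph{equivalence classes} of tropical quotient covers — which by definition identify covers differing only in the relative order of vertices lying in distinct connected components after cutting between $p_{i-1}$ and $p_i$ — precisely reproduces the multinomial coefficient $\binom{d}{|\underline n_1|,|\underline n_2|}$ together with the symmetry factor $\tfrac1\sigma$. Ensuring compatibility of the degree normalization $\tfrac1d$ with the degrees $d_1,d_2$ of the two sub-covers (where $d_1+d_2=d$), so that no spurious factors survive after the two independent tropical counts are multiplied together, is the point requiring the most care.
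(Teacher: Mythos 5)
Your overall strategy coincides with the paper's: the paper proves \cref{thm-tropmon} by declaring the argument ``very close'' to that of \cref{thm-corresdoubleHNb} and then only spelling out the two differences, namely the $\tfrac{1}{d}$-factors in the vertex multiplicities and the changed role of the multinomial coefficient. Your handling of the cut, redundant-join and twist cases, and of how multiplying by $d$ clears the $\tfrac{2}{d}$ and $\tfrac{1}{d}\,b\,(\omega_V-1)$ denominators at the leftmost vertex, matches the paper's intended bookkeeping.

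The one step that would fail is precisely the case you flag as delicate, the disconnecting join. You propose to extract the factor $\binom{d}{|\underline n_1|,|\underline n_2|}\cdot\tfrac1\sigma$ from the enumeration of \emph{equivalence classes} of tropical quotient covers. That is not where it comes from, and the equivalence relation cannot produce it: identifying covers that differ only in the interleaving of the branch points of the two components yields exactly \emph{one} class per pair of sub-covers, i.e.\ a factor of $1$. Its purpose is to \emph{remove} the vertex-ordering multinomial $\binom{k-1}{k_1,k_2}$ that appears in the double-cover recursion of \cref{thm:recursionbHN}, not to create a new combinatorial factor. The coefficient $\binom{d}{|\underline n_1|,|\underline n_2|}$ instead counts the ways of distributing the labels of the $d$ right ends (all of weight $1$) between the two connected components; this is the contribution that in the proof of \cref{thm-corresdoubleHNb} was carried by the separate sum over $J_1\cup J_2=\{1,\ldots,\ell(\nu)\}$, and $\tfrac1\sigma$ corrects the overcount when the two components are indistinguishable. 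Run with the mechanism you describe, the disconnecting term comes out short by exactly this multinomial factor. A smaller imprecision: in the redundant-join case the parenthetical ``$g(\overline\Gamma)=g$'' is false in general (the graph genus of the quotient and the half-integer Hurwitz genus differ, e.g.\ through the twist vertices), though the operative fact you use --- that the graph genus drops by one under the cut, so an extra $(1+b)$ appears relative to $(1+b)^{g(\overline\Gamma)-1}$ --- is correct.
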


\begin{proof}

The proof is very close to the proof of Theorem \ref{thm-corresdoubleHNb}. Here, we stress only the differences: the vertex multiplicities contain factors of $\frac{2}{d}$, where $d$ is the degree of the connected component a vertex lives in after cutting the edges between the branch points to the left. This is because the recursion comes with a factor of $d$ on the left, which we divide by, and has no factor of $\frac{1}{2}$ on the right. Since the tropical quotient cover is connected, $d$ is the degree. If we continue the recursion after cutting of a vertex producing two connected components (corresponding to the third summand in the recursion), the input degree changes, which is in accordance with the definition of vertex multiplicity. Another difference concerns the different binomial factor in the third summand compared to the recursion of Theorem \ref{thm:recursionbHN}. Here, the binomial factor is in charge of distributing labels of the right ends to the two connected components (notice an extra factor of $\frac{1}{2}$ if the two parts are indistinguishable, then it does not matter which part obtains which labels), a contribution which in \ref{thm-corresdoubleHNb} was summed over separately (since there, we had not only ends of weight one). There, instead, there was a binomial factor taking care of vertex orderings of the two connected components. Since here, we count equivalence classes of tropical quotient covers, we do not have a binomial factor for the alignment of the vertices in the two parts.

\end{proof}

\begin{example}
 
Figure \ref{fig:extropmon} depicts the tropical count of $h^{\le,\trop}_1((2))$.

   \begin{figure}
        \centering

\tikzset{every picture/.style={line width=0.75pt}} 

\begin{tikzpicture}[x=0.75pt,y=0.75pt,yscale=-1,xscale=1]

\draw    (140,380) .. controls (149.26,371.64) and (171.11,370.86) .. (180,380) ;
\draw    (110,380) -- (140,380) ;
\draw  [fill={rgb, 255:red, 0; green, 0; blue, 0 }  ,fill opacity=1 ] (301.56,379.74) .. controls (301.56,378.98) and (300.99,378.37) .. (300.29,378.37) .. controls (299.58,378.37) and (299.01,378.98) .. (299.01,379.74) .. controls (299.01,380.5) and (299.58,381.11) .. (300.29,381.11) .. controls (300.99,381.11) and (301.56,380.5) .. (301.56,379.74) -- cycle ;
\draw  [fill={rgb, 255:red, 0; green, 0; blue, 0 }  ,fill opacity=1 ] (341.33,379.96) .. controls (341.33,379.2) and (340.76,378.59) .. (340.06,378.59) .. controls (339.36,378.59) and (338.79,379.2) .. (338.79,379.96) .. controls (338.79,380.72) and (339.36,381.33) .. (340.06,381.33) .. controls (340.76,381.33) and (341.33,380.72) .. (341.33,379.96) -- cycle ;
\draw    (270,380) -- (370,380) ;
\draw    (210,380) -- (230,370) ;
\draw    (210,380) -- (230,390) ;
\draw    (140,380) .. controls (149.8,392.07) and (174.2,389.27) .. (180,380) ;
\draw    (180,380) -- (210,380) ;
\draw    (370,380) -- (390,370) ;
\draw    (370,380) -- (390,390) ;

\draw (131.5,374) node  [font=\footnotesize] [align=left] {\begin{minipage}[lt]{12.34pt}\setlength\topsep{0pt}
$\displaystyle 2$
\end{minipage}};
\draw (199.39,373.78) node  [font=\footnotesize] [align=left] {\begin{minipage}[lt]{12.34pt}\setlength\topsep{0pt}
$\displaystyle 2$
\end{minipage}};
\draw (288.5,374) node  [font=\footnotesize] [align=left] {\begin{minipage}[lt]{12.34pt}\setlength\topsep{0pt}
$\displaystyle 2$
\end{minipage}};
\draw (321.5,374) node  [font=\footnotesize] [align=left] {\begin{minipage}[lt]{12.34pt}\setlength\topsep{0pt}
$\displaystyle 2$
\end{minipage}};
\draw (358.5,374) node  [font=\footnotesize] [align=left] {\begin{minipage}[lt]{12.34pt}\setlength\topsep{0pt}
$\displaystyle 2$
\end{minipage}};
\draw (186.39,413.52) node  [font=\scriptsize] [align=left] {\begin{minipage}[lt]{50.24pt}\setlength\topsep{0pt}
$\displaystyle 1$
\end{minipage}};
\draw (337.72,409.02) node  [font=\scriptsize] [align=left] {\begin{minipage}[lt]{50.24pt}\setlength\topsep{0pt}
$\displaystyle \frac{b^{2}}{( 1+b)}$
\end{minipage}};

\end{tikzpicture}

         \caption{The tropical count of $h^{\le,\trop}_1((2);b)$.}
        \label{fig:extropmon}
    \end{figure}
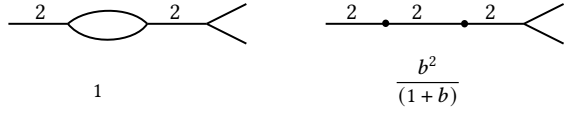
\end{example}

\section{Tropicalizing refined monotone Hurwitz numbers}\label{sc:trop-mono}

In this section, we derive a tropical interpretation of the refined monotone double Hurwitz numbers $N_g^a\binom{n_1,\dots,n_p}{m_1,\dots,m_q|r}$. This is achieved by combining the approach for tropicalising orbifold and double monotone Hurwitz numbers used in \cite{Karev-Do,hahn2019monodromy} with the notion of twisted tropical covers derived in \cite{HMtwisted1}. The main idea is to enrich the twisted tropical covers of \cite{HMtwisted1} by edge colours that keep track of the monotonicity condition and encode the CJT--deformation in vertex multiplicities. We note that while in the previous section, we used the cut--and--join recursion to obtain tropicalizations, here we instead work directly with the factorisation interpretation of refined Hurwitz numbers.

As a first step, we give the following definition.

\begin{definition}
\label{def-montwisttrop}
Let $n$ be a positive integer and $\underline{m}=(m_1,\dots,m_q),\underline{n}=(n_1,\dots,n_p)$ partitions of $n$.
    Let $\pi\colon\Gamma_1\to\Gamma_2$ with involution $\iota\colon\Gamma_1\to\Gamma_1$ be a twisted tropical cover of type $(g,\underline{n},\underline{m})$ in the sense of \cref{def-twistedtropcover}. We assign the following additional data:
    \begin{enumerate}
        \item All edges are either coloured normal, dashed or bold.
        \item The in-ends are labelled $1,\dots,2p$ and the out-ends are labeled $1,\dots,2q$, such that $\iota$ identifies in-ends labeled $i$ and $p+i$ and out-ends labeled $j$ and $q+j$.
        \item The in-ends labelled $1,\dots,p$ are coloured normal and the in-ends labelled $p+1,\dots,2p$ are either labeled dashed or bold.
        \item We call a connected path of bold edges beginning at an in-end a \textbf{chain}.
        \item For a chain $\mathcal{C}$, we denote the first inner vertex by $l_\mathcal{C}$ and the last inner vertex by $r_\mathcal{C}$. We require that for any two chains $\mathcal{C}$ and $\mathcal{C}'$ that $[\pi(l_\mathcal{C}),\pi(r_\mathcal{C})]\cap[\pi(l_{\mathcal{C}'}),\pi(r_{\mathcal{C}'})]=\emptyset$.
        \item The intervals $[\pi(l_\mathcal{C}),\pi(r_\mathcal{C})]$ induce a natural ordering of the chains, namely $\mathcal{C}<\mathcal{C}'$ if and only if $\pi(l_\mathcal{C})<\pi(l_{\mathcal{C}'})$. This ordering is required to be compatible with the labeling. More precisely, we denote by $\mathcal{I}(\mathcal{C})$ the in-end at which $\mathcal{C}$ begins. Then, we have that $\mathcal{C}<\mathcal{C}'$ if and only if $\mathcal{I}(\mathcal{C})<\mathcal{I}(\mathcal{C}')$.
        \item There is a \textbf{counter} $c(e)$ associated to each bold and dashed edge $e$.
        \item The counter of each non-normal in-end is set to $1$.
        \item Let $v$ be an inner vertex of $\Gamma_1$. Then either $v$ or $\iota(v)$ has a unique in-coming bold edge with counter $i(v)$. We require the same vertex and a unique out-going non-normal edge with counter $o(v)$. We require $i(v)\le o(v)$.
        \item Every non-normal edge arises from a unique chain: Every bold edge is part of a chain and every dashed edge is sourced at a bold chain. Let an edge $e$ be part of the chain $C$ starting at the in-end labelled $i$. Then, the counter satisfies $n_{i-p}-\omega(e)<c(e)\le n_{i-p}$.
    \end{enumerate}
    We call the resulting structure a \emph{monotone twisted tropical cover of type $(g,\underline{n},\underline{m})$}.

    Moreover, we call a monotone twisted tropical cover of type $(g,\underline{n},\underline{m})$ to be of type $(g,\underline{n},\underline{m},r,a)$ if for its largest chain $\mathcal{C}$ w.r.t. to the ordering in (6), we have
    \begin{itemize}
        \item final outgoing dashed edge $e$ connects to the out-end labeled $r$ or $q+r$ for $1\le r\le q$
        \item its final counter is $a$.
    \end{itemize}

\end{definition}

We now discuss how monotone twisted tropical covers arise from monotone twisted factorisations. For this, we simply rephrase the proof of \cref{th:cut-and-join-monotone} into terms more compatible with the tropical setting.

To begin with, let $\underline{n}=(n_1,\dots,n_p)$ a composition of $n$. We define a cycle $\sigma_{\underline{n}}^{(i)}$ for $1\le i\le p$ as
\begin{equation}
    \sigma_{\underline{n}}^{(i)}=\left(\overline{\sum_{j=1}^{i-1} n_j+1}\, \overline{\sum_{j=1}^{i-1} n_j+2}\dots\overline{\sum_{j=1}^i n_j}\right)
\end{equation}
and for $p+1\le i\le 2p$ as

\begin{equation}
    \sigma_{\underline{n}}^{(i)}=\left(\tau\sigma_{\underline{n}}^{(i-l)}\tau\right)^{-1}
\end{equation}

and define the permutation

\begin{equation}
    \sigma_{\underline{n}}=\prod_{i=1}^{2p} \sigma_{\underline{n}}^{(i)}.
\end{equation}

We note that $\sigma_{\underline{n}}$ is simply equal to $\rho_{\underline{n}}\tau$, where $\rho_{\underline{n}}$ is the convenient representative constructed in \cref{sc:ref-mon}.

Then, we have that $N_g^a\binom{n_1,\dots,n_p}{m_1,\dots,m_q|r}$ is equal to $\frac{1}{C^qJ^p\prod(2j)^{n_j}}$ the weighted (w.r.t. to the CJT-deformation) count of monotone factorisations $(\sigma_1,\dots,\sigma_k)$, such that
\begin{itemize}
    \item $k=\frac{2g-2+p+q}{2}$,
    \item $\tilde{\sigma}=\sigma_k\cdots\sigma_1\sigma_{\underline{n}}\tau\sigma_1\cdots\sigma_k\tau$ is of cycle type $(\underline{m},\underline{m})$,
    \item the cycles of $\sigma_{\underline{n}}$ and $\tilde{\sigma}$ are labeled by $1,\dots,2p$ and $1,\dots,2q$ respectively, such that the cycle labeled $j$ has length $n_j$ or $m_j$ respectively and such that $\tau\sigma_{\underline{n}}\tau$ and $\tau\tilde{\sigma}\tau$ respectively induce the bijections $i\leftrightarrow \overline{i}$ and $j\leftrightarrow \overline{j}$ for $1\le i\le p$, $1\le j\le q$,
    \item $n$ lies in the cycle of $\tilde{\sigma}$ labeled $r$ or $q+r$,
    \item $\sigma_k=(j\, n-n_p+a)$ or $\sigma_k=(\overline{j}\,n-n_p+a)$.
\end{itemize} 

Introducing notation, we define $B_n^\sim$ to be the set of permutations $\sigma$ in $S_{2n}$, such that $\tau\sigma\tau=\sigma^{-1}$ and without self--symmetric cycles, i.e. without cycles $\eta$ of $\sigma$ satisfying $\tau\eta\tau=\eta^{-1}$ (see \cite[Lemma 2.1]{burman2021ribbon}). Note for any fixed point-free involution $\rho\in S_{2n},$ the product $\rho\tau \in B_n^\sim$.

We now show how to associate a monotone twisted tropical cover to such a factorisation.

\begin{construction}
\label{constr-tropcov}
     Let $(\sigma_1,\dots,\sigma_k)$ a factorisation as above. We let $\Gamma_2$ be $\mathbb{R}$ subdivided by $k$ vertices $p_i$ and construct a monotone twisted tropical cover $\pi\colon\Gamma_1\to\Gamma_2$ associated to this.

\begin{enumerate}
    \item We fix $p_1,\dots,p_k\in V(\Gamma_2)$ with $p_i<p_{i+1}$. Furthermore, we set $p_0=-\infty$ and $p_{k+1}=
    \infty$.
    \item We consider $2p$ ends over the interval $(-\infty,p_1]$, labelled by $\sigma_{\underline{n}}^{(1)},\dots,\sigma_{\underline{n}}^{(2l)}$. The action of $\tau$ on $\sigma_{\underline{n}}$ yields an involution on these ends. We colour all ends labeled $\sigma_{\underline{n}}^{1},\dots,\sigma_{\underline{n}}^{p}$ normal and the others dashed. Moreover, we associate a \textbf{counter} to all dashed ends of value $c(e)=1$.
    \item We first consider the action of the transposition $\sigma_1$ on $\sigma_{\underline{n}}$, i.e. we take 
    \begin{equation}
    \label{equ:firststep}
        \sigma_1\sigma_{\underline{n}}\tau\sigma_1\tau.
    \end{equation} 
    
    We first note that $\sigma_m$ must be of the form $(r_1\, s_1)$ or $(\overline{r_1}\,s_1)$ with $r_1<s_1$. Since $(\sigma_1,\dots,\sigma_k)$ is transitive and by monotonicity $s_i$ -- for $\sigma_i=(p_i\,q_i)$ or $\sigma_i=(\overline{r_i}\,s_i)$ and $r_i<s_i$ -- can only weakly increase, we have 
     \begin{equation}
        r_1\in\left\{1,\dots,n_1\right\}.
    \end{equation}

    By \cite[Theorem 2.10]{burman2021ribbon} and \cite[Construction 4.1]{HMtwisted1}, there are three cases to consider for $\sigma_1\sigma_{\underline{n}}\tau\sigma_1\tau$:
    \begin{enumerate}
        \item In this first case, we have four cycles $\sigma_1^1,\sigma_1^2,\sigma_1^3,\sigma_1^4$ of $\sigma_{\underline{n}}$, such that
        \begin{equation}
            \sigma_1^1\sigma_1^2\sigma_1^3\sigma_1^4\in B^{\sim}_n.
        \end{equation}
        Then, the product in \cref{equ:firststep} joins two pairs of these cycles to two new cycles, e.g. $\sigma_1^1$, $\sigma_1^2$ to a new cycle $\eta_1\sigma_1^1\sigma_1^2\tau\eta_1\tau$; and $\sigma_1^3$, $\sigma_1^4$ to a new cycle $\eta_1\sigma_1^3\sigma_1^4\tau\eta_1\tau$.
        Following the construction in \cite{HMtwisted1}, we create two vertices over $p_1$, each adjacent to two ends. More precisely, we attach the ends that are joined via \cref{equ:firststep} to the same vertex. Moreover, we attach to each vertex two edges projecting to $[p_1,p_2]$ that are temporarily labeled by the corresponding cycles obtained from the join. We colour the incoming edge corresponding to the cycle containing $s_1$ bold and the outgoing edge corresponding to the cycle containing $s_1$ dashed. Moreover, we give the outgoing dashed edge the counter $s_1$. This is illustrated in the following picture for the case that $\sigma_1^1$ is joined with $\sigma_1^2$ and $\sigma_1^3$ is joined with $\sigma_1^4$. Again, the action of $\tau$ on the permutation obtained from the join yields an involution of the tropical cover.

        \begin{figure}[H]

        \scalebox{0.6}{

\tikzset{every picture/.style={line width=0.75pt}} 

\begin{tikzpicture}[x=0.75pt,y=0.75pt,yscale=-1,xscale=1]

\draw [line width=2.25]    (40.52,16) -- (439.19,81.86) ;
\draw [line width=2.25]  [dash pattern={on 6.75pt off 4.5pt}]  (45.5,150) -- (439.19,81.86) ;
\draw [line width=2.25]  [dash pattern={on 6.75pt off 4.5pt}]  (439.19,81.86) -- (798,81.86) ;
\draw [line width=0.75]    (38,171) -- (436.67,236.86) ;
\draw [line width=0.75]    (42.98,305) -- (436.67,236.86) ;
\draw [line width=0.75]    (436.67,236.86) -- (795.48,236.86) ;
\draw [line width=0.75]    (41.78,362) -- (785.4,360.86) ;
\draw  [fill={rgb, 255:red, 0; green, 0; blue, 0 }  ,fill opacity=1 ] (431,361) .. controls (431,357.13) and (434.13,354) .. (438,354) .. controls (441.87,354) and (445,357.13) .. (445,361) .. controls (445,364.87) and (441.87,368) .. (438,368) .. controls (434.13,368) and (431,364.87) .. (431,361) -- cycle ;

\draw (437.36,377) node [anchor=north west][inner sep=0.75pt]   [align=left] {$\displaystyle p_{1}$};
\draw (237.06,17) node [anchor=north west][inner sep=0.75pt]   [align=left] {$\displaystyle \sigma _{1}^{1}$};
\draw (227.06,90) node [anchor=north west][inner sep=0.75pt]   [align=left] {$\displaystyle \sigma _{1}^{2}$};
\draw (234.06,181) node [anchor=north west][inner sep=0.75pt]   [align=left] {$\displaystyle \sigma _{1}^{3}$};
\draw (235.06,246) node [anchor=north west][inner sep=0.75pt]   [align=left] {$\displaystyle \sigma _{1}^{4}$};
\draw (546.06,44) node [anchor=north west][inner sep=0.75pt]   [align=left] {$\displaystyle \sigma _{m} \sigma _{1}^{1} \sigma _{1}^{2} \tau \sigma _{m} \tau $};
\draw (535.06,206) node [anchor=north west][inner sep=0.75pt]   [align=left] {$\displaystyle \sigma _{m} \sigma _{1}^{3} \sigma _{1}^{4} \tau \sigma _{m} \tau $};
\draw (155,39) node [anchor=north west][inner sep=0.75pt]   [align=left] {$\displaystyle 1$};
\draw (161,131) node [anchor=north west][inner sep=0.75pt]   [align=left] {$\displaystyle 1$};
\draw (502,90) node [anchor=north west][inner sep=0.75pt]   [align=left] {$\displaystyle {s_{1}}$};

\end{tikzpicture}
}
        
        \end{figure}
           
        \item In the second case, we have two cycles $\sigma_1^1,\sigma_1^2$ of $\sigma_{\underline{n}}$, such that
        \begin{equation}
            \sigma_1^1\sigma_1^2\in B^{\sim}_n.
        \end{equation}
        Then the product in \cref{equ:firststep} splits two cycles $\sigma_1^1$ and $\sigma_1^2$ each into two cycles, that we denote by $\sigma_1^{1,a},\sigma_1^{1,b}$ and $\sigma_1^{2,a},\sigma_1^{2,b}$ respectively. In this case, we create two vertices over $p_1$, each adjacent to one end labeled by $\sigma_1^1$ and $\sigma_1^2$ respectively. Moreover, we attach to each vertex two edges projecting to $[p_1,p_2]$ that are temporarily labeled by the corresponding cycles obtained from the split, i.e. the new edges attached to the vertex adjacent to $\sigma_1^i$ are labeled by $\sigma_1^{i,a}$ and $\sigma_1^{i,b}$. Moreover, colour the incoming edge corresponding to the cycle containing $s_1$ bold and the respective outgoing edge dashed. The outgoing edge is assigned the counter $s_1$. We illustrate this construction in the following picture.
         \begin{figure}[H]
        \scalebox{0.6}{

\tikzset{every picture/.style={line width=0.75pt}} 

\begin{tikzpicture}[x=0.75pt,y=0.75pt,yscale=-1,xscale=1]

\draw [line width=0.75]    (791.06,304.26) -- (392.78,236.03) ;
\draw [line width=0.75]    (786.87,170.23) -- (392.78,236.03) ;
\draw [line width=0.75]    (392.78,236.03) -- (33.98,233.9) ;
\draw [line width=0.75]    (41.78,362) -- (785.4,360.86) ;
\draw  [fill={rgb, 255:red, 0; green, 0; blue, 0 }  ,fill opacity=1 ] (392,360) .. controls (392,356.13) and (395.13,353) .. (399,353) .. controls (402.87,353) and (406,356.13) .. (406,360) .. controls (406,363.87) and (402.87,367) .. (399,367) .. controls (395.13,367) and (392,363.87) .. (392,360) -- cycle ;
\draw [line width=0.75]    (788,150.24) -- (389.72,82.01) ;
\draw [line width=2.25]  [dash pattern={on 6.75pt off 4.5pt}]  (783.81,16.21) -- (389.72,82.01) ;
\draw [line width=2.25]    (389.72,82.01) -- (30.92,79.88) ;

\draw (437.36,377) node [anchor=north west][inner sep=0.75pt]   [align=left] {$\displaystyle p_{1}$};
\draw (135,49) node [anchor=north west][inner sep=0.75pt]   [align=left] {$\displaystyle \sigma _{1}^{1}$};
\draw (128,204) node [anchor=north west][inner sep=0.75pt]   [align=left] {$\displaystyle \sigma _{1}^{2}$};
\draw (493,20) node [anchor=north west][inner sep=0.75pt]   [align=left] {$\displaystyle \sigma _{1}^{1,a}$};
\draw (500,110) node [anchor=north west][inner sep=0.75pt]   [align=left] {$\displaystyle \sigma _{1}^{1,b}$};
\draw (497,187) node [anchor=north west][inner sep=0.75pt]   [align=left] {$\displaystyle \sigma _{1}^{2,a}$};
\draw (491,263) node [anchor=north west][inner sep=0.75pt]   [align=left] {$\displaystyle \sigma _{1}^{2,b}$};
\draw (188,91) node [anchor=north west][inner sep=0.75pt]   [align=left] {$\displaystyle 1$};
\draw (186,247) node [anchor=north west][inner sep=0.75pt]   [align=left] {$\displaystyle 1$};
\draw (599,53) node [anchor=north west][inner sep=0.75pt]   [align=left] {$\displaystyle {s_1}$};

\end{tikzpicture}        }
        \end{figure}

        \item In the third case, we have two cycles $\sigma_1^1$ and $\sigma_1^2$ of $\sigma_{\underline{n}}$ of the same length. Here, the product in \cref{equ:firststep} rearranges $\sigma_1^1$ and $\sigma_1^2$ into two new cycles $\tilde{\sigma}_1^1,\tilde{\sigma}_1^2$ of the same length. In this case, we create one vertex over $p_1$ that joins the two ends labeled by $\sigma_1^1,\sigma_1^2$. Moreover, we attach two new edges to this vertex that map to $[p_1,p_2]$ and that are labeled by $\tilde{\sigma}_1^1,\tilde{\sigma}_1^2$. Again, we recolour the incoming edge containing $s_1$ bold and the respective outgoing edge dashed. The dashed outgoing edge is assigned the counter $s_1$.
        We illustrate this construction in the following picture.
    
        \begin{figure}[H]
        
        \tikzset{every picture/.style={line width=0.75pt}} 
        
        \scalebox{0.6}{

\tikzset{every picture/.style={line width=0.75pt}} 

\begin{tikzpicture}[x=0.75pt,y=0.75pt,yscale=-1,xscale=1]

\draw [line width=0.75]    (34.86,379) -- (778.47,377.86) ;
\draw  [fill={rgb, 255:red, 0; green, 0; blue, 0 }  ,fill opacity=1 ] (392.66,378.43) .. controls (392.66,374.57) and (395.8,371.43) .. (399.66,371.43) .. controls (403.53,371.43) and (406.66,374.57) .. (406.66,378.43) .. controls (406.66,382.3) and (403.53,385.43) .. (399.66,385.43) .. controls (395.8,385.43) and (392.66,382.3) .. (392.66,378.43) -- cycle ;
\draw [line width=2.25]    (31,22) -- (400,171) ;
\draw [line width=0.75]    (400,171) -- (769,320) ;
\draw    (41,291) -- (400,171) ;
\draw [line width=1.5]  [dash pattern={on 5.63pt off 4.5pt}]  (400,171) -- (778,18) ;

\draw (430.43,394) node [anchor=north west][inner sep=0.75pt]   [align=left] {$\displaystyle p_{1}$};
\draw (145,32) node [anchor=north west][inner sep=0.75pt]   [align=left] {$\displaystyle \sigma _{1}^{1}$};
\draw (139,224) node [anchor=north west][inner sep=0.75pt]   [align=left] {$\displaystyle \sigma _{1}^{2}$};
\draw (564,61) node [anchor=north west][inner sep=0.75pt]   [align=left] {$\displaystyle \widetilde{\sigma _{1}^{1}}$};
\draw (550,201) node [anchor=north west][inner sep=0.75pt]   [align=left] {$\displaystyle \widetilde{\sigma _{1}^{2}}$};
\draw (144,82) node [anchor=north west][inner sep=0.75pt]   [align=left] {$\displaystyle 1$};
\draw (171,251) node [anchor=north west][inner sep=0.75pt]   [align=left] {$\displaystyle 1$};
\draw (614,98) node [anchor=north west][inner sep=0.75pt]   [align=left] {$\displaystyle {s_1}$};

\end{tikzpicture}
}

        \end{figure}

    \end{enumerate}
    In each case, we further extend all ends not attached to a vertex over $p_1$ to $[p_1,p_2]$.
    \item We now take the permutation $\sigma_1\sigma_{\underline{n}}\tau\sigma_1\tau$ and consider the product
    \begin{equation}
        \sigma_{2}(\sigma_1\sigma_{\underline{n}}\tau\sigma_1\tau)\tau\sigma_{2}\tau.
    \end{equation}
    We proceed as in step (3) and create the corresponding vertices over $p_2$. The in-coming edge carrying the larger element of $\sigma_{2}$ becomes bold and the corresponding outgoing edge dashed. Let $s_{2}$ the larger element of $\sigma_{2}$. There exist unique $l$ and $c_{2}$ with
    \begin{equation}
        s_2=\sum_{i=1}^ln_i+c_{2}
    \end{equation}
    and $1\le c_{2}< n_{l+1}$. We assign the outgoing dashed edge counter $c_{2}$. 
    
\item     We proceed inductively as in step (4) for $\sigma_{i+1}(\sigma_{i}\cdots\sigma_1\sigma_{\underline{n}}\tau\sigma_1\cdots\sigma_i\tau)\sigma_{i+1}$ until $i=k$. For $i=k$, we obtain ends that are labeled by the cycles of the resulting permutation of cycle type $(\underline{m},\underline{m})$ and that project to $[p_k,\infty)$. The counter $c_i$ at step $i$ is obtained by observing that there are unique $l$ and $c_i$ with
\begin{equation}
    s_i=\sum_{j=1}^lc_i
\end{equation}
for $1\le c_2< n_{l+1}$. Note that we take $n_{k+1}=n$.

This gives a map between graphs $\tilde{\pi}\colon\tilde{\Gamma_1}\to\Gamma_2$.
    \item The conjugation by $\tau$ induces a natural involution $
    \tilde{\iota}\colon\tilde{\Gamma}_1\to\tilde{\Gamma}_1$ that respects the map $\tilde{\pi}$ (but not the colours or the counters).
    \item Next, for each edge $e$ of $\tilde{\Gamma}_1$, we replace the cycle label by the length of this cycle. We consider this cycle length as the weight $\omega(e)$ of $e$. 
\end{enumerate}

\end{construction}

\begin{example}
    In \cref{fig:montwistcover}, we illustrate the monotone twisted tropical cover associated to the factorisation $((\overline{1}\,2),(\overline{1}\,3),(1\,4))$ for $\underline{n}=(2,2)$ and $\underline{m}=(4)$.

    \begin{figure}
        \centering

\tikzset{every picture/.style={line width=0.75pt}} 

\begin{tikzpicture}[x=0.75pt,y=0.75pt,yscale=-1,xscale=1]

\draw [line width=3]    (31,114) -- (110.56,136.35) ;
\draw [line width=1.5]    (28.41,165.91) -- (118.79,140.06) ;
\draw [line width=1.5]    (118.79,140.06) -- (211.18,109.21) ;
\draw [line width=3]  [dash pattern={on 7.88pt off 4.5pt}]  (110.56,136.35) -- (207.77,159.62) ;
\draw [line width=3]    (113.97,85.95) -- (211.18,109.21) ;
\draw [line width=3]    (29,67) -- (113.97,85.95) ;
\draw [line width=1.5]    (117.38,185.47) -- (207.77,159.62) ;
\draw [line width=1.5]    (34,210) -- (117.38,185.47) ;
\draw [line width=3]    (211.18,109.21) -- (305,142) ;
\draw [line width=1.5]    (207.77,159.62) -- (305,142) ;
\draw [line width=3]  [dash pattern={on 7.88pt off 4.5pt}]  (305,142) -- (409,105.33) ;
\draw [line width=1.5]    (305,142) -- (408,171) ;
\draw  [dash pattern={on 0.84pt off 2.51pt}]  (29,140) -- (34,140) -- (500,140) ;

\draw (66.75,43.93) node [anchor=north west][inner sep=0.75pt]    {$2$};
\draw (59.93,95.63) node [anchor=north west][inner sep=0.75pt]    {$2$};
\draw (56.52,158.5) node [anchor=north west][inner sep=0.75pt]    {$2$};
\draw (71.87,208.07) node [anchor=north west][inner sep=0.75pt]    {$2$};
\draw (139.52,105.5) node [anchor=north west][inner sep=0.75pt]    {$2$};
\draw (127.93,146.63) node [anchor=north west][inner sep=0.75pt]    {$2$};
\draw (255.93,105.63) node [anchor=north west][inner sep=0.75pt]    {$4$};
\draw (257.93,152.63) node [anchor=north west][inner sep=0.75pt]    {$4$};
\draw (357.93,88.63) node [anchor=north west][inner sep=0.75pt]    {$4$};
\draw (356.5,160.5) node [anchor=north west][inner sep=0.75pt]    {$4$};
\draw (140,147) node [anchor=north west][inner sep=0.75pt]    {$( 2)$};
\draw (72,96) node [anchor=north west][inner sep=0.75pt]    {$( 1)$};
\draw (78,42) node [anchor=north west][inner sep=0.75pt]    {$( 1)$};
\draw (270,105.63) node [anchor=north west][inner sep=0.75pt]    {$( 1)$};
\draw (372,88) node [anchor=north west][inner sep=0.75pt]    {$( 2)$};

\end{tikzpicture}

        \caption{A twisted monotone cover of type $(1,(2,2),(4))$. The numbers in parentheses denote the counters, those without the edge weights.}
        \label{fig:montwistcover}
    \end{figure}
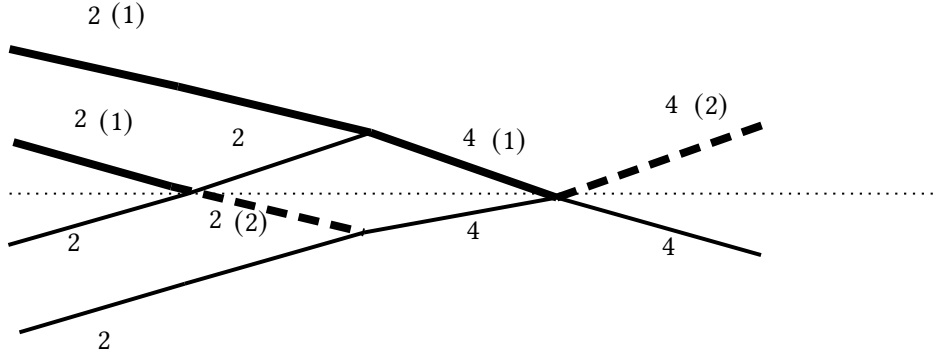
\end{example}

We observe the following lemma which is proved analogously to \cite[Lemma 3.8]{hahn2019monodromy}.

\begin{lemma}
    The covers obtained in \cref{constr-tropcov} are monotone twisted tropical covers in the sense of \cref{def-montwisttrop}.
\end{lemma}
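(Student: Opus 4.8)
The plan is to verify that the output of Construction~\ref{constr-tropcov} satisfies every item in Definition~\ref{def-montwisttrop}, proceeding item by item and leaning on the structural facts already established. Most conditions are built into the construction by fiat: the colouring rule (items 1, 3 of the definition), the labelling of in- and out-ends together with the compatibility of $\iota$ on labels (item 2), and the assignment of counters to non-normal edges (items 7, 8) are all dictated directly by steps (2)--(6) of the construction. Thus the real content lies in checking the conditions that are \emph{consequences} of the factorisation data rather than definitional choices, namely the valence and involution structure (inherited from \cite[Theorem 2.10]{burman2021ribbon} and \cite[Construction 4.1]{HMtwisted1}), the monotonicity inequality $i(v)\le o(v)$ at each inner vertex (item 9), and the counter bound $n_{i-p}-\omega(e)<c(e)\le n_{i-p}$ on each non-normal edge (item 10), as well as the disjointness and ordering of chains (items 4, 5, 6).

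First I would record that $\tilde{\pi}\colon\tilde{\Gamma}_1\to\Gamma_2$ is a genuine tropical cover with involution: harmonicity at each vertex follows because each local move (join, cut, twist) preserves the total cycle length passing a given point of $\Gamma_2$, which is exactly the balancing condition, and the weights $\omega(e)$ are the cycle lengths by step (8). The involution $\tilde\iota$ from step (7) respects $\tilde\pi$ since it is induced by conjugation by $\tau$, and its fixed locus consists precisely of the $4$-valent vertices arising from the ``twist'' case (c), matching Definition~\ref{def-twistedtropcover}. The three local cases in step (3) produce exactly two $3$-valent vertices or one $4$-valent vertex over each $p_i$, and the edges adjacent to a $4$-valent vertex share a common weight because case (c) rearranges two cycles of equal length. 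This is where I would cite the analogy with \cite[Lemma 3.8]{hahn2019monodromy} most directly.

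Next I would treat the monotonicity-related items, which I expect to be the crux. The inequality $i(v)\le o(v)$ at an inner vertex encodes the monotonicity condition $q_{i-1}\le q_i$ on the second entries $s_i$ of the transpositions $\sigma_i$: the bold edge entering $v$ carries the counter recording the previously used index, and the outgoing dashed counter $c(v)$ is computed from the current larger element $s_i$ via the decomposition $s_i=\sum_{j=1}^{l} n_j + c_i$ in steps (4)--(5). Since a bold chain tracks a weakly increasing sequence of the $s_i$, the counters along it are nondecreasing, which gives item 9 and simultaneously forces the chain intervals $[\pi(l_{\mathcal C}),\pi(r_{\mathcal C})]$ to be disjoint and linearly ordered compatibly with the in-end labels (items 4, 5, 6). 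For item 10 I would verify that the counter $c(e)=c_i$ of an edge in the chain starting at in-end $i$ satisfies $n_{i-p}-\omega(e)<c(e)\le n_{i-p}$: the upper bound $c_i\le n_{l+1}$ is the defining constraint $1\le c_i< n_{l+1}$ from the construction (with the convention $n_{k+1}=n$), and the lower bound follows by bookkeeping the number of elements of the relevant cycle lying strictly above the threshold, exactly the reasoning underlying the uniqueness of the first transposition entry in the cut-and-join analysis (Lemma~\ref{lm:cut-twist}).

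The main obstacle I anticipate is the precise matching between the \emph{combinatorial} monotonicity of the factorisation and the \emph{geometric} chain structure on $\Gamma_1$: one must argue that the bold edges genuinely concatenate into connected chains each sourced at a single dashed in-end, and that no two chains overlap in their projections. This requires tracking which cycle contains the ``moving'' index $s_i$ through every application, and confirming that the recolouring rule in step (4) consistently propagates the bold/dashed distinction so that every non-normal edge arises from a unique chain (item 10, first sentence). Once this propagation is established, the remaining verifications are routine bookkeeping, and the lemma follows by the same induction on $k$ used to build the cover, in direct parallel to \cite[Lemma 3.8]{hahn2019monodromy}.
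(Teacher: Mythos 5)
Your proposal is correct and follows the same route the paper intends: the paper itself offers no written argument beyond the remark that the lemma "is proved analogously to [hahn2019monodromy, Lemma 3.8]", and your item-by-item verification — structural conditions inherited from the join/cut/twist case analysis, monotonicity of the $s_i$ translating into the counter inequality $i(v)\le o(v)$ and the disjointness and ordering of chains, and the counter bounds coming from the same bookkeeping as in the cut-and-join analysis — is precisely the content that analogy is meant to carry. You have correctly identified the only genuinely nontrivial point (the propagation of the bold/dashed colouring into well-defined, non-overlapping chains), so your write-up is a faithful, more explicit version of the paper's one-line proof.
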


Our next step is to define the multiplicity of monotone twisted tropical covers. For this, we need the following definition.

\begin{definition}
    Let $\pi\colon\Gamma_2\to\Gamma_1$ with involution $\iota$ be a monotone twisted tropical cover in the sense of \cref{def-montwisttrop}.

    We call an edge $e$ of $\Gamma_2$ a contributing edge if neither $e$ nor $\iota(e)$ are bold. Moreover, we denote by $\overline{E}(\Gamma_2)$ the set of contributing edges.
    Moreover, we call a vertex $v$ of $\Gamma_2$ a contributing vertex if $v$ has an incoming bold edge.

    Moreover, for any $4$-valent vertex $v$, we define its weight $\omega_v$ as the weight of its adjacent edges.

    Let $v$ be a contributing vertex of $\Gamma_2$. We define its vertex multiplicity as follows:
    \begin{enumerate}
        \item If $v$ is a $3$-valent vertex that performs a cut, we define its vertex multiplicity as $m(v)=C$.
        \item If $v$ is a $3$-valent vertex that performs a joint, we define its vertex multiplicity as $m(v)=J$.
        \item If $v$ is a $4$-valent vertex, i.e. $v$ performs a twist and assume that the $v$ is part of a chain of bold edges starting and $n_i$ and with outgoing non-bold edge $e$, then we define $m_v=T(\omega_v-c(e)+1)$.
    \end{enumerate}
\end{definition}

We are now ready to define the multiplicity of a monotone twisted tropical cover.

\begin{definition}
    Let $\pi\colon\Gamma_2\to\Gamma_1$ be a monotone twisted tropical cover. We then define its multiplicity to be
    \begin{equation}
        m(\pi)=\sqrt{\prod\omega(e)}\prod m(v),
    \end{equation}
   the first product runs over all contributing edges except out-ends and the second product over all contributing vertices.
\end{definition}

\begin{remark}
    We note that there are no automorphisms of monotone twisted tropical covers, since any such notion would need to respect the colourings. The colourings however prohibit the existence of any automorphism.
\end{remark}

\begin{proposition}
    Let $\pi\colon\Gamma_2\to\Gamma_1$ be a monotone twisted tropical cover. Then, $m(\pi)$ is the weighted count of all monotone factorisations producing $\pi$ via \cref{constr-tropcov}.
\end{proposition}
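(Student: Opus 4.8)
The plan is to read \cref{constr-tropcov} as a map from monotone twisted factorisations of the prescribed type to monotone twisted tropical covers, and to compute the total CJT--weight of the fibre over a fixed cover $\pi$. The guiding principle is that $\pi$ records everything about a factorisation $(\sigma_1,\dots,\sigma_k)$ \emph{except} the smaller entry of each transposition: processing the cover from left to right, the branch point $p_i$ together with its bold/dashed edges and counters determines the type of the $i$--th operation (cut, join, or twist) and the larger element $s_i$, while the choice of the smaller element $r_i$ (and whether it is barred) remains free. Consequently the weighted fibre over $\pi$ factors as a product of local contributions, one per inner vertex, each equal to the CJT--weighted number of admissible choices of $r_i$ at that step. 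I would process the vertices in the order induced by the edge directions, which is legitimate because the refined Jucys--Murphy elements commute on the space of type indicators (\cref{th:comm}), so the count is independent of the chosen order.

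For the local analysis I would invoke \cref{lm:cut-twist} together with the monotonicity and counter constraints (9) and (11) of \cref{def-montwisttrop}. At a $3$--valent cut vertex the two outgoing cycle lengths and the larger element $s_i$ are prescribed by $\pi$, so by \cref{lm:cut-twist} the transposition is uniquely determined and the step contributes the single weight $C=m(v)$. At a $4$--valent twist vertex lying on a bold chain, \cref{lm:cut-twist}(1) converts the requirement that $(i\,s_i)$ be a twist into the requirement that $(\bar i\,s_i)$ be a cut, and the counter $c(e)$ of the outgoing edge records how many admissible smaller elements have already been consumed along the chain by monotonicity; exactly $\omega_v-c(e)+1$ values of $r_i$ remain, each of weight $T$, so the step contributes $T(\omega_v-c(e)+1)=m(v)$. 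At a $3$--valent join vertex the smaller element must lie in the cycle being absorbed, of reduced length equal to the weight $\omega(e)$ of the corresponding contributing edge; the step contributes the weight $J=m(v)$ together with a factor proportional to $\omega(e)$.

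It then remains to assemble the local contributions. The vertex factors reproduce $\prod_v m(v)$ exactly, so the proposition reduces to the claim that the edge--weight factors accumulated at the joins combine to $\sqrt{\prod_{e\in\overline{E}(\Gamma_2)}\omega(e)}$. Here one uses that the contributing edges occur in $\iota$--orbits of equal weight: a bold edge and its $\iota$--image are both non--contributing, so bold chains drop out of the product, and by the bar--ambiguity of \cref{lm:cut-twist}(2) the barred and unbarred choices of $r_i$ producing the same cover are identified, which is what the square root and the factors of $2$ attached to the $4$--valent vertices encode. I expect this edge bookkeeping to be the main obstacle: one must verify that across all joins each $\iota$--orbit of contributing edges is charged its weight exactly once, while correctly treating the self--symmetric edges adjacent to the fixed ($4$--valent) vertices as well as the normalisation of the in-- and out--ends. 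The cleanest way to discharge it is by induction on the number $k$ of branch points, deleting the rightmost vertex together with its adjacent edges and matching the resulting change in $m(\pi)$ against the corresponding term of the recursion of \cref{th:cut-and-join-monotone}, which \cref{constr-tropcov} was built to reproduce; the base case is checked directly.
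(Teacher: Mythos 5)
Your proposal is correct and follows essentially the same route as the paper: the paper's proof consists of the single observation that the claim "follows immediately from the same cut-and-join analysis used to derive \cref{th:cut-and-join-monotone}", which is precisely the vertex-by-vertex counting of admissible smaller entries (unique for a cut, $\omega_v-c(e)+1$ for a twist, $2\beta$ for a join) that you carry out and then match inductively against that recursion. Your elaboration of the edge-weight bookkeeping and the role of \cref{lm:cut-twist} is a faithful unpacking of what the paper leaves implicit.
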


\begin{proof}
    The proposition follows immediately from the same cut-and-join analysis used to derive \cref{th:cut-and-join-monotone}.
\end{proof}

In total, we have proved the following result

\begin{theorem}
\label{thm-refinedtropcorr}
    Let $g$ be a non-negative integer $\mu$ and $\lambda$ partitions of the same positive integer $d$. Then, we have
    \begin{equation}
       N_g^a\binom{n_1,\dots,n_p}{m_1,\dots,m_q|r}=\frac{1}{C^qJ^p\prod(2j)^{n_j}}\sum m(\pi)
    \end{equation}
    where the sum runs over all monotone twisted tropical covers $\pi$ of type $(g,\underline{n},\underline{m},r,a)$.
\end{theorem}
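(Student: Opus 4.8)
The plan is to assemble the pieces established in this section into a single fibration argument. Recall from the discussion preceding \cref{constr-tropcov} that $N_g^a\binom{n_1,\dots,n_p}{m_1,\dots,m_q|r}$ equals $\frac{1}{C^qJ^p\prod(2j)^{n_j}}$ times the total CJT--weighted count of monotone factorisations $(\sigma_1,\dots,\sigma_k)$ subject to the listed transitivity, type, labelling and positional constraints. Thus it suffices to show that this total weighted count equals $\sum_\pi m(\pi)$, where $\pi$ ranges over monotone twisted tropical covers of type $(g,\underline{n},\underline{m},r,a)$.

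First I would observe that \cref{constr-tropcov} defines a map $\Phi$ sending each admissible factorisation to a monotone twisted tropical cover. The construction is entirely deterministic --- at each step the vertex over $p_i$, its colourings, and the counter attached to the newly created dashed edge are dictated by $\sigma_i$ --- so $\Phi$ is well defined, and the lemma following the construction guarantees that $\Phi$ indeed lands in the set of monotone twisted tropical covers in the sense of \cref{def-montwisttrop}. The fibres $\Phi^{-1}(\pi)$ then partition the set of admissible factorisations.

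The crux is the proposition stated immediately above the theorem: for a fixed cover $\pi$, its multiplicity $m(\pi)=\sqrt{\prod\omega(e)}\prod m(v)$ equals the CJT--weighted number of factorisations in $\Phi^{-1}(\pi)$. Granting this, summing over all covers $\pi$ of type $(g,\underline{n},\underline{m},r,a)$ recovers the total weighted count of factorisations, and dividing by $C^qJ^p\prod(2j)^{n_j}$ yields the theorem. The content of that proposition is exactly the cut--and--join bookkeeping underlying \cref{th:cut-and-join-monotone}: a $3$-valent cut vertex contributes the weight $C$, a $3$-valent join vertex contributes $J$, and a $4$-valent twist vertex sitting on a bold chain contributes $T(\omega_v-c(e)+1)$, where the factor $\omega_v-c(e)+1$ counts the admissible choices of the first entry of the twisting transposition compatible with monotonicity --- precisely the multiplicity appearing in the twist term of \cref{th:cut-and-join-monotone}.

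The main obstacle, and the step I would treat most carefully, is twofold. On the one hand, I must verify that the refined positional data $(r,a)$ on the factorisation side --- the condition that $n$ lies in the cycle labelled $r$ (or $q+r$) and that $\sigma_k=(j\,\,n-n_p+a)$ or $(\bar j\,\,n-n_p+a)$ --- translates exactly into the refined type condition on $\pi$ from \cref{def-montwisttrop}, namely that the final outgoing dashed edge of the largest chain connects to the out-end labelled $r$ or $q+r$ and carries final counter $a$. This ensures that $\Phi$ restricts to a surjection onto covers of type $(g,\underline{n},\underline{m},r,a)$ and that no factorisation of the wrong refined type contributes to the sum. On the other hand, I would justify the normalisation by the square root: the involution $\iota$ pairs each contributing edge with a mirror edge of equal weight, so $\prod_{e}\omega(e)$ over contributing edges is a perfect square whose square root extracts the genuine downstairs edge--weight contribution, matching the factorisation count. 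Finally, as in the remark preceding the theorem, the colourings rule out nontrivial automorphisms of monotone twisted tropical covers, so no automorphism correction factors intervene and the fibrewise identity suffices to conclude.
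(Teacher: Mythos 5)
Your proposal is correct and follows essentially the same route as the paper, which simply states the theorem as the summary of the preceding development: the factorisation reformulation of $N_g^a$, Construction \ref{constr-tropcov}, and the proposition that $m(\pi)$ equals the weighted count of the fibre over $\pi$ (itself justified by the cut--and--join analysis of \cref{th:cut-and-join-monotone}). Your additional care about the $(r,a)$ bookkeeping and the square-root normalisation fleshes out details the paper leaves implicit, but introduces no new idea.
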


\begin{example}
    The multiplicity of the monotone twisted tropical cover of type $(1,(2,2),(4),1,2)$ in \cref{fig:montwistcover} is $6T^2J$. Indeed, the factorisations giving this cover are
    \begin{enumerate}
        \item $((\overline{1}\,2),(\overline{1}\,3),(1\,4))$
        \item $((\overline{1}\,2),(\overline{1}\,3),(2\,4))$
        \item $((\overline{1}\,2),(\overline{1}\,3),(\overline{3}\,4))$
        \item $((\overline{1}\,2),(\overline{2}\,3),(1\,4))$
        \item $((\overline{1}\,2),(\overline{2}\,3),(2\,4))$
        \item $((\overline{1}\,2),(\overline{2}\,3),(\overline{3}\,4))$
    \end{enumerate}
    In total, we have 6 monotone factorisations, each providing two twists and one join.
\end{example}

\subsection{Towards the piecewise polynomiality of refined monotone Hurwitz numbers}
Recalling the piecewise polynomiality result for classical double Hurwitz numbers, we stated in \cref{thm-GJV}, the same statement (w.r.t. the same chamber structure) was proved to be true for monotone double Hurwitz numbers in \cite{goulden2016toda}. Employing similar techniques as in \cref{subsec-poly}, one may derive a weaker polynomiality statement for refined monotone Hurwitz numbers based on the tropical correspondence theorem in \cref{thm-refinedtropcorr}. A similar tropical approach was used in \cite{hahn2019monodromy} for the piecewise polynomiality of (non--deformed) monotone double Hurwitz numbers. We briefly summarise the strategy in the following two steps and refer to \cite{hahn2019monodromy} for details:
\begin{enumerate}
    \item The idea is the same as in \cref{subsec-poly} in that one parametrises the possible weights of a given monotone twisted tropical cover $\pi$ as lattice points of a polytope. However, we now also have to parametrise the counters. In total, we obtain the inequalities $\omega(e)\ge0$ and $c(e)\ge n_{i-p}$ (as in \cref{def-montwisttrop} (10)) and $c(e)\ge0$. This yields a polytope $P$.
    \item Using the typical Ehrhart theory, one obtains that summing $m(\pi)$ over all lattice points in $P$ yields piecewise quasipolynomiality with respect to a finer hyperplane arrangement than the resonance arrangement. In the non--deformed case, the additional hyperplanes are not necessary, however this is not obvious from the tropical picture. A similar observation was made in the context of tropicalising so-called pruned Hurwitz numbers in \cite{fitzgerald2023combinatorics}.
\end{enumerate}

We state without proof the following result.

\begin{proposition}
    \label{prop-refinpoly}
    The refined monotone Hurwitz numbers $N_g\binom{n_1,\dots,n_p}{m_1,\dots,m_q}$ are piecewise quasipolynomial in the variables $n_1,\dots,n_p$ and $m_1,\dots,m_q$.
\end{proposition}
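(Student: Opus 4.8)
The plan is to establish piecewise quasipolynomiality by realising the refined monotone Hurwitz numbers as a sum, over finitely many combinatorial types of monotone twisted tropical covers, of lattice-point sums of multiplicities over suitable polytopes, and then invoking Ehrhart theory exactly as in \cite{hahn2019monodromy,Karev-Do}. First I would fix $g$ and the lengths $p=\ell(\underline{n})$, $q=\ell(\underline{m})$, and observe via \cref{thm-refinedtropcorr} that $N_g\binom{n_1,\dots,n_p}{m_1,\dots,m_q}$ is a finite sum over combinatorial types of monotone twisted tropical covers $\pi$ of the multiplicity $m(\pi)$, summed over all admissible weights $\omega(e)$ and counters $c(e)$. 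The number of combinatorial types is finite once $g$, $p$, and $q$ are fixed, since the number of inner vertices $k=2g-2+p+q$ is fixed and each vertex is $3$- or $4$-valent. Hence it suffices to prove piecewise quasipolynomiality for the contribution of a single fixed combinatorial type $\pi$, treating the entries of $\underline{n}$ and $\underline{m}$ as formal variables.

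Next I would fix the combinatorial type and set up the parametrising polytope. The balancing condition at each inner vertex expresses every edge weight $\omega(e)$ as an integer-linear function of the entries of $\underline{n}$, $\underline{m}$, and a bounded collection of free cycle parameters (one for each independent cycle of the quotient graph, as in the proof of \cref{thm-piecepoly}). The additional data here are the counters $c(e)$ on non-normal edges, which by \cref{def-montwisttrop}(10) satisfy $n_{i-p}-\omega(e)<c(e)\le n_{i-p}$, together with the monotonicity inequalities $i(v)\le o(v)$ at each vertex from \cref{def-montwisttrop}(9) and the positivity constraints $\omega(e)\ge 0$. Collecting these, the admissible $(\omega(e),c(e))$ range over the lattice points of a polytope $P=P(\underline{n},\underline{m})$ whose defining inequalities are integer-linear in the entries of $\underline{n}$ and $\underline{m}$. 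The multiplicity $m(\pi)=\sqrt{\prod\omega(e)}\prod m(v)$ is, after fixing the combinatorial type so that each $m(v)\in\{C,J\}$ or $m(v)=T(\omega_v-c(e)+1)$ is determined, a polynomial in the edge weights and counters; summing such a polynomial over the lattice points of $P$ yields a quasipolynomial in the entries of $\underline{n},\underline{m}$ by Ehrhart--Brion--Barvinok theory, with the quasipolynomial formula changing only when the combinatorial structure of $P$ changes, i.e. across a finite hyperplane arrangement.

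I would then assemble these contributions: the total $N_g\binom{n_1,\dots,n_p}{m_1,\dots,m_q}$ is a finite sum of quasipolynomials, each piecewise with respect to its own hyperplane arrangement, so the result is piecewise quasipolynomial with respect to the common refinement of all these arrangements. This refinement is finer than the resonance arrangement precisely because the counter inequalities $n_{i-p}-\omega(e)<c(e)\le n_{i-p}$ introduce walls not present in the non-deformed double Hurwitz setting; this accounts for the ``weaker'' nature of the statement flagged in the text, and mirrors the phenomenon observed for pruned Hurwitz numbers in \cite{fitzgerald2023combinatorics}.

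The main obstacle I anticipate is bookkeeping the interaction between the counter constraints and the twist multiplicity $m(v)=T(\omega_v-c(e)+1)$: because the counters are themselves summation variables constrained by the edge weights, the summand $m(\pi)$ is genuinely a polynomial in both $\omega$ and $c$, and one must verify that the resulting lattice-point sum remains quasipolynomial rather than merely piecewise-defined in an uncontrolled way. This is handled by the standard fact that summing a polynomial over the lattice points of a polytope with integer-linearly-varying facets produces a quasipolynomial on each chamber, but confirming that the chambers are cut out by genuine hyperplanes (and that the $\sqrt{\prod\omega(e)}$ factor combines correctly across the involution so that no half-integer exponents survive) requires the same care as in \cite{hahn2019monodromy}. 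Since the proposition is stated without proof and only sketched in two steps, I would present exactly this reduction-to-Ehrhart-theory argument at the level of detail above.
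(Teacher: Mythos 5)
Your proposal is correct and follows essentially the same route as the paper, which itself only sketches the argument in two steps: parametrise the edge weights and counters of each combinatorial type of monotone twisted tropical cover as lattice points of a polytope cut out by the balancing, positivity, and counter constraints of \cref{def-montwisttrop}, then apply Ehrhart theory to the sum of the polynomial multiplicity $m(\pi)$ over those lattice points, obtaining quasipolynomiality on the chambers of a hyperplane arrangement refining the resonance arrangement. Your additional attention to the $\sqrt{\prod\omega(e)}$ factor and to the interaction of the counters with the twist multiplicity is a reasonable fleshing-out of details the paper defers to \cite{hahn2019monodromy}.
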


We note that a different tropical approach used in \cite{hahn2022tropical,hahn2020wall} that employed the Fock space formalism did recover the full polynomiality for monotone double Hurwitz numbers as in \cite{goulden2016toda}. It would be interesting to see whether similar techniques work for the refinement. We leave this as an open problem for future work.

%

\printbibliography
\newpage
\appendix

\section{Elementary symmetric functions in preliminary refined Jucys-Murphy elements applied to the type indicators} \label{sec:pdata}

The table below contains the results of application of elementary symmetric functions $\mathrm e_k$ evaluated on preliminary refined odd Jucyc-Murphy $\mathbf X_i$ elements to the type indicators $\mathcal D_\lambda$ with $\lambda\vdash n$ for small values of $n.$ For the corresponding action of the elementary symmetric functions evaluated on the refined Jucys-Murphy elements $\mathbb X_i$, it is enough to set $J = 1,$ and $C = CJ.$

\begin{center}
\begin{tabular}{cccl} \toprule
$n$ & function  & type  & result \\ \midrule
$2$ &  $\mathrm e_1$ & $\mathcal D_{11}$ & $J\mathcal D_2$\\
  &  & $\mathcal D_2$ & $T\mathcal D_2 + 2C\mathcal D_{11}$\\ \midrule
$3$ & $\mathrm e_1$ & $\mathcal D_{111}$ & $J\mathcal D_{21}$ \\
 &  & $\mathcal D_{21}$ & $3J\mathcal D_{3} + T\mathcal D_{21} + 6C\mathcal D_{111}$\\
 &  & $\mathcal D_{3}$ & $3T\mathcal D_{3} + 4C\mathcal D_{21}$\\
 & $\mathrm e_2$ & $\mathcal D_{111}$ & $J^2\mathcal D_{3}$\\
& & $\mathcal D_{21}$ & $3JT\mathcal D_{3} + 4CJ\mathcal D_{21}$\\
 &  & $\mathcal D_{3}$ & $(2CJ + 2T^2)\mathcal D_{3} + 4CT\mathcal D_{21} + 8C^2\mathcal D_{111}$\\ \midrule
$4$ & $\mathrm e_1$ & $\mathcal D_{1111}$ & $J\mathcal D_{211}$\\
 &  & $\mathcal D_{211}$ & $3J\mathcal D_{31} + 2J\mathcal D_{22} + T\mathcal D_{211} + 12C\mathcal D_{1111} $\\
 &  & $\mathcal D_{22}$ & $2J\mathcal D_{4} + 2T\mathcal D_{22} + 2C\mathcal D_{211} $\\
&  & $\mathcal D_{31}$ & $4J\mathcal D_{4} + 3T\mathcal D_{31} + 8C\mathcal D_{211} $\\
 &  & $\mathcal D_{4}$ & $6T\mathcal D_{4} + 6C\mathcal D_{31} + 8C\mathcal D_{22}$\\
 & $\mathrm e_2$ & $\mathcal D_{1111}$ & $J^2\mathcal D_{22} + J^2\mathcal D_{31}$\\
& & $\mathcal D_{211}$ & $6J^2\mathcal D_{4} + 3JT\mathcal D_{31} + 2JT\mathcal D_{22} + 10CJ\mathcal D_{211}$\\
& & $\mathcal D_{22}$ & $5JT\mathcal D_{4} + 6CJ\mathcal D_{31} + (4CJ + T^2)\mathcal D_{22} + 2CT\mathcal D_{211} + 12C^2\mathcal D_{1111}$\\
 &  & $\mathcal D_{31}$ & $12JT\mathcal D_{4} + (14CJ + 2T^2)\mathcal D_{31} + 16CJ\mathcal D_{22} + 8CT\mathcal D_{211} + 32C^2\mathcal D_{1111}$\\
 &  & $\mathcal D_{4}$ & $(10CJ + 11T^2)\mathcal D_{4} + 18CT\mathcal D_{31} + 20CT\mathcal D_{22} + 24C^2\mathcal D_{211}$\\
 & $\mathrm e_3$ & $\mathcal D_{1111}$ & $J^3\mathcal D_{4}$\\
&  & $\mathcal D_{211}$ & $6J^2T\mathcal D_{4} + 6CJ^2\mathcal D_{31} + 8CJ^2\mathcal D_{22}$\\
 & & $\mathcal D_{22}$ & $(2CJ^2 + 3JT^2)\mathcal D_{4} + 6CJT\mathcal D_{31} + 4CJT\mathcal D_{22} + 8C^2J\mathcal D_{211}$\\
 &  & $\mathcal D_{31}$ & $(8CJ^2 + 8JT^2)\mathcal D_{4} + 12CJT\mathcal D_{31} + 16CJT\mathcal D_{22} + 16C^2J\mathcal D_{211}$\\
 &  & $\mathcal D_{4}$ & $(14CJT + 6T^3)\mathcal D_{4} + (12C^2J + 12CT^2)\mathcal D_{31} + (8C^2J + 12CT^2)\mathcal D_{22}$ \\
& & & $+ 24C^2T\mathcal D_{211} + 48C^3\mathcal D_{1111}$\\
\bottomrule
\end{tabular}
\end{center}

\section{Simple refined monotone Hurwitz numbers}\label{sc:simple-mon}

In this section, we indicate a single example that the cut-and-join equations for the refined Hurwitz numbers might be solved by the \emph{refined topological recursion} (see~\cite{KO22,Ch-Dolega-Osuga2}). Namely, we use the cut-and-join equation of Theorem~\ref{th:cut-and-join-monotone} specialized to the case $\nu = 1^n$ to produce the Virasoro constraints for the corresponding partition function, allowing us to compute it recursively, constructing an algebraic recursion, that imply the refined recursion. The method we use is close to~\cite{Ch-Dolega-Osuga2}. 

Recall, that by Theorem~\ref{th:rec_single_monotone1} the single monotone refined Hurwitz numbers 
 $\mathcal N_g(m_1,\ldots,m_q)$ satisfy the recurrence
\begin{gather}
m_r\mathcal N_g(m_1,\ldots,m_q) = \sum_{j \ne r}(m_r +m_j) \mathcal N_g(m_1,\ldots,\hat m_j,\ldots m_r + m_j,\ldots,m_q) + \\  m_r(m_r - 1 )T \mathcal N_{g-\frac 12}(m_1,\ldots,m_q)
+\sum_{\alpha+ \beta = m_r} 2CJ \alpha \beta \Biggl( \mathcal N_{g-1}(m_1,\ldots,\hat m_r, \alpha, \beta, \ldots,m_q) + \Biggr. \\ \Biggl.   \sum_{g_1 + g_2 = g} \sum_{
\begin{smallmatrix}  I_1 \cup I_2 = \{1,\ldots,q\},\ I_1\cup I_2 = \{r\}
\end{smallmatrix}
}  \mathcal N_{g_1}\left(m_{I_1}(m_r \mapsto \beta)\right)\mathcal N_{g_2}\left(m_{I_2}(m_r \mapsto \alpha)\right)\Biggr),
\end{gather}

for any $r = 1,\ldots,q$ and the initial condition $\mathcal N_0(m) = \delta_{m,1}\frac 1{2CJ}.$

The remaining part of the section reads in line with~\cite{Ch-Dolega-Osuga2}. We gather the numbers $\mathcal N_g(m_1,\ldots,m_q)$ into the generating function:
\[
\mathcal H_{g,q} = \sum_{m_1,\ldots,m_q \ge 1} u^{2g - 2 + \sum m_j + q}t^{\sum{m_j}}\mathcal N_g(m_1,\ldots,m_q)\frac{ p_{m_1}\cdots p_{m_q}}{q!} \in \mathbb C(C,J,T)[[u,t;p_1,p_2,\ldots]],
\]
where $u$ is a bookkeeping parameter for the number of transpositions used in the monotone factorisation, and $t$ is a bookkeeping parameter for the sum of $m_j.$ We denote \[\mathcal H = \sum_{\begin{smallmatrix} g \in \frac 12 \mathbb N\cup \{0\} \\ q \in \mathbb N \end{smallmatrix}} \mathcal H_{g,q}.\]
Following~\cite{KZ} we notice, that the recursion of Theorem~\ref{th:rec_single_monotone1} is equivalent to the following set of constraints for the function $\mathcal H^\circ = \exp{\mathcal  H}$: for all $i \in \mathbb N$ we have $L_i \mathcal H^\circ = 0,$ where $L_i$ is the differential operator
\[
L_i = -\frac iu \frac{\partial}{\partial p_i} + 2CJ\sum_{\alpha+\beta = i} \alpha\beta\frac{\partial^2}{\partial p_\alpha \partial p_\beta} + \sum_{\alpha\ge 1} (i+\alpha)p_\alpha \frac{\partial}{\partial p_{i+\alpha}} + i(i-1)T\frac{\partial}{\partial p_i} + \frac{t\delta_{i,1}}{2uCJ},\quad i \ge 1,
\]
where $\delta_{i,j}$ is the Kronecker's delta. A similar set of constraints appears in~\cite{Bonzom-Chapuy-Dolega} in the study of single $b$-monotone Hurwitz numbers, which indicates that the specialization $T = b, CJ = \frac {1+b}2$ recovers the monotone $b$-Hurwitz numbers.

As it is mentioned in~\cite{Bonzom-Chapuy-Dolega}, one can use these constraints to produce the evolution equation. Multiply the  constraint $L_i$ by $p_i$ and sum over all $i$ to get:

\begin{eqnarray*}
\sum_{i = 1}^\infty p_i L_i = -\frac 1u \sum_{i = 1}^\infty ip_i \frac \partial{\partial p_i} + 2\mathcal{BL}^\circ + \frac{tp_1}{2uCJ}.
\end{eqnarray*}
This differential operator annihilates the function $\mathcal H^\circ$. Notice, that due to the homogeneousness of $\mathcal H^\circ,$ the expression $-\frac 1u \sum_{i\ge 1} ip_i \frac \partial{\partial p_i} \mathcal H^\circ$ equals $-\frac tu\frac \partial{\partial t} \mathcal H^\circ$. The operator $\mathcal{BL}^\circ_b$ is explicitly given by

\begin{equation}\label{eq:Laplace-Beltrami}
\mathcal{BL}^\circ = \frac{1}{2}\left( \sum_{i,j \ge 1}(i+j)p_i p_j\frac{\partial}{\partial p_{i+j}}
		+ 2CJ\sum_{i,j \ge 1} ij p_{i+j} \frac{\partial^2}{\partial p_i\partial p_j} + 
		T\sum_{i \ge 1} i(i-1)p_{i}\frac{\partial}{\partial p_{i}}\right).
\end{equation}
  We call it the \emph{refined Laplace-Beltrami operator} (the coefficient $\frac 12$ is due to historical reasons).

  Summing everything up, we obtain the following evolution equation for the function $\mathcal H^\circ$ (cf. the evolution equation of~\cite{Bonzom-Chapuy-Dolega}):
  $$\frac 1{2u} \left(t \frac \partial{\partial t} - \frac{tp_1}{2CJ}\right) \mathcal H^\circ = \mathcal{BL}^\circ(\mathcal H^\circ).$$


Following Kazarian and Zograf~\cite{KZ} we introduce a formal operator
\( \delta_x = \sum_{i \ge 1} i x^{i-1} \frac{\partial}{\partial p_i}\) and sum up the results of the action of the constraints with the appropriate factors on the partition function to get the \emph{master equation}:
\begin{eqnarray*}
- \frac{x}{u}\delta_x \mathcal{H} +2CJx^2  \big( \delta_x^2 \mathcal{H} + (\delta_x \mathcal{H})^2 \big) + \delta_y^{-1} d_y \Big( \frac{xy}{x-y} (\delta_x \mathcal{H} - \delta_y \mathcal{H} \big)\Big)\\
+ Tx^2 \frac \partial{\partial x} \delta_x \mathcal{H} + \frac{tx}{2uCJ} = 0.
\end{eqnarray*}

The most non-trivial part of turning the  constraints into the master equation is
\begin{eqnarray*}
    \sum_{i\ge 1} x^i \sum_{j \ge 1} p_j (i + j)\frac {\partial \mathcal H}{\partial p_{i+j}} = \delta_y^{-1} d_y \sum_{j \ge 1} \sum_{i \ge 1} x^i y^j (i + j) \frac {\partial \mathcal H}{\partial p_{i+j}}
    = \delta_y^{-1} d_y \sum_{k \ge 2} \sum_{j = 1 }^{k-1} x^{k-j} y^j k \frac{\partial \mathcal H}{\partial p_k} \\= \delta_y^{-1} d_y \sum_{k\ge 2} xy \frac{x^{k-1} - y^{k-1}}{x-y} k \frac {\partial \mathcal H}{\partial p_k} = \delta_y^{-1} d_y \Big( \frac{xy}{x-y}\big(\delta_x \mathcal H - \delta_y \mathcal H \big) \Big).
\end{eqnarray*}

The master equation can be solved recursively, based on the value of the difference of the degrees of the variables $u$ and $t$, and the total degree in $p$ variables of the terms of the generating function $\mathcal H$. 

The base case corresponds to the total degree in $p$ variables equal 1, and the difference of the degrees of the variables $u$ and $t$ equals $-1$ which is the case that corresponds to  $(g,q) = (0,1)$. The terms of the master equation that contribute to this case are:
\[
-\frac xu \delta_x \mathcal H_{0,1} + 2CJx^2 (\delta_x \mathcal H_{0,1})^2 + \frac{tx}{2uCJ} = 0.
\]
Solving it we get
\[
\delta_x \mathcal H_{0,1} = \frac{1 - \sqrt{1 - 4\,utx}}{uCJx}.
\]
Next two terms emerge for the difference between the degrees of $u$ and $t$ equals 0. In this case, we have the term $\mathcal H_{\frac 12, 1}$, that do not have an oriented counterpart, and the term $\mathcal H_{0,2}$. For the former, we have the equation:
\[
-\frac xu \delta_x \mathcal H_{\frac 12,1} + 4CJ x^2 \delta_x \mathcal H_{\frac 12, 1}\, \delta_x \mathcal H_{0,1} + Tx^2 \frac \partial{\partial x} \delta_x \mathcal H_{0,1} = 0
\]
that produces \[
\delta_x \mathcal H_{\frac 12, 1} = 
\frac{{\left( 1- 2 \, t u x - \sqrt{1 -4 \, t u x }  \right)} T}{4 \, {\left( 1- 4 \, t u x \right)} {CJ} x}.
\]
For the latter, after the application of $\delta_y$, the equation reads
\[
-\frac xu \delta_x \delta_y \mathcal H_{0,2} + 4CJ\delta_x \mathcal H_{0,1}\,  \delta_x \delta_y \mathcal H_{0,2} + d_y \Big( \frac{xy}{x-y}\left(\delta_x \mathcal H_{0,1} - \delta_y \mathcal H_{0,1}\right)  
\Big) = 0,
\]
or
\[
\delta_y \delta_x \mathcal H_{0,2} =  \frac{1 - 2 \, t u x - 2 \, t u y - \sqrt{1  -4 \, t u x } \sqrt{1 -4 \, t u y } }{4 \, \sqrt{1 -4 \, t u x } \sqrt{1 -4 \, t u y } {CJ} {\left(x - y\right)}^{2}}.
\]
Notice, that the variable change
\[
z = \frac 1{\sqrt{1 - 4\, t u x}},\quad w = \frac 1{\sqrt{1 - 4\, t u y}}
\]
turns the computed values into rational expressions:
\begin{eqnarray*}
\delta_x \mathcal H_{0,1} = \frac {tz}{CJ(z + 1)},\\
\delta_x \mathcal H_{\frac 12, 1} = \frac{ t u {\left(z - 1\right)} z^{2}T}{2{CJ} {\left(z + 1\right)}},\\
\delta_x \delta_y \mathcal H_{0,2} = \frac{2 \, t^{2} u^{2} w^{3} z^{3}}{{CJ} {\left(w + z\right)}^{2}}.
\end{eqnarray*}

The general equation for $\mathcal H_{g,q}$  with $2g - 2 + q \ge 1$ reads:

\begin{eqnarray*}
    \Big( \frac xu - 4CJx^2\delta_x \mathcal H_{0,1}\Big) \delta_x \mathcal H_{g,q} = 2CJx^2\left( \delta_x^2 \mathcal H_{g-1,q+1} + \sum_{\begin{smallmatrix}g_1 + g_2 = g\\ q_1 + q_2 = q+1  \end{smallmatrix}}^{'} \delta_x \mathcal H_{g_1,q_1} \delta_x \mathcal H_{g_2,q_2}\right)\\
    + \delta_y^{-1} d_y \left(\frac{xy}{x-y}\left( \delta_x \mathcal H_{g,q-1} - \delta_y \mathcal H_{g,q-1} \right) \right) + Tx^2 \frac \partial{\partial x} \delta_x \mathcal H_{g-\frac 12, q},
\end{eqnarray*}
where the $'$ on top of the sum means that the terms with $(g_1,q_1) = (0,1)$ or $(g_2,q_2) = (0,1)$ are not involved in the summation.

For the study of the structural properties of the solutions, we will rewrite this formula in $z$ variables, isolating the contribution of the terms that have already been investigated.

We introduce the following objects:
\begin{itemize}
\item The form $\eta = \left(\frac 1{ux} - 4CJ \delta_x \mathcal H_{0,1} \right) dx = \frac{2\,dz}{uz^2(z^2 - 1)}$, and its dual vector field $ \frac 1\eta = \frac {uz^2(z^2 - 1)}2 \frac \partial{\partial z}$. Compared to the standard topological recursion, the vector field $\frac 1\eta$ plays the role of the recursion kernel.
\item Studying the expression 
\[
\frac {dx}{\eta}\left( 4CJ \delta_x \mathcal H_{0,2} \, \delta_x \mathcal H_{g,n-1} + \delta_y^{-1} d_y \left( \frac{y}{x(x-y)} \left(\delta_x \mathcal H_{g,n-1} - \delta_y \mathcal H_{g,n-1} \right) \right) \right)
\]
we can rewrite it as follows. Notice, that
\[
\delta_x \delta_y \mathcal H_{0,2} = -d_y \frac{\sqrt{1 -4 \, t u x } - \sqrt{1 -4 \, t u y }}{4 \, \sqrt{1 -4 \, t u x } {CJ} {\left(x - y\right)}},
\]
so that
\[
\left( 4CJ \delta_x \delta_y  \mathcal H_{0,2}  +  d_y \frac{y}{x(x-y)} \right)\,dx\,dy =  \frac{2 \, {\left(w^{2} + z^{2}\right)}\, dz\,dw}{ {\left(w^2 - z^2\right)}^{2}} = d_w \frac{2w\,dz}{z^2 - w^2}.
\]
And
\[
\frac {(dx)^2}{\eta(z)}  d_y\left( \frac{y}{x(x-y)}\delta_y \mathcal H_{g,q-1} \right) =   d_w \left(\frac{2w\,dz}{\eta(w)(z^2 - w^2)}\, \left(dy\, \delta_y \mathcal H_{g,q-1}\right)\right),
\]
where $\eta(z)$ and $\eta(w)$ mean the evaluation of the form $\eta$ in the coordinate $z$ or $w$, respectively. The expression
\[
d_w\left( \frac{2w\,dz}{z^2 - w^2}\left(\frac {\delta_x \mathcal H_{g,q-1}}{\eta(z)} - \frac{\delta_y \mathcal H_{g,q-1}}{\eta(w)}\right) \right)
\]
is the counterpart of the result of the pairing of  $(g,q-1)$-term with the Bergman kernel (that plays the role of $(0,2)$-term) in the standard topological recursion.
\item Lastly, we treat the contribution from $(g-\frac 12, q)$-term as follows. We have
\[
T\frac {dx}{\eta}\left(\frac{\partial}{\partial x} \delta_x \mathcal H_{g - \frac 12,n} + \frac{4CJ}{T}\delta_x \mathcal H_{\frac 12,1}\delta_x \mathcal H_{g - \frac 12,q}\right), 
\]
which can be treated as a multiple of the result of a covariant derivation of $\delta_x \mathcal H_{g-\frac 12,q}$ in the direction of the vector field $\frac 1\eta$. We rewrite it in terms of an application of the derivative to the form $\delta_x \mathcal H_{g - \frac 12,q}\, dx$ expressed in the variable $z$:
\[
\mathcal U(z)\, dz \mapsto T\nabla_{\frac 1\eta} \mathcal U(z)\,dz =  T \left(\frac{dz}{\eta(z)} \frac {\partial U(z)}{\partial z} + \frac{2(2z+1)\,dz}{\eta(z) z(z+1)} \mathcal U(z)\right)\,dz.
\]

\item The initial data for the recursion can be computed from the master equation. Introducing the variables $x_1,x_2,x_3$ and the corresponding $z_1,z_2,z_3,$ we have:
\begin{eqnarray*}
    dx_1\,dx_2\,dx_3 \delta_{x_1} \delta_{x_2} \delta_{x_3} \mathcal H_{0,3} = \frac{u}{2CJ}\,dz_1\,dz_2\,dz_3 = \mathcal U_{0,3},\\
    dx_1\,dx_2 \delta_{x_1} \delta_{x_2} \mathcal H_{\frac 12,2} =\frac{{\left(z_2^{4} + 3 \, z_2^{3} z_1 + 3 \, z_2^{2} z_1^{2} + 3 \, z_2 z_1^{3} + z_1^{4} - z_2^{3} - 3 \, z_2^{2} z_1 - 3 \, z_2 z_1^{2} - z_1^{3} - z_2 z_1\right)} T u}{{2CJ} {\left(z_2 + z_1\right)}^{3}}\,dz_1\,dz_2 \\
    = \frac{Tu}{2CJ}\left(z_1 + \frac{2 \,  z_2^{3} - z_2}{ {\left(z_2 + z_1\right)}^{2}} - \frac{ z_2^{4} - z_2^{2}}{ {\left(z_2 + z_1\right)}^{3}}\right)\,dz_1\,dz_2= \mathcal U_{\frac 12, 2}\\
    dx_1 \delta_{x_1} \mathcal H_{1,1} 
    = \frac{u(z_1 - 1)}{16CJ}\left(5T^2z_1 - 3T^2 + 2CJz_1 + 2CJ\right)\,dz_1 = \mathcal U_{1,1}.
\end{eqnarray*}

\end{itemize}
Notice, that contrary to the standard topological recursion, the computed initial data differentials has poles not only in $z = \infty$, but also along the diagonals $z_1 = -z_2$. Also, these forms are not necessarily odd with respect to the Galois involution $z\mapsto -z$. Nevertheless, we have the following rational recursion for the correlation differential $\mathcal U_{g,q} = dx\,\delta_x \mathcal H_{g,q}$ with $2g - 2 + q \ge 2$:
\begin{theorem}\label{th:toprec} The correlation differentials $\mathcal U_{g,q}$ for $2g - 2 + q \ge 2$ satisfy the recursion
\begin{eqnarray*}
    \mathcal U_{g,q} = \frac{2CJ}{\eta}\left(dx\, \delta_x \mathcal U_{g-1, q + 1} + \sum_{\begin{smallmatrix}g_1 + g_2 = g\\ q_1 + q_2 = q+1 \\ 2g_1 -2 + q_1 \ge 1 \\ 2g_2 -2 + q_2 \ge 1 \end{smallmatrix}}\mathcal U_{g_1,q_1} \mathcal U_{g_2,q_2}  \right) + T\nabla_{\frac 1\eta} \mathcal U_{g-\frac 12,q}\\ + dz\,\delta_y^{-1} d_w \left(\frac {2w}{z^2 - w^2}\left(\frac{\mathcal U_{g,q-1}(z)}{\eta(z)} - \frac{\mathcal U_{g,q-1}(w)}{\eta(w)} \right)\right).
\end{eqnarray*}
\end{theorem}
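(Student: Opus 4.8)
The plan is to read the statement off the general master equation displayed just above the theorem, by reorganizing its right-hand side into the three geometric building blocks recorded in the excerpt---the recursion kernel $\frac{1}{\eta}$, the Bergman-type pairing, and the covariant derivative $\nabla_{\frac{1}{\eta}}$---and then confirming that the rationalizing substitution $z=(1-4tux)^{-1/2}$, $w=(1-4tuy)^{-1/2}$ turns every ingredient into a rational expression. No new input beyond the master equation and the already-computed unstable data $\delta_x\mathcal H_{0,1}$, $\delta_x\mathcal H_{\frac12,1}$, $\delta_x\delta_y\mathcal H_{0,2}$ is required; the content of the proof is entirely the bookkeeping of how the inhomogeneous terms absorb the unstable contributions.

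First I would normalize the left-hand side. Since $\eta$ is built precisely from the coefficient $\frac{1}{ux}-4CJ\delta_x\mathcal H_{0,1}$ that multiplies $\delta_x\mathcal H_{g,q}$ on the left, multiplying the master equation through by $\frac{dx}{\eta}$ (equivalently, contracting with the dual vector field $\frac{1}{\eta}$) produces $\mathcal U_{g,q}=dx\,\delta_x\mathcal H_{g,q}$ on the left and converts the prefactor $2CJx^2$ of the quadratic and of the $\delta_x^2\mathcal H_{g-1,q+1}$ terms into the operator $\frac{2CJ}{\eta}$ appearing in the statement. This explains the two terms inside the first bracket of the recursion.

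Next I would split the primed quadratic sum into genuinely stable pairs, where $2g_i-2+q_i\ge 1$ on both factors, and the two unstable pieces $(0,2)\times(g,q-1)$ and $(\tfrac12,1)\times(g-\tfrac12,q)$, each entering with multiplicity two from the ordering. The first unstable piece merges with the inhomogeneous $\delta_y^{-1}d_y$-term: using the identity already established in the excerpt, $\left(4CJ\,\delta_x\delta_y\mathcal H_{0,2}+d_y\frac{y}{x(x-y)}\right)dx\,dy=d_w\frac{2w\,dz}{z^2-w^2}$, their combination collapses to the Bergman-pairing term $dz\,\delta_y^{-1}d_w\!\left(\frac{2w}{z^2-w^2}\big(\frac{\mathcal U_{g,q-1}(z)}{\eta(z)}-\frac{\mathcal U_{g,q-1}(w)}{\eta(w)}\big)\right)$. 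The second unstable piece merges with the $Tx^2\partial_x$-term into $T\nabla_{\frac1\eta}\mathcal U_{g-\frac12,q}$, where the connection coefficient $\frac{2(2z+1)}{\eta(z)z(z+1)}$ is exactly the multiple of $\delta_x\mathcal H_{\frac12,1}$ added to $\partial_x$. Passing to the $z,w$ coordinates then rationalizes the stable products and the kernel; the differentials $\mathcal U_{0,3}$, $\mathcal U_{\frac12,2}$, $\mathcal U_{1,1}$ already computed seed the induction on $2g-2+q$.

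The main obstacle I anticipate is the covariant-derivative term. Unlike in the classical topological recursion, the $T$-twist produces a genuine connection contribution rather than a plain derivative, so I would need to verify that $T\nabla_{\frac1\eta}\mathcal U_{g-\frac12,q}$ reproduces precisely $T\frac{dx}{\eta}\big(\partial_x\delta_x\mathcal H_{g-\frac12,q}+\frac{4CJ}{T}\delta_x\mathcal H_{\frac12,1}\,\delta_x\mathcal H_{g-\frac12,q}\big)$, including the correct connection coefficient after the change of variables. A secondary subtlety is that, contrary to the classical setting, the resulting forms carry poles along the diagonals $z_i=-z_j$ and are not odd under the Galois involution $z\mapsto-z$; I would need to confirm that the recursion nevertheless closes within the ring of such rational differentials, which the explicit initial data already suggests.
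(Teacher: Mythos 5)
Your proposal is correct and follows essentially the same route as the paper: the theorem is obtained by contracting the general master equation for $\mathcal H_{g,q}$ with $\tfrac{dx}{\eta}$, peeling the unstable pairs $(0,2)\times(g,q-1)$ and $(\tfrac12,1)\times(g-\tfrac12,q)$ out of the primed quadratic sum, and absorbing them respectively into the Bergman-type pairing (via the identity $\bigl(4CJ\,\delta_x\delta_y\mathcal H_{0,2}+d_y\tfrac{y}{x(x-y)}\bigr)dx\,dy=d_w\tfrac{2w\,dz}{z^2-w^2}$) and into the covariant derivative $T\nabla_{\frac1\eta}$, exactly as in the paper's preceding bullet points. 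Your identification of the connection coefficient and the remark that the resulting differentials are rational in $z,w$ but carry diagonal poles and fail Galois antisymmetry also match the paper's discussion.
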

This form of the recursion indicates that all correlation differentials are residue-free, can have poles at $z_j = \infty$ and along the diagonals $z_i = -z_j$ only, by the properties of the refined topological recursion outlined by  K. Osuga~\cite{Osu23}. Given this, showing that the correlation differentials for simple monotone refined Hurwitz numbers satisfy the refined topological recursion of~\cite{KO22} can be performed using the methods of~\cite{Ch-Dolega-Osuga2}.





\end{document}